\DeclareMathOperator{\acl}{acl}
\DeclareMathOperator{\dcl}{dcl} 
 \DeclareMathOperator{\id}{Id}
 \DeclareMathOperator{\dom}{dom}
\DeclareMathOperator{\cl}{cl}
\newtheorem{introtheorem}{Theorem}
\newtheorem{theorem}{Theorem}[section]
\newtheorem{corollary}[theorem]{Corollary}
\newtheorem{fact}[theorem]{Fact}
\newtheorem{lemma}[theorem]{Lemma}
\newtheorem{proposition}[theorem]{Proposition}
\newtheorem*{gen-dif}{\fbox{{\large A}} \hypertarget{Agen-dif}{Gen-Dif}}
\newtheorem*{min-balln}{\fbox{{\large A}} \hypertarget{Amin-ball}{Cballs}}
\theoremstyle{definition}
\newtheorem{definition}[theorem]{Definition}
\newtheorem{example}[theorem]{Example}
\newtheorem{remark}[theorem]{Remark}
\newtheorem{question}[theorem]{Question}
\newtheorem{notation}[theorem]{Notation}
\newcommand{\Rr}{{\mathbb{R}}}
\newcommand{\Nn}{{\mathbb{N}}}
\newcommand{\CL}{{\mathcal L}}
\newcommand{\CM}{{\mathcal M}}
\newcommand{\CC}{{\mathcal C}}
\newcommand{\CO}{{\mathcal O}}
\newcommand{\0}{\emptyset}
\renewcommand{\phi}{\varphi}
\newcommand{\Ad}{\mathrm{Ad}}
\newcommand{\ad}{\mathrm{ad}}
\def\sub{\subseteq}
\title{On groups and fields definable in $1$-h-minimal fields}
\author{Juan Pablo Acosta L\'opez}
\address{Department of Mathematics, Ben Gurion University of the Negev, Be'er-Sheva 84105, Israel}
\email{jupaaclo1393@gmail.com }
\author{Assaf Hasson }
\address{Department of Mathematics, Ben Gurion University of the Negev, Be'er-Sheva 84105, Israel}
\email{hassonas@math.bgu.ac.il}
\date{\today}
\begin{document}
	
	\thanks{The  authors were supported by ISF grants No. 555/21.}
	
	\begin{abstract}
		We show that an infinite group $G$ definable in a $1$-h-minimal field  admits a strictly $K$-differentiable structure with respect to which $G$ is a (weak) Lie group, and show that definable local subgroups sharing the same Lie algebra  have the same germ at the identity. We conclude that infinite fields definable in $K$ are definably isomorphic to finite extensions of $K$ and that $1$-dimensional groups definable in $K$ are finite-by-abelian-by-finite. Along the way we develop the basic theory of definable weak $K$-manifolds and definable morphisms between them.  
	\end{abstract}

	\maketitle
	\section{Introduction}
	Various Henselian valued fields are amenable to model theoretic
	study. Those include  the $p$-adic numbers (more generally, $p$-adically closed fields), and (non-trivially) valued real closed and algebraically closed fields, as well as various expansions thereof (e.g. by restricted analytic functions). Recently, a new axiomatic framework for tame valued fields (of characteristic $0$) was introduced. This framework, known as Hensel-minimality\footnote{In \cite{hensel-min} and \cite{hensel-minII} various notions of Hensel-minimality -- $n$-h-minimality -- for $n\in \Nn\cup \{\omega\}$,  were introduced. For the sake of clarity of exposition, we will only discuss $1$-h-minimality.}, was suggested in \cite{hensel-min} and \cite{hensel-minII}  as a valued field analogue of  o-minimality. 
	The notion of $1$-h-minimality is both broad and powerful. Known examples include, among others, all pure Henselian valued fields of characteristic $0$ as well as their expansions by restricted analytic functions.  Known tameness consequences of $1$-h-minimality include a well-behaved dimension theory, and strong regularity of definable functions (e.g., a generic Taylor approximation theorem for definable functions).
	
	In the present paper, we initiate a study of groups definable in $1$-h-minimal fields. Using the above mentioned tameness and regularity conditions provided by $1$-h-minimality and inspired by similar studies in the o-minimal setting (initiated in \cite{Pi5}) and in $p$-adically closed fields (\cite{PilQp}) our first theorem (Proposition \ref{definable-is-lie}, stated here in a slightly weaker form) is: 
	
	\begin{introtheorem}\label{T: main intro}
		Let $K$ be a $1$-h-minimal field, $G$ an infinite group definable in $K$. Then $G$ admits a definable weak $\CC^k$ (any $k$) manifold structure with respect to which $G$ has the structure of a strictly differentiable weak $\CC^k$-Lie group. I.e., the forgetful functor from definable strictly differentiable weak Lie groups
		to definable groups is an equivalence of categories. If algebraic closure coincides with definable closure in $K$, then a definable weak Lie group is a definable Lie group. 
	\end{introtheorem}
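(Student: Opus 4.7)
The plan is to follow the Pillay paradigm, originally developed in the o-minimal setting \cite{Pi5} and adapted by him to $p$-adically closed fields \cite{PilQp}, transplanted here to $1$-h-minimal fields. The two principal inputs from $1$-h-minimality that I would rely on are the well-behaved dimension theory for definable sets, giving a robust notion of generic point, and the generic Taylor approximation \MVTay{}, which produces strict $\CC^k$-differentiability of definable functions off a subset of strictly smaller dimension. Cell decomposition in the $1$-h-minimal framework then allows one to express definable sets locally as graphs of strictly $\CC^k$-functions, giving candidate charts.

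First I would isolate a local differentiable structure on $G$ near a generic point. Applying \MVTay{} to the multiplication $m\colon G\times G\to G$ and inversion $\iota\colon G\to G$, both become strictly $\CC^k$ away from a definable subset of strictly smaller dimension. A genericity argument then produces a generic $a\in G$ (over the parameters defining $G$) together with a definable chart $\phi\colon U\to K^d$, where $U$ is a neighbourhood of $a$ in $G$ and $d=\dim G$, identifying $U$ with a strictly $\CC^k$-submanifold of $K^d$. By choice of $a$, both $m$ and $\iota$ are strictly $\CC^k$ on a definable neighbourhood of $(a,a)$ and of $a$ respectively in $\phi$-coordinates.

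Next I would propagate this local structure by left translation. Define a chart $\psi\colon a^{-1}U\to K^d$ at the identity by $\psi(x)=\phi(ax)$, and for each $b\in G$ define $\psi_b\colon b\cdot a^{-1}U\to K^d$ by $\psi_b(x)=\psi(b^{-1}x)$. These charts cover $G$, and each transition $\psi_{b'}\circ\psi_b^{-1}$ reduces, via the definition, to a composition of local multiplication maps carried out in $\psi$-coordinates on a definable subset of $K^d$; strict $\CC^k$-differentiability on the definable overlap thus follows from the local regularity of multiplication near the identity, which in turn is obtained from the local regularity near $a$ by translation. The same argument shows that $m$ and $\iota$ are strictly $\CC^k$ in the resulting atlas, so $G$ acquires a definable strictly $\CC^k$-Lie weak group structure.

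I expect the main obstacle to lie in the precise handling of the \emph{weak} manifold structure: chart domains $b\cdot a^{-1}U$ need not be topologically open everywhere on their image, and transitions need only be strictly $\CC^k$ on the definable ``good'' part of the overlap. The paper's formalism of definable weak $\CC^k$-manifolds, developed in the earlier sections, is designed for exactly this setting; verifying that the atlas fits into it coherently, that $m$ and $\iota$ are morphisms of weak manifolds, and that the forgetful functor is fully faithful (so definable group homomorphisms are automatically strictly $\CC^k$) all reduce, by genericity, to applications of \MVTay{} to the relevant definable maps. Finally, under the hypothesis $\acl=\dcl$ in $K$, finite-to-one definable maps admit definable sections off lower-dimensional subsets, which allows the chart domains to be chosen genuinely topologically open; this upgrades the weak structure to an honest definable Lie group structure and yields the final sentence of the theorem.
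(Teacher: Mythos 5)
Your overall strategy is the paper's: this is Pillay's paradigm, and the paper's proof of Proposition \ref{definable-is-lie} follows the same outline (generic regularity of definable maps from Taylor approximation, a large ``good'' set, translation charts, and Proposition \ref{weak-manifold-is-generically-strong} for the $\acl=\dcl$ case). But there is a genuine gap at the central step, and it is precisely the step you dispose of in one clause: ``strict $\CC^k$-differentiability on the definable overlap thus follows from the local regularity of multiplication near the identity, which in turn is obtained from the local regularity near $a$ by translation.'' This is circular. Generic regularity only gives that multiplication is strictly differentiable on a dense open subset $Y_0\subset V_0\times V_0$; the translations $L_b$ are themselves instances of multiplication, so you cannot use them to propagate regularity until you know they are regular at \emph{every} point of the relevant chart domain, not just at generic ones. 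Your later remark that ``transitions need only be strictly $\CC^k$ on the definable good part of the overlap'' is not correct either: Definition \ref{weak-manifolds-def} requires the transition maps to be regular on the whole overlap, and having this only on a dense open subset does not define a manifold structure at all.

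The paper closes this gap with a specific device you would need to reproduce. One shrinks the chart domain to the definable set $V_1'$ of points $g$ such that for \emph{all} $h$ generic over $g$ the pairs $(h,g)$ and $(h^{-1},hg)$ lie in the good set $Y_0$ (largeness and definability of $V_1'$ use definability of dimension in families), and then proves regularity of $x\mapsto gxh$ at an \emph{arbitrary} point of the chart by factoring $gxh=k^{-1}(((kgx)h_1)h_2)$ through auxiliary elements $k,h_1$ generic over all the data (with $h_2=h_1^{-1}h$), so that each partial product is evaluated at a point of the good set. Without this factorization the transition maps are only known to be regular generically. A second, smaller omission: your atlas has one chart per $b\in G$, which is not a finite (hence not a definable) atlas; you need Lemma \ref{generic-in-group} --- in a pregeometric theory finitely many translates of a large subset cover $G$ --- to cut down to finitely many charts. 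Both ingredients are also present in Pillay's original argument, so your plan is recoverable, but as written the proposal asserts the hardest step rather than proving it.
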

	
	Above by a definable weak Lie group (over $K$) we mean a Lie group whose underlying $K$-manifold structure  may not have a definable (so, in particular, finite) atlas but can be covered by (the domains of) finitely many compatible \'etale maps. We do not  know whether this is a necessary requirement for the correctness of the statement, or an artifact of the proof:   we follow Pillay's argument in the o-minimal 
	and $p$-adic contexts (\cite{Pi5},  \cite{PilQp}), but the fact that in the present setting finite covers are not generically trivial, requires that we work with weakly definable manifolds, in the  above sense.   
	To pursue this argument, we have to extend the study of definable functions beyond what was done in \cite{hensel-min} (and its sequel). Specifically, instead of working with continuously differentiable functions (as is the case in the o-minimal setting) we are working with strictly differentiable functions, and for those we prove an inverse function theorem, allowing us to deduce an implicit function theorem for definable functions as well as other standard consequences of these theorems.  
	
	We do not know whether strict differentiability follows in the $1$-h-minimal context from continuous differentiability (as is the case in real analysis),  but it can be easily inferred from a multi-variable Taylor approximation theorem for definable functions available in this context. \\

	Having established that definable groups are Lie, our next theorem establishes the natural Lie correspondence (asserting that the germ of a definable group morphism at the identity is determined by its derivative at that point). For applications it is convenient to state the result for local groups (Corollary \ref{dimension-of-kernel}): 
	
	\begin{introtheorem}
		Let $K$ be a $1$-h-minimal field, $U$ and $V$  definable strictly differentiable local Lie groups and $g,f:U\to V$ definable strictly differentiable local Lie group
		morphisms. If we denote $Z=\{x\in U: g(x)=f(x)\}$, then 
		$\dim_e Z=\dim(\ker(f'(e)-g'(e)))$.
		
	\end{introtheorem}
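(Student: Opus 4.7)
The plan is to study the definable strictly differentiable map $h \colon U \to V$ defined by $h(x) = f(x)\cdot g(x)^{-1}$, so that $Z = h^{-1}(e_V)$, and to reduce the claim to a constant rank statement. Using the Taylor expansion of the group law on $V$ and of inversion at $e_V$ one first checks that $h(e)=e_V$ and $h'(e)=f'(e)-g'(e)$, so it suffices to prove $\dim_e h^{-1}(e_V) = \dim\ker h'(e)$.

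The key step is a crossed homomorphism identity. Since $f$ and $g$ are morphisms,
\[
h(xy) \;=\; f(x)f(y)g(y)^{-1}g(x)^{-1} \;=\; f(x) h(y) g(x)^{-1} \;=\; h(x)\cdot\Ad(g(x))\bigl(h(y)\bigr)
\]
for all $x,y$ in a neighborhood of $e$ in $U$. Differentiating this identity with respect to $y$ at $y=e$, and applying the chain rule to both sides (using $h(e)=e_V$), yields
\[
h'(x)\circ (L_x^U)'(e) \;=\; (L_{h(x)}^V)'(e_V)\circ\Ad(g(x))\circ h'(e),
\]
where $L^U_x$ and $L^V_{h(x)}$ denote left translation in $U$ and $V$ respectively. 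Each of the three factors $(L_x^U)'(e)$, $(L_{h(x)}^V)'(e_V)$ and $\Ad(g(x))$ is a linear isomorphism, so $\rk h'(x) = \rk h'(e) =: r$ for every $x$ in a neighborhood of $e$.

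Thus $h$ has constant rank $r$ near $e$, and the constant rank theorem---a standard consequence of the inverse and implicit function theorems for definable strictly differentiable maps developed earlier in the paper---implies that $h^{-1}(e_V)$ is locally at $e$ a definable submanifold of $U$ of dimension $\dim U - r = \dim\ker h'(e) = \dim\ker(f'(e)-g'(e))$, giving the desired equality. The main obstacle is the combination of (i) carrying out the derivative computations for strictly differentiable (rather than merely $C^1$) maps in a local Lie group setting, so that the crossed homomorphism identity differentiates cleanly on both sides, and (ii) having the constant rank theorem available in the definable strictly differentiable category; both are at hand via the strictly differentiable inverse function theorem established earlier in the paper.
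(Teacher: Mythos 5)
Your overall strategy is the same as the paper's: show that $h=f\cdot g^{-1}$ has constant rank near $e$ via an equivariance identity, then appeal to the constant rank theorem. The equivariance computation is essentially correct (modulo writing $\Ad(g(x))$ for the conjugation map $c_{g(x)}$ itself rather than its derivative in the undifferentiated identity; this is harmless since you only use the differentiated form, and the paper's version of the identity, $(f\cdot g^{-1})L_u=L_{f(u)}R_{g(u)^{-1}}(f\cdot g^{-1})$, carries the same information).

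The gap is in the last step. In this paper the constant rank theorem (Proposition \ref{constant-rank}) is only a \emph{generic} statement: it produces the normal form $(a,b)\mapsto(a,0)$ on \emph{some} nonempty definable open subset $U'$ of the domain, which need not contain the point you care about. This is not an artifact of the proof: the paper exhibits a definable strictly differentiable function ($f(x,y)=0$ for $|y|>|x|$ and $f(x,y)=x^2$ for $|y|\le|x|$) with $D_yf\equiv 0$ that is not locally of the form $g(x)$ at the origin, so the ``standard consequence of the inverse and implicit function theorems'' you invoke fails pointwise in the $1$-h-minimal setting. Consequently, constancy of $\mathrm{rank}\, h'$ near $e$ does not by itself give that $h^{-1}(e_V)$ is, \emph{at $e$}, a submanifold of dimension $\dim U-r$. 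The missing step --- the one the paper supplies --- is to use the equivariance a second time: apply Proposition \ref{constant-rank} to obtain the normal form on some open $U'$, pick $u\in U'$ mapping to the distinguished point of the chart, and transport the charts back to $e$ by replacing $(U',V',\phi_1,\phi_2)$ with $(u^{-1}U',\, f(u)^{-1}V'g(u),\, \phi_1 L_u,\, \phi_2 L_{f(u)}R_{g(u)}^{-1})$. Since you already derived exactly the identity needed for this transport, the repair is short; but as written the proof is incomplete at precisely the point where this setting differs from real or o-minimal differential geometry.
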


	We then prove two applications. First, we show -- adapting techniques from the o-minimal context -- that every infinite field definable in a $1$-h-minimal field, $K$, is definably isomorphic to a finite extension of $K$, Proposition \ref{field}. This generalizes an analogous result for real closed valued fields (\cite{BaysPet}) and $p$-adically closed fields (\cite{PilQp}). It will be interesting to know whether these results can be extended to \emph{interpretable} fields (in the spirit of \cite{HaHaPeVF} or \cite[\S 6]{HrRid} under suitable additional assumptions on the RV-sort. 
	
	Our next application is a proof that definable $1$-dimensional groups are finite-by-abelian-by-finite, Corollary \ref{one-dimensional}. This generalizes  analogous results in the o-minimal context (\cite{Pi5}), in $p$-adically closed fields (\cite{PilQp}) and combines with \cite{AcostaACVF} to give a complete classification of $1$-dimensional groups definable in ACVF$_0$. \\

	The present paper is a first step toward the study of groups definable in $1$-h-minimal fields. It seems that more standard results on Lie groups over complete local fields can be extended to this context. Thus, for example, it can be shown that any definable local group contains a definable open subgroups. As the proof is long and involves new techniques we postpone it to a subsequent paper. 
	\subsection{Structure of the paper}
	In Section \ref{preliminaries} we review the basics of $1$-h-minimality and dimension theory in geometric structures. In Section \ref{taylor-section} we prove a multi-variable Taylor approximation theorem for $1$-h-minimal fields, and formulate some strong regularity conditions (implied, generically, by Taylor's theorem) that will be needed in later parts of the paper.  These results are, probably, known to the experts, and we include them mostly for the sake of completeness and clarity of exposition (as some of them do not seem to exist in writing). 
	
	In Section \ref{smooth-map} we prove the inverse function theorem and related theorems on the local structure of immersions, submersions and constant rank functions. Though some of the proofs are similar to those of analogous statements in real analysis (and, more generally, in the o-minimal context) this is not true throughout. Specifically,  $1$-h-minimality is invoked in a crucial way 
	in the proof that a function with vanishing derivative
	is locally constant, which -- in turn -- is used in our proof of the Lie correspondence for definable groups.

	Using the results of the first sections, our study of definable groups starts in 
	Section \ref{groups-section}. We first show that definable groups can be endowed with an, essentially unique,  strictly differentiable weak Lie group structure, and that the germ of  definable group morphisms are determined by their derivative at the identity. We then
	define the (definable) Lie algebra associated with a definable Lie group, and show that 
	it satisfies the familiar properties of Lie algebras. This is done using a local
	computation, after characterizing the Lie bracket as the second order part of the commutator
	function near the identity.
	
	Section \ref{fields-section} is dedicated to the classification of fields definable in $1$-h-minimal fields, and in Section \ref{one-dimensional-section} we prove our results on definable  one dimensional groups. 
	
	\section{Preliminaries}\label{preliminaries}
	In this section we set some background definitions, notation and describe basic
	relevant results, used in later sections. Most of the terminology below is either standard or taken from \cite{hensel-min}. Throughout, $K$ will denote a non-trivially valued field. We will not distinguish, notationally, between the structure and its universe. Formally, we allow $K$ to be a multi-sorted structure (with all sorts coming from from $K^{eq}$), but by a definable set we mean (unless explicitly stated otherwise) a subset of $K^n$ definable with parameters. All tuples are finite, and we write (as is common in model theory) $a\in K$ for $a\in K^n$ for $n=\mathrm{length}(a)$. We apply the same convention to variables. 
	
	To stress the analogy of the current setting with the Real numbers  we use multiplicative notation for the valuation.
	Thus, the valued group is denoted $(\Gamma,\cdot)$ and the valuation $|\cdot|:K\to \Gamma_0=\Gamma\cup \{0\}$, and if $x\in K^n$ we set $|x|:=\max_{1\leq k\leq n}|x_k|$.
	
	An open ball  of (valuative) radius $r\in \Gamma$ in $K^n$ is a set of the form
	$B=\{x\in K^n: |x-a|<r\}$ for $a\in K^n$. The balls endow $K$ with a field topology (the valuation topology). 
	Up until Section \ref{groups-section} all topological notions mentioned in the text will refer solely to this topology. 
	
	We denote $\mathcal{O}:=\{x: |x|\le 1\}$, the valuation ring,
	$\mathcal{M}:=\{x\in \CO: |x|<1\}$, the valuation ideal, and $k:=\CO/\CM$, the residue field. 
	We also denote $RV=K^{\times}/1+\mathcal{M}$. 
	More generally, whenever $s\in \Gamma$ and $s\leq 1$,  we denote
	$\mathcal{M}_s=\{x\in K: |x|<s\}$, and $RV_s=K^{\times}/1+\mathcal{M}_s$.
	If $K$ has mixed characteristic $(0,p)$, we denote 
	$RV_{p,n}=RV_{|p|^n}$ and $RV_{p,\bullet}=\bigcup_n RV_{p,n}$. \\
	
	It is convenient, when discussing approximation theorems, to adopt the big-O notation from real analysis. For the sake of clarity we recall this notation in the valued field setting:    
	
	\begin{definition}\label{big-o}
		\begin{enumerate}
			\item 
			If $f:U\to K^m$ and $g:U\to \Gamma_0$ are functions
			defined in an open neighborhood of $0$ in $K^n$, then $f(x)=O(g(x))$ means
			that there are $r, M>0$  in $\Gamma$, such that if 
			$|x|<r$ then $|f(x)|\le Mg(x)$. 
			We also denote 
			$f_1(x)=f_2(x)+O(g(x))$ if $f_1(x)-f_2(x)=O(g(x))$.
			\item
			If $g:U\to K^r$, and $s\in \mathbb{N}$, then $O(g(x)^s)=O(|g(x)|^s)$.
			\item
			If $f:Y\times U\to K^m$, is a function where $U$ is an open neighborhood
			of $0$ in $K^n$,
			and if $g:U\to \Gamma_0$, then 
			$f(y,x)=O_y(g(x))$ means that 
			for every $y\in Y$, there are $r_y,M_y>0$, 
			such that
			if $|x|<r_y$ then $|f(y,x)|\le M_yg(x)$.
		\end{enumerate}
	\end{definition}
	
	As mentioned in the introduction, in the present paper we are working with the notion of strict differentiability, which we now recall: 
	\begin{definition}
		Let $U\subset K^n$ be an open subset and $f:U\to K^m$ be a map.
		Then $f$ is  strictly differentiable at $a\in U$ if there is a linear map 
		$A:K^n\to K^m$ such that for 
		every $\epsilon>0$,
		there exists $\delta>0$ satisfying 
		$|f(x)-f(y)-A(x-y)|\leq \epsilon |x-y|$
		for every $x,y$ such that $|x-a|<\delta$ and $|y-a|<\delta$.
		
		$f$ is  strictly differentiable in $U$ if it is strictly differentiable
		at every point of $U$.
	\end{definition}
	In the situation of the definition the linear map $A$ is uniquely determined and denoted $f'(a)$.
	If $f$ is strictly differentiable in an open $U$,
	then it is continuously differentiable.
	\begin{definition}
		Let $U\subset K^n$ and $V\subset K^n$ be open subsets.
		Then $f:U\to V$ is a strict diffeomorphism if it is strictly differentiable,
		bijective and its inverse is strictly differentiable.
	\end{definition}
	
	As we will see, a strict diffeomorphism is just a strictly differentiable diffeomorphism. \\
	
	Given an open ball $B\sub K^n$ of radius $r$, 
	a subset $Y$ of $K^n$, and an element $s\in \Gamma$ with $s\leq 1$,
	we say that 
	$B$ is $s$-next to $Y$ if $B'\cap Y=\emptyset$ for $B'$ the  
	open ball of radius $s^{-1}r$ containing $B$.
	
	Note that every point not in the closure of $Y$ is contained in a ball 
	$s$-next to $Y$. This is because if $B$ is an open ball of radius $r$
	disjoint from $Y$, then every open ball of radius $sr$ contained in $B$
	is $s$-next to $Y$.
	
	Following \cite{hensel-min} we say that a finite set $Y\subset K$ prepares the 
	set $X\subset K$, if for every ball, $B$, disjoint from $Y$ is either 
	disjoint from $X$ or contained in $X$. More generally, if $s\in \Gamma$ is such that $s\leq 1$, then $Y$ $s$-prepares
	$X$ if every open ball $B$ $s$-next to $Y$ is either contained in $X$ or disjoint
	from $X$.
	
	If $K$ is a valued field of mixed characteristic $(0,p)$, given an integer
	$m\in \mathbb{N}$, an open ball, $B\sub K^n$, and a set $Y\sub K^n$,
	we say that $B$ is $m$-next to $Y$ if it is $|p|^m$-next to $Y$. Similarly,  if $s\in \Gamma$ and $s\leq 1$ then $B$ is 
	$m$-$s$-next to $Y$ if it is $|p|^ms$-next to $Y$. Given a finite $Y\subset K$ and $X\subset K$, 
	we say that $Y$ $m$-prepares (resp. $m$-$s$-prepares) the set
	$X$ if $Y$ $|p|^m$-prepares $X$ (resp. $Y$ $|p|^ms$-prepares $X$).
	
	Next, we recall the definitions of 1-h-minimality defined in the equi-characteristic $0$ (\cite{hensel-min}) and in mixed characteristic (\cite{hensel-minII}) settings: 
	
	\begin{definition}
		Let $K$ be an $\aleph_0$-saturated non-trivially valued field of characteristic $0$, which is a structure
		in a language extending the language of valued fields. 
		\begin{enumerate}
			\item If $K$ has residue characteristic $0$ then $K$ is  $1$-h-minimal, if for any $s\le 1$ in  $\Gamma$ any $A\sub K$, $A'\in RV_s$ (a singleton) and every $(A\cup RV\cup A')$-definable 
			set $X\subset K$, there is an $A$-definable finite set $Y\subset K$  $s$-preparing $X$. 
			\item If $K$ has mixed characteristic $(0,p)$ then $K$ is $1$-h-minimal, if for any $s\le 1$ in  $\Gamma$ any $A\sub K$, $A'\in RV_{s}$ (a singleton) and every $(A\cup RV_{p,\bullet}\cup A')$-definable 
			set $X\subset K$, there is $m\in \Nn$ and an $A$-definable finite set $Y\subset K$ which $m$-$s$-prepares $X$. 
		\end{enumerate}
	\end{definition}
	In the sequel, when appealing directly to the definition,  we will only need the case $s=1$ (so $A'$ does not appear).
	The parameter $s$ does appear implicitly, though, when applying properties of $1$-h-minimality such as  generic continuity of definable functions (see \cite[Proposition 5.1.1]{hensel-min}).
	
	Below we will need to study properties of  ``one-to-finite definable functions'' (definable correspondences, in the terminology of \cite{SimWal}). 
	It turns out that statements regarding such objects can sometimes 
	be reduced to statements on definable functions
	in  expansions of the language by algebraic Skolem functions (i.e., Skolem functions for definable finite sets). For this, the following will be convenient (see  \cite[Proposition 4.3.3]{hensel-min}, and 
	\cite[Proposition 3.2.2]{hensel-minII}): 
	
	\begin{fact}\label{acl=dcl}
		Suppose $K$ is a 1-h-minimal valued field. 
		Then there exists a language $\mathcal{L}'\supseteq \mathcal{L}$, an elementary extension $K'$ of $K$, 
		and an $\aleph_0$-saturated $\mathcal{L}'$-structure on $K'$ extending the 
		$\mathcal{L}$-structure of $K'$, such that 
		$K'$ is 1-h-minimal as an $\mathcal{L}'$-structure, and such that 
		$\acl_{\mathcal{L}'}(A)=\dcl_{\mathcal{L}'}(A)$ for all $A\sub K'$.
	\end{fact}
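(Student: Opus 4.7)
The plan is to construct $\mathcal{L}'$ and $K'$ by iteratively adjoining algebraic Skolem functions. Starting from $\mathcal{L}_0 := \mathcal{L}$ and an $\aleph_0$-saturated elementary extension $K_0$ of $K$, at stage $\alpha+1$ I introduce, for every $\mathcal{L}_\alpha$-formula $\varphi(x,\bar y)$ whose fibres $\varphi(K_\alpha,\bar b)$ are uniformly finite, a new function symbol $f_\varphi$, and interpret it so that $f_\varphi(\bar b) \in \varphi(K_\alpha,\bar b)$ whenever the latter is non-empty. Passing to an $\aleph_0$-saturated $\mathcal{L}_{\alpha+1}$-elementary extension, and taking unions at limits, the procedure stabilises at some stage; the resulting language $\mathcal{L}'$ and structure $K'$ then satisfy $\acl_{\mathcal{L}'}(A) = \dcl_{\mathcal{L}'}(A)$ automatically, because any $b$ algebraic over $A$ witnesses some $\mathcal{L}'$-formula $\varphi(x,\bar a)$ with finitely many solutions and hence coincides with $f_\varphi(\bar a)$.

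The core of the argument is to verify that $1$-h-minimality persists at each successor step. Assuming $K_\alpha$ is $1$-h-minimal, and letting $f := f_\varphi$ be a newly added algebraic Skolem function, consider an $(A \cup RV \cup A')$-definable subset $X \subseteq K_{\alpha+1}$ of the expansion (with $s \le 1$). Its defining $\mathcal{L}_{\alpha+1}$-formula uses only finitely many instances $f(\bar c_1), \dots, f(\bar c_n)$ of the new symbol, and each value $f(\bar c_i)$ lies in the $\mathcal{L}_\alpha$-definable finite set $F_i := \varphi(K_\alpha, \bar c_i)$. Replacing each term $f(\bar c_i)$ by an existential quantifier over $F_i$ exhibits $X$ as a union, over tuples $(z_1, \dots, z_n) \in F_1 \times \cdots \times F_n$, of sets $X_{\bar z}$ each definable in $\mathcal{L}_\alpha$ over $A \cup RV \cup A'$ together with the auxiliary $z_i$. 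Applying $1$-h-minimality of $K_\alpha$ to each $X_{\bar z}$ (absorbing $z_i$ into $A$) and taking the union of the resulting $\mathcal{L}_\alpha$-definable preparing sets produces the required finite $Y$ that $s$-prepares $X$.

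The main obstacle is the uniformity in this reduction: $Y$ must be $\mathcal{L}'$-definable from $A$ alone, independently of the specific choices $\bar z$, and must remain finite. This will be handled by a compactness argument combined with $\aleph_0$-saturation, exploiting a uniform bound on $|F_i|$ in $\bar c_i$ to reduce to finitely many configurations of Skolem values that must be simultaneously prepared, and then closing the set of preparing points under the finitely many relevant selections via $\mathcal{L}'$-terms. In the mixed-characteristic setting an additional book-keeping layer is needed, since one must find a single $m \in \mathbb{N}$ such that the collected finite set $m$-$s$-prepares $X$ across all configurations; this too follows from compactness since the number of relevant configurations is uniformly bounded. Once this preservation lemma is in place, the final $K'$ is obtained as the union of the $K_\alpha$, further $\aleph_0$-saturated as an $\mathcal{L}'$-structure, and the three required properties hold by construction.
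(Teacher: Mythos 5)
First, note that the paper does not actually prove this statement: it is imported verbatim from \cite[Proposition 4.3.3]{hensel-min} and \cite[Proposition 3.2.2]{hensel-minII}, so your proposal has to be judged as a self-contained proof attempt. The overall shape (iteratively adjoining algebraic Skolem functions and checking that $1$-h-minimality survives) is the right one, but the central step has a genuine gap.

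The gap is your claim that the defining $\mathcal{L}_{\alpha+1}$-formula of $X$ ``uses only finitely many instances $f(\bar c_1),\dots,f(\bar c_n)$ of the new symbol.'' This is only true when $f$ is applied to closed terms. In general $f$ may be applied to terms containing the free variable of $X$ or quantified variables, e.g.\ $X=\{x: f(x^2)=x\}$ or $X=\{x:\exists \bar y\,\psi(x,\bar y,f(\bar y))\}$, and then $X$ depends on the entire graph of $f$, so the decomposition into finitely many $\mathcal{L}_\alpha$-definable sets $X_{\bar z}$ does not exist. This is not a technicality that compactness repairs; it is where the statement can actually fail. Since you interpret $f_\varphi$ by an \emph{arbitrary} choice of an element in each finite fibre, $1$-h-minimality is in general destroyed: take $\varphi(x,y)$ to define the two square roots of $y\neq 0$ in an algebraically closed valued field of residue characteristic $0$, let $f$ be an arbitrary section, and set $D=\{y\in K^\times: f(y^2)=y\}$. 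Then $D$ contains exactly one of $y,-y$ for each $y$, and since the choices at distinct fibres are independent, any ball disjoint from a given finite set contains points of $D$ and of its complement; so no finite set ($1$-)prepares the $\emptyset$-definable set $D$. The entire content of the cited proposition is that the Skolem values can be chosen \emph{coherently} so that every new definable subset of $K$ in one variable is still controlled by the original structure; this requires a substantively different argument than pointwise selection plus bookkeeping, and the uniformity issue you defer to compactness is secondary to it.
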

	
	Above and throughout, algebraic and definable closures are always assumed to be taken in the $K$ sort. In the sequel we will refer to the property appearing in the conclusion of Fact \ref{acl=dcl} simple as "$\acl=\dcl$". 
	
	\begin{remark}\label{acl=dcl-remark}
		Given an $\CL$-definable set, $S$, statements concerning topological or geometric properties  of 
		$S$ are often expressible by first-order $\CL$-formulas. As the topology on $K$ is definable in the valued field language, and the dimension of definable sets in $1$-minimal minimal structure is determined by the topology (see Proposition \ref{dimension}),  the truth values of the hypothesis and conclusion of such statements (for our fixed $\CL$-definable set $S$) 
		are the same in $K$ and in any elementary extension  $K\prec K'$, as well as in any $1$-h-minimal expansion of the latter. Therefore, by Fact \ref{acl=dcl}, in the proof of such statements (for a fixed definable $S$) there is no harm assuming  
		$\acl=\dcl$.
	\end{remark}
	\subsection{Geometric structures} Geometric structures were introduced in  
	\cite[\S 2]{HruPil}. Let us recall the definition: 
	An  $\aleph_0$-saturated structure, $M$ is \emph{pregeometric} if  $\acl(\cdot)$  is a matroid, that is, it satisfies the exchange property:
	\[ \text{if } a\in \acl(Ab)\setminus \acl(A), \text{ then } b\in \acl(Aa) \text{ for singletons } a,b\in M. \]
	
	In this situation the matroid gives  a notion of dimension, $\dim(a/b)$, 
	the dimension of a tuple $a$ over a tuple $b$
	as the smallest length of a sub-tuple $a'$ of $a$ such that $a\in \acl(a'b)$,
	and the dimension of a $b$-definable set $X$ as the maximum of the dimensions
	$\dim(a/b)$ with $a\in X$ (this does not depend on $b$).
	As is customary, we set $\dim(\emptyset)=-\infty$. We recall the basic properties of dimension (see \cite[\S 2]{HruPil} for all references). 
	This dimension satisfies the additivity property 
	\[
	\dim(ab/c)=\dim(a/bc)+\dim(b/c),
	\]
	that we will invoke without further reference. 
	We call $a$ and $b$ algebraically independent over $c$ if 
	$\dim(a/bc)=\dim(a/c)$.  Note that by additivity of dimension this is a symmetric relation. Note also that additivity implies that
	if $b,c$ are inter-algebraic over $a$, meaning $b\in \acl(ac)$ and 
	$c\in \acl(ab)$, then $\dim(b/a)=\dim(c/a)$ (in particular, this holds when 
	$c$ is the image of $b$ under an $a$-definable bijection).
	If $M$ is a pregeometric structure and $f:X\to Y$ is a surjective definable
	function with fibers of constant dimension $k$, then $\dim(X)=\dim(Y)+k$.
	This is a consequence of the additivity formula.
	
	Given an $a$-definable set $X$, a generic element of $X$
	over $a$ is an element $b\in X$ such that $\dim(b/a)=\dim(X)$. By compactness generic elements can always be found in saturated enough models. We call $Y\subset X$ large if $\dim(X\setminus Y)<\dim(X)$.
	This is equivalent to $Y$ containing every generic point of $X$.
	
	A pregeometric structure, $M$, eliminating the quantifier $\exists^\infty$ is called geometric. If $M$ is geometric dimension is definable in definable families. Namely,  for $\{X_a\}_{a\in S}$, 
	a definable family, the set $\{a\in S: \dim(X_a)=k\}$ is definable.

	The following simple fact is a translation of the definition of a 
	pregeometry to a
	property of definable sets. Note as an aside that this reformulation implies
	that the property of being a pregeometry is preserved under reducts.
	I.e., if $M$ is an $\aleph_0$-saturated pregeometric 
	$\mathcal{L}'$-structure,
	and $\mathcal{L}\subset \mathcal{L}'$, then $M$ is also a pregeometric 
	$\mathcal{L}$-structure. 
	For the sake of completeness, we give the proof: 
	\begin{fact}\label{pregeometry-definable-set-criterion}
		Suppose $M$ is an $\aleph_0$-saturated structure.
		Then $M$ is pregeometric if and only if for every definable  $X\subset M\times M$ if the projection, $\pi_1:X\to M$,  into the first factor is finite-to-one, and  $\pi_2:X\to M$ is  the projection into the second factor, then the set
		$Y=\{c\in M: \pi_2^{-1}(c)\cap X\text{ is infinite}\}$ is finite.
	\end{fact}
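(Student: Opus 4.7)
The plan is to prove both directions by translating between algebraic closure statements and fiber behavior in a two-variable definable set.

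For the direction pregeometric $\Rightarrow$ fiber condition, I would fix a finite parameter set $A$ defining $X$, and set $Y = \{c : \pi_2^{-1}(c) \cap X \text{ is infinite}\}$. I would first show $Y \subseteq \acl(A)$. Given $c \in Y$, the $Ac$-definable fiber $\pi_2^{-1}(c) \cap X$ is infinite, so $\aleph_0$-saturation produces $a$ in this fiber with $a \notin \acl(Ac)$. Finite-to-oneness of $\pi_1|_X$ then forces $c \in \acl(Aa)$; if we had $c \notin \acl(A)$, exchange would give $a \in \acl(Ac)$, a contradiction, so $c \in \acl(A)$. To upgrade $Y \subseteq \acl(A)$ to $Y$ finite, consider the $A$-definable decreasing sequence $Y_n = \{c : \exists^{\geq n} x \, X(x,c)\}$, with $\bigcap_n Y_n = Y$. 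If $Y$ were infinite, then each $Y_n$ would be infinite, and the partial $A$-type $\{Y_n(y)\}_n \cup \{\neg \varphi(y) : \varphi \text{ an } A\text{-formula defining a finite set}\}$ would be finitely satisfiable (finitely many finite sets may be avoided in an infinite $Y_n$). By $\aleph_0$-saturation over the finite set $A$, it would be realized by some $c^* \in Y \setminus \acl(A)$, a contradiction.

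For the reverse direction, I would assume the fiber condition and derive exchange. Given $a \in \acl(Ab) \setminus \acl(A)$, pick an $A$-formula $\varphi(x,y)$ with $\varphi(x,b)$ of exact size $k$ containing $a$, and set $\psi(x,y) := \varphi(x,y) \wedge \exists^{=k} x' \, \varphi(x',y)$; then $\psi$ is $A$-definable and each nonempty $y$-fiber of $\psi$ has size exactly $k$. Letting $X = \{(y,x) : \psi(x,y)\}$ makes $\pi_1|_X$ finite-to-one, so the hypothesis yields that $Y := \{c : \{y : \psi(c,y)\} \text{ is infinite}\}$ is finite. If $b \notin \acl(Aa)$, then $\{y : \psi(a,y)\}$ is an $Aa$-definable set containing $b$ which must therefore be infinite, so $a \in Y$. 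By the same decreasing-intersection-plus-saturation argument as above (applied to $Y = \bigcap_n \{c : \exists^{\geq n} y \, \psi(c,y)\}$), finiteness of $Y$ forces some $A$-definable $\{c : \exists^{\geq n_0} y \, \psi(c,y)\}$ to already be finite, hence contained in $\acl(A)$, so $a \in \acl(A)$, contradicting $a \notin \acl(A)$.

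The main subtlety, in both directions, is the interplay between the set $Y$ — whose defining condition uses the quantifier $\exists^{\infty}$, not first-order expressible in the absence of its elimination — and algebraic closure. The bridge is to write $Y$ as a countable decreasing intersection of $A$-definable sets $Y_n$ and to use $\aleph_0$-saturation over the finite parameter set $A$ to transfer finiteness between $Y$ and some individual $Y_n$. This is precisely where the hypothesis that $M$ is $\aleph_0$-saturated plays its essential role, and it is why the reformulation cannot dispense with saturation.
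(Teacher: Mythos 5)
Your proof is correct and follows essentially the same route as the paper's: in the forward direction both arguments use $\aleph_0$-saturation to extract a point $c\in Y$ not algebraic over the parameters and a point $a$ in its infinite fibre not algebraic over $c$, contradicting finite-to-one-ness via exchange (the paper phrases this through dimension additivity), and in the converse both normalize the fibre cardinality of the algebraicity witness and apply the hypothesis to the resulting $X$. Your explicit treatment of $Y$ as a decreasing intersection of the $A$-definable sets $Y_n$ is just a spelled-out version of the ``compactness and saturation'' and ``$Y$ is $A$-invariant and definable'' steps that the paper leaves implicit.
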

	\begin{proof}
		Suppose $M$ is pregeometric and suppose $X\subset M\times M$ is $A$-definable such that
		$\pi_1^{-1}(x)\cap X$ is finite for all $x\in M$.
		Suppose also that $Y=\{y\in M: \pi_2^{-1}(y)\cap X\text{ is infinite}\}$
		is infinite. By compactness and saturation, we can choose 
		$b\in Y$ such that $\dim(b/A)=1$. Similarly, we can find $a\in \pi_2^{-1}(b)\cap X$ such that $\dim(a/Ab)=1$. We conclude that $\dim(ab/A)=2$, and so 
		$\dim(X)\ge 2$. This contradicts the fact that $\pi_1^{-1}(x)\cap X$ is finite
		for all $x\in M$.
		
		For the converse, 
		suppose $A$ is a finite subset of $M$ and $a,b\in M$ are singletons such that
		$a\in\acl(Ab)\setminus \acl(A)$.
		Then there is an $A$-definable set $X\subset M\times M$ such that
		$(b,a)\in X$ and $\pi_1^{-1}(b)\cap X$ 
		is finite, say of cardinality $k$. If we take 
		$Z=\{c\in M: \pi_1^{-1}(c)\cap X\text{ has cardinality }k\}$, then we may
		replace $X$ by
		$X\cap Z\times M$, and we may assume that $\pi_1^{-1}(c)\cap X$ 
		is either empty or of constant
		finite cardinality for all $c\in M$. 
		In this case, by the hypothesis we conclude that 
		$Y=\{y\in M: \pi_2^{-1}(y)\cap M\text{ is infinite}\}$ is finite.
		Note that $Y$ is $A$-invariant and definable, so it is $A$-definable.
		We conclude that $a\notin Y$, because $a\notin \acl(A)$, and so 
		$b\in \acl(Aa)$ as required.
	\end{proof}
	
	The next characterization of the $\acl$-dimension should be well known: 
	\begin{fact}\label{dimension-is-canonical}
		Suppose $M$ is an $\aleph_0$-saturated structure, which eliminates the $\exists^{\infty}$
		quantifier.
		Suppose there is a function, $X\mapsto d(X)$, from the non-empty definable subsets of (cartesian powers of) $M$ into 
		$\mathbb{N}$  satisfying:
		\begin{enumerate}
			\item If $X\subset M^n\times M$ is such that the first coordinate projection 
			$\pi_1:X\to M^n$ is finite to one, then $d(X)=d(\pi_1(X))$.
			\item If $X\subset M^n\times M$ is such that the first coordinate projection 
			$\pi_1:X\to M^n$ has infinite fibers, then $d(X)=d(\pi_1(X))+1$.
			\item If $\pi:M^n\to M^n$ is a coordinate permutation, then 
			$d(X)=d(\pi(X))$.
			\item $d(X\cup Y)=\max\{d(X),d(Y)\}$.
			\item $d(M)=1$
			\item $d(X)=0$ if and only if $X$ is finite.
		\end{enumerate}
		Then $M$ is a geometric structure and $d$ coincides with its $\acl$-dimension.
	\end{fact}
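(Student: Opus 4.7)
The plan is to proceed in two stages: first verify that $M$ is geometric so that the $\acl$-dimension is defined and well behaved, and then identify $d$ with it by induction on ambient arity. Before beginning, I would observe that axioms (4) and (6) together imply monotonicity of $d$ under inclusion of nonempty sets: from $d(X \cup Y) = \max\{d(X), d(Y)\}$ applied to $X \subseteq Y$ one reads $d(X) \le d(Y)$.

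To verify pregeometricity I would appeal to Fact~\ref{pregeometry-definable-set-criterion}. Given definable $X \subseteq M \times M$ with $\pi_1\colon X \to M$ finite-to-one, set $Y := \{c \in M : \pi_2^{-1}(c) \cap X \text{ is infinite}\}$ and suppose for contradiction that $Y$ is infinite. Then $d(Y) \ge 1$ by axiom (6), monotonicity, and axiom (5). Restrict to $X' := X \cap (M \times Y)$. Via axiom (3) the projection to the $Y$-coordinate has all infinite fibers, so axiom (2) gives $d(X') = d(Y) + 1 \ge 2$; but $\pi_1 \colon X' \to M$ remains finite-to-one, so axiom (1) combined with monotonicity and axiom (5) yields $d(X') = d(\pi_1(X')) \le d(M) = 1$, a contradiction. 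Hence $Y$ is finite, and, coupled with elimination of $\exists^\infty$, $M$ is geometric.

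For the identification $d = \dim$ I would induct on $n$ for nonempty definable $X \subseteq M^n$. The case $n = 1$ is immediate: a finite $X$ gives $d(X) = \dim(X) = 0$ via axiom (6), while an infinite $X \subseteq M$ forces $1 \le d(X) \le d(M) = 1$ by axiom (6), monotonicity, and axiom (5), matching $\dim(X) = 1$. For the inductive step with $X \subseteq M^m \times M$, elimination of $\exists^\infty$ together with $\aleph_0$-saturation makes $Z := \{a \in M^m : \pi_1^{-1}(a) \cap X \text{ is infinite}\}$ definable, with uniformly bounded fibers over its complement. Decompose $X = X' \sqcup X''$ where $X' := X \cap (Z \times M)$ and $X'' := X \cap ((M^m \setminus Z) \times M)$. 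Axioms (1) and (2) give
\[
d(X') = d(Z) + 1, \qquad d(X'') = d(\pi_1(X''))
\]
whenever each piece is nonempty, and axiom (4) produces $d(X) = \max\{d(X'), d(X'')\}$. The standard additivity of the $\acl$-dimension in a geometric structure supplies identical formulas for $\dim$. Applying the inductive hypothesis in $M^m$ to $Z$ and to $\pi_1(X'')$ then closes the argument.

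I expect the main obstacle to be the clean execution of the splitting in the inductive step: one must invoke elimination of $\exists^\infty$ both to guarantee definability of $Z$ and to obtain uniform finiteness of the fibers of $\pi_1$ over $M^m \setminus Z$, so that axioms (1) and (2) are applied to pieces that genuinely satisfy their hypotheses. The minor case distinctions when $X'$ or $X''$ is empty dissolve into routine bookkeeping by dropping the missing term from each maximum.
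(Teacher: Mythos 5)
Your proposal is correct and follows essentially the same route as the paper's own proof: pregeometricity via Fact~\ref{pregeometry-definable-set-criterion} by playing the two projections of $X\cap(M\times Y)$ against each other, and then the identification $d=\dim$ by induction on arity using the same splitting of $X$ over the set of infinite-fiber points. The only cosmetic remarks are that monotonicity already follows from axiom (4) alone, and that uniform boundedness of the finite fibers is not actually needed to apply axiom (1).
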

	\begin{proof}
		It suffices to show that $M$ is pregeometric.  We  use  
		Fact \ref{pregeometry-definable-set-criterion}.
		Let $X\subset M\times M$ be such that $\pi_1^{-1}(x)\cap X$ is finite for all
		$x\in M$. Take $Y=\{y\in M: \pi_2^{-1}(y)\cap X\text{ is infinite}\}$.
		Because $M$ eliminates the $\exists^{\infty}$ quantifier we have that 
		$Y$ is definable. If $Y$ is infinite we conclude that 
		$d(X)\ge d(X\cap \pi_2^{-1}(Y))=d(Y)+1=2$.
		The first inequality by item (4), the second equality 
		by items (3) and (2), and the third
		by item (6).
		On the other hand $d(X)=d(\pi_1(X))\le d(M)=1$, the first equality by item (1),
		the second inequality by item (4) and the third by item (5). This is a 
		contradiction and finishes the proof. 
		
		In order to see that $d(X)=\dim(X)$ for $X\subset M^n$ we may proceed by 
		induction on $n$. The base case $n=1$ follows from item (4), (5) and (6).
		So suppose that $X\subset M^n\times M$. Denote 
		$Y=\{x\in M^n: \pi_1^{-1}(x)\cap X\text{ is infinite}\}$.
		By hypothesis $Y$ is a definable set.
		Denote $X_1=\pi_1^{-1}(Y)\cap X$ and $X_2=X\setminus X_1$.
		Then by items (1),(2) and (4) we conclude that 
		$d(X)=\max\{d(X_1),d(X_2)\}=\max\{d(Y)+1,d(\pi_1(X)\setminus Y)\}$.
		For the same reason we have the formula
		$\dim(X)=\max\{\dim(Y)+1,\dim(\pi_1(X)\setminus Y)\}$, so 
		$d(X)=\dim(X)$ as required.
	\end{proof}
	
	The next fact is also standard: 
	
	\begin{fact}\label{cell-decomposition0}
		Suppose $M$ is a geometric structure. Suppose $X\subset M^n$ is $a$-definable.
		Then there is a partition of $X$ into a finite number of $a$-definable sets
		$X=X_1\cup\cdots\cup X_n$, such that for each member of the partition $X_k$, there is
		a coordinate projection $\pi:X_k\to M^r$ 
		which is finite to one and has image of dimension $r$.
	\end{fact}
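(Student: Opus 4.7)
The plan is to induct on $d := \dim(X)$. For $d = 0$, item (6) of Fact \ref{dimension-is-canonical} gives $X$ finite, and the unique map $X \to M^0$ is finite-to-one onto an image of dimension $0$.

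For the inductive step with $d \geq 1$, the crux is to produce a coordinate projection to $M^d$ that is finite-to-one on a large $a$-definable subset of $X$. For each $S \subseteq \{1, \ldots, n\}$ with $|S| = d$, let $\pi_S : M^n \to M^d$ denote the projection to the coordinates in $S$, and set
\[ Y_S := \{p \in X : \pi_S^{-1}(\pi_S(p)) \cap X \text{ is finite}\}. \]
Each $Y_S$ is $a$-definable because $M$ eliminates $\exists^\infty$. I claim every generic $p$ of $X$ over $a$ lies in some $Y_S$: indeed, $\dim(p/a) = d$ produces a sub-tuple $p_S$ of $p$ of length $d$ with $p \in \acl(a, p_S)$, and additivity forces $\dim(p_S/a) = d$. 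For any $q$ in the fiber $\pi_S^{-1}(\pi_S(p)) \cap X$ one has $q_S = p_S$ and $\dim(q/a) \leq d$, so
\[ d + \dim(q/a, p_S) = \dim(q, p_S/a) = \dim(q/a) \leq d, \]
forcing $\dim(q/a, p_S) = 0$, i.e. $q \in \acl(a, p_S)$; hence the fiber is finite and $p \in Y_S$. Consequently $Z := X \setminus \bigcup_{|S|=d} Y_S$ meets no generic of $X$, so $\dim(Z) < d$.

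To assemble the partition, totally order the index sets $S$ of size $d$ and set $X_S := Y_S \setminus \bigcup_{S' < S} Y_{S'}$; then $X = Z \sqcup \bigsqcup_S X_S$, with $\pi_S$ finite-to-one on $X_S$. Any $X_S$ with $\dim(X_S) = d$ is already of the required form, since the image has dimension $d = |S|$. For $X_S$ with $\dim(X_S) < d$, apply the inductive hypothesis to $\pi_S(X_S) \subseteq M^d$ and pull the resulting partition back through $\pi_S$: finite-to-oneness is preserved, and a composition of coordinate projections is itself a coordinate projection, so this yields a refinement of $X_S$ of the required form. Similarly apply the inductive hypothesis to $Z$. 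The union of these partitions is the desired decomposition of $X$.

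The only nontrivial ingredient is the additivity computation showing that the $\pi_S$-fiber over the image of a generic is finite; everything else is bookkeeping resting on the definability of dimension in families and on the inductive hypothesis.
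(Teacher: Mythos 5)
Your proof is correct, but it takes a genuinely different route from the paper's. The paper inducts on the ambient dimension $n$: it projects to the first $n-1$ coordinates, splits $X$ according to whether the fibres of that projection are finite or infinite, applies the induction hypothesis to the image in $M^{n-1}$, and in the infinite-fibre case appends the last coordinate to the projection supplied by the induction hypothesis (the resulting map $x\mapsto(\tau(\pi_1(x)),\pi_2(x))$ is finite-to-one because $\tau$ is). You instead induct on $\dim(X)$ and, for each $d$-element set $S$ of coordinates, isolate the locus $Y_S$ where the fibres of $\pi_S$ meet $X$ in a finite set; your additivity computation showing that every generic of $X$ lies in some $Y_S$ is the real content, and it exploits the definition of the $\acl$-dimension (a minimal sub-tuple witnessing $\dim(p/a)=d$) more directly than anything in the paper's argument. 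Both routes are sound: yours produces, outside a lower-dimensional remainder, pieces that already project finite-to-one onto a full-dimensional subset of $M^{\dim X}$ in one step, whereas the paper builds the projection one coordinate at a time but never needs the genericity argument, only definability of ``infinite fibre''. Two points you leave implicit, neither fatal: the passage from ``every point of $\pi_S^{-1}(\pi_S(p))\cap X$ lies in $\acl(a,p_S)$'' to ``this fibre is finite'' uses $\aleph_0$-saturation (an infinite definable set over a finite parameter set contains a point of positive dimension over it), and the detour through $\pi_S(X_S)$ for the pieces of dimension $<d$ is unnecessary, since the induction hypothesis applies to $X_S\subset M^n$ directly.
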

	\begin{remark}\label{terminology-cell-decomposition-0}
		For this statement we need to allow the identity $\mathrm{id}:M^n\to M^n$ as a 
		coordinate projection.
		
		Also, recall that $M^{0}$ is a set consisting of one element.
		For this statement we also need to 
		allow the constant function $M^n\to M^{0}$ as a coordinate projection.
	\end{remark}
	\begin{proof}
		By induction on the dimension of the ambient space $n$.
		Consider the projection onto the first $n-1$ coordinates
		$\pi_1:M^n\to M^{n-1}$. 
		Then the set $Y\subset M^{n-1}$ of $y$ such that the fibers
		$X_y=\pi_1^{-1}(y)\cap X$ are infinite is definable.
		So partitioning $X$ we may assume all the nonempty fibers of $X$
		over $M^{n-1}$ are finite, or all are infinite.
		If all the fibers of $X\to K^{n-1}$ are finite then we finish by induction.
		
		If all the nonempty fibers are infinite then by the induction
		there is a partition $Y=\bigcup_i Y_i$ and for each $Y_i$ there is a coordinate
		projection $\tau: Y_i\to K^r$ 
		with finite fibers
		and $r=\dim(Y_i)$. 
		Denote $\pi_2:M^{n}\to M$ the projection onto the last coordinate. 
		Then setting $X_i=X\cap \pi_1^{-1}(Y_i)$,
		the projection $\pi(x)=(\tau(\pi_1(x)),\pi_2(x))$ has the desired properties.
	\end{proof}
	
	The next proposition is key. It asserts that $1$-h-minimal fields are geometric, and it connects (combined with the previous fact) topology and dimension in such structures:  
	
	\begin{proposition}\label{dimension}
		Suppose $K$ is a 1-h-minimal valued field. Then:
		\begin{enumerate}
			\item
			$K$ is a geometric structure.
			\item
			Every $X\subset K^n$ satisfies $\dim(X)=n$ if and only if
			$X$ has nonempty interior. Every $X\subset K^n$ satisfies $\dim(X)<n$ if and only if 
			$X$ is nowhere dense.
			\item
			For $X\subset K^n$,
			we have $\dim(X)=\max\limits_{x\in X}\dim_x(X)$, where we denote
			$\dim_x(X)$ the local dimension of $X$ at $x$, defined as 
			$\dim_x(X)=
			\min\{\dim(B\cap X): x\in B \text{ is an open ball}\}$
		\end{enumerate}
	\end{proposition}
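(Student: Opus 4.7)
The natural approach is to apply Fact \ref{dimension-is-canonical} with the topological dimension function
\[
d(X) := \max\{r : \pi(X) \text{ has nonempty interior in } K^r \text{ for some coordinate projection } \pi: K^n \to K^r\}
\]
(for nonempty definable $X \subset K^n$). Once we show $d$ satisfies the six axioms, part (1) follows and $d = \dim$. Part (2), at least in the nonempty-interior direction, is then essentially the definition taking $\pi = \mathrm{id}$: $\dim(X) = n$ iff the identity projection has $X$ with nonempty interior.

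The crucial input from $1$-h-minimality is the one-variable dichotomy: any definable $X \subset K$ is finite or has nonempty interior, which follows directly from $1$-preparation applied with $s=1$ — the finite preparing set $Y$ partitions $K \setminus Y$ into open balls, each contained in or disjoint from $X$. Uniform versions of this give $\exists^\infty$-elimination. Verifying the axioms of Fact \ref{dimension-is-canonical}: axioms (3), (5) are immediate from the definition, axiom (6) is the one-variable dichotomy applied after projecting to a single coordinate, and axiom (4) reduces, coordinate-projection by coordinate-projection, to the fact that if $A \cup B \subset K^r$ has nonempty interior then so does $A$ or $B$ — established by iterating $1$-preparation in each coordinate. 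The hard axioms are (1) and (2): given $X \subset K^n \times K$, we must relate coordinate projections of $X$ that \emph{involve} the last coordinate to coordinate projections of $\pi_1(X) \subset K^n$. In case (1) (finite fibers), if $\tau(X) \subset K^r$ has nonempty interior and $\tau$ uses $x_{n+1}$, then the last-coordinate slices of $\tau(X)$ lift to only finitely many fiber points of $\pi_1$, forcing one of the ``free'' coordinates $x_j$ with $j\leq n$ to vary on a set of positive $d$-measure; replacing $x_{n+1}$ by $x_j$ in $\tau$ gives a projection of $\pi_1(X)$ witnessing $d(\pi_1(X)) \geq r$. In case (2) (uniformly infinite fibers), the one-variable dichotomy applied fiberwise shows the fibers have nonempty interior in $K$, so any projection of $\pi_1(X)$ with interior in $K^{r-1}$ extends by $x_{n+1}$ to a projection of $X$ with interior in $K^r$.

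Once (1) holds, for part (2) the only content remaining is $\dim X < n \iff X$ nowhere dense. The ``$\Leftarrow$'' is clear since a nonempty interior in $K^n$ forces $d = n$. For ``$\Rightarrow$'' we need $\dim(\overline X) = \dim(X)$, which follows from Fact \ref{cell-decomposition0} (now available): we decompose $X$ into finitely many pieces each admitting a finite coordinate projection to some $K^r$ with $r = \dim X$, and observe that the preparation-driven description of such pieces shows their closures only add lower-dimensional boundary. Finally, for part (3), the inequality $\dim(X) \geq \dim_x(X)$ for all $x$ is trivial; the reverse uses that, since $K$ is now geometric, the function $x \mapsto \dim_x(X)$ is definable (as $\dim$ is definable in families), and if it were strictly less than $\dim X$ everywhere on $X$ we could cover $X$ by finitely many definable balls each meeting $X$ in dimension less than $\dim X$ (using saturation and definability), contradicting additivity of $\dim$ under finite unions. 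The main obstacle is the careful bookkeeping in axioms (1) and (2) — tracking the interplay between finite/infinite fibers of $\pi_1$ and the nonempty-interior behaviour of arbitrary coordinate projections of $X$ — for which the one-variable dichotomy is invoked fiberwise in the style of generic continuity from \cite{hensel-min}.
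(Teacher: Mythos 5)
The paper does not reprove this proposition: its ``proof'' is a citation of \cite[Proposition 5.3.4]{hensel-min} (residue characteristic $0$) and \cite[Proposition 3.1.1]{hensel-minII} (mixed characteristic), plus the one easy deduction that $\dim(X)<n$ implies $X$ nowhere dense (for nonempty open $U$ one has $\dim(U\setminus X)=n$, so $U\setminus X$ has interior and $X$ is not dense in $U$). You are instead attempting to reprove the cited results from scratch via Fact \ref{dimension-is-canonical}, which is far more ambitious, and the sketch has real gaps. First, in each of the hard axioms --- (1), (2) and (4) of Fact \ref{dimension-is-canonical} --- the crux is a uniformity statement: knowing that every fibre $X_a$ over $a$ in some open $U$ contains a ball, you must produce a single ball $B_1\times B_2$ inside the relevant projection, i.e.\ make the fibre-ball radius uniform over a smaller ball in the base. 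Your phrases ``iterating $1$-preparation in each coordinate'' and ``vary on a set of positive $d$-measure'' gesture at this but never address it; this uniformity is exactly where the work in \cite{hensel-min} lies. Second, for the nowhere-dense direction of (2) you route through $\dim(\cl(X))=\dim(X)$, which is (equivalent to) Proposition \ref{dimension-boundary-function} --- a strictly harder fact that the paper explicitly defers --- and your justification of it from Fact \ref{cell-decomposition0} (``closures only add lower-dimensional boundary'') assumes what is to be proved. The direct argument above makes this detour unnecessary.

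The most serious gap is in part (3). Your plan is: if $\dim_x(X)<\dim(X)$ for every $x\in X$, extract by ``saturation and definability'' a finite cover of $X$ by balls each meeting $X$ in dimension $<\dim(X)$, and contradict $d(X\cup Y)=\max\{d(X),d(Y)\}$. This finite-subcover step does not work. The witnessing balls form a definable family indexed by centres and radii ranging over the whole model, so the partial type ``$x\in X$ and $x\notin B$ for every ball $B$ with $\dim(B\cap X)<\dim(X)$'' has a large parameter set; $\aleph_0$-saturation does not realize it in $K$, and realizing it in an elementary extension $K^\ast$ gives no contradiction, since new balls of $K^\ast$ may cover the realization. (The obvious repair --- take $x\in X$ generic over the parameters of $X$ and argue no small ball around it can meet $X$ in low dimension --- also fails as stated, because the witnessing radius is an extra parameter over which $x$ need not remain generic.) This is precisely why item (5) of \cite[Proposition 5.3.4]{hensel-min} requires its own argument rather than a soft compactness step, and it is why the paper cites it rather than rederiving it.
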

	\begin{proof}
		This is essentially items (1)-(5) of 
		\cite[Proposition 5.3.4]{hensel-min} in residue characteristic
		$0$ and contained in \cite[Proposition 3.1.1]{hensel-minII} in mixed characteristic.
		
		For example, assume $K$ has residue characteristic $0$.
		That $K$ is geometric is proved
		in the course of the proof of \cite[Proposition 5.3.4]{hensel-min}.
		We can also derive it from Fact \ref{dimension-is-canonical}
		and \cite[Proposition 5.3.4]{hensel-min}. 
		
		The topological characterization $\dim(X)=n$ if and only if $X$ has nonempty
		interior, is a particular case of item (1) in \cite[Proposition 5.3.4]{hensel-min}.
		That $\dim(X)<n$ if and only if $X$ is nowhere dense follows from this.
		Indeed, if $\dim(X)=n$, then $X$ has nonempty interior and so it is not nowhere
		dense. If $\dim(X)<n$ and $U\subset K^n$ is nonempty open, then 
		$\dim(U\setminus X)=n$, and so $U\setminus X$ has nonempty interior.
		This implies $X$ is nowhere dense.
		
		That dimension is the maximum of the local dimensions is item (5) of Proposition 5.3.4 of 
		\cite{hensel-min}
	\end{proof}
	\begin{proposition}\label{generic-continuity}
		Suppose $K$ is a 1-h-minimal field.
		Suppose $f:U\to K^m$ is a definable function. Then there is a definable open dense 
		subset $U'\subset U$ such that $f:U'\to K^m$ is continuous.
	\end{proposition}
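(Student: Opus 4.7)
The plan is to prove that the definable set $D\subset U$ of points of discontinuity of $f$ has dimension strictly less than $n$, where $U\subset K^n$. Once this is established, Proposition \ref{dimension}(2) forces $D$ to be nowhere dense, so $U':=U\setminus \overline{D}$ is a definable, open, dense subset of $U$, and by construction $f$ is continuous on $U'$. The reduction to $m=1$ is essentially free: $f$ is continuous at $x$ iff each of its components $f_i$ is, so $D=\bigcup_{i=1}^m D_i$, and a finite union of sets of dimension $<n$ still has dimension $<n$; it therefore suffices to treat each component $f_i:U\to K$ separately.

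For $n=1$, I would apply the preparation axiom of $1$-h-minimality directly to (a definable set coding) the graph of $f$, obtaining a finite definable $C\subset K$ such that $f$ restricted to any open ball in $U$ disjoint from $C$ is continuous; hence $D\subset C$ is finite and of dimension $0$. This step is essentially the content of \cite[Proposition 5.1.1]{hensel-min}, with the analogous statement in mixed characteristic appearing in \cite{hensel-minII}.

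For $n\geq 2$ I would argue by induction, exploiting the fact that $1$-h-minimal preparation is uniform in parameters. For every $a\in K^{n-1}$ the fiber function $t\mapsto f(a,t)$ is continuous off a finite definable set $C_a\subset K$ whose cardinality is uniformly bounded and whose elements depend definably on $a$. The inductive hypothesis, applied to the parameter space $K^{n-1}$ and to the ``fiberwise continuous'' restriction of $f$ away from $\bigcup_{a}\{a\}\times C_a$, then yields joint continuity on a definable open dense subset of $U$. The main obstacle — and the step where $1$-h-minimality (rather than some weaker tameness hypothesis) is genuinely used — is the passage from separate fiberwise continuity to joint continuity, which is false in general: here it is the uniform preparation with parameter $s$ built into the definition of $1$-h-minimality that controls the radii of continuity definably in $a$ and lets the fiberwise argument propagate, exactly as foreshadowed in the remark preceding the proposition.
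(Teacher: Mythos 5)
Your overall packaging is fine: the discontinuity locus $D$ is definable, and if you can show $\dim(D)<n$ then Proposition \ref{dimension}(2) makes $D$ nowhere dense and $U'=U\setminus\cl(D)$ works; the reduction to $m=1$ is also unproblematic. The gap is in the inductive step for $n\geq 2$. You correctly identify that passing from separate (fiberwise) continuity to joint continuity is the crux and is false in general, but you do not actually close it: you assert that ``the uniform preparation with parameter $s$ \dots controls the radii of continuity definably in $a$,'' yet the parameter $s$ in the definition of $1$-h-minimality concerns $s$-preparation of subsets of $K$ relative to $\mathrm{RV}_s$-parameters and gives no modulus-of-continuity statement by itself. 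Nor do you say to which $(n-1)$-variable function the inductive hypothesis is applied, or how a uniform bound on the radii of balls of fiberwise continuity would be extracted. As written, the $n\geq 2$ case is a restatement of the difficulty rather than an argument, and this is precisely the nontrivial content of the multivariable generic continuity theorem.

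The paper avoids this entirely: it cites \cite[Proposition 5.1.1]{hensel-min} (resp.\ \cite[Proposition 3.1.1]{hensel-minII}) already in its multivariable form, which yields that the set $Z$ of continuity points is dense in $U\subset K^n$, and then only needs a short localization argument --- applying the same statement to $f|_V$ for every nonempty definable open $V\subset U$ --- to upgrade ``$Z$ is dense'' to ``$Z$ has dense interior.'' If you likewise invoke the cited result for general $n$ rather than only for $n=1$, your argument goes through (and your dimension-theoretic framing then gives the conclusion directly); but re-deriving the multivariable case from the one-variable case by the fiberwise induction you sketch would require a genuinely new uniformity argument that is not supplied.
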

	\begin{proof}
		This is essentially a particular case of
		\cite[Proposition 5.1.1]{hensel-min} in residue characteristic $0$,
		and contained in \cite[Proposition 3.1.1]{hensel-minII} in mixed characteristic.
		
		Indeed, because the intersection of open dense sets is open and dense we reduce
		to the case $m=1$. 
		From those propositions one gets that the set $Z$ of points where $f$ is continuous
		is dense in $U$. As $Z$ is not nowhere dense we  conclude using  item (2) of Proposition \ref{dimension} that $\dim(Z)=n$ and so
		$Z$ has nonempty interior.
		If $V\subset U$ is a nonempty open definable subset then, 
		as $Z\cap V$ is the set of points at which $f|_{V}$ is continuous, by what we just proved
		$Z\cap V$ has nonempty interior. 
		We conclude that the set of points at which $f$ is continuous
		has a dense interior in $U$, as desired.
	\end{proof}
	Next we describe a topology for $Y^{[s]}$, the set of subsets of $Y$ of
	cardinality $s$, for $Y$ a Hausdorff topological space,
	and $s$ a positive integer.  We prove a 
	slightly more general statement that will be applied when  $X$ is $Y^s\setminus \Delta$, the set
	of tuples of $Y^s$ with distinct coordinates and the symmetric group, $S_s$, on $s$ elements acting on $Y^s$ by coordinate permutation, in which case the orbit space is identified with $Y^{[s]}$. 
	\begin{fact}\label{topology-group-action}
		Suppose $X$ is a Hausdorff topological space, and $G$ a finite group
		acting on $X$ by homeomorphisms, and such that every $x\in X$ has a trivial
		stabilizer in $G$.
		Then $X/G$ equipped with the quotient topology is Hausdorff and
		the map $p:X\to X/G$ is a closed finite covering map. In fact for every $x\in X$
		there is an open set $x\in U\subset X$ such that $\{gU: g\in G\}$
		are pairwise disjoint, $p^{-1}p(U)=\bigcup_ggU$ and $p|_{gU}$
		is a homeomorphism onto $p(U)$.
	\end{fact}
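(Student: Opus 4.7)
The plan is to establish the local triviality statement first, and then derive the remaining assertions (closedness of $p$ and the Hausdorff property of $X/G$) from it, together with the finiteness of $G$.

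First I would construct the required open neighborhood $U$. Fix $x\in X$. Since the stabilizer of $x$ is trivial, the orbit $Gx=\{gx:g\in G\}$ consists of $|G|$ pairwise distinct points. As $X$ is Hausdorff and $Gx$ is finite, I can find pairwise disjoint open neighborhoods $V_g\ni gx$ for each $g\in G$. Define
\[
U=\bigcap_{g\in G}g^{-1}V_g.
\]
Then $U$ is an open neighborhood of $x$ (since $g^{-1}V_g\ni g^{-1}(gx)=x$), and for each $h\in G$ we have $hU\subset V_h$, so the family $\{hU:h\in G\}$ is pairwise disjoint. In particular, if $h_1\neq h_2$ then $h_1U\cap h_2U=\emptyset$, which translates into $gU\cap h gU=\emptyset$ whenever $h\neq e$. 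This freeness is exactly what will make $p|_{gU}$ injective.

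Next, with $U$ as above, I would verify the covering map properties. The identity $p^{-1}p(U)=\bigcup_{g\in G}gU$ is immediate from the definition of the $G$-orbits. For injectivity of $p|_{gU}$, suppose $p(gu_1)=p(gu_2)$ with $u_1,u_2\in U$; then $gu_1=hgu_2$ for some $h\in G$, so $gu_1\in gU\cap hgU$, forcing $h=e$ and hence $u_1=u_2$. Continuity of $p|_{gU}$ is automatic; for openness, if $W\subset gU$ is open, then $p^{-1}p(W)=\bigcup_{h\in G}hW$ is a finite union of open sets (as each $h$ acts by a homeomorphism), so $p(W)$ is open by definition of the quotient topology. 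This makes $p|_{gU}$ a homeomorphism onto $p(U)$.

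For the remaining two assertions, I would use the same saturation trick. If $C\subset X$ is closed, then $p^{-1}p(C)=\bigcup_{g\in G}gC$ is a finite union of closed sets, hence closed, so $p(C)$ is closed in $X/G$; thus $p$ is a closed map. For Hausdorffness of $X/G$, take $p(x)\neq p(y)$, so $Gx$ and $Gy$ are disjoint finite subsets of the Hausdorff space $X$; choose disjoint open sets $A\supset Gx$ and $B\supset Gy$, and replace them by the $G$-invariant open sets $A'=\bigcap_{g\in G}gA$ and $B'=\bigcap_{g\in G}gB$. Each $A'$, $B'$ is open (finite intersection), $G$-invariant, still contains the respective orbit, and $A'\cap B'\subset A\cap B=\emptyset$. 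Since saturated open sets descend to open sets in $X/G$, their images give disjoint open neighborhoods of $p(x)$ and $p(y)$. The main obstacle is really just the construction in the first step; once the free action is combined with finiteness of $G$ and Hausdorffness of $X$ in that symmetric way, everything else follows by standard manipulations.
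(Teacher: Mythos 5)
Your proposal is correct and follows essentially the same route as the paper: the key step, taking pairwise disjoint neighborhoods $V_g\ni gx$ of the orbit points and setting $U=\bigcap_{g\in G}g^{-1}V_g$, is exactly the paper's construction, and the covering/closedness claims are verified by the same saturation observations. The only (minor) divergence is in the Hausdorff step, where you saturate the separating open sets directly by intersecting their $G$-translates, whereas the paper deduces the existence of the separating neighborhoods in $X/G$ from the already-established closedness of $p$; both are standard and equally valid.
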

	\begin{proof}
		We know that $p$ is open, since $p^{-1}p(U)=\bigcup_{g\in G}gU$ is open for $U$ open.
		Consider the orbit $\{gx\}_{g\in G}$ of $x$. By assumption, if $g\neq h$, then
		$gx\neq hx$. Let $V$ be an open set in $X$ containing $\{gx\}_{g\in G}$. 
		Now, because $X$ is Hausdorff, we conclude that there are 
		$U_g$ open neighborhoods of $gx$, contained in $V$,
		such that $U_g\cap U_h=\emptyset $ for $g\neq h$.
		If we take $U=\bigcap_{g\in G}g^{-1}U_g$, then $gU\subset U_g$ and so 
		$\{gU: g\in G\}$ are pairwise disjoint. We conclude that $p$ is closed and restricted
		to $gU$ is a homeomorphism.
		That $X/G$ is Hausdorff now follows from this. Indeed, if $p(x)\neq p(y)$, then there
		are open sets $V_1$ and $V_2$ of $X$, which are disjoint and such that 
		$p^{-1}p(x)\subset V_1$
		and $p^{-1}p(y)\subset V_2$. Because $p$ is closed there are open sets $p(x)\in U_1$
		and $p(y)\in U_2$ in $X/G$ such that $p^{-1}(U_i)\subset V_i$. We conclude that $U_1$
		and $U_2$ are disjoint.
	\end{proof}
	\begin{proposition}\label{generic-regularity-finite}
		Let $K$ be a 1-h-minimal valued field. 
		Suppose $U\subset K^n$ is open and $f:U\to (K^r)^{[s]}$ is definable.
		Then there is an open dense definable set 
		$U'\subset U$ such that $f$ is continuous in $U'$.
	\end{proposition}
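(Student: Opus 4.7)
The plan is to construct the desired set $U'$ canonically as $\inte(V)$, where $V := \{x \in U : f \text{ is continuous at } x\}$, and then verify density by Skolemizing $f$ in an $\acl=\dcl$ expansion and reducing to Proposition \ref{generic-continuity}. First, by Fact \ref{topology-group-action} a basic open neighborhood of $f(x_0) = \{a_1,\ldots,a_s\}$ in $(K^r)^{[s]}$ consists of those $s$-element subsets meeting each of $s$ pairwise disjoint open balls $B_i \ni a_i$ in exactly one point. Thus ``$f$ is continuous at $x_0$'' is expressible by an $\CL$-formula in $x_0$ (quantifying over $\Gamma$ for the radii and using the $\CL$-definable valuation topology), so $V$ is $K$-definable and $U' := \inte(V)$ is open and $K$-definable.

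It remains to show $U'$ is dense in $U$. By Fact \ref{acl=dcl} and Remark \ref{acl=dcl-remark}, it suffices to prove density of $V$ in $U$ in an $\aleph_0$-saturated 1-h-minimal expansion $K'$ of $K$ with $\acl_{\CL'} = \dcl_{\CL'}$: the property $\bar V \supset U$ is an $\CL$-formula (with $\bar V$ definable in the valuation topology) and so transfers back to $K$. In the expansion, each point in a fiber $f(x)$ lies in $\dcl_{\CL'}(x)$, so by a standard compactness argument there is a finite $\CL'$-definable partition $U = U_1 \sqcup \cdots \sqcup U_N$ and, on each $U_j$, $\CL'$-definable Skolem functions $y_1^{(j)}, \ldots, y_s^{(j)} \colon U_j \to K^r$ with $f(x) = \{y_1^{(j)}(x), \ldots, y_s^{(j)}(x)\}$. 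Discarding pieces of dimension $<n$ (nowhere dense by Proposition \ref{dimension}(2)) and replacing each remaining $U_j$ by its interior, we may assume every $U_j$ is open and $\bigcup_j U_j$ is dense in $U$. Applying Proposition \ref{generic-continuity} to the $\CL'$-definable tuple $\tilde f_j := (y_1^{(j)}, \ldots, y_s^{(j)}) \colon U_j \to K^{rs}$ produces an open dense $U_j'' \subset U_j$ on which $\tilde f_j$ is continuous; since the quotient map $p \colon (K^r)^s \setminus \Delta \to (K^r)^{[s]}$ is continuous by Fact \ref{topology-group-action}, $f = p \circ \tilde f_j$ is continuous on each $U_j''$. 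Thus $V \supset \bigcup_j U_j''$, and since the latter is dense in $U$, so is $V$.

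Finally, density of $U' = \inte(V)$ follows from the topological characterization of dimension. Given any nonempty open $W \subset U$, $V \cap W$ is dense in $W$, so $\overline{V \cap W} \supset W$ has nonempty interior; hence $V \cap W$ is not nowhere dense in $K^n$, so by Proposition \ref{dimension}(2) it has dimension $n$ and thus nonempty interior, which coincides with $\inte(V) \cap W$ since $W$ is open. Therefore $U' \cap W \neq \emptyset$, so $U'$ is open, dense in $U$, $K$-definable, and $f$ is continuous on $U' \subset V$. The main obstacle is that Skolemization for the set-valued $f$ is only available after a language expansion, so the intermediate open dense sets witnessing continuity are not a priori $K$-definable; this is sidestepped by defining $V$ intrinsically via continuity of $f$ and using that density is first-order in $\CL$.
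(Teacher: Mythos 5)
Your proof is correct and follows essentially the same route as the paper's: both reduce to an $\acl=\dcl$ expansion via Fact \ref{acl=dcl}, use definable choice to turn $f$ into an honest $K^{rs}$-valued function to which Proposition \ref{generic-continuity} applies, and transfer the (first-order) density statement back. The only cosmetic difference is that the paper composes $f$ with a single definable section $(K^r)^{[s]}\to K^{rs}$ of the quotient map, whereas you Skolemize the fibers of $f$ piecewise over a finite definable partition of $U$; your extra verifications (definability of the continuity locus, the dimension-theoretic step from density of $V$ to density of its interior) are details the paper leaves implicit.
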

	\begin{proof}
		This statement is equivalent to saying that the interior of the set of points
		on which $f$ is continuous is dense. 
		As this property is expressible by a first order formula we may assume
		$\acl=\dcl$, see Fact \ref{acl=dcl} and the remark following it. 
		
		In that case we have a definable section
		$s:(K^r)^{[s]}\to K^{rs}$, and if $V\sub U$ is open dense such that 
		$sf$ is continuous, as provided by Proposition \ref{generic-continuity}, then $f$ is continuous in $V$.
	\end{proof}
	\begin{proposition}\label{cell-decomposition}
		Suppose $X\subset K^n$ is $b$-definable. Then there is finite partition 
		of $X$ into $b$-definable sets, such that for each element $Y$
		of the partition
		there is a coordinate projection $\pi:Y\to U$ onto an open set $U\subset K^m$,
		such that the fibers of $\pi$ all have the same cardinality equal to $s$,
		and the associated map $f:U\to (K^{n-m})^{[s]}$ is continuous.
	\end{proposition}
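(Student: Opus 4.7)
The plan is to argue by induction on $\dim(X)$. The base case $\dim(X)=0$ is immediate: by Proposition \ref{dimension}(2), $X$ is finite, and we may take $Y=X$, $m=0$ (so $U=K^0$ is a single point, as allowed by Remark \ref{terminology-cell-decomposition-0}), $s=|X|$, with the associated map into $(K^n)^{[s]}$ being constant, hence continuous.

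For the inductive step, apply Fact \ref{cell-decomposition0} to partition $X$ into $b$-definable pieces, each equipped with a finite-to-one coordinate projection $\pi\colon Y\to K^r$ whose image has dimension $r$. Since the function $y\mapsto |\pi^{-1}(\pi(y))\cap Y|$ is $b$-definable and takes only finitely many values, a further $b$-definable partition lets us assume that all fibers of $\pi$ over $\pi(Y)$ have one fixed cardinality $s$. Let $U:=\inte(\pi(Y))\sub K^r$. By Proposition \ref{dimension}(2), $\pi(Y)\setminus U$ is nowhere dense in $K^r$ and hence of dimension $<r$, so the part $Y\cap\pi^{-1}(\pi(Y)\setminus U)$, projecting finite-to-one onto a set of dimension $<r$, has dimension $<r=\dim(Y)$ by additivity of $\dim$, and is handled by the inductive hypothesis.

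It remains to deal with $Y':=Y\cap \pi^{-1}(U)$. The associated $b$-definable map $f\colon U\to (K^{n-r})^{[s]}$ sending $x\mapsto \pi^{-1}(x)\cap Y$ admits, by Proposition \ref{generic-regularity-finite}, an open dense $b$-definable subset $V\sub U$ on which $f$ is continuous. Then $Y'\cap\pi^{-1}(V)$ satisfies the conclusion of the proposition directly, while $Y'\cap\pi^{-1}(U\setminus V)$ projects finite-to-one onto $U\setminus V$, a nowhere dense subset of $K^r$ and hence, again by Proposition \ref{dimension}(2), of dimension $<r$; this part is dealt with by induction. The one subtle point throughout is the strict drop in dimension at each recursive call, which rests on the fact (standard in any pregeometric structure) that finite-to-one definable projections preserve $\acl$-dimension, together with the topological characterization of dimension in Proposition \ref{dimension}(2). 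No other ingredients beyond those already developed in this section are required.
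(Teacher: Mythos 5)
Your proof is correct and follows essentially the same route as the paper's: induction on $\dim(X)$, reduction via Fact \ref{cell-decomposition0} to a finite-to-one coordinate projection, a further partition by fiber cardinality, an application of Proposition \ref{generic-regularity-finite} to get continuity of the fiber map on an open dense subset of the interior of the image, and the inductive hypothesis on the lower-dimensional remainder. (The only quibble is cosmetic: finiteness of $0$-dimensional sets is a general fact about the $\acl$-dimension rather than literally item (2) of Proposition \ref{dimension}.)
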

	\begin{remark}
		As in Remark \ref{terminology-cell-decomposition-0}, we need to allow 
		the two cases $m=0$ and $m=n$. The set $K^0$ consists of a single point,
		and has a unique topology.
	\end{remark}
	\begin{proof}
		This is a consequence of dimension theory and the previous observation.
		In more detail, we proceed by induction on  the dimension of $X$.
		
		First, recall that $X$ 
		has a finite partition into $b$-definable sets such 
		that for each set $X'$ in the partition there is a coordinate projection 
		$\pi:X'\to K^r$ with finite fibers and $r=\dim(X')$, see 
		Fact \ref{cell-decomposition0}.
		
		So now assume $\pi:X\to K^r$ is a coordinate projection with finite fibers
		and $r=\dim(X)$, and denote $\pi':X\to K^{n-r}$ 
		the projection into the other coordinates. 
		There is an integer $s$ which bounds the cardinality
		of the fibers of $\pi$. If we denote $Y_k$ the set of elements $a\in K^r$
		such that $X_a=\pi'(\pi^{-1}(a))$ has cardinality $k$ then we get
		$Y_0\cup\cdots\cup Y_s=K^r$.
		Now let $V_j\subset Y_j$ be open dense in the interior of $Y_j$ and such that
		the map $V_j\to (K^{n-r})^{[j]}$ given by $a\mapsto X_a$ 
		is continuous, see Proposition \ref{generic-regularity-finite}.
		Then the set $\{x\in X: \pi(x)\in Y_j\setminus V_j, 1\leq j\leq s\}$
		is of lower dimension than $X$, by item 2 of Proposition \ref{dimension},
		and so we may apply the induction hypothesis on it. 
	\end{proof}
	Recall that a subset $Y\subset X$ of a topological space $X$ is locally closed
	if it is the intersection of an open set and a closed set.
	This is equivalent to $Y$ being relatively open in its closure.
	It is also equivalent to, for every point $y\in Y$,
	the existence of a neighborhood $V$ of $y$, such that 
	$Y\cap V$ is relatively closed in $V$.
	\begin{proposition}\label{definable-is-constructible}
		Suppose $K$ is 1-h-minimal and $X\subset K^n$ an $a$-definable set. Then $X$ is a finite union of $a$-definable locally closed subsets of $K^n$.
	\end{proposition}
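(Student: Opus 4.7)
The plan is to reduce to showing that each piece in the cell decomposition of Proposition \ref{cell-decomposition} is itself locally closed in $K^n$, and then to exhibit local closedness using the continuity of the associated multi-valued map together with the Hausdorff separation of the points of a single fiber.

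More precisely, I would first apply Proposition \ref{cell-decomposition} to partition $X$ into finitely many $a$-definable sets $Y_1,\dots,Y_N$, so it suffices to prove that a single such cell $Y$ is locally closed. After a coordinate permutation we may assume the given projection is $\pi:Y\to U$ onto the first $m$ coordinates, with $U\subset K^m$ open, fibers of constant cardinality $s$, and associated map $f:U\to (K^{n-m})^{[s]}$ continuous (with respect to the quotient topology described in Fact \ref{topology-group-action}). The edge cases $m=n$ (so $Y=U$ is open, hence locally closed) and $m=0$ (so $Y$ is finite, hence closed) are immediate; assume $0<m<n$.

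Fix $(u_0,v_0)\in Y$ and write $f(u_0)=\{w_1,\dots,w_s\}$ with $v_0=w_j$. Since $K^{n-m}$ is Hausdorff, choose pairwise disjoint open balls $B_1,\dots,B_s$ with $w_k\in B_k$. By Fact \ref{topology-group-action}, the set of $s$-element subsets of $B_1\sqcup\cdots\sqcup B_s$ meeting each $B_k$ in exactly one point is open in $(K^{n-m})^{[s]}$, so by continuity of $f$ there is an open $V\subset U$ with $u_0\in V$ such that for every $u\in V$ the fiber $f(u)$ contains exactly one element $g_k(u)\in B_k$ for each $k$. The resulting function $g_j:V\to B_j$ is continuous, and
\[
Y\cap (V\times B_j)=\{(u,g_j(u)): u\in V\},
\]
which is the graph of a continuous map into the Hausdorff space $K^{n-m}$, hence closed in the open neighborhood $V\times B_j\subset K^n$ of $(u_0,v_0)$. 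This shows $Y$ is locally closed in $K^n$ at every point, so $Y$ is locally closed, and $X=Y_1\cup\cdots\cup Y_N$ is a finite union of $a$-definable locally closed sets as required.

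The only mildly delicate point is converting continuity of the multi-valued map $f$ into a continuous single-valued local section $g_j$; this is precisely what the local product description in Fact \ref{topology-group-action} provides (applied to $S_s$ acting freely on $(K^{n-m})^s\setminus\Delta$), so no genuine obstacle arises once cell decomposition is in hand.
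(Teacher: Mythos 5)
Your proof is correct and follows essentially the same route as the paper: apply the cell decomposition of Proposition \ref{cell-decomposition} and then use continuity of the associated map $U\to (K^{n-m})^{[s]}$ to conclude each piece is locally closed. The only difference is cosmetic — the paper observes directly that each piece is closed in $U\times K^{n-m}$, while you localize this via Fact \ref{topology-group-action} to exhibit the piece as a union of graphs of continuous local sections; both arguments rest on the same Hausdorff separation of fiber points.
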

	\begin{proof}
		This is a consequence of Proposition \ref{cell-decomposition}.
		Namely, there is a partition of $X$ into a finite union
		of definable subsets for each of which there is a coordinate projection with finite fibers onto
		an open set $U$, so we may assume $X$ is of this form.
		We may further assume that the fibers
		have constant cardinality $k$ and the associated mapping
		$U\to (K^r)^{[k]}$ is continuous. 
		Then $X$ is closed in $U\times K^r$ and so locally closed.
	\end{proof}
	We finish by reviewing a more difficult property of dimension. We will only
	use this in Proposition \ref{generic-embedding},
	Proposition \ref{subgroups-are-closed}
	and Corollary \ref{subgroup-tangent}, which are not used in the main
	theorems.
	\begin{proposition}\label{dimension-boundary-function}
		Suppose $K$ is a 1-h-minimal field and 
		$X\subset K^n$. Then $\dim(\cl(X)\setminus X)<\dim(X)$.
	\end{proposition}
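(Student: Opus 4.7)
My plan is to induct on $\dim X = d$, the base case $d=0$ being immediate since then $X$ is finite and hence closed. For the inductive step, I apply Proposition \ref{cell-decomposition} to write $X$ as a finite union of cells $X = \bigcup_i X_i$; since $\cl X\setminus X = \bigcup_i (\cl X_i\setminus X) \subseteq \bigcup_i(\cl X_i\setminus X_i)$, it suffices to treat the case where $X$ itself is a $d$-dimensional cell with coordinate projection $\pi\colon X\to U$ finite-to-one onto an open $U\subseteq K^d$ and associated continuous $f\colon U\to (K^{n-d})^{[s]}$. Since $X$ is then closed in $U\times K^{n-d}$, I have $\cl X\cap(U\times K^{n-d})=X$ and hence $\cl X\setminus X\subseteq \partial U\times K^{n-d}$, where $\partial U:=\cl U\setminus U$.

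A direct argument shows $\dim\partial U<d$: any open ball contained in $\partial U$ would be a nonempty open subset of $K^d$ disjoint from $U$ while lying inside $\cl U$, a contradiction; so $\partial U$ has empty interior, and Proposition \ref{dimension}(2) gives the bound. I next verify $\dim\cl X=d$ without further induction. Decomposing $\cl X=\bigcup Z_j$ by Proposition \ref{cell-decomposition}, each $Z_j$ has a projection $\pi_j$ onto an open $V_j\subseteq K^{e_j}$ with $e_j=\dim Z_j$, and $V_j\subseteq \pi_j(\cl X)\subseteq \cl\pi_j(X)$; since $\dim\pi_j(X)\le d$, the hypothesis $e_j>d$ would make $\pi_j(X)$ nowhere dense in $K^{e_j}$ so that $\cl\pi_j(X)$ has empty interior, ruling out the open $V_j$. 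Thus every cell of $\cl X$ has dimension $\le d$, so $\dim\cl X=d$.

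To promote $\dim(\cl X\setminus X)\le d$ to a strict inequality, I would refine the cell decomposition of $\cl X$ so that every $d$-dimensional cell $Z_j$ uses $\pi$ itself as its finite-to-one projection. This is plausible because $\pi|_{\cl X}$ is generically finite-to-one: its image contains $U$, which is full-dimensional in $\pi(\cl X)\subseteq \cl U$, and over each $u\in U$ the fiber of $\cl X$ is the finite set $f(u)$. Once such $\pi$-alignment is arranged for a $d$-cell $Z_j$, the closedness of $X$ in $U\times K^{n-d}$ gives $Z_j\cap(U\times K^{n-d})\subseteq X$; hence $Z_j\setminus X\subseteq \partial U\times K^{n-d}$ projects finite-to-one under $\pi$ into $\partial U$, so $\dim(Z_j\setminus X)\le \dim\partial U<d$, as required.

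The hard part, which justifies the authors' remark that this is a \emph{more difficult} property of dimension, is precisely the alignment step: ruling out a $d$-dimensional cell $Z_j\subseteq \cl X$ whose finite-to-one projection is transverse to $\pi$. Such a cell would force $\cl X$ to admit a positive-dimensional fiber over a generic point of $\partial U$, i.e., a \emph{fat} cluster set of the continuous associated map $f$ at a generic boundary point of $U$. Excluding this pathology is where $1$-h-minimality must be invoked essentially; I expect to combine the generic continuity of definable single- and multi-valued functions (Propositions \ref{generic-continuity} and \ref{generic-regularity-finite}) with the rigidity of dimension in definable families supplied by the geometric structure of $K$ (Fact \ref{dimension-is-canonical}) to show that at a generic $u\in \partial U$, the cluster set of $f$ is finite.
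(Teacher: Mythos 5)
The paper does not actually prove this proposition: it imports it wholesale from item (6) of \cite[Proposition 5.3.4]{hensel-min} (and from \cite{hensel-minII} in mixed characteristic), so there is no internal argument to measure yours against. Your preliminary reductions are all correct: the reduction to a single $d$-dimensional cell via Proposition \ref{cell-decomposition} (with the induction hypothesis handling the lower-dimensional cells), the observation that such a cell $X$ is closed in $U\times K^{n-d}$ so that $\cl(X)\setminus X\subseteq \partial U\times K^{n-d}$, the fact that $\partial U$ has empty interior and hence dimension $<d$, and the argument via nowhere-density of projections that $\dim \cl(X)=d$. These steps establish the non-strict inequality $\dim(\cl(X)\setminus X)\le \dim(X)$, which is the easy part.

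The proof is nevertheless incomplete, because the entire content of the proposition is concentrated in the ``alignment step'' that you describe but do not carry out, and the tools you propose for it do not reach it. Propositions \ref{generic-continuity} and \ref{generic-regularity-finite} give information only at points of the open domain $U$, not at points of $\partial U$, and Fact \ref{dimension-is-canonical} characterizes the dimension function without saying anything about frontiers; none of these controls the cluster set of $f$ at a boundary point. That cluster set can genuinely be infinite (already $f(x,y)=y/x$ on $\{x\neq 0\}$ has cluster set all of $K$ at the origin), so ``the cluster set is finite at a generic point of $\partial U$'' is itself a frontier statement essentially equivalent in difficulty to the proposition being proved. Moreover, even granting that claim, your dimension count does not close: the exceptional locus $W\subseteq\partial U$ where the cluster set is infinite would satisfy only $\dim W\le d-2$, while the fibers of $\cl(X)$ over $W$ are bounded in dimension only by $n-d$, giving $\dim\bigl(\cl(X)\cap(W\times K^{n-d})\bigr)\le n-2$, which is not $<d$ once $n\ge d+2$; closing this would require a further induction (on $n$, or on the dimension of the base) that you have not set up. In the source papers the result is obtained from the preparation/cell-decomposition machinery specific to Hensel minimality, which is strictly stronger than the topological cell decomposition available here, and I do not believe the proposition can be recovered from the paper's stated ingredients alone; citing it, as the authors do, is the right move.
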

	This is item 6 of \cite[Proposition 5.3.4]{hensel-min} for the residue
	characteristic $0$ and it is contained in Proposition 3.1.1 of 
	\cite{hensel-minII} in the mixed characteristic case.

	\section{Taylor approximations}\label{taylor-section}
	
	In this section we show that, in the $1$-h-minimal setting, the generic one variable 
	Taylor approximation theorem  (\cite[Theorem 3.1.2]{hensel-minII})
	implies a multi-variable version of the theorem. 
	In equicharacteristic $(0,0))$ this is  \cite[Theorem 5.6.1]{hensel-min}. 
	Though the proof in mixed characteristic is, essentially, similar;
	we give the details, for the sake of completeness and in view of the importance of this result in the sequel.
	
	We then proceed to introducing some regularity conditions for definable functions (implied, in the present context,  by Taylor's approximation theorem) necessary for computations related to the Lie algebra of definable groups. 
	
	First, we recall the multi-index notation.
	If $i=(i_1,\dots,i_n)\in \mathbb{N}^n$, we denote
	$|i|=i_1+\cdots+i_n$ and $i!=i_1!\cdots i_n!$. 
	For $x=(x_1,\cdots,x_n)\in K^n$ we denote $x^i=x_1^{i_1}\cdots x_n^{i_n}$.
	Also if $f:U\to K$ is a function defined in an open set of $K^n$,
	we denote
	$f^{(i)}(x)=(\frac{\partial^{i_1}}{\partial x_1^{i_1}}
	\cdots \frac{\partial^{i_n}}{\partial x_n^{i_n}}f)(x)$ whenever it exists.
	Note that we are not assuming equality of mixed derivatives, but see
	Corollary \ref{partial-derivatives-commute}.
	\begin{proposition}\label{taylor0}
		Let $K$ be a 1-h-minimal field of residue characteristic $0$.
		Suppose $f:U\to K$ is an $a$-definable function with $U\subset K^n$ open and 
		let $r\in \Nn$.
		Then there is an $a$-definable set $C$, of
		dimension strictly smaller than $n$, such that for any open ball, $B\sub U$ 
		disjoint from $C$ the derivative $f^{(i)}$ exists in $B$ for every $i$ with 
		$|i|\leq r$, and has constant valuation in $B$. Moreover, 
		
		\begin{displaymath}
			\left | f(x)-\sum_{\{i:|i|<r\}}\frac{1}{i!}f^{(i)}(x_0)(x-x_0)^i \right |
			\leq \max_{\{i:|i|=r\}} \left |\frac{1}{i!}f^{(i)}(x_0)(x-x_0)^i \right|
		\end{displaymath}
		For every $x,x_0\in B$.
	\end{proposition}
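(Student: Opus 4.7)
The plan is to proceed by induction on $n$, the base case $n=1$ being the given one-variable Taylor theorem (\cite[Theorem 3.1.2]{hensel-minII} or its equicharacteristic-zero analogue), applied parametrically. Throughout, I use that $1$-h-minimality provides uniform preparation in parametric families, so that the exceptional sets produced by the one-variable theorem in each fibre assemble into an $a$-definable subset of the total space whose dimension can be read off from its fibre structure.

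For the inductive step, assume the result for functions of $n-1$ variables. Write $x = (y, t)$ with $y \in K^{n-1}$ and $t \in K$. Viewing $f$ as an $a$-definable family of one-variable functions $t \mapsto f(y,t)$ parametrized by $y$, the parametric one-variable Taylor theorem yields an $a$-definable set $C_0 \subset K^n$ with finite fibres over $K^{n-1}$, hence $\dim(C_0) < n$ by the fibre-dimension formula, such that on every product ball $B'' \times B_n \subset U \setminus C_0$ the derivatives $\partial_n^k f$ exist for $k \leq r$, have constant valuation, and the order-$r$ Taylor expansion in $t$ holds. Next, for each $k \leq r$ apply the inductive hypothesis (to order $r-k$) to the coefficient $\partial_n^k f$ viewed as a function of the first $n-1$ variables, parametrized by the remaining variable $t_0$; this produces an $a$-definable set $D_k \subset K^n$ of dimension $< n$ outside whose fibres the full $(n-1)$-variable order-$(r-k)$ expansion of each coefficient holds with the required constant-valuation properties.

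Setting $C = C_0 \cup D_0 \cup \cdots \cup D_r$, we have $\dim(C) < n$. On any open ball $B \subset U$ disjoint from $C$ (and small enough to be contained in one of the product balls above), pick $x_0 = (y_0, t_0)$ and $x = (y, t)$ in $B$. Substitute the $(n-1)$-variable expansion of each coefficient $\partial_n^k f(\,\cdot\,, t_0)$ around $y_0$ into the one-variable expansion of $f(y, t)$ around $t_0$, and collect terms by multi-index. The derivatives that arise are, after composition, exactly the $f^{(i)}$ for $|i| \leq r$ in the fixed ordered convention introduced before the statement, so no commutativity of mixed partials is required. The ultrametric inequality bounds the combined remainder by the maximum of the two individual remainders, which after matching up the highest-order terms gives precisely $\max_{|i|=r} |\tfrac{1}{i!} f^{(i)}(x_0)(x-x_0)^i|$.

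The main obstacle is the uniform $a$-definability and dimension bookkeeping: one must verify that the exceptional sets produced fibrewise (either by the one-variable theorem in $t$ or by the inductive hypothesis in $y$) assemble into $a$-definable subsets of $K^n$ of dimension $<n$, and that the local-to-global passage to an open ball $B \subset U \setminus C$ does indeed place $(y,t_0),(y,t),(y_0,t_0)$ all inside the same small product ball where both expansions are valid. These are handled respectively by the parametric form of the cited theorems together with Proposition \ref{dimension}, and by choosing $B$ small enough that it is contained in a product of balls, which is possible since the valuation topology has a basis of such products.
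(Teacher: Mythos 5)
The paper offers no proof of this proposition: it is quoted verbatim from \cite[Theorem 5.6.1]{hensel-min}. The honest comparison is therefore with the paper's proof of the mixed-characteristic analogue, Proposition \ref{taylor}, and your plan reproduces that proof's skeleton almost exactly: fibrewise one-variable preparation in the last coordinate, uniformity of the exceptional data by compactness (Fact \ref{standard-compactness}), the inductive hypothesis applied to the coefficients $\partial_n^k f(\cdot,t_0)$ in the remaining variables, a union of exceptional sets whose dimensions are controlled by additivity, and splicing of the two expansions via the ultrametric inequality. Your observations that every max-norm ball is automatically a product of balls and that the fixed ordering of the iterated partials makes commutation of mixed derivatives unnecessary are both correct. (One small slip: the statement quantifies over \emph{every} ball $B\subset U$ disjoint from $C$, so you may not shrink $B$; since $B$ is already a product of balls, the parenthetical ``small enough'' should simply be deleted.)

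There is, however, a genuine gap in the constant-valuation part of the conclusion. Your two preparation steps give (i) $|\partial_t^k f(y,\cdot)|$ constant on $B_n$ for each fixed $y\in B''$, and (ii) $|\partial_y^j\partial_t^k f(\cdot,t_0)|$ constant on $B''$ for each fixed $t_0\in B_n$. Neither controls the variation of $\partial_y^j\partial_t^k f(y,\cdot)$ in the last variable when $j\neq 0$, so the asserted constancy of $|f^{(i)}|$ on the whole product $B''\times B_n$ does not follow. This is not mere bookkeeping: for $f(y,t)=1+y(t-c)$ with $r=1$, both of your preparation steps can be satisfied with $C$ contained in $\{y=0\}$, yet $f^{(1,0)}(y,t)=t-c$ has non-constant valuation on any ball whose $t$-coordinate ranges over a ball containing $c$. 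The paper's proof of Proposition \ref{taylor} closes exactly this hole with a third preparation step — the finite sets $F_x$ preparing the functions $h_{x,s,i}\colon t\mapsto f^{(i,s)}(x,t)$ so that their valuations become constant in the last variable — after which the two one-sided constancies chain together as $|f^{(i,s)}(x',y')|=|g^{(i)}_{s,y'}(x')|=|g^{(i)}_{s,y'}(x)|=|h_{x,s,i}(y')|=|h_{x,s,i}(y)|=|f^{(i,s)}(x,y)|$. You need to add the analogous step and enlarge $C$ accordingly; the Taylor estimate itself survives without it (the only valuation transfer it needs, namely $|\partial_t^r f(y,t_0)|=|\partial_t^r f(y_0,t_0)|$, is covered by (ii)), but the proposition as stated is not proved.
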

	This is \cite[Theorem 5.6.1]{hensel-min}. Our first order of business is to generalize this result to positive residue characteristic. 
	
	The following fact is proved by a standard compactness argument, 
	and is often applied implicitly. We add this argument for convenience.
	\begin{fact}\label{standard-compactness}
        Let $M$ be an $\aleph_0$-saturated structure, and
        $\{\Phi^l(\bar{D})\}_{l\in I}$ be a family of properties of definable sets $\bar{D}=(D_1,\dots,D_n)$ in $M$, indexed 
		by a directed set $I$. Let $b$ be a tuple in $M$, and $S$ be a $b$-definable set. Assume that 
        \begin{enumerate}
            \item 
                For all $l$ the property $\Phi^l$ is  definable in definable families. I.e, 
		          if $\{D_{i,a}\}_{a\in T}$ are $b$-definable families, 
		          then the set $\{a\in T: \Phi^l({\bar{D}_a})\text{ holds}\}$ is $b$-definable.
            \item
		           $\Phi^l$ implies $\Phi^{l'}$ for all $l\leq l'$. 
		      \item
		          For every $a\in S$,
		          there are $ba$-definable sets $D_{i,a}$, satisfying $\Phi^{l_a}$ for some  $l_a\in I$. 
        \end{enumerate}
        
	    Then there are $\{D_{i,a}\}_{a\in S}$
	    $b$-definable families of sets, and a fixed $l\in I$,
	    such that $\Phi^l(\bar{D}_{a})$ holds for 
	    every $a\in S$.
	\end{fact}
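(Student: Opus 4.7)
The plan is a standard compactness/saturation argument. For each $a \in S$, hypothesis (3) supplies $ba$-definable sets $D_{1,a}, \dots, D_{n,a}$ and some $l_a \in I$ with $\Phi^{l_a}(\bar{D}_a)$ holding. Each $D_{i,a}$ can be written as $\{x : \varphi_i(x, b, a)\}$ for some $\mathcal{L}$-formula $\varphi_i$ of the right arity, but a priori both the choice of $\bar{\varphi} = (\varphi_1, \dots, \varphi_n)$ and of $l_a$ depend on $a$; the goal is to upgrade this pointwise choice to a single uniform choice valid for all $a \in S$ simultaneously.

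For each fixed tuple of formulas $\bar{\varphi}$ (in the appropriate variables, with the middle block reserved for $b$ and the last block for $a$) and each $l \in I$, let
$S_{\bar{\varphi}, l} = \{a \in S : \Phi^{l}(\varphi_1(M, b, a), \dots, \varphi_n(M, b, a)) \text{ holds}\}$.
Applying hypothesis (1) to the $b$-definable families $\{\varphi_i(M, b, a)\}_{a \in S}$, each set $S_{\bar{\varphi}, l}$ is $b$-definable. The preceding paragraph shows $S = \bigcup_{\bar{\varphi}, l} S_{\bar{\varphi}, l}$.

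By $\aleph_0$-saturation, the partial type $\{x \in S\} \cup \{x \notin S_{\bar{\varphi}, l}\}_{(\bar{\varphi}, l)}$ over the parameters from $b$ is inconsistent, so compactness yields a finite subcover $S = S_{\bar{\varphi}_1, l_1} \cup \dots \cup S_{\bar{\varphi}_k, l_k}$. By directedness of $I$, choose $l \in I$ with $l \geq l_1, \dots, l_k$; hypothesis (2) then gives $S_{\bar{\varphi}_j, l_j} \subseteq S_{\bar{\varphi}_j, l}$, whence $S = \bigcup_{j} S_{\bar{\varphi}_j, l}$. Disjointify by setting $T_j = S_{\bar{\varphi}_j, l} \setminus \bigcup_{i < j} S_{\bar{\varphi}_i, l}$, and define the $b$-definable family piecewise by $D_{i,a} = \varphi_{j,i}(M, b, a)$ whenever $a \in T_j$; then $\Phi^{l}(\bar{D}_a)$ holds for every $a \in S$. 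There is no real obstacle: the substantive points are that hypothesis (1) delivers uniform $b$-definability of the test sets $S_{\bar{\varphi}, l}$, and that directedness together with monotonicity (2) lets us collapse the finite list of $l_j$ produced by compactness into a single $l$; the final disjointification merely packages finitely many candidate families into one.
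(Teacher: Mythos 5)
Your argument is correct and is essentially the paper's own proof: both cover $S$ by $b$-definable sets on which a single formula-tuple and a single $l$ witness the property (you index the cover by pairs $(\bar\varphi,l)$, the paper by points $a\in S$, which amounts to the same thing), extract a finite subcover by $\aleph_0$-saturation, use directedness of $I$ together with hypothesis (2) to collapse to one $l$, and disjointify to assemble the final $b$-definable families. No substantive difference.
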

	\begin{remark}
		Formally, $\Phi^l$ is a subset of 
		\[
		\{(D_1,\dots,D_n): D_i \text{ is a definable set and}\}
		\] 
		and we say 
		$\Phi^l(D_1,\dots,D_n)$ holds if the tuple
		$(D_1,\cdots,D_n)$ belongs to $\Phi^l$.
		
		Note also that
		the tuple $(D_1,\dots,D_n)$ can be replaced with $D_1\times \cdots \times D_n$,
		so there is no loss of generality in taking $\Phi^l$ of the form
		$\Phi^l(D)$. 
	\end{remark}
	\begin{proof}
		Let $a\in S$. By hypothesis there are  $b$-definable families
		$\{D_{i,a'}^a\}_{a'\in S_i^{0,a}}$
		and an element $l^a\in I$,
		such that $(D_{1,a}^a,\dots,D_{n,a}^a)$ satisfies $\Phi^{l^a}$.
		Consider $S^a$ to be the set of $a'\in S$ such that $a'\in S_i^{0,a}$ for $i=1,\dots,n$,
		and such that 
		$\Phi^{l^a}(\bar{D}_{a'}^a)$ holds.
		By hypothesis, this is a $b$-definable set contained in $S$ and containing $a$.
		
		We conclude that $S=\bigcup_{a\in S}S^a$ is a cover of $S$ by $b$-definable sets,
		and so by compactness and saturation there is a finite sub-cover, say
		$S=S^1\cup\dots \cup S^k$ for $S^r=S^{a_r}$. Indeed, if there was no finite 
		sub-cover,
		then the partial type expressing $x\in S$ and $x\notin S^a$ for all $a\in S$, is a 
		consistent $b$-type, and so a realization in $M$ would contradict $S=\bigcup_{a\in S}S^a$.
		
		Then $D_{i,a}$ defined as $D_{i,a}^{a_r}$ if $a\in S^r\setminus \bigcup_{r'<r}S^{r'}$
		satisfy that $\{D_{i,a}\}_{a\in S}$ form $b$-definable
		families. If we take $l$ such that $l\geq l^{a_1},\dots,l^{a_k}$, then we get that
		$\Phi^l(\bar{D}_{a})$ holds for every $a\in S$, as required.
	\end{proof}
	\begin{notation}
		If $D\subset E\times F$, and $a\in E$ we often denote
		$D_a=\{b\in F: (a,b)\in D\}$. If $b\in F$ we denote, when no ambiguity can occur, 
		$D_b=\{a\in E: (a,b)\in D\}$.
		If $f:D\to C$ is a function, we let  $f_a:D_a\to C$ denote the function $f_a(b)=f(a,b)$,
		and similarly $f_b$ for $b\in F$.
	\end{notation}
	\begin{proposition}\label{taylor}
		Let $K$ be a 1-h-minimal field of positive residue characteristic,  $f:U\to K$ 
		an $a$-definable function with $U\subset K^n$ open, and let  
		$r\in \Nn$.
		Then there is an integer $m$, and a set $C$,
		closed, $a$-definable with  $\dim(C)<n$, such that for every open ball,
		$B\sub U$
		$m$-next to $C$, $f^{(i)}$ exists in $B$ for every $i$ with 
		$|i|\leq r$, and $f^{(i)}$ has constant valuation in $B$. Moreover, 
		
		\begin{displaymath}
			\left | f(x)-\sum_{\{i:|i|<r\}}\frac{1}{i!}f^{(i)}(x_0)(x-x_0)^i \right |
			\leq \max_{\{i:|i|=r\}} \left |\frac{1}{i!}f^{(i)}(x_0)(x-x_0)^i \right |
		\end{displaymath}
		For every $x,x_0\in B$.
	\end{proposition}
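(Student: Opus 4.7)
The plan is to proceed by induction on $n$, with base case $n=1$ being \cite[Theorem 3.1.2]{hensel-minII}. For the inductive step I decompose $K^n=K^{n-1}\times K$, write $x=(y,t)$ and $x_0=(y_0,t_0)$, expand first in the $t$-variable using the one-variable theorem, and then apply the inductive hypothesis to each $t$-partial derivative viewed as a function of $y$.

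For each $y$, apply the one-variable mixed-characteristic Taylor theorem to the $(ya)$-definable function $t\mapsto f(y,t)$: there is an integer $m_y$ and a $(ya)$-definable finite set $C_y^0\sub K$ of uniformly bounded cardinality such that on balls $m_y$-next to $C_y^0$ all $t$-partial derivatives up to order $r$ exist with constant valuation, and the order-$r$ Taylor expansion in $t$ holds. Fact \ref{standard-compactness} yields a uniform integer $m_0$ and a uniform cardinality bound. After passing to an $\acl=\dcl$ expansion (Fact \ref{acl=dcl}, Remark \ref{acl=dcl-remark}; the conclusion is first-order), I Skolemize to write $C_y^0=\{c_1(y),\dots,c_k(y)\}$ and set $C_1=\bigcup_i\gra(c_i)\sub K^n$, with $\dim C_1\le n-1$. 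Similarly, for each $j=0,\dots,r-1$ I apply the inductive hypothesis to $y\mapsto(\partial_t^jf)(y,t_0)$ uniformly in $t_0$, invoking Fact \ref{standard-compactness} to extract a uniform integer $m_j$ and a definable family $D_{j,t_0}\sub K^{n-1}$ of $t_0$-fibers with $\dim D_{j,t_0}<n-1$; then $D_j:=\bigcup_{t_0}D_{j,t_0}\times\{t_0\}\sub K^n$ has $\dim D_j<n$ by additivity of dimension. Take $C:=\cl(C_1\cup D_0\cup\cdots\cup D_{r-1})$, closed, $a$-definable, of dimension $<n$ by Proposition \ref{dimension-boundary-function}, and set $m=\max(m_0,\dots,m_{r-1})$.

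For an open ball $B\sub U$ that is $m$-next to $C$, the product structure of ultrametric balls ensures that each $t$-fiber $B_y$ is $m$-next to $C_y^0$ and that the $y$-projection $B_{t_0}$ is $m$-next to each relevant $D_{j,t_0}$. Hence all mixed partials $\partial_y^{i'}\partial_t^jf=f^{(i',j)}$ with $|i'|+j\le r$ exist on $B$ with constant valuation there. For $(y_0,t_0),(y,t)\in B$, substitute the inductive expansion
\[ (\partial_t^jf)(y,t_0)=\sum_{|i'|<r-j}\tfrac{1}{i'!}(\partial_y^{i'}\partial_t^jf)(y_0,t_0)(y-y_0)^{i'}+\varepsilon_2^{(j)} \]
into the one-variable expansion
\[ f(y,t)=\sum_{j<r}\tfrac{1}{j!}(\partial_t^jf)(y,t_0)(t-t_0)^j+\varepsilon_1. \]
The main term collects to $\sum_{|i|<r}\tfrac{1}{i!}f^{(i)}(x_0)(x-x_0)^i$, while each contribution $\tfrac{(t-t_0)^j}{j!}\varepsilon_2^{(j)}$ is bounded by monomials of multi-degree $(i',j)$ with $|i'|+j=r$, and $\varepsilon_1$ by the $(0,\dots,0,r)$ monomial (using constant valuation on $B$ to replace $\partial_t^rf(y,t_0)$ by $\partial_t^rf(y_0,t_0)$ in absolute value). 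The ultrametric inequality then collapses the total error to $\max_{|i|=r}|\tfrac{1}{i!}f^{(i)}(x_0)(x-x_0)^i|$, as required.

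The main difficulty lies in the uniformity bookkeeping: the integer $m$ must simultaneously control the $t$-direction one-variable preparation and all $y$-direction multi-variable preparations of the derivatives $\partial_t^jf$, and constant valuation of derivatives must be propagated through both layers of the induction. Fact \ref{standard-compactness} handles this cleanly, additivity of $\acl$-dimension ensures each $D_j$ has dimension $<n$, and Proposition \ref{dimension-boundary-function} ensures that taking closures does not increase dimension.
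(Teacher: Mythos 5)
Your overall strategy is the same as the paper's: induct on $n$, expand in the last coordinate via the one-variable mixed-characteristic theorem, apply the inductive hypothesis to the $t$-partial derivatives as functions of the remaining coordinates, and use Fact \ref{standard-compactness} to uniformize the integers and the definable families. However, there is a genuine gap in the step where you claim that ``all mixed partials $f^{(i',j)}$ with $|i'|+j\le r$ exist on $B$ with constant valuation there.'' Your two preparation steps give you (a) constancy of $|\partial_t^j f(y,\cdot)|$ in the $t$-direction for each fixed $y$ (but only for the pure $t$-derivatives, $i'=0$), and (b) constancy of $|f^{(i',j)}(\cdot,t_0)|$ in the $y$-direction for each fixed $t_0$. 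To get constancy on the product ball $B_y\times B_t$ you must chain $|f^{(i',j)}(y,t)|=|f^{(i',j)}(y_0,t)|=|f^{(i',j)}(y_0,t_0)|$, and the second equality --- constancy in $t$ at fixed $y_0$ for a \emph{mixed} partial with $i'\neq 0$ --- is provided by neither preparation. The paper closes exactly this hole with a third preparation: for each fixed $x$ in the first block of coordinates, the one-variable functions $h_{x,s,i}:t\mapsto f^{(i,s)}(x,t)$ are prepared by a uniformly definable finite family $F_x$ so as to have constant valuation on balls $m$-next to $F_x$, and $\bigcup_x\{x\}\times F_x$ is added to the bad set before taking the closure. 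Since the constant-valuation clause is part of the statement being proved (and is what the inductive hypothesis delivers at level $n-1$, hence what must be delivered at level $n$), this step cannot be omitted.

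Two smaller points. First, you apply the inductive hypothesis only to $\partial_t^j f(\cdot,t_0)$ for $j=0,\dots,r-1$, but you also need the case $j=r$ (at order $0$, i.e.\ just constant valuation of $\partial_t^r f(\cdot,t_0)$ in $y$) to justify replacing $|\partial_t^r f(y,t_0)|$ by $|\partial_t^r f(y_0,t_0)|$ in the error term of the one-variable expansion; your parenthetical appeal to ``constant valuation on $B$'' presupposes the very fact that is in question. Second, the Skolemization detour through an $\acl=\dcl$ expansion is unnecessary and slightly delicate, since the conclusion demands an $a$-definable set in the original language: the union $\bigcup_y \{y\}\times C^0_y$ of a uniformly $a$-definable family of finite fibers is already $a$-definable and has dimension at most $n-1$ by additivity, which is how the paper proceeds.
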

	\begin{proof}
		We proceed by induction on $n$, the case $n=1$ being \cite[Theorem 3.1.2]{hensel-minII}. Assume the result for $n$ and let $f:U\to K$ be an 
		$a$-definable function with  $U\subset K^n\times K$ open, $i$ a multi-index with $|i|\le r$. 
		Then for every $x\in K^n$ there is a finite $ax$-definable set
		$C_x\subset K$ and an integer $m_x$ such that 
		\begin{equation}\label{taylor-1}
			|f_x(y)-\sum_{s<r}\frac{1}{s!}f_x^{(s)}(y_0)(y-y_0)^s|
			\leq |\frac{1}{r!}f_x^{(r)}(y_0)(y-y_0)^r|
		\end{equation}
		for every $y$ and $y_0$ in an open ball $m_x$-next to $C_x$, and such that
		$|f_x^{(s)}(y)|$ exists and is constant in any such open ball.
		By a standard compactness argument (See Fact \ref{standard-compactness}),  we may assume that the $C_x$
		are uniformly  definable and that there is some $m\in \Nn$ such that $m_x=m$ for all $x$. 
		Define $C=\cup_x\{x\}\times C_x$. 
		By induction,  for each $y\in K$ we can approximate the
		functions $g_{s,y}(x)=f_x^{(s)}(y)$ defined on $V_y=\mathrm{Int}(U_y\setminus C_y)$
		up to order $r-s$. By the same 
		compactness argument we obtain a natural number $m'$  and an $a$-definable family $\{D_y\}_{y\in K^n}$ of subsets $D_y\sub V_y$ with $\dim(D_y)<n$ 
		such that $g_{s,y}^{(i)}$ exists and has constant valuation on any ball
		$m'$-next to $D_y$ in $V_y$, for every multi-index, $i$, with $|i|\leq r-s$. Moreover, 
		\begin{equation}\label{taylor-2}
			| g_{s,y}(x)-\sum_{\{i:|i|<r-s\}}\frac{1}{i!}g_{s,y}^{(i)}(x_0)(x-x_0)^i |
			\leq \max_{\{i:|i|=r-s\}} |\frac{1}{i!}g_{s,y}^{(i)}(x_0)(x-x_0)^i|
		\end{equation}
		Replacing $m$ and $m'$ by their maximum, we may assume 
		$m=m'$. Define $D:=\bigcup_y D_y\times \{y\}$. By additivity of dimension $\dim(C)\le n$ and $\dim(D)\le n$. Finally,  take $E=C\cup D\cup \bigcup_y 
		(U_y\setminus V_y)\times\{y\}$. 
		Similar dimension considerations show that $\dim(E)<n+1$. 
		
		Note that for $(x,y)\in U\setminus E$ we have that, for $i$ and $s$
		such that $|i|+s\leq r$, 
		$f^{(i,s)}(x,y)$ and $g_{s,y}^{(i)}(x)$ exist and are equal.
		
		Now, for $x\in K^n$ define $W_x=\mathrm{Int}(U_x\setminus E_x)$,
		and for the functions 
		$h_{x,s,i}:y\mapsto f^{(i,s)}(x,y)$ with $s+|i|\leq r$ defined
		on $W_x$, we find a finite set $F_x\subset W_x$ such that 
		$\{F_x\}_x$ is an $a$-definable family, and there is an integer $m'$, 
		such that in every ball in $W_x$ $m'$-next to $F_x$, $h_{x,s,i}$ has constant 
		valuation. We may assume that $m'=m$ as before. Let $G$ be the closure of 
		$E\cup\bigcup_x \{x\}\times F_x\cup \bigcup_x \{x\}\times (U_x\setminus W_x)$. 
		Note that $\dim(G)<n+1$.
		
		Take $B_1\times B_2$ a ball in $U$,
		$m$-next to $G$. Then for every $x\in B_1$ we get that $B_2$ is $m$-next
		to both $C_x$ and $F_x$ and $B_2\sub W_x$. Similarly,  for every $y\in B_2$, $B_1\sub V_y$  is $m$-next to $D_y$.
		
		We conclude that for every $(x,y)\in B_1\times B_2$, $f^{(i,s)}(x,y)$
		exists and has constant valuation, for every index 
		$(i,s)$ such that $|(i,s)|\leq r$. Indeed, we have for every 
		$(x,y), (x',y')\in B_1\times B_2$ that
		\[
		|f^{(i,s)}(x',y')|=|g_{s,y'}^{(i)}(x')|=|g_{s,y'}^{(i)}(x)|=|h_{x,s,i}(y')|
		=|h_{x,s,i}(y)|=|f^{(i,s)}(x,y)|,
		\]
		as the second equality follows from the condition on $C_x$, and the fourth from those on $F_x$ and $W_x$.
		
		Now, if $(x,y)$ and $(x_0,y_0)$ are in $B_1\times B_2$, then equations
		\ref{taylor-1} and \ref{taylor-2} hold and  for the error term  of \ref{taylor-1}
		we have 
		$|f_x^{(r)}(y_0)|=|f^{(0,r)}(x,y_0)|= |f^{(0,r)}(x_0,y_0)|$.
		Denote
		\[M=\max\left \{\frac{1}{i!s!}
		|f^{(i,s)}(x_0,y_0)(x-x_0)^i(y-y_0)^s|: |(i,s)|=r\right \}.\]
		Then Equation \ref{taylor-1}  yields 
		$|f(x,y)-\sum_{s<r}\frac{1}{s!}f^{(0,s)}(x,y_0)(y-y_0)^s|\leq M$.
		Also from Equation \ref{taylor-2} we have that
		\[\left |\frac{1}{s!}f^{(0,s)}(x,y_0)(y-y_0)^s-
		\sum_{\{i:|i|<r-s\}}\frac{1}{i!s!}f^{(i,s)}(x_0,y_0)(x-x_0)^i(y-y_0)^s \right |
		\leq M.\]
		Taking the sum over $s$ smaller than $r$ and using the ultrametric inequality
		we obtain
		\[\left |\sum_s \frac{1}{s!}f^{(0,s)}(x,y_0)(y-y_0)^s-
		\sum_{\{(i,s):|i|+s<r\}}\frac{1}{i!s!}f^{(i,s)}(x_0,y_0)(x-x_0)^i(y-y_0)^s\right |\leq M.\]
		Summing this with  Equation \ref{taylor-1}  and using the 
		ultrametric inequality once more we conclude.
	\end{proof}
	
	As a consequence of the previous theorem  we obtain that partial derivatives of definable functions commute generically.
	\begin{corollary}\label{partial-derivatives-commute}
		Suppose $f:U\to K$ is a definable function for some open $U\subset K\times K$. 
		Then there exists a  open dense $U'\subset U$ such that
		for every $(x,y)\in U'$
		\[\frac{\partial}{\partial x}\frac{\partial}{\partial y}f(x,y)=
		\frac{\partial}{\partial y}\frac{\partial}{\partial x}f(x,y)\]
		and, in particular,  the terms of the above equation exist in $U'$.
		
		Moreover, if $f:U\to K$ is such that the partial derivatives 
		$\frac{\partial}{\partial x}\frac{\partial}{\partial y}f(x,y)$, 
		$\frac{\partial}{\partial y}\frac{\partial}{\partial x}f(x,y)$ exist and 
		are continuous in $U$, then they are equal.
	\end{corollary}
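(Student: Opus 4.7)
The plan is to apply the multi-variable Taylor theorem (Proposition~\ref{taylor}; or Proposition~\ref{taylor0} in residue characteristic $0$) at order $r=3$, both to $f$ and to the coordinate-swapped function $\tilde f(y_1,y_2):=f(y_2,y_1)$. Each application produces a closed definable bad set of dimension less than $2$, and on any preparing ball in the complement Taylor's formula of degree $\leq 2$ with error bounded by the maximum of the degree-$3$ Taylor monomials is valid. Taking the complement of the union of the two bad sets we obtain an open dense definable $U'\sub U$; at every point of $U'$ there is a small ball $B$ that is simultaneously $m$-next to both bad sets, on which both iterated mixed partials $\partial_{x_1}\partial_{x_2} f$ and $\partial_{x_2}\partial_{x_1} f$ exist, and both degree-$\leq 2$ Taylor polynomials around any basepoint $(x_0,y_0)\in B$ approximate $f$ on $B$.

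These two Taylor polynomials share their constant, linear, and pure-quadratic coefficients, since those involve only iterated single-variable derivatives which are ordering-independent. They differ, potentially, only in the coefficient of $(x-x_0)(y-y_0)$: the polynomial coming from $f$ carries $e:=\partial_{x_1}\partial_{x_2} f(x_0,y_0)$, while the one coming from $\tilde f$, after undoing the coordinate swap, carries $\tilde e:=\partial_{x_2}\partial_{x_1} f(x_0,y_0)$. Subtracting the two approximations and invoking the ultrametric inequality yields
\[
|(e-\tilde e)(x-x_0)(y-y_0)|\leq M\cdot\max(|x-x_0|,|y-y_0|)^3
\]
for every $(x,y)\in B$, where $M$ bounds $|\tfrac{1}{i!}f^{(i)}(x_0,y_0)|$ over multi-indices $|i|=3$ in either ordering.

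Since $K$ is non-trivially valued, $B$ contains pairs $(x,y)$ with $|x-x_0|=|y-y_0|=s$ for arbitrarily small $s$; substituting gives $|e-\tilde e|\leq Ms$, and letting $s\to 0$ forces $e=\tilde e$, proving the first assertion. The \emph{moreover} clause then follows at once: two continuous functions on $U$ that agree on the dense subset $U'$ must coincide on all of $U$. The one point requiring care is identifying correctly which of the two iterated mixed partials appears as the coefficient of $(x-x_0)(y-y_0)$ in each Taylor polynomial after undoing the coordinate swap; once that is in place, the ultrametric estimate and the shrinking step are routine.
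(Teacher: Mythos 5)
Your proposal is correct and follows essentially the same route as the paper: apply the order-$3$ Taylor approximation to $f$ and to its coordinate swap on a common preparing ball, subtract the two expansions so that only the mixed-partial coefficients survive, and use the ultrametric bound with $|x-x_0|=|y-y_0|=s\to 0$ to force the difference to vanish; the \emph{moreover} clause by density and continuity is also how the statement is intended. No substantive differences to report.
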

	\begin{proof}
		Take a $1$-dimensional closed $C\subset K\times K$ and $m$ an integer 
		as provided by the  Taylor approximation property for errors of order $3$. We may also
		assume that $\pi(C)$ and $m$ satisfy the same 
		Taylor approximation property for the function $f\pi$, where $\pi$ is the 
		coordinate permutation $(x,y)\mapsto (y,x)$.
		
		Then for $(x,y),(x_0,y_0)\in B_1\times B_2$ in a ball $m$-next to $C$
		we obtain (see Definition \ref{big-o} for the big-$O$ notation)  that
		\begin{align*}
			f(x,y)=& f(x_0,y_0)+
			(x-x_0)^2\frac{1}{2}\frac{\partial^2}{\partial x^2}f(x_0,y_0)+
			(y-y_0)^2\frac{1}{2}\frac{\partial^2}{\partial y^2}f(x_0,y_0)+ \\
			& (x-x_0)(y-y_0)\frac{\partial}{\partial x}\frac{\partial}{\partial y}f(x_0,y_0)+
			O((x-x_0,y-y_0)^3).
		\end{align*}
		Similarly, 
		\begin{align*}
			f\pi(y,x)=f(x,y)=& f(x_0,y_0)+
			(x-x_0)^2\frac{1}{2}\frac{\partial^2}{\partial x^2}f(x_0,y_0)+
			(y-y_0)^2\frac{1}{2}\frac{\partial^2}{\partial y^2}f(x_0,y_0)+ \\ & 
			(x-x_0)(y-y_0)\frac{\partial}{\partial y}\frac{\partial}{\partial x}f(x_0,y_0)+
			O((x-x_0,y-y_0)^3)
		\end{align*}
		Taking the difference we obtain
		\begin{center}
			$(x-x_0)(y-y_0)\frac{\partial}{\partial y}\frac{\partial}{\partial x}f(x_0,y_0)-
			(x-x_0)(y-y_0)\frac{\partial}{\partial x}\frac{\partial}{\partial y}f(x_0,y_0)=
			O((x-x_0,y-y_0)^3)$
		\end{center}
		Taking $h=(x-x_0)=(y-y_0)$ small we get,
		$h^2
		(\frac{\partial}{\partial y}\frac{\partial}{\partial x}f(x_0,y_0)-
		\frac{\partial}{\partial x}\frac{\partial}{\partial y}f(x_0,y_0))=
		O(h^3)$,
		so
		$\frac{\partial}{\partial y}\frac{\partial}{\partial x}f(x_0,y_0)-
		\frac{\partial}{\partial x}\frac{\partial}{\partial y}f(x_0,y_0)=O(h)$.
		This is only possible when the left-hand side is $0$, as desired.
	\end{proof}
	The following notation is intended to look similar to the monomial
	$ax^n$ for $a\in K$ and $x\in K$, for the purpose of expressing the Taylor 
	approximation of a multivariate function.
	\begin{definition}\label{monomial}
		Let $m, n$ be positive integers, and $r\in \Nn$.
		Let $J=J(r,n)=\{j\in\mathbb{N}^n: |j|=r\}$. Let 
		$a=(a_j)_{j\in J}$ be such that $a_j\in K^m$ for all $j\in J$. Then, for $x\in K^n$  we define 
		$ax^r=\sum\limits_{j\in J}a_jx^j$, where $x^j:=\prod_{i=1}^n x^{j(i)}$. Note that $x\mapsto ax^r$ is a function
		$K^n\to K^m$.
	\end{definition}
	
	As an example, consider, in the above notation, the case $r=1$. In this case $J=\{e_1,\dots, e_n\}$ and for $j\in J$ we have $x^j=x_j$ (where $x=(x_1,\dots, x_n)$), so for $a=(a_j)_{j\in J}$ with $a_j\in K^m$ we get that $ax=A\cdot x$ where $A$ is the matrix whose $j$-th column is $a_j$. 
	
	\begin{definition}\label{tn}
		Let $U\subset K^k$ be open, $f:U\to K^m$  a function and 
		$a\in U$. We say that $f$ is $P_n$  at $a$ if it is approximable
		by polynomials of degree $n$ near $a$ in the following sense:
		there are constants $b_0,\cdots,b_n$  
		$f(a+x)=\sum_{r\leq n}b_rx^r+O(x^{n+1})$. 
	\end{definition}
	
	In view of the above example, it follows immediately from the definition that a $P_1$ function is differentiable and for the coefficient $b_1$ in the definition we may take $f'(a)$ (or, more precisely $b_1^t=f'(a)$). It follows from Lemma \ref{uniqueness-taylor} below, that -- in fact -- $b_1=f'(a)^t$ whenever $f$ is $P_n$ for any $n\ge 1$. 
	
	\begin{definition}
		Let $U\subset K^k$ be open, $f:U\to K^m$  a function and 
		$a\in U$.
		We say $f$ is $T_n$ at $a$ if there is $\gamma\in \Gamma$ such that
		for every $x,x'$ with
		$|x-a|,|x'-a|<\gamma$,  we have 
		$f(x)=\sum_{r\leq n}c_r(x')(x-x')^r+O(x-x')^{n+1}$
		for $c_r$ a $P_{n-r}$ function at $a$.
	\end{definition}
	Note that in the previous definition,  
	the constant implicit in the notation  $O(x-x')^{n+1}$  (see Definition \ref{big-o})  
	does not depend
	on $x'$; so this definition requires some uniformity with respect to the center $x'$
	which is not implied by simply assuming $f$ is $P_n$ at every point of
	a ball around $a$.
	
	Note also 
	that if $f$ is $T_n$ at $c=(b,a)$ then,  in particular
	$f(z,a+x)=f(z)+f_1(z)x+\cdots+f_n(z)x^n+O(x^{n+1})$,
	for $P_{n-k}$ functions $f_k$ at $b$ (and a constant in $O(x^{n+1})$ uniform
	in $z$). This follows from the definition by taking $x=(z,a+x)$ and $x'=(z,a)$.
	
	Sums and products of $P_n$ (resp. $T_n$) functions are $P_n$ (resp. $T_n$), 
	and a vector function is $P_n$ (resp. $T_n$) if and only if its coordinate functions are $P_n$ (resp. $T_n$).
	
	We could also require the stronger condition, $ST_n$, defined  similarly $T_n$, but requiring inductively the functions $c_r$ be $ST_{n-r}$ for 
	$r=1,\dots,n$ (the base case $ST_0=T_0$).
	This definition may be
	more natural, and one can prove the same results for this notion in what 
	follows, but the stated definition is enough for Lemma \ref{xy} which is our main motivation.
	\begin{lemma}\label{uniqueness-taylor}
		If $f$ is $P_n$ at $a$, then for every number $i\leq n$, the coefficients 
		$b_i$  in Definition \ref{tn} are determined by $f$.
	\end{lemma}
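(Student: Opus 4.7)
The plan is to show that any two valid expansions of $f$ up to order $n$ must agree coefficient-by-coefficient. First, I would form the difference: if $f(a+x)=\sum_{r\le n}b_rx^r+O(x^{n+1})=\sum_{r\le n}b'_rx^r+O(x^{n+1})$, then setting $c_r:=b_r-b'_r$ and using the ultrametric inequality gives $\sum_{r\le n}c_rx^r=O(x^{n+1})$, which reduces the problem to showing that a formal polynomial of degree $\le n$ whose values near $0$ are $O(|x|^{n+1})$ must be identically zero as a function.

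To tackle this, I would reduce to a one-variable problem by testing along lines through the origin. Fix $v\in K^n$ and substitute $x=tv$ for a scalar $t\in K$. Since $(tv)^j=t^{|j|}v^j$ for each multi-index $j$ with $|j|=r$, we get $c_r(tv)^r=t^r(c_rv^r)$, so the resulting expression is the single-variable polynomial $p_v(t):=\sum_{r\le n}t^r(c_rv^r)\in K^m$ of degree $\le n$ in $t$, and the hypothesis yields $|p_v(t)|\le M_v|t|^{n+1}$ for $|t|$ sufficiently small (the constant $M_v$ may depend on $v$, but that is harmless since $v$ is fixed throughout the next step).

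The core step is then an induction that peels off one coefficient at a time: evaluating $p_v$ at $t=0$ gives $c_0=0$; after dividing by $t$, the quotient is again a polynomial in $t$ satisfying $|p_v(t)/t|\le M_v|t|^n$, so continuity at $t=0$ forces its value there, namely $c_1\cdot v$, to vanish for every $v\in K^n$; iterating this division-and-evaluation $n$ times in total, I obtain $c_rv^r=0$ for every $v\in K^n$ and every $r\le n$. Since $K$ is an infinite field, a polynomial function vanishing identically on $K^n$ has all its coefficients zero, so every component of $c_r$ vanishes, proving $b_r=b'_r$ for all $r\le n$. I do not anticipate any serious obstacle; the argument is essentially formal once the reduction to one direction at a time is made, the only point requiring mild care being that the implicit constant in the big-$O$ estimate is allowed to depend on the auxiliary direction $v$.
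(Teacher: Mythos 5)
Your proposal is correct and follows essentially the same route as the paper: reduce to showing a polynomial that is $O(x^{n+1})$ near $0$ vanishes, restrict to lines $x=tv$ to get a one-variable polynomial, kill the coefficients of the one-variable polynomial by a lowest-order-term argument, and then use that $K$ is infinite to conclude each $c_r=0$ from $c_rv^r=0$ for all $v$. The only cosmetic difference is that you peel off coefficients by iterated division by $t$ where the paper looks directly at the smallest index with a nonzero coefficient; these are the same argument.
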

	\begin{proof}
		The problem readily reduces to the case of $f$ a polynomial restricted to some open neighborhood, $U$, of the origin. I.e., We have to show that if $\sum_{i\leq n}b_ix^i=O(x^{n+1})$ in any open $U\sub K^r$ then $b_i=0$ for all $i$. 
		For $x_0$ fixed let $x=tx_0$ and consider the single variable polynomial $P(tx_0)=\sum_i\le n (b_ix_0^i)t^i=O(t^{n+1})$.
		If we knew the result for $r=1$ this would give that $b_ix_0^i=0$. Since $x_0\in U$ was arbitrary and $U$ contains
		a cartesian product of $r$ infinite sets, this implies $b_i=0$ for all $i$. So we are reduced to proving the result for $r=1$. 
		
		In this case, if $i$ is the smallest with
		$b_i\neq 0$ we get $x^i=O(x^{i+1})$ which is a contradiction.
	\end{proof}
	\begin{proposition}
		If $g$ is $P_n$ at $a$ and $f$ is $P_n$ at $g(a)$ then the composition, $f\circ g$ is $P_n$ at $a$.
		
		If $g$ is $T_n$ at $a$ and $f$ is $T_n$ at $g(a)$ then $f\circ g$ is $T_n$ at $a$.
	\end{proposition}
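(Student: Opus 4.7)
The plan is to prove the $P_n$ statement first by direct substitution, and then obtain the $T_n$ statement as a uniform (in the center) version of the same computation. For the $P_n$ case, write $g(a+x)=g(a)+y_0(x)+E_g(x)$ and $f(g(a)+y)=\sum_{0\le s\le n}c_s y^s+O(y^{n+1})$, where $y_0(x)=\sum_{1\le r\le n}b_r x^r$ and $E_g(x)=O(x^{n+1})$. Setting $y=g(a+x)-g(a)$ gives $|y|=O(|x|)$, hence $O(y^{n+1})=O(x^{n+1})$. Each coordinate monomial $y^j$ with $|j|=s\ge 1$ is a product of coordinates of $y_0(x)+E_g(x)$; every term containing at least one factor from $E_g$ is $O(x^{n+s})\subset O(x^{n+1})$, so $y^j=y_0(x)^j+O(x^{n+1})$. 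Substituting yields
\[
f(g(a+x))=\sum_{0\le s\le n}c_s y_0(x)^s+O(x^{n+1}).
\]
The sum on the right is a polynomial in $x$ of degree at most $n^2$; absorbing its monomials of degree greater than $n$ into the error term produces the $P_n$ expansion of $f\circ g$ at $a$.

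For the $T_n$ case, I carry out the same substitution parametrized by the center $x'$. Set $Y_0(x,x')=\sum_{1\le r\le n}b_r(x')(x-x')^r$. Since each $b_r$ is $P_{n-r}$ at $a$, the $b_r$ are bounded on some neighborhood of $a$, so $|g(x)-g(x')|=O(|x-x'|)$ uniformly for $x,x'$ near $a$, which gives $O((g(x)-g(x'))^{n+1})=O((x-x')^{n+1})$ uniformly. The expansion argument above then yields, uniformly in $x'$,
\[
f(g(x))=\sum_{0\le s\le n}c_s(g(x'))\,Y_0(x,x')^s+O((x-x')^{n+1}).
\]
Expanding and collecting monomials in $(x-x')$, and sending those of degree greater than $n$ into the error term, the right-hand side takes the form $\sum_{0\le k\le n}d_k(x')(x-x')^k+O((x-x')^{n+1})$, which is the shape required by the $T_n$ definition.

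The remaining step is to verify that each $d_k$ is $P_{n-k}$ at $a$. By construction, $d_k(x')$ is a polynomial expression in the quantities $c_s(g(x'))$ for $s\le k$ and in the coefficients of $b_r(x')$ for $r\le k$; the bounds $s,r\le k$ follow from $Y_0$ having no constant term in $(x-x')$, so a degree-$k$ monomial arises only from products of at most $k$ factors $b_{r_i}$ of total $r$-degree $k$. Each $b_r$ is $P_{n-r}$ at $a$, hence $P_{n-k}$ since $r\le k$. Each $c_s\circ g$ is $P_{n-s}$ at $a$ by the $P_n$ case already established (applied with $n$ replaced by $n-s$, using that $g$ is $P_{n-s}$ at $a$ since $P_n\Rightarrow P_{n-s}$), hence $P_{n-k}$ since $s\le k$. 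Since $P_m$ is closed under sums and products, each $d_k$ is $P_{n-k}$ at $a$. The main difficulty is the multi-index bookkeeping; it is straightforward but tedious, and care is needed to ensure that all error constants remain uniform in $x'$ throughout, which they do because every auxiliary function used is $P_m$ at the appropriate point and hence bounded near it.
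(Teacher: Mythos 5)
Your proposal is correct and follows essentially the same route as the paper: substitute the polynomial expansion of $g$ into that of $f$, use $|g(x)-g(x')|=O(|x-x'|)$ (uniformly in $x'$ for the $T_n$ case) to convert the error $O((g(x)-g(x'))^{n+1})$ into $O((x-x')^{n+1})$, and identify the resulting coefficients as sums and products of $c_s\circ g$ and the $b_r$, which are $P_{n-k}$ by the already-established $P$-case and closure of $P_m$ under sums and products. Your bookkeeping of which indices $s,r\le k$ contribute to $d_k$ is somewhat more explicit than the paper's, but the argument is the same.
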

	\begin{proof}
		For the first statement we write
		\begin{align*}
			& f(g(a+x))= \\ 
			& f(g(a)+g_1(a)x+...+g_n(a)x^n+O(x^{n+1}))= \\
			& f(g(a))+f_1(g(a))h(a,x)+f_2(g(a))h(a,x)^2+...+O(h(a,x)^{n+1})= \\
			& fg(a)+b_1(a)x+\cdots+b_n(a)x^n+O(x^{n+1})+O(h(a,x)^{n+1}),
		\end{align*}
		where
		\begin{enumerate}
			\item $h(a,x)=g(a,x)-g(a)=g_1(a)x+\cdots+g_n(a)x^n+O(x^{n+1})$
			\item The second inequality is the application of the assumption that $f$ is $P_n$ at $g(a)$. 
			\item The coefficients $b_i$ arise by expanding the expression 
			\[
			f_k(a)h(a,x)^k=f_k(a)(g_1(a)x+\cdots+g_n(a)x^n+O(x^{n+1}))^k.
			\] 
		\end{enumerate}
		To conclude, we note that, as in the proof of Lemma  \ref{uniqueness-taylor}  $h(a,x)=O(x)$, and so 
		$O(h(a,x)^{n+1})=O(x^{n+1})$.
		
		The proof of the second statement is, essentially, similar:
		\begin{align*}
			& f(g(x))=\\
			& f(g(x')+g_1(x')(x-x')+...+g_n(x')(x-x')^n+O(x-x')^{n+1})= \\
			& f(g(x'))+f_1(g(x'))h(x,x')+f_2(g(x'))h(x,x')^2+...+f_n(g(x'))(x-x')^n+O(h(x,x')^{n+1})= \\
			& f(g(x'))+b_1(x')(x-x')+\cdots+b_n(x')(x-x')^n+O(x^{n+1})+O(h(x,x')^{n+1})    
		\end{align*}

		Where $h(x,x')=g(x)-g(x')=g_1(x')(x-x')+\cdots+g_n(x')(x-x')^n+O(x-x')^{n+1}=
		O(x-x')$, 
		and the coordinates of the coefficients $b_k(x')$ are sums
		and products of the coordinates of the 
		coefficients of $f_i(g(x'))$ and $g_j(x')$ with 
		$i,j\leq k$. By what we have just proved, those are $P_{n-i}$ functions. The constant appearing on $h(x,x')=O(x-x')$ does not depend
		on $x'$ because the $g_i$ are continuous at $a$. 
		We conclude that the $b_k$ are $P_{n-k}$ at $a$, as claimed. 
	\end{proof}
	\begin{proposition}\label{generic-regularity-0}
		Let $K$ be 1-h-minimal and $f:U\to K^m$ as definable function. Then there exists $U'\subset U$ definable
		open and dense such that $f$ is $T_n$ at every point of $U'$.
		
		In particular for every $f:U\to K^m$, there is a definable open dense subset 
		$U'\subset U$ such that $f$ is strictly differentiable in $U'$.
	\end{proposition}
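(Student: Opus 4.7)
The plan is to reduce to the scalar case $m=1$ (both $T_n$ and strict differentiability are coordinate--wise properties) and then extract $U'$ from two nested applications of the multi--variable Taylor approximation (Proposition~\ref{taylor}): one applied to $f$ itself, which will produce the sum $\sum_{r\le n}c_r(x')(x-x')^r+O((x-x')^{n+1})$ with variable center $x'$, and one applied separately to each partial derivative $f^{(j)}$ with $|j|\le n$, which will force the coefficient functions $c_r(x')=(\tfrac{1}{j!}f^{(j)}(x'))_{|j|=r}$ (in the notation of Definition~\ref{monomial}) to be $P_{n-r}$ at any good point $a$.

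Concretely, first I would apply Proposition~\ref{taylor} to $f$ with order $n+1$, obtaining a closed definable $C_0\sub U$ with $\dim(C_0)<k$ and an integer $m_0$ such that on every open ball $B\sub U$ that is $m_0$--next to $C_0$ all derivatives $f^{(i)}$ with $|i|\le n+1$ exist and have constant valuation on $B$, and the order--$(n+1)$ Taylor estimate holds for all $x,x'\in B$; note that the constant--valuation clause makes the error $O((x-x')^{n+1})$ uniform in the center $x'\in B$, which is exactly the uniformity required by the definition of $T_n$. Let $V_0$ be the (open, dense, definable) union of these good balls. Each $f^{(j)}$ with $|j|\le n$ is then definable on $V_0$, so I would apply Proposition~\ref{taylor} a second time to $f^{(j)}\colon V_0\to K$ with order $n-|j|+1$, producing $C_j\sub V_0$ with $\dim(C_j)<k$ and an integer $m_j$. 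Setting $C:=C_0\cup\bigcup_{|j|\le n}\cl_U(C_j)$ (with $\dim(C)<k$ by Proposition~\ref{dimension-boundary-function}) and $m:=\max_j m_j$, I take $U'$ to be the union of all open balls in $U$ that are $m$--next to $C$; this is open, dense and definable. For $a\in U'$ in its good ball $B$, fix $\gamma$ such that the $\gamma$--ball around $a$ lies in $B$. The first Taylor estimate, grouped by total degree, gives the required expansion of $f$ with coefficients $c_r(x')$, while the second Taylor estimate applied at center $a$ reads $f^{(j)}(a+h)=\sum_{|i|\le n-r}\tfrac{1}{i!}[f^{(j)}]^{(i)}(a)h^i+O(h^{n-r+1})$ for each $|j|=r$, which is exactly the $P_{n-r}$ property of $\tfrac{1}{j!}f^{(j)}$ at $a$; hence $c_r$ is $P_{n-r}$ at $a$ and $f$ is $T_n$ at $a$.

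For the ``in particular'' clause I would argue that $T_1$ at $a$ implies strict differentiability there. Substituting $x=x'$ in the $T_1$ expansion forces $c_0=f$ on the $\gamma$--ball, so $f(x)-f(x')-c_1(a)(x-x')=(c_1(x')-c_1(a))(x-x')+O((x-x')^2)$; using that $c_1$ is $P_0$ at $a$ (so $|c_1(x')-c_1(a)|\le M|x'-a|$) this is at most $(M|x'-a|+M'|x-x'|)|x-x'|$, which can be made $\le\varepsilon|x-x'|$ by shrinking the ball around $a$; then $c_1(a)$ is the strict derivative. The main obstacle I expect is purely bookkeeping: combining the two applications of Proposition~\ref{taylor} (at different orders, and on slightly different domains $U$ and $V_0$) into a single closed lower--dimensional bad set, and tracking that the $x'$--uniformity in the order--$(n+1)$ Taylor estimate really comes for free from the constant--valuation clause --- neither step is conceptually hard, but each requires care with the mixed--characteristic ``$m$--next'' parameters and with Proposition~\ref{dimension-boundary-function} to control the closures in $U$.
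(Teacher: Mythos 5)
Your proof is correct and takes essentially the same route as the paper: the paper's own proof is just the one-line remark that the statement follows from the Taylor approximation theorem (Proposition \ref{taylor0} in residue characteristic $0$, Proposition \ref{taylor} otherwise) plus the observation that a $T_1$ function is strictly differentiable, and your argument supplies exactly the details that remark leaves implicit --- the constant-valuation clause giving the error term uniform in the center $x'$, the second application of Taylor to each partial $f^{(j)}$ giving the $P_{n-r}$ property of the coefficients, and the $\varepsilon$--$\delta$ check that $T_1$ yields strict differentiability. No gaps.
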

	This follows from Taylor's approximation theorem (Proposition \ref{taylor0}
	in residue characteristic $0$ and Proposition \ref{taylor} in positive
	residue characteristic). The second statement follows because a $T_1$ function
	is strictly differentiable.
	
	In the next section we show that a strictly differentiable map with invertible
	derivative, definable in a 1-h-minimal valued field,
	is a local homeomorphism. Here we show that 
	the local inverse is  strictly differentiable.
	We then proceed to showing that the properties $P_n$ and $T_n$ are also preserved
	in this local inverse, though this latter fact is not used for the proof of our main results. 
	
	\begin{proposition}\label{inverse-derivative}
		Suppose $f:U\to V$ is a bijection where $U\subset K^n$ and
		$V\subset K^n$ are open. Suppose $f$ satisfies
		$|f(x)-f(y)-(x-y)|<|x-y|$ for $x,y\in U$ distinct.
		Assume $f$ is differentiable at $a$. 
		Then $f'(a)$ is invertible,
		$f^{-1}$ is 
		differentiable at $b=f(a)$
		and $(f^{-1})'(b)=f'(f^{-1}(b))^{-1}$.
		
		If $f$ is strictly differentiable at $a$, then $f^{-1}$ is strictly differentiable at $b$.
	\end{proposition}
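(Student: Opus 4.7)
The plan is to exploit the ultrametric nature of the hypothesis to deduce that $f$ is an isometry, from which every other claim follows almost by a direct computation. Specifically, from $|f(x)-f(y)-(x-y)|<|x-y|$ and the ultrametric inequality applied to the decomposition $f(x)-f(y)=(f(x)-f(y)-(x-y))+(x-y)$, one concludes
\[
|f(x)-f(y)|=|x-y|\quad\text{for all }x,y\in U,
\]
so $f$ is an isometry.

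From here I would handle invertibility of $A:=f'(a)$. Writing $f(a+h)=f(a)+Ah+r(h)$ with $|r(h)|=o(|h|)$, the isometry property yields $|Ah+r(h)|=|h|$; for $h$ small enough $|r(h)|<|h|$, hence $|Ah|=|h|$ by the ultrametric inequality. By scaling this extends to all $h\in K^n$, so $A$ is a linear isometry of $K^n$. In particular $A$ is invertible and $A^{-1}$ is again an isometry: $|A^{-1}k|=|k|$ for all $k$.

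For differentiability of $f^{-1}$ at $b=f(a)$: for $k$ small set $h=f^{-1}(b+k)-a$, so that $k=f(a+h)-f(a)$ and $|h|=|k|$ by the isometry. Then
\[
f^{-1}(b+k)-a-A^{-1}k \;=\; h - A^{-1}k \;=\; A^{-1}\bigl(Ah-k\bigr)\;=\;-A^{-1}\bigl(f(a+h)-f(a)-Ah\bigr),
\]
and differentiability of $f$ at $a$ gives $|f(a+h)-f(a)-Ah|=o(|h|)=o(|k|)$; applying the isometry $A^{-1}$ preserves this, so the left-hand side is $o(|k|)$. This shows that $f^{-1}$ is differentiable at $b$ with derivative $A^{-1}$, as required.

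For the strict version, let $f$ be strictly differentiable at $a$; given $\varepsilon>0$ there is $\delta>0$ such that $|f(x)-f(x')-A(x-x')|\le \varepsilon|x-x'|$ whenever $|x-a|,|x'-a|<\delta$. For $y,y'$ with $|y-b|,|y'-b|<\delta$ set $x=f^{-1}(y)$, $x'=f^{-1}(y')$; the isometry property gives $|x-a|=|y-b|<\delta$ and analogously for $x'$, and $|x-x'|=|y-y'|$. Then the same identity as above,
\[
f^{-1}(y)-f^{-1}(y')-A^{-1}(y-y') \;=\; -A^{-1}\bigl(f(x)-f(x')-A(x-x')\bigr),
\]
combined with the fact that $A^{-1}$ preserves norms, yields the bound $\le \varepsilon|y-y'|$. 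No serious obstacle arises; the only subtlety is that one must observe at step two that $A$ is a genuine isometry (not merely an injective map with $|Ah|\ge c|h|$), so that $A^{-1}$ also preserves valuations — it is this fact that lets the error estimate for $f$ transfer to $f^{-1}$ with the same constant, avoiding any appeal to an operator norm on linear maps.
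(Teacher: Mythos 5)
Your proof is correct and follows essentially the same route as the paper's: both first extract the isometry $|f(x)-f(y)|=|x-y|$ from the hypothesis and then transport the error estimate for $f$ through $f'(a)^{-1}$ via the identity $f^{-1}(y)-f^{-1}(y')-A^{-1}(y-y')=-A^{-1}\bigl(f(x)-f(x')-A(x-x')\bigr)$. The only cosmetic difference is that you show $A=f'(a)$ is itself a linear isometry (so $A^{-1}$ preserves valuations exactly), whereas the paper settles for invertibility and the sup-norm bound $|A^{-1}v|\le|A^{-1}|\,|v|$, which costs only a fixed constant and is equally sufficient.
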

	\begin{proof}
		Note that the hypothesis implies $|f(x)-f(x')|=|x-x'|$. This implies that  $f'(a)$ is invertible. Indeed, 
		assume otherwise, and take $x$ close to $a$ such that $f'(a)(x-a)=0$,
		to get
		$|f(x)-f(a)|<|x-a|$, a contradiction.
		
		Assume that
		$f$ is strictly differentiable at $a$.
		Take $\epsilon>0$ in $\Gamma$. Then there is $0<r\in \Gamma$ such that 
		if $|x-a|,|x'-a|<r$, then
		$|f(x)-f(x')-f'(a)(x-x')|\le \epsilon |x-x'|$.
		If we denote $y=f(x)$ and $y'=f(x')$, then we have $|y-y'|=|x-x'|$,
		so multiplying the above inequality by $f'(a)^{-1}$ we obtain
		\begin{align*}
			& |f^{-1}(y)-f^{-1}(y')-f'(a)^{-1}(y-y')|= \\
			& |f'(a)^{-1}(f'(a)(x-x')-(f(x)-f(x'))|\le \\
			& |f'(a)^{-1}||f(x)-f(x')-f'(a)(x-x')|\le  \\
			& \epsilon |f'(a)^{-1}||x-x'|=
			\epsilon |f'(a)^{-1}||y-y'|, 
		\end{align*}
		where, for a linear map, $A$,  represented by the matrix $(a_{ij})_{i,j}$  we  denote
		$|A|=\max_{i,j}|a_{ij}|$, and use the ultra-metric inequality to get $|Ax|\le |A||x|$, which we apply to obtain the first inequality in the above computation. 
		
		So we conclude that $|f^{-1}(y)-f^{-1}(y')-f'(a)^{-1}(y-y')|\leq 
		\epsilon|f'(a)|^{-1}|y-y'|$ for any $y,y'$ such that $|y-b|=|x-a|<r$ and 
		$|y'-b|=|x'-a|<r$. We have, thus,  shown that $f^{-1}$ is strictly differentiable at $b$ and 
		$(f^{-1})'(b)=f'(f^{-1}(b))^{-1}$. \\
		
		To show that $f^{-1}$ is differentiable if $f$ is substitute $x'=a$ in the above argument. 
	\end{proof}
	\begin{proposition}
		Suppose $f:U\to V$ is a bijection where $U\subset K^n$ and
		$V\subset K^n$ are open. Suppose $f$ satisfies
		$|f(x)-f(y)-(x-y)|<|x-y|$ for $x,y\in U$ distinct.
		Then if $f$ is $P_n$ (resp. $T_n$)  at $b$, $f^{-1}$ is $P_n$ (resp. $T_n$) at $f(b)$.
	\end{proposition}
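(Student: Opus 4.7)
The plan is to set $g(y):=f^{-1}(f(b)+y)-b$, so that $g(0)=0$ and, combining the hypothesis $|f(x)-f(y)-(x-y)|<|x-y|$ with the ultrametric inequality, $|g(y)|=|y|$. By the previous proposition $\alpha_1:=f'(b)$ is invertible. Writing the $P_n$-expansion $f(b+x)=f(b)+\sum_{r=1}^{n}\alpha_r x^r+O(x^{n+1})$ and substituting $x=g(y)$ into the identity $f(b+g(y))=f(b)+y$, one obtains, using $O(g(y)^{n+1})=O(y^{n+1})$,
\[
g(y)=\alpha_1^{-1}y-\alpha_1^{-1}\sum_{r=2}^n\alpha_r\, g(y)^r+O(y^{n+1}).
\]

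The $P_n$ case then follows by a finite recursion: I inductively construct polynomials $P_k(y)$ of degree $\le k$ with $g(y)=P_k(y)+O(y^{k+1})$ for $k\le n$, starting from $P_1(y)=\alpha_1^{-1}y$. At each step the key observation is that if $g(y)=P_k(y)+O(y^{k+1})$ then $g(y)^r=P_k(y)^r+O(y^{k+r})$, so the polynomial part of degree $\le k+1$ on the right-hand side of the displayed identity produces $P_{k+1}$. After $n-1$ iterations $g(y)=P_n(y)+O(y^{n+1})$, which is the $P_n$-condition at $f(b)$.

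For the $T_n$ case I would run the same iteration with a varying base point $x'$ near $b$. Applying the $T_n$-expansion $f(x)=\sum_{r=0}^n c_r(x')(x-x')^r+O(x-x')^{n+1}$ (with error uniform in $x'$, with $c_r$ a $P_{n-r}$ function at $b$, and $c_1(x')$ invertible for $x'$ near $b$ by continuity), setting $y=f(x)$, $y'=f(x')$, and using $|x-x'|=|y-y'|$, the analogous recursion yields
\[
x-x'=\sum_{r=1}^n d_r(x')(y-y')^r+O(y-y')^{n+1},
\]
with each $d_r(x')$ a polynomial expression in $c_1(x')^{-1}$ and $c_2(x'),\dots,c_r(x')$. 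A short calculation shows $d_r$ is $P_{n-r}$ at $b$, using that a $P_m$ matrix-valued function with invertible value at $b$ has a $P_m$ inverse via a finite geometric-series expansion. Composing with $f^{-1}$, which is $P_n$ at $f(b)$ by the first part, $d_r\circ f^{-1}$ is $P_{\min(n-r,n)}=P_{n-r}$ at $f(b)$; together with the $r=0$ term $f^{-1}(y')$ (which is $P_n$ at $f(b)$ by the first part), these produce the desired $T_n$-expansion of $f^{-1}$ at $f(b)$.

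The main obstacle I anticipate is keeping track of the uniformity of the error $O(y-y')^{n+1}$ across the iterations: at each substitution one must verify that the implicit big-$O$ constant does not depend on $y'$, which is precisely what the uniformity built into the $T_n$-hypothesis on $f$ provides (and what mere pointwise $P_n$-ness would not). A secondary technicality is the closure of $P_m$ under matrix inversion at an invertible point, but this is routine from Lemma \ref{uniqueness-taylor} together with the geometric-series trick.
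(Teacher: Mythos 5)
Your proposal is correct and follows essentially the same route as the paper: both substitute the inverse into the forward Taylor expansion (using $|f(x)-f(y)|=|x-y|$ to control the error) and bootstrap the order of approximation one degree at a time, exploiting that the terms with $r\geq 2$ improve the error, and for the $T_n$ case track the $P_{n-r}$-ness of the coefficients through products and composition with $f^{-1}$. The uniformity issue and the $P_m$-inverse of an invertible matrix-valued $P_m$ function that you flag are exactly the points the paper also relies on (the latter implicitly), so there is no gap.
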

	\begin{proof}
		Denote  $a=f(b)$.
		Note that the hypothesis implies than $|f(x)-f(y)|=|x-y|$ for all distinct $x,y\in U$,
		so the inverse map $f^{-1}$ is continuous, and in fact satisfies
		$|f^{-1}(x)-f^{-1}(y)|=|x-y|$ for distinct $x,y\in V$. In particular, $f^{-1}$ is 
		$T_0$ in $V$.
		
		Now, assume that $f$ is $P_n$ at $b$, with $n\ge 1$. In particular, by Proposition \ref{inverse-derivative}
		it is differentiable and  $f'(b)$ is invertible.
		
		Apply the fact that $f$ is $P_n$ (and see also the discussion following the definition) to get: 
		\[
		f(y)-f(b)=f'(b)(y-b)+f_2(b)(y-b)^2+\cdots+f_n(b)(y-b)^n+O(y-b)^{n+1}.
		\]
		Rearranging, we get:  
		\[
		y-b=f'(b)^{-1}(f(y)-f(b))-f'(b)^{-1}f_2(b)(y-b)^2-\cdots-
		f'(b)^{-1}f_n(b)(y-b)^n+O(y-b)^{n+1}.
		\]
		Putting $y=f^{-1}(x)$, and remembering $x-a=O(y-b)$ we conclude
		\[
		f^{-1}(x)-f^{-1}(a)=f'(f^{-1}(a))^{-1}(x-a)+\sum_{2\leq i\leq n}c_i(a)(f^{-1}(x)-f^{-1}(a))^i+O(x-a)^{n+1},
		\tag{$\diamond$} \]
		for some constants $c_i(a)$.
		
		Next we proceed to  showing (by induction on $k\le n$) that $f^{-1}$ is $P_k$. As $P_0$ follows from the equality $|f^{-1}(x)-f^{-1}(y)|=|x-y|$, we assume that 
		So suppose $f^{-1}$ is $P_{k-1}$. Using this, we can write
		$f^{-1}(x)-f^{-1}(a)=\sum_{1\leq j<k}b_j(a)(x-a)^j+O(x-a)^k$, 
		and apply a direct computation to obtain that 
		\[
		c_i(a)(f^{-1}(x)-f^{-1}(a))^i=
		\sum_{i\leq j\leq k}d_{ij}(a)(x-a)^j+O(x-a)^{k+1}.
		\]
		for some constants $d_{ij}$. 
		Note that as $i\ge 2$ we obtain the improved error $O(x-a)^{k+1}$.
		Substituting this in $(\diamond)$, we the conclusion follows. 
		
		Now suppose $f$ is $T_n$ at $b$. The proof in this case is similar:
		\[f^{-1}(x)-f^{-1}(x')=f'(f^{-1}(x'))^{-1}(x-x')+\sum_{2\leq i\leq n}c_i(f^{-1}(x'))(f^{-1}(x)-f^{-1}(x'))^i+O(x-x')^{n+1},\]
		so, as above,  if $f^{-1}(x)$ is $T_{k-1}$ we obtain
		\[f^{-1}(x)-f^{-1}(x')=f'(f^{-1}(x'))^{-1}(x-x')+\sum_{2\leq i\leq k}d_i(x')(x-x')^i+O(x-x')^{k+1}.\]
		Here note that
		$f'$ is $P_{n-1}$ at $b$ and so $f'(f^{-1}(x'))^{-1}$ is $P_{k-1}$ at $a$. 
		Also, following the above argument we see that
		the coordinates of $d_i(x')$ 
		are sum and products of functions of the form $b_{i'}(x')$ with
		$1\leq i'<i$, 
		for $b_{i'}(x')$ a 
		$P_{k-1-i'}$ function at $a$ (by the induction hypothesis that $f^{-1}$
		is $T_{k-1}$), and functions of the form
		$c_{i'}(f^{-1}(x'))$ for $c_{i'}(y')$ a $P_{k-i'}$ function at $b$,
		$i'\leq i$ (by the assumption
		that $f$ is $T_n$). 
		So $d_i(x')$ is a $P_{k-i}$ at $a$.
	\end{proof}
	
	The next lemma will be important in our study of the differential structure of definable groups. For the statement, recall  that $O_x$ means that the constant implicit in the notation depends
	on $x$, see Definition \ref{big-o}.
	
	\begin{lemma}\label{xy}
		Let $f:U\times V\to K^r$ be a definable function, where $U\subset K^n$
		and $V\subset K^m$ are open sets around $0$.
		Suppose $f(x,y)$ is $T_2$ at $(0,0)$, and $f(x,y)=O(x,y)^3$. 
		If 
		$axy+f(x,y)=O_x(y^2)$, then $a=0$.
	\end{lemma}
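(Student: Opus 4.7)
The plan is to exploit the $T_2$-regularity of $f$ at the origin to write a degree-$\le 2$ Taylor expansion with reference point $(x,0)$, collapse the lower-order coefficients using $f(x,y)=O((x,y)^3)$, and then match the resulting linear-in-$y$ term against $axy$ in the hypothesis to deduce that $ax = O(x^2)$, which forces $a=0$.

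First, I would apply the $T_2$ property of $f$ at $(0,0)$ with reference point $(x',y')=(x,0)$ and evaluation point $(x,y)$. This gives, for all small $(x,y)$ and with an $O$-constant uniform in $x$,
\[
f(x,y) = f(x,0) + c_{1,y}(x,0)\, y + c_2(x,0)(0,y)^2 + O(y^3),
\]
where $c_{1,y}$ is the $y$-component of the linear coefficient $c_1$ in the $T_2$-expansion. By definition of $T_n$, $c_{1,y}$ is $P_1$ at $(0,0)$ and $c_2$ is $P_0$ at $(0,0)$, so
\[
c_{1,y}(x,0) = c_{1,y}(0,0) + L_1 x + O(x^2),\qquad c_2(x,0) = c_2(0,0) + O(x),
\]
for some linear map $L_1$.

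Next, I would combine these expansions with $f=O((x,y)^3)$. Plugging in and absorbing the cross terms $O(x^2)y$, $O(x)y^2$, $O(y^3)$ into $O((x,y)^3)$, the displayed identity becomes
\[
f(x,y) = f(x,0) + c_{1,y}(0,0)\,y + L_1 xy + c_2(0,0)(0,y)^2 + O((x,y)^3).
\]
On the other hand, $f$ is $P_2$ at $(0,0)$ with vanishing degree-$\le 2$ coefficients (Lemma \ref{uniqueness-taylor} together with $f = O((x,y)^3)$). Matching the $P_2$-Taylor coefficients at the origin forces the degree-$\le 2$ part of $f(x,0)$ as a polynomial in $x$ to vanish, and likewise $c_{1,y}(0,0)=0$, $c_2(0,0)=0$, $L_1=0$. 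In particular $c_{1,y}(x,0)=O(x^2)$.

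Finally, I would return to the hypothesis. Evaluating $axy + f(x,y) = O_x(y^2)$ at $y=0$ gives $f(x,0)=0$ for all small $x$, and then for $y\ne 0$ small,
\[
[ax + c_{1,y}(x,0)]\, y + O_x(y^2) = O_x(y^2).
\]
The coefficient $ax + c_{1,y}(x,0)$ is independent of $y$, so replacing $y$ by $ty$ and letting $t\to 0$ forces $ax + c_{1,y}(x,0) = 0$ for every small $x$. Combined with $c_{1,y}(x,0) = O(x^2)$ this yields $ax = O(x^2)$, and a linear expression in $x$ being $O(x^2)$ is identically zero, so $a=0$.

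The main technical point is the middle step: one has to identify the ``coefficient of $xy$'' of $f$ inside the $T_2$-formalism (it coincides with $L_1$ up to combinatorial factors) and verify that the miscellaneous error terms combine uniformly into $O((x,y)^3)$ so that Lemma \ref{uniqueness-taylor} genuinely applies. Once the Taylor bookkeeping is in place, the matching arguments are routine.
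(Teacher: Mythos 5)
Your proof is correct and takes essentially the same route as the paper's: expand $f$ via $T_2$ at the origin with reference point $(x,0)$, use the hypothesis together with uniqueness of Taylor coefficients in $y$ to get $ax + c_{1,y}(x,0) = 0$, then use $f=O((x,y)^3)$ and uniqueness of Taylor coefficients again to force $c_{1,y}(x,0)=O(x^2)$ and hence $a=0$. The only cosmetic difference is that the paper absorbs the quadratic term $c_2(x,0)y^2$ directly into the $O(y^2)$ error instead of tracking it explicitly.
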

	
	\begin{proof}
		By the definition of $T_2$ (with $x=x'$, $y'=0$) we get 
		$f(x,y)=f_0(x)+f_1(x)y+O(y^2)$, for $f_1$ a $P_1$ function at $0$ and 
		$f_0$ a $P_2$ function at $0$. Fixing $x$ and expanding the Taylor polynomial of $f(x,y)$ the uniqueness
		of Taylor coefficients (Lemma \ref{uniqueness-taylor}) 
		gives, using our assumption,  $axy+f_0(x)+f_1(x)y=0$.
		
		Expanding $f_0,f_1$ around $0$ and keeping in mind $f(x,y)= O(x,y)^3$
		we get $f_0(x)= O(x^3)$ and $f_1(x)= O(x^2)$.
		Indeed, we have $f_0(x)=b_0+b_1x+b_2x^2+O(x^3)$ and 
		$f_1(x)=c_0+c_1x+O(x^2)$, so 
		$f(x,y)=b_0+b_1x+b_2x^2+c_0y+c_1xy+O(x,y)^3=O(x,y)^3$, so from the 
		uniqueness of the Taylor coefficients we get $b_0=b_1=b_2=c_0=c_1=0$.
		
		Now from $axy=O(x,y)^3$, we get $a=0$, by the uniqueness of Taylor coefficients
		again.
	\end{proof}
	
	\section{Strictly differentiable definable maps}\label{smooth-map}
	In this section we prove an inverse function theorem for definable strictly differentiable 
	maps in a 1-h-minimal valued fields. This is done adapting a standard argument from real
	analysis using Banach's fixed point theorem. In the present section we use definable spherical completeness to obtain a definable version of Banach's fixed point theorem, implying, almost formally, the desired inverse function theorem. 
	From the inverse function theorem we deduce results on the local
	structure of immersions and submersions in the usual way. We then proceed to  proving a generic
	version of the theorem on the local structure of functions of constant rank (Proposition \ref{constant-rank}). This last result
	is obtained only generically. The reason is that definable functions whose partial derivative with respect to a variable $x$ is $0$ on an open set, $U$  need not be  locally constant in $x$ in $U$. For that reason,  we give  a different argument for a weaker result, see 
	Proposition \ref{generic-relative-locally-constant}, and the discussion preceding it. \\
	
	Throughout the rest of this section, we fix an $\aleph_0$-saturated 1-h-minimal valued field $K$.  \\
	
	We start with a fixed point theorem, mentioned in  \cite[Remark 2.7.3]{hensel-min}. We first note that a version of definable spherical completeness of $1$-h-minimal fields (\cite[Lemma 2.7.1]{hensel-min}) holds in positive residue characteristic: 
	\begin{lemma}\label{spherical-completeness}
		Suppose $K$ has positive residue characteristic $p$.
		Suppose $\{B_i\}_i$ is a definable chain of open balls or 
		a definable chain of closed balls. Suppose, further,  that for every
		$i$ there is $j$ such that $\text{rad}(B_j)\leq |p|\text{rad}(B_i)$.
		Then $\bigcap_i B_i\neq \0$.
	\end{lemma}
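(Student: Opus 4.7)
The plan is to adapt the argument for equi-characteristic zero from \cite[Lemma 2.7.1]{hensel-min}, using the radius hypothesis to compensate for the fact that preparation in mixed characteristic only controls balls that are $m$-next to a finite set (for a fixed $m$ depending on the data). First, by Fact \ref{acl=dcl} together with Remark \ref{acl=dcl-remark}, I pass to a language extension in which $\acl=\dcl$, so that definable Skolem functions are available. Let $A$ index the chain and choose, via $\acl=\dcl$, a definable selector $c:A\to K$ with $c(a)\in B_a$ for each $a\in A$. Set $E:=c(A)\subseteq K$, which is a definable subset of $K$.

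Assume for contradiction that $\bigcap_i B_i=\emptyset$ and apply $1$-h-minimality (mixed characteristic version, with $s=1$) to $E$: we obtain $m\in\Nn$ and a finite set $Y\subset K$ that $m$-prepares $E$. Because $Y$ is finite and the intersection of the chain is empty, some $B_{a_0}$ satisfies $B_{a_0}\cap Y=\emptyset$. Now iterate the hypothesis: applying the radius-shrinking $m$ times, we produce $a_1\in A$ with $B_{a_1}\sub B_{a_0}$ and $\text{rad}(B_{a_1})\le |p|^{m}\text{rad}(B_{a_0})$. The open ball of radius $|p|^{-m}\text{rad}(B_{a_1})$ concentric to $B_{a_1}$ then has radius at most $\text{rad}(B_{a_0})$ and meets $B_{a_0}$ at $c(a_1)$, so is contained in $B_{a_0}$, hence disjoint from $Y$. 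Thus $B_{a_1}$ is $m$-next to $Y$, and since $c(a_1)\in B_{a_1}\cap E$, the preparation forces $B_{a_1}\sub E$.

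The final step is to convert the containment $B_{a_1}\sub c(A)$ into a genuine contradiction with the emptiness of the intersection; this is the main obstacle. My approach is to pick, again using $\acl=\dcl$, a definable section $\sigma:B_{a_1}\to A$ with $c\circ\sigma=\id_{B_{a_1}}$, and then to iterate the preceding step to produce a strictly smaller $B_{a_2}\subsetneq B_{a_1}$ (guaranteed by the radius hypothesis) with $B_{a_2}\sub E$ as well. Picking $x\in B_{a_1}\setminus B_{a_2}$, the index $\sigma(x)$ indexes a ball $B_{\sigma(x)}$ containing $x$, and the nested structure of the chain combined with $x\notin B_{a_2}$ constrains the location of $\sigma(x)$ in $A$. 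To close the argument I would combine this with an auxiliary application of $1$-h-minimality, either to the set $\sigma(B_{a_1}\setminus B_{a_2})\sub A$ (after first reducing to the case $A\sub\Gamma$ via the radius reparametrisation $a\mapsto\text{rad}(B_a)$), or to the iterated ``sphere localisation'' that shows the tail of the chain lives on a sphere of radius $\delta:=|c(a_0)-c(a)|$ for $a$ sufficiently deep, thus exhibiting an infinite $|p|$-multiplicative descent of such radii that is incompatible with the uniform bound coming from $Y$ and $m$.
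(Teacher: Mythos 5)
There are two genuine gaps here, one at the very start and one at the end. The first is the definable selector $c:A\to K$ with $c(a)\in B_a$. Fact \ref{acl=dcl} only gives $\acl=\dcl$, i.e.\ Skolem functions for \emph{finite} definable sets; it does not give definable choice for an infinite definable family of balls, and 1-h-minimal fields need not have such choice (in ACVF$_0$, for instance, there is no definable section picking a point in each open ball of radius $1$, as that would definably split $K\to K/\mathcal{O}$). Without $c$ the set $E=c(A)$ is not definable and the preparation step cannot be run; the later ``definable section $\sigma:B_{a_1}\to A$'' has the same problem. The second gap is that the endgame is not actually a proof: you correctly identify that $B_{a_1}\subseteq E$ is not by itself contradictory (it is the set of chosen centers, not the intersection of the chain, that you have shown to contain a ball), and you then offer two alternative sketches (``either \dots or \dots'') without carrying either one out. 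As written, the argument stops exactly where the real work would have to happen.

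For comparison, the paper sidesteps both issues by preparing the \emph{family} of balls rather than a set of chosen points: after reducing the closed case to the open case and noting the chain has no minimal element, it observes that $B_i\mapsto \mathrm{rad}(B_i)$ is injective, so the chain can be reindexed by a subset of $\Gamma$, hence by $RV$-interpretable parameters; then \cite[Proposition 2.3.2]{hensel-minII} yields a single finite set $C$ and an $m$ such that every ball $m$-next to $C$ is contained in or disjoint from \emph{every} $B_i$ simultaneously. If some $B_{i_0}$ missed $C$, a ball $B_i\subseteq B_{i_0}$ of radius $<|p|^m\mathrm{rad}(B_{i_0})$ would be $m$-next to $C$ yet properly split by a still deeper ball of the chain, a contradiction; so $C$ meets every $B_i$, and finiteness of $C$ plus the chain condition produces a common point. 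Your first two steps (finding a ball disjoint from $Y$ and descending $m$ levels to land $m$-next to $Y$) are sound in spirit and parallel the paper's, but the object being prepared has to be the chain itself, not an undefinable set of centers.
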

	
	\begin{proof}
		The proof is similar to spherical completeness in residue characteristic
		$0$, see Lemma 2.7.1 of \cite{hensel-min}. It is enough consider the $1$-dimensional case, since the higher dimensional case follows by considering the coordinate projections. Note also that our assumption implies that the chain $\{B_i\}$ has no minimal element (as such an element would have valuative radius $0$). 
		
		The closed case follows from
		the open case as follows: given a definable chain  $\{B_i\}_{i\in I}$ of closed balls. For each $i$ let $r_i$ be the valuative radius of $B_i$ and let $B_i'$ be the unique open ball $B\sub B_i$ of valuative radius $r_i$ with the additional property that $B\supseteq B_j$ for all $j<i$. Obviously, $\bigcap_i B_i=\bigcap_i B_i'$ (unless the chain $B_i$ has  a minimal element, in which case there is nothing to prove). 
		
		Note that, in the above notation, the map $B_i\mapsto r_i$ is injective, so there is no harm assuming that $\{B_i\}$ is indexed by a subset of $\Gamma$. Thus, our chain $\{B_i\}$ has index set interpretable in $RV$, 
		so by \cite[Proposition 2.3.2]{hensel-minII} there is a finite set $C$ $m$-preparing the chain $\{B_i\}$ for some $m\in \Nn$. We claim that $C\cap B_i\neq \0$ for all $i\in I$. This would finish the proof, since $C$ is finite. 
		Assume, therefore, that this is not the case, and let $i_0\in I$ be such that $B_{i_0}\cap C=\0$. By assumption we can find $i<i_0$ such that $r_i<|p^m|r_{i_0}$. Then $B_i$ is a ball $m$-next to $C$, and since our chain has no minimal element, any ball $B\varsubsetneq B_i$ that is an element of our chain is not $m$-prepared by $C$, a contradiction. 
	\end{proof}

 Note that by \cite[Exanple 1.5]{BWHal} infinitely ramified $1$-h-minimal fields of positive residue characteristic need not be definably spherically model complete. Thus, the extra condition in the assumption of the above lemma is not superfluous.

	\begin{proposition}\label{fixed-point}
		Let $B_r=\{x\in K^n: |x|\leq r\}$.
		Suppose $f:B_r\to B_r$ is a definable function. 
		Assume that for distinct $x,y\in B_r$,
		we have
		\begin{enumerate}
			\item $|f(x)-f(y)|<|x-y|$ if the residue characteristic is $0$.
			\item $|f(x)-f(y)|\leq |p||x-y|$ if the residue characteristic is $p>0$.
		\end{enumerate}
		Then $f$ has a unique fixed point in $B_r$.
	\end{proposition}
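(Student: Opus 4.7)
The plan is to prove uniqueness directly from the contraction hypothesis and to obtain existence via an application of definable spherical completeness (Lemma \ref{spherical-completeness} in positive residue characteristic, and the analogous Lemma 2.7.1 of \cite{hensel-min} in residue characteristic $0$). Uniqueness is immediate: if $x, y \in B_r$ are distinct fixed points of $f$, then $|x-y| = |f(x) - f(y)| < |x-y|$ (or $\leq |p||x-y|$ in the positive residue characteristic case), a contradiction.

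For existence, I would consider the definable family of closed balls $\{B_x\}_{x \in B_r}$ given by $B_x := \{z \in K^n : |z - x| \leq \rho(x)\}$, where $\rho(x) := |f(x) - x|$. Using only the ultrametric inequality together with the contraction hypothesis, one verifies three properties: (a) $f(B_x) \sub B_x$ for every $x$; (b) if $y \in B_x$ then $B_y \sub B_x$; and (c) any two members $B_x, B_y$ are comparable under inclusion, since if they were disjoint then $|x - y| > \max(\rho(x), \rho(y))$, which by ultrametricity forces $|f(x) - f(y)| = |x-y|$, contradicting the contraction. Property (c) shows that $\{B_x\}_{x \in B_r}$ is a definable chain of closed balls, naturally re-indexed by the radii $\rho(x) \in \Gamma_0$ (distinct radii determine distinct balls, and balls of equal radius in the family must coincide by (c)).

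Next, for every $x \in B_r$, setting $y = f(x) \in B_x$ yields $\rho(y) = |f(f(x)) - f(x)| \leq |p|\rho(x)$ in positive residue characteristic (and $\rho(y) < \rho(x)$ whenever $\rho(x) > 0$ in residue characteristic $0$), so the chain satisfies the hypotheses of the relevant spherical completeness lemma. This produces a point $z \in \bigcap_{x \in B_r} B_x$. In particular, applying the defining condition of $B_{f(z)}$ to $z$ gives $|z - f(z)| \leq \rho(f(z)) = |f(f(z)) - f(z)|$, which combined with the strict contraction forces $f(z) = z$.

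The main technical point, and the one requiring care, is property (c): pinning down via the ultrametric that the family of balls is genuinely linearly ordered by inclusion, so that the spherical completeness lemmas apply. Everything else is a routine transcription of the classical Banach fixed point argument to the ultrametric setting, with iteration of $f$ replaced by passing to the intersection of the definable chain.
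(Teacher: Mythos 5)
Your proposal is correct and follows essentially the same route as the paper: the same family of balls $B_x$ of radius $|f(x)-x|$, the same disjointness-forces-$|f(x)-f(y)|=|x-y|$ argument to get a chain, the same verification of the radius-shrinking hypothesis via $y=f(x)$, and the same use of definable spherical completeness followed by the contraction to show the intersection point is fixed. No substantive differences.
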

	\begin{proof}
		Uniqueness is immediate from the hypothesis. For existence take the family of balls of the form $B(a)_{|f(a)-a|}$. It is a definable chain of balls indexed by $a\in B_r$.
		Indeed if $a,b\in B_r$ are distinct and the balls are disjoint then
		$|f(a)-f(b)|=|a-b|$, as the distance of points in disjoint balls does not change. Note that 
		in positive residue characteristic one has the additional hypothesis
		in Lemma \ref{spherical-completeness}, because $|f(f(a))-f(a)|\leq |p||f(a)-a|$,
		by assumption 2 on $f$.
		
		By the appropriate version of definable spherical completeness of 1-h-minimal fields (Lemma 2.7.1 of \cite{hensel-min} for residue characteristic $0$, Lemma \ref{spherical-completeness} otherwise),
		we obtain a point $x$ in the intersection of all balls. Then $x$ is a fixed point of $f$.
		Indeed, if we assume otherwise then for $y=f(x)$, we have $|f(y)-y|<|f(x)-x|$ by the hypothesis. 
		On the other hand if $a$ is arbitrary then, as $x\in B(a)_{|f(a)-a|}$, one has $|f(x)-f(a)|\leq |x-a|\leq |f(a)-a|$
		and so $|f(x)-x|\leq |f(a)-a|$. This is a contradiction and finishes the proof.
	\end{proof}
	Just as in real analysis this fixed point theorem implies an inverse function theorem.
	
	\begin{proposition}\label{bilipschitz-inverse-function}
		Suppose $f:U\to K^n$ is a definable function from an open set $U\subset K^n$ 
		satisfying the following ``bilipschitz condition'':
		for every $x,y\in U$ distinct 
		\begin{enumerate}
			\item
			$|f(x)-f(y)-(x-y)|<|x-y|$ if the residue characteristic is $0$.
			\item
			$|f(x)-f(y)-(x-y)\leq |p||x-y|$ if the residue characteristic is $p>0$.
		\end{enumerate}
		Then $f(U)$ is open and $f$ is a homeomorphism from $U$ to $f(U)$.
		If $f$ is (strictly) differentiable then $f^{-1}$ is (strictly) differentiable.
	\end{proposition}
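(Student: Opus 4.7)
The plan is to derive injectivity and continuity of $f^{-1}$ directly from the bilipschitz hypothesis, then reduce openness of $f(U)$ to an application of the definable fixed point theorem (Proposition \ref{fixed-point}), and finally invoke Proposition \ref{inverse-derivative} to transfer (strict) differentiability from $f$ to $f^{-1}$.

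First, for distinct $x,y\in U$, writing $f(x)-f(y)=(x-y)+\bigl((f(x)-f(y))-(x-y)\bigr)$ expresses $f(x)-f(y)$ as the sum of $x-y$ and a vector of strictly smaller norm, so the ultrametric inequality gives $|f(x)-f(y)|=|x-y|$. In particular $f$ is injective and both $f$ and $f^{-1}\colon f(U)\to U$ are isometries, and hence continuous.

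For openness of $f(U)$, fix $x_0\in U$, set $y_0=f(x_0)$, and pick $r>0$ such that the closed ball $B=\{x\in K^n:|x-x_0|\leq r\}$ is contained in $U$. For $y$ with $|y-y_0|\leq r$, define the definable map $g\colon B\to K^n$ by $g(x)=x-f(x)+y$, so that fixed points of $g$ correspond exactly to preimages of $y$ under $f$. For distinct $x,x'\in B$ one has $g(x)-g(x')=-\bigl((f(x)-f(x'))-(x-x')\bigr)$, so the bilipschitz condition on $f$ translates directly into the contraction hypothesis of Proposition \ref{fixed-point} for $g$. In particular, for every $x\in B$ we obtain $|g(x)-g(x_0)|\leq |x-x_0|\leq r$, while $|g(x_0)-x_0|=|y-y_0|\leq r$, so the ultrametric inequality yields $g(B)\subseteq B$. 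After translating by $-x_0$ to bring $B$ into the form required by Proposition \ref{fixed-point}, that proposition produces a unique fixed point of $g$ in $B$, i.e.\ some $x\in B$ with $f(x)=y$. Hence $\{y:|y-y_0|\leq r\}\subseteq f(U)$, proving that $f(U)$ is open.

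Combined with continuity of $f^{-1}$, this shows that $f\colon U\to f(U)$ is a homeomorphism onto an open set. The statement on (strict) differentiability of $f^{-1}$ then follows pointwise by a direct appeal to Proposition \ref{inverse-derivative}. The only real subtlety is in the fixed-point reduction: one has to choose $r$ so that $B\subseteq U$ and simultaneously ensure that $g$ stabilises $B$ uniformly for all $y$ with $|y-y_0|\leq r$, but both requirements are handled cleanly by openness of $U$ together with the ultrametric inequality.
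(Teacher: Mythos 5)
Your proposal is correct and follows essentially the same route as the paper's proof: establish $|f(x)-f(y)|=|x-y|$ from the ultrametric inequality (giving injectivity and continuity of the inverse), reduce openness of $f(U)$ to the definable fixed point theorem via the auxiliary map $g(x)=x-f(x)+y$, and invoke Proposition \ref{inverse-derivative} for (strict) differentiability of $f^{-1}$. The only cosmetic difference is that the paper translates to $x_0=0$, $f(x_0)=0$ at the outset rather than at the point of applying the fixed point theorem.
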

	\begin{proof}
		Injectivity of the map follows directly from the hypothesis. The same assumptions also imply that if $x,y\in U$ are distinct then
		$|f(x)-f(y)|=|x-y|$, implying  continuity of the inverse.
		
		The main difficulty is showing that $f(U)$ is open. Translating, we may assume $0\in U$ and $f(0)=0$. We have to find
		an open neighborhood of $0$ in $f(U)$.
		Take $r>0$ such that $0\in B_r\subset U$. Then $B_{r}\subset f(U)$.
		Indeed, if $|a|\leq r$ then, by the same reasoning as above,  the function $g(x)=x+a-f(x)$ satisfies $g(B_r)\subset B_r$. By the  assumptions on $f$ this implies that $g$ satisfies the hypothesis of Proposition \ref{fixed-point}. So  $g(x_0)=x_0$ for some $x_0$, namely, $a=f(x_0)$, as claimed. 
		
		Differentiability (and strict differentiability) of $f^{-1}$  now follow from \ref{inverse-derivative}.
	\end{proof}
	
	We can finally formulate and prove the inverse function theorem for $1$-h-minimal fields: 
	
	\begin{proposition}\label{inverse-function}
		Suppose $f:U\to K^n$ is a definable function from an open set 
		$U\subset K^n$.
		Suppose $f$ is strictly differentiable at $a$ and 
		$f'(a)$ is invertible.
		Then there is an open set $V\sub U$ around $a$ such that $f(V)$ is open and $f:V\to f(V)$ is a bijection whose inverse is strictly differentiable  at $f(a)$.
	\end{proposition}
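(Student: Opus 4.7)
The plan is to reduce the statement directly to Proposition \ref{bilipschitz-inverse-function} by first performing an affine change of coordinates to normalize the derivative at $a$ to the identity, and then using strict differentiability at a single point to produce the required bilipschitz estimate on a small ball.

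Concretely, I would replace $f$ by
\[
g(x) \;:=\; f'(a)^{-1}\bigl(f(x+a) - f(a)\bigr),
\]
defined on the translate $(U-a)$. Then $g$ is definable, $g(0)=0$, $g$ is strictly differentiable at $0$, and by the chain rule (which is routine for strict differentiability) $g'(0) = \id$. Since $f(x) = f(a) + f'(a)\cdot g(x-a)$, the map $f$ on a neighborhood of $a$ is the composition of $g$ with two affine strict diffeomorphisms of $K^n$, so proving the conclusion for $g$ at $0$ transports back to the conclusion for $f$ at $a$.

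Next, strict differentiability of $g$ at $0$ with $g'(0)=\id$ means that for every $\epsilon\in\Gamma$ with $\epsilon>0$ there exists $\delta\in\Gamma$ with $\delta>0$ such that
\[
|g(x)-g(y)-(x-y)|\;\leq\;\epsilon\,|x-y|
\]
whenever $|x|,|y|<\delta$. In residue characteristic $0$, choose any $\epsilon<1$ in $\Gamma$; in residue characteristic $p>0$, choose $\epsilon\leq |p|$. Shrinking $\delta$ if necessary so that $V_0:=\{x\in K^n:|x|<\delta\}\sub U-a$, the restriction $g|_{V_0}$ satisfies the bilipschitz hypothesis of Proposition \ref{bilipschitz-inverse-function} (with strict inequality in the equicharacteristic $0$ case on distinct pairs, because $\epsilon<1$).

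Applying Proposition \ref{bilipschitz-inverse-function} to $g|_{V_0}$ gives that $g(V_0)$ is open, $g:V_0\to g(V_0)$ is a homeomorphism, and since $g$ is strictly differentiable at $0$, the inverse is strictly differentiable at $g(0)=0$. Setting $V:=a+V_0$ and transporting through the affine changes of variables yields the statement for $f$ at $a$. I do not anticipate a genuine obstacle: the substantive analytic content — definable spherical completeness producing a fixed point, and the fixed point producing a local inverse — is already carried out in Propositions \ref{fixed-point} and \ref{bilipschitz-inverse-function}; the only remaining work is the familiar observation that strict differentiability at a single point upgrades the pointwise linear approximation to a uniform bilipschitz estimate on a ball, which is precisely what Propositions \ref{bilipschitz-inverse-function} and \ref{inverse-derivative} consume.
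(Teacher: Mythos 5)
Your proof is correct and takes essentially the same route as the paper: the paper's own proof consists of the single observation that $f'(a)^{-1}f$ satisfies the hypothesis of Proposition \ref{bilipschitz-inverse-function} on a small ball around $a$, which is exactly your normalization-plus-bilipschitz argument (you merely add the harmless translation and spell out the choice of $\epsilon$ in each residue characteristic).
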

	\begin{proof}
		By the definition of strict differentiability the function $f'(a)^{-1}f$ satisfies the hypothesis of the previous proposition in a small open ball around $a$. The conclusion follows. 
	\end{proof}
	We do not know whether a definable function, $f:U\to K$, with continuous partial derivatives, but such that $f$ is not strictly differentiable in $U$ could exist\footnote{In real analysis it is well known that a function $f: U \to \Rr$ is  $\CC^1$ in $U$ if and only if it is strictly differentiable there.}. Clearly, sums, products and compositions of strictly differentiable functions are strictly differentiable, and so are locally analytic functions. Moreover, strict differentiability is first order definable, and therefore extends to elementary extensions. Also, by the generic Taylor
	approximation theorem, in the $1$-h-minimal context, any definable function in an open subset of $K^n$ is strictly differentiable in a dense open subset. See Proposition \ref{generic-regularity-0}. \\
	
	Our next goal is to study definable functions of constant rank. We first note, that without the assumption of definability, a strictly differentiable function whose derivative vanishes identically need not be locally constant:  
	\begin{example}
		Consider a function $f:\mathcal{O}\to\mathcal{O}$ 
		that is locally constant in $B\setminus \{0\}$ but near $0$ it grows like $x^2$. Such a function $f$ will be strictly differentiable, with $f'\equiv 0$, but $f$ is not locally constant at $0$. 
	\end{example}
	
	Roughly, a function as in the above example involves an infinite number of choices, so it is not definable. In contrast we have:
	
	\begin{proposition}\label{locally-constant}
		Let $f:U\to K^m$ be a function definable in an open set $U\subset K^n$.
		Assume $f$ is continuous. Assume $f$ is
		differentiable with derivative $0$ on an open
		dense subset of $U$.
		Then $f$ is locally constant with finite image.
	\end{proposition}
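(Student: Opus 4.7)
The plan is to combine Taylor's approximation theorem (Propositions \ref{taylor0} and \ref{taylor}) with the dimension theory of geometric structures. Applying Taylor componentwise with error order $r=1$, there is a definable $C\sub U$ with $\dim(C)<n$ (together with an integer $N\in\mathbb{N}$ in the mixed characteristic case) such that on every open ball $B\sub U$ which is $N$-next to $C$, all partial derivatives of each component of $f$ exist and have constant valuation on $B$, and the Taylor estimate
\[
|f(x)-f(x_0)|\le \max_{|i|=1}\bigl|f^{(i)}(x_0)(x-x_0)^i\bigr|
\]
holds for all $x,x_0\in B$. Since $f'\equiv 0$ on a dense open subset of $U$, each such $B$ meets this zero set; constancy of valuation then forces $f'\equiv 0$ throughout $B$, and the Taylor estimate collapses to $f(x)=f(x_0)$, so $f$ is constant on $B$.

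Every point of $U\setminus \overline{C}$ lies in some ball $N$-next to $C$, so $f$ is locally constant on the open dense set $V:=U\setminus \overline{C}$; openness and density of $V$ use Proposition \ref{dimension-boundary-function}. The main step is then to show that the image $g(V)$ of $g:=f|_V$ is finite. Fix a tuple $b$ over which everything is defined and pick $y\in g(V)$ generic over $b$. Local constancy of $g$ implies that the fiber $g^{-1}(y)$ contains an open ball around each of its points, so $\dim(g^{-1}(y))=n$. Choose $x$ generic in $g^{-1}(y)$ over $by$, so that $\dim(x/by)=n$; additivity of dimension then gives $\dim(xy/b)=n+\dim(y/b)$. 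On the other hand $(x,y)$ lies in the graph of $g$, which is in definable bijection with $V$ and hence has dimension $n$, forcing $\dim(y/b)=0$ and so $y\in \acl(b)$. Since $y$ was a generic element of $g(V)$, this definable set is finite.

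To conclude, extend the result from $V$ to $U$ by continuity of $f$: density of $V$ in $U$ forces $f(U)$ to lie in the closure of $g(V)$, which equals $g(V)$ because finite subsets of the Hausdorff space $K^m$ are closed. Thus $f(U)$ is finite, and a continuous function with finite image in a Hausdorff space is automatically locally constant (each level set is clopen). The subtlest step is the finiteness of the image, which is resolved by the dimension counting in the middle paragraph; the rest is a fairly direct consequence of Taylor's theorem together with the fact that definable derivatives are forced to have constant valuation on sufficiently small balls.
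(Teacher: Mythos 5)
Your proof is correct, but it follows a genuinely different route from the paper's. The paper proves local constancy on a dense open set by induction on the number of variables: the base case $n=1$ is handled by the one-variable valuative Jacobian property (\cite[Corollary 3.1.6]{hensel-min}, \cite[Corollary 3.1.3]{hensel-minII}), and the inductive step fibers $U\subset K^n\times K$ over the two coordinate projections, applying the inductive hypothesis to the slice functions $f_x$ and $f_y$ to produce an $(n{+}1)$-dimensional set on which $f$ is constant. You instead invoke the multi-variable Taylor theorem (Propositions \ref{taylor0} and \ref{taylor}) with $r=1$ directly: on each ball ($N$-)next to the exceptional set $C$, the first partials exist with constant valuation, vanish somewhere by density of the zero set of $f'$, hence vanish identically, and the order-$1$ Taylor estimate then makes $f$ genuinely constant on the whole ball --- a slightly stronger intermediate statement than the paper's ``locally constant.'' This buys you a shorter, induction-free argument at the cost of using the heavier multi-variable Taylor machinery (which the paper has in any case already established in Section \ref{taylor-section}), whereas the paper's induction needs only the order-$1$ one-variable input. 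The remaining two steps --- full-dimensional fibers forcing $\dim(f(V))=0$ by additivity of dimension, and continuity of $f$ together with density of $V$ propagating finiteness of the image (and hence local constancy) from $V$ to all of $U$ --- coincide with the paper's. One cosmetic remark: your appeal to Proposition \ref{dimension-boundary-function} for density of $V=U\setminus\overline{C}$ is not needed, since $\dim(C)<n$ already makes $C$ nowhere dense by Proposition \ref{dimension}(2), so $\overline{C}$ has empty interior; avoiding it keeps the proof within the paper's stated policy of not using that proposition on the main line of argument.
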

	\begin{proof}
		We proceed by induction on $n$,  the dimension of the domain.
		We may assume $m=1$.
		
		First assume $n=1$. By the valuative Jacobian property in 
		\cite[Corollary 3.1.6]{hensel-min} and
		\cite[Corollary 3.1.3]{hensel-minII}, there is a finite set 
		$C\subset U$ such that in $U\setminus C$ the function $f$ is locally constant.
		This implies that the fibers of $f|_{U\setminus C}$ are of dimension 1, and so
		the image of $f$ is finite. As $f$ is continuous it is locally constant, the fibers form a finite partition of
		closed and so open sets on which $f$ is constant.
		
		Now assume the proposition is valid for $n$, and suppose 
		$U\subset K^n\times K$.
		We denote $\pi_1:K^n\times K\to K^n$ the projection onto the first factor and 
		$\pi_2:K^n\times K\to K$ the projection onto the second factor. 
		
		Let $V\subset U$ be an open dense subset such that $f$ is differentiable at $V$
		with derivative $0$. Denote
		$C=U\setminus V$.
		Let $T=\{x\in K^n\mid \dim(\pi_1^{-1}(x)\cap C)=1\}$ then
		$\dim(T)<n$.
		We conclude that there is an open dense set $W\subset K^{n}$ such that
		$\dim(\pi_1^{-1}(x)\cap C)=0$ for all $x\in W$. 
		Similarly, there is an open dense set
		$P\subset K$ such that $\dim(\pi_2^{-1}(x)\cap C)<n$  for every 
		$x\in P$. Shrinking $V$ to $V\cap \pi_1^{-1}(W)\cap \pi_2^{-1}(P)$ we may assume
		that if $(x,y)\in V$ then $V_x$ is an open dense subset of $U_x$ and 
		$V_y$ is an open dense subset of $U_y$.
		By the induction hypothesis and the $n=1$ case, we conclude that 
		$f_x$ and $f_y$ are locally constant. This implies that the fiber
		$f^{-1}f(x,y)$ has dimension $n+1$. Indeed, if $B$ is an open neighborhood of $y$ on which
		$f_x$ is constant and for each $y'\in B$ we take 
		$R_{y'}\subset U_{x}$ an open 
		neighborhood of $x$ on which $f_{y'}$ is constant, then $f$ is constant
		on $\bigcup_{y'\in B} R_{y'}\times\{y'\}$.By dimension considerations, we conclude that
		the image $f(V)$ is finite. As $f$ is continuous, $f^{-1}f(V)$ is closed
		in $U$, and as $V$ is dense in $U$, we conclude $f(U)=f(V)$ is finite.
		As $f$ is continuous, we conclude $f$ is locally constant as before.
	\end{proof}
	
	Given the previous proposition we may expect that a definable strictly
	differentiable function $f:U\to K^m$ with open domain $U\subset K^r\times K^s$ and
	satisfying that $D_yf(x,y)=0$, is locally of the form $f(x,y)=g(x)$. Unfortunately
	this is not true.
	\begin{example}
		Take $f:\mathcal{O}\times \mathcal{O}\to \mathcal{O}$ defined by
		$f(x,y)=0$ if $|y|> |x|$ and $f(x,y)=x^2$ if $|y|\leq |x|$. 
		Then $f$ is strictly differentiable, $f(x,\cdot)$ is locally constant, but
		$f$ is not of the form $g(x)$ near $(0,0)$.
	\end{example}
	It is due to this pathology that the conclusion of 
	Proposition \ref{constant-rank} below only holds  generically. \\
	
	Below, we let
	$D_yf(x,y)$ be the differential of the function $f_x$, given by 
	$f_x(y)=f(x,y)$;
	we call this the derivative of $f$ with respect to $y$ (where $y$ can be a tuple of variables).
	
	\begin{proposition}\label{generic-relative-locally-constant}
		Suppose $U\subset K^n$, 
		and $V\subset K^r$ are open and $f:U\times V\to K^m$ is a definable function 
		such that $f$ is continuous
		and $D_yf=0$ on a dense open subset of $\dom(f)$. Then there exists
		an open dense set $U'\subset U$ such that $f|_{U'}$ is locally of the form $g(x)$.
	\end{proposition}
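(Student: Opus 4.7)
The plan is to apply Proposition \ref{locally-constant} fiberwise to show that for generic $x$ the slice function $f_x$ is locally constant with finite image, and then to exploit the ultrametric gap between these finitely many values, together with continuity of $f$, to promote local constancy in $y$ to local constancy in both variables simultaneously.

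First I would use fiber dimension, applied to the hypothesized open dense set $Z=\{(x,y):D_yf(x,y)=0\}$, to find an open dense $U_1\subset U$ such that $Z_x$ is open dense in $V$ for every $x\in U_1$. For such $x$, $f_x:V\to K^m$ is continuous and differentiable with zero derivative on a dense open subset, so Proposition \ref{locally-constant} gives that $f_x$ is locally constant with finite image. Setting $N(x)=|f_x(V)|$, a standard saturation argument (the definable nested sets $\{x\in U_1:N(x)>k\}$ cannot all be dense, for otherwise an $\aleph_0$-saturated realization of the corresponding partial type would produce $N(x)=\infty$) produces a nonempty open $W\subset U_1$ on which $N\equiv k$ for some integer $k$. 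Applying Proposition \ref{generic-regularity-finite} to the definable map $\alpha:W\to (K^m)^{[k]}$, $\alpha(x)=f_x(V)$, yields an open dense $W'\subset W$ on which $\alpha$ is continuous.

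The heart of the proof is the local calculation at an arbitrary point $(x_0,y_0)\in W'\times V$. By Fact \ref{topology-group-action} applied to $S_k$ acting on $(K^m)^k\setminus\Delta$, the quotient map to $(K^m)^{[k]}$ is a local homeomorphism; combined with continuity of $\alpha$ this yields continuous functions $a_1,\ldots,a_k:W_0\to K^m$ on a neighborhood $W_0\ni x_0$ with $\{a_1(x),\ldots,a_k(x)\}=f_x(V)$. Let $i$ be the index with $a_i(x_0)=f(x_0,y_0)$ and set $\delta=\min_{j\neq i}|a_i(x_0)-a_j(x_0)|>0$. Continuity of $f$ and of the $a_j$ allow me to shrink to a neighborhood $W_0''\times V_0$ of $(x_0,y_0)$ on which simultaneously $|f(x,y)-a_i(x)|<\delta/2$ and $|f(x,y)-a_j(x)|>\delta/2$ for $j\neq i$. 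Since $f(x,y)\in\{a_1(x),\ldots,a_k(x)\}$, the ultrametric gap forces $f(x,y)=a_i(x)$ throughout, so $f=a_i\circ\pi_1$ on this neighborhood, of the desired form $g(x)$.

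Finally, to upgrade to an open \emph{dense} $U'$, observe that the argument above applies verbatim to the restriction of $f$ to any nonempty open $U_0\subset U$, producing a nonempty open subset of $U_0$ on which the local-form conclusion holds. Hence the set $U'$ of points admitting such an open neighborhood is open and dense in $U$, as required. The main obstacle I anticipate is precisely the local step: one must combine the local continuous section provided by Fact \ref{topology-group-action} with the ultrametric gap estimate that upgrades the approximate equality $f(x,y)\approx a_i(x)$ to the exact equality required, using crucially that $f(x,y)$ is forced to lie in the discrete set $\{a_1(x),\ldots,a_k(x)\}$. The remaining steps are routine dimension and definability manipulations available in the $1$-h-minimal setting.
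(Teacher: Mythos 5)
Your argument is correct, and its skeleton matches the paper's: restrict to a dense open set of $x$ for which $Z_x$ is dense open in $V$, apply Proposition \ref{locally-constant} fiberwise to get that each $f_x$ is locally constant with finite image, bound the number of values uniformly, and then use continuity of the finite-set-valued map $x\mapsto \mathrm{Im}(f_x)$ to conclude. The genuine divergence is in the selection step. The paper first reduces to the case $\acl=\dcl$ (via Fact \ref{acl=dcl}, after checking the conclusion is first-order expressible), which yields globally definable continuous selections $r_1,\dots,r_k$ with $\{r_1(x),\dots,r_k(x)\}=\mathrm{Im}(f_x)$; the sets $B_i=\{(x,y):f(x,y)=r_i(x)\}$ are then closed and partition $U\times V$, hence clopen, which finishes the proof in one line. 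You instead avoid the $\acl=\dcl$ reduction entirely: you take only \emph{local} continuous sections of the covering $(K^m)^k\setminus\Delta\to(K^m)^{[k]}$ via Fact \ref{topology-group-action}, and replace the clopen-partition observation by an explicit ultrametric gap estimate forcing $f(x,y)=a_i(x)$ near each point; you then recover density of $U'$ by rerunning the argument inside an arbitrary nonempty open $U_0\subset U$ rather than by tracking dense open sets through each reduction. Both routes are sound; the paper's is shorter once the language expansion is in place, while yours is more self-contained topologically and does not require verifying first-order expressibility of the conclusion. Two small points worth tightening: your saturation argument really shows the sets $\{x:N(x)>k\}$ cannot all be \emph{nonempty} (not merely not all dense), which is what you need, and you should note that one of the level sets $\{N=j\}$ has nonempty interior by dimension theory (Proposition \ref{dimension}(2)) before passing to $W$.
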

	\begin{proof}
		The set, $D$ of points $x\in U$ such that for every point of $\{x\}\times V$
		$f$ is locally of the form $g(x)$ is definable.  More precisely, $x\in D$  exactly when for all 
		$y\in V$, there exists an open ball $B\ni (x,y)$, such that for all $(x',y'), (x',y'')\in B$  we have $f(x',y')=f(x',y'')$. Thus, the statement that $D$ has dense
		interior in $U$ is a first order expressible property, so we may assume
		that $\acl=\dcl$, 
		see Fact \ref{acl=dcl} and the subsequent remark.
		
		In the course of the proof we may replace $U$ by a dense open subset
		a finite number of times. Fix, $W\subset U\times V$, a dense open set
		where $f$ is differentiable and its derivative with respect to $y$ is $0$. 
		Shrinking $U$ we may assume $W_x\subset V$ is dense open for all $x\in U$.
		By Proposition \ref{locally-constant} we know that
		$f_x$ is locally constant with finite image for every $x\in U$ (recall that $f_x(y):=f(x,y)$). The sets 
		$\mathrm{Im}(f_x)$ form a definable family of finite sets indexed by $x\in U$, so
		there is uniform bound,  $n$, on their cardinalities. 
		Denoting  $A_k=\{x\in U: |\mathrm{Im}(f_x)|=k\}$ we have  $U=A_1\cup\cdots\cup A_n$, so the union
		of the interiors of the  $A_k$ form a dense open subset of $U$. Since the closures of the $\mathrm{Int}(A_k)$ are pairwise disjoint, we may assume that $|\mathrm{Im}(f_x)|=k$ for all
		$x$ and some fixed $k$.
		Since we assumed that $\acl=\dcl$, there are definable
		functions $r_1,\dots, r_k:U\to K^m$ such that 
		$\{r_1(x),\dots,r_k(x)\}=\mathrm{Im}(f_x)$. By generic continuity of definable
		functions (Proposition \ref{generic-continuity})
		we may assume that $r_i$ are all continuous.
		Then the sets $B_i=\{(x,y): f(x,y)=r_i(x)\}$ form a finite partition of 
		$U\times V$ into closed, and so open subsets.
	\end{proof}

The next two results, describing the local structure of definable maps of full rank, are standard applications of the inverse function theorem: 
 
	\begin{proposition}\label{immersion}
		Suppose $U\subset K^k$ is a definable open set and $f:U\to K^k\times K^r$
		is a definable, strictly differentiable map. Suppose that for some $a\in U$ the derivative $f'(a)$ has full rank.
		Then there is a ball $a\in B\subset U$, 
		a ball $B_2\ni 0$, a definable open set 
		$V\subset K^k\times K^r$, and a definable strict diffeomorphism
		$\phi:V\to B\times B_2$ such that $f(B)\subset V$ and the composition
		$\phi f:B\to B\times B_2$ 
		is the inclusion
		$b\mapsto (b,0)$. 
	\end{proposition}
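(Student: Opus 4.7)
The plan is to reduce the statement to the inverse function theorem (Proposition \ref{inverse-function}) by the standard trick of completing the definable immersion $f$ to a locally invertible map on an open set of the target dimension.

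First, I would use that $f'(a):K^k\to K^k\times K^r$ has rank $k$, hence its image is a $k$-dimensional linear subspace that projects isomorphically onto some coordinate $k$-plane of $K^k\times K^r$. Composing $f$ with a suitable coordinate permutation of $K^k\times K^r$ (a linear, hence everywhere strict, diffeomorphism that we will absorb into $\phi$ at the end), I may assume that the top $k\times k$ block of the matrix of $f'(a)$ is invertible, i.e.\ that the composition $\pi_1\circ f$ with the projection $\pi_1:K^k\times K^r\to K^k$ has invertible derivative at $a$.

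Next, I would define the auxiliary definable map
\[
F:U\times K^r\longrightarrow K^k\times K^r,\qquad F(x,y)=f(x)+(0,y).
\]
Since $f$ is strictly differentiable at $a$, a straightforward check from the definition shows that $F$ is strictly differentiable at $(a,0)$, with derivative represented by the block matrix
\[
F'(a,0)=\begin{pmatrix}(\pi_1 f)'(a) & 0 \\ \ast & \mathrm{Id}_{K^r}\end{pmatrix},
\]
which is invertible by the arrangement of coordinates in the previous step.

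Finally, I would apply Proposition \ref{inverse-function} to $F$ at $(a,0)$: this gives an open neighborhood of $(a,0)$ on which $F$ is a bijection onto an open set, with strictly differentiable inverse. Shrinking this neighborhood to a product of balls $B\times B_2$ with $a\in B\subset U$ and $0\in B_2$, and setting $V:=F(B\times B_2)$ and $\phi:=F^{-1}:V\to B\times B_2$, one obtains a definable strict diffeomorphism. Since $F(b,0)=f(b)$ by construction, we have $f(B)\subset V$ and $\phi(f(b))=(b,0)$, as required. The only step that is not quite formal is the linear change of coordinates at the beginning, but this is absorbed harmlessly into $\phi$; there is no real obstacle, since both the inverse function theorem and the preservation of strict differentiability under linear compositions have already been established.
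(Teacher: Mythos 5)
Your proof is correct and follows essentially the same route as the paper: a coordinate permutation in the target so that the principal $k\times k$ minor of $f'(a)$ is invertible, the auxiliary map $g(x,y)=f(x)+(0,y)$, and an application of the inverse function theorem (Proposition \ref{inverse-function}) with $\phi=g^{-1}$. No substantive differences.
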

	\begin{proof}
		After a coordinate permutation in the target we may assume the principal
		$k\times k$ minor of $f'(a)$ is invertible.
		Consider the function $g:U\times K^r\to K^k\times K^r$ defined as
		$g(x,y)=f(x)+(0,y)$. Then $g$ is strictly differentiable and has
		invertible derivative at $(a,0)$ so by the inverse function theorem, 
		Proposition \ref{inverse-function}, 
		we can find a ball $B$ around $a$ and a ball
		$B_2$ around $0$, and open set
		$f(a)\in V$ such that $g$ restrict to a strict diffeomorphism
		$g:B\times B_2\to V$.
		If $i:B\to B\times B_2$ is the inclusion $i(b)=(b,0)$
		then we get that $gi=f$, so we conclude the statement is valid with
		$\phi=g^{-1}$.
	\end{proof}
	\begin{proposition}\label{submersion}
		Suppose $U\subset K^k\times K^r$ is a definable 
		open set, and $f:U\to K^k$ is a definable strictly differentiable map.
		Let $a\in U$. Suppose $f'(a)$ has full rank.
		Then there exists a definable open set $a\in U'\subset U$,
		a ball $f(a)\in B$,
		a ball $B_2\sub K^r$, and a definable strict diffeomorphism
		$\phi:B\times B_2\to  U'$, such that $f(U')\subset B$ and the composition 
		$f\phi:B\times B_2\to B$
		is the projection $(b,c)\mapsto b$.
	\end{proposition}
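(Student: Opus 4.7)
The proof follows the same template as the preceding Proposition \ref{immersion}, applying the inverse function theorem to an auxiliary map constructed from $f$.

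First, since $f'(a):K^{k+r}\to K^k$ has full rank $k$, after permuting the coordinates of the source (which affects neither the hypotheses nor the shape of the conclusion) we may assume that the $k\times k$ block $D_x f(a)$, corresponding to the first $k$ source coordinates $x\in K^k$, is invertible. Write points of $U$ as $(x,y)$ with $x\in K^k$ and $y\in K^r$, and define
\[
g:U\to K^k\times K^r,\qquad g(x,y)=(f(x,y),y).
\]
Then $g$ is definable and strictly differentiable on $U$, and its derivative at $a$ is the block matrix
\[
g'(a)=\begin{pmatrix} D_x f(a) & D_y f(a) \\ 0 & I_r \end{pmatrix},
\]
whose determinant equals $\det(D_x f(a))\neq 0$. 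Hence $g'(a)$ is invertible.

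By the inverse function theorem (Proposition \ref{inverse-function}), there exists an open neighborhood $U''\subseteq U$ of $a$ such that $g(U'')$ is open and $g:U''\to g(U'')$ is a bijection whose inverse is strictly differentiable at $g(a)$. Shrinking if necessary, we may arrange that $g^{-1}$ is strictly differentiable throughout a neighborhood of $g(a)$. Since the balls form a basis for the product topology of $K^k\times K^r$, we can choose a ball $B\subseteq K^k$ around $f(a)$ and a ball $B_2\subseteq K^r$ around the projection of $a$ to $K^r$ with $B\times B_2\subseteq g(U'')$. Set $U':=g^{-1}(B\times B_2)\subseteq U$ and $\phi:=g^{-1}\restriction_{B\times B_2}:B\times B_2\to U'$; this $\phi$ is a strict diffeomorphism by Proposition \ref{inverse-derivative} (and the remarks following it).

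Finally, for any $(b,c)\in B\times B_2$ we have $g(\phi(b,c))=(b,c)$, so by the definition of $g$,
\[
f(\phi(b,c)) = \pi_1(g(\phi(b,c))) = b,
\]
where $\pi_1$ is the projection onto $K^k$. In particular $f(U')\subseteq B$, and the composition $f\circ\phi:B\times B_2\to B$ is precisely the first projection $(b,c)\mapsto b$. This yields the required local normal form. There is no substantive obstacle here; the only points requiring a touch of care are the initial coordinate permutation (which the statement permits since the conclusion is invariant under it) and the shrinking step to a product of balls, both of which are routine.
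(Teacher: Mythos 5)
Your proposal is correct and follows essentially the same route as the paper: permute coordinates so that the principal $k\times k$ minor of $f'(a)$ is invertible, apply the inverse function theorem to $g(x,y)=(f(x,y),y)$, shrink the image to a product of balls, and take $\phi=g^{-1}$. The only cosmetic difference is that you spell out the block-triangular form of $g'(a)$ and the final verification $f\phi(b,c)=b$, which the paper leaves implicit.
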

	\begin{proof}
		After applying a coordinate permutation to $U$ we may assume 
		that the principal $k\times k$ minor of $f'(a)$ is invertible.
		
		Consider the function $g:U\to K^k\times K^r$ defined as $g(x,y)=(f(x,y),y)$.
		Then $g$ is strictly differentiable with invertible differential,
		so by the inverse function theorem,
		Proposition \ref{inverse-function},
		there is an open set $a\in U'\subset U$ such that $g(U')$ is open and
		$g:U'\to g(U')$ is a strict diffeomorphism.
		
		Making $U'$ smaller we may assume 
		$g(U')=B\times B_2$ is a product of two balls.
		Then if $p:B\times B_2\to B$ is the projection $p(b,c)=b$, we get that
		$pg=f$ and so the statement is valid with $\phi=g^{-1}$.
	\end{proof}
	We can finally prove our result on the local structure of definable functions of constant rank:
	\begin{proposition}\label{constant-rank}
		Let $U\subset K^k\times K^r$ and $V\subset K^k\times K^s$ be open definable
		sets and let $f:U\to V$ be a definable strictly differentiable map such that
		for all $a\in U$ the rank of $f'(a)$ is constant equal to $k$.
		Then there exist $U'\subset U$ and $V'\subset V$  definable open sets,
		such that $f(U')\subset V'$ and there are definable 
		strict diffeomorphisms
		$\phi_1:B_1\times B_2\to U'$ and $\phi_2:V'\to B_1\times B_3$, such that
		the composition
		$\phi_2f\phi_1:B_1\times B_2\to B_1\times B_3$
		is the map
		$(a,b)\mapsto (a,0)$.
	\end{proposition}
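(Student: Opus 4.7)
The plan is to reduce to the case where $f$ already has the form $(u,v)\mapsto (u,g(u,v))$ via the submersion theorem (Proposition \ref{submersion}), then use the constant rank hypothesis to force $D_v g \equiv 0$, and finally appeal to the generic statement that definable functions with vanishing derivative in a block of variables are locally functions of the other variables (Proposition \ref{generic-relative-locally-constant}) to straighten $g$ into a function of $u$ alone.

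First I would pick any $a_0\in U$. Since $\rk f'(a_0)=k$, after a coordinate permutation of $K^k\times K^s$ we may assume the upper $k\times k$ block of $f'(a_0)$ is invertible; writing $f=(f_1,f_2)$ with $f_1\colon U\to K^k$, the derivative $f_1'(a_0)$ then has full rank. Applying Proposition \ref{submersion} to $f_1$ at $a_0$ produces a strict diffeomorphism $\phi_1\colon B_1\times B_2\to U_0$ onto an open neighborhood $U_0\subset U$ of $a_0$ with $f_1\circ\phi_1(u,v)=u$. Hence $f\circ\phi_1(u,v)=(u,g(u,v))$ for some definable strictly differentiable $g\colon B_1\times B_2\to K^s$.

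Next, the differential of $f\circ\phi_1$ at $(u,v)$ is the block matrix $\bigl(\begin{smallmatrix} I_k & 0 \\ D_u g & D_v g \end{smallmatrix}\bigr)$, which must have rank $k$ by the constant rank assumption. Since the top $k$ rows already contribute rank $k$, this forces $D_v g\equiv 0$ on $B_1\times B_2$. Now invoke Proposition \ref{generic-relative-locally-constant}: there is an open dense $B_1'\subset B_1$ such that $g$ is locally of the form $g(u,v)=h(u)$ on $B_1'\times B_2$. Shrinking, choose balls $B_1''\subset B_1'$ and $B_2''\subset B_2$ and a fixed $v_0\in B_2''$ so that $g(u,v)=h(u):=g(u,v_0)$ on $B_1''\times B_2''$; the function $h$ is strictly differentiable as a restriction of $g$.

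Finally, set $U'=\phi_1(B_1''\times B_2'')$, pick a ball $B_3\subset K^s$ around $0$ with $(u,h(u)+w)\in V$ for all $u\in B_1''$, $w\in B_3$, and let $V'=\{(u,y)\in B_1''\times K^s: y-h(u)\in B_3\}\subset V$. Define $\phi_2\colon V'\to B_1''\times B_3$ by $\phi_2(u,y)=(u,y-h(u))$; this is a strict diffeomorphism with inverse $(u,w)\mapsto(u,w+h(u))$, and $\phi_2\circ f\circ\phi_1(u,v)=\phi_2(u,h(u))=(u,0)$, as required. The main obstacle, and the reason the statement is only generic, is the step $D_v g\equiv 0\Rightarrow g$ depends only on $u$: the example preceding Proposition \ref{generic-relative-locally-constant} shows this fails pointwise, so one genuinely needs to pass to the open dense subset $B_1'$ supplied by that proposition.
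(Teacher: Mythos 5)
Your proof is correct and follows the paper's argument almost step for step: reduce via the submersion theorem to $f\circ\phi_1(u,v)=(u,g(u,v))$, observe that constant rank $k$ forces $D_vg\equiv 0$, and then invoke Proposition \ref{generic-relative-locally-constant} to make $g$ depend on $u$ alone after shrinking (which, as you correctly note, is exactly where the statement becomes only generic). The one place you diverge is the final straightening: the paper applies the immersion theorem (Proposition \ref{immersion}) to $x\mapsto(x,g(x))$, whereas you write the target chart explicitly as the shear $\phi_2(u,y)=(u,y-h(u))$; this is a perfectly valid and slightly more self-contained alternative, the only caveat being that to guarantee a product neighborhood $B_1''\times B_3$ of the graph of $h$ inside $V$ you should shrink $B_1''$ once more around a chosen base point (uniformity in $u$ is not automatic without compactness), which is harmless here.
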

	\begin{proof}
		Take a point $(b,c)\in U$. After a coordinate permutation in $U$ and $V$ 
		we may assume
		$f'(b,c)$ has its first $k\times k$ minor invertible.
		Then by the theorem on submersions,
		Proposition \ref{submersion},
		applied to the composition of 
		$f:U\to K^k\times K^s$ with the projection
		$K^k\times K^s\to K^k$ onto the first factor, we may assume that $U$
		is of the form $B_1\times B_2$ and $f$ is of the form 
		$f(x,y)=(x,g(x,y))$.
		As $f'$ has constant rank equal to $k$ we conclude that 
		$D_yg=0$. By the Proposition \ref{generic-relative-locally-constant} we may
		assume $g(x,y)$ is of the form $g(x,y)=g(x)$ (after passing to smaller
		open balls of $B_1$ and $B_2$ not necessarily containing $(b,c)$).
		Now the function $h:B_1\to K^k\times K^s$ defined by $h(x)=(x,g(x))$
		is a definable strictly differentiable immersion so by the theorem on 
		immersions \ref{immersion} we may after shrinking $B_1$ and composing
		with a definable diffeomorphism in the target assume that $h$ is of the form 
		$h(x)=(x,0)$. This finishes the proof.
	\end{proof}
	\section{Strictly differentiable definable manifolds}\label{manifold-section}
	In this section we define definable manifolds in a 1-h-minimal field, and variants.
	These are manifolds which are covered by a finite number of definable charts,
	with compatibility functions of various kinds. 
	
	Throughout, we keep the convention that $K$ is an $\aleph_0$-saturated $1$-h-minimal field. In case $\acl_K$ is not the same as $\dcl_K$ it is better
	to take ``\'etale domains'' instead of open subsets of $K^n$ as the local model of 
	the manifold. This is because the cell decomposition,  as provided by 
	Proposition \ref{cell-decomposition},  decomposes a definable set into a finite number
	of pieces, each of which is only a finite cover of an open set, instead of an open set. We describe this notion formally below:
	
	\begin{definition}
		Let $S\subset K^m$. A definable  function
		$f:S\to K^n$ is  (topologically) étale if it is a 
		local homeomorphism. In other words, for every $x\in T$ there is a ball
		$x\in B$ such that $f(B\cap S)$ is open and the inverse map
		$f(B\cap S)\to B\cap S\to K^m$ is continuous. 
	\end{definition}
	Informally, we think of étale maps as similar to open immersions,
	and will denote such maps accordingly 
	e.g.,  $i:U\to K^n$. We now proceed to describing the differential structure of \'etale maps (or, rather, \'etale domains): 
	
	\begin{definition}
		Suppose $i:U\to K^n$ and $j:V\to K^m$ are étale maps.
		A definable function $f:U\to V$ is strictly differentiable at $x\in U$
		if there are balls $x\in B$ and $f(x)\in B'$ such that 
		$i:B\cap U\to i(B\cap U)$,
		$j:B'\cap V\to j(B'\cap V)$ are homeomorphisms onto open sets, such that 
		$f(B\cap U)\subset B'\cap V$, and the map
		$i(B\cap U)\xrightarrow{i^{-1}} B\cap U\xrightarrow{f} B'\cap V\xrightarrow{j} j(B'\cap V)$ is strictly differentiable
		at $i(x)$. In this case the derivative $f'(x)$ is defined as the derivative
		of $i(B\cap U)\to j(B'\cap V)$.
		
		The function $f:U\to V$ is called $T_k$ at $x$ if the composition 
		$i(B\cap U)\to j(B'\cap V)$ is $T_k$ at $i(x)$.
	\end{definition}
	Note that with this definition the given inclusion $U\subset K^r$ is not 
	necessarily strictly differentiable, because the local inverses 
	$i(U\cap B)\to K^r$ of the
	map $i:U\to K^n$ are only topological embeddings, so not necessarily strictly
	differentiable. \\
	
	\textbf{For the rest of this section let $\mathcal{P}$ stand for any one of the following adjectives: topological, strictly differentiable, or $T_n$}. 
	
	\begin{definition}\label{weak-manifolds-def}
		A definable weak $\mathcal{P}$-$n$-manifold is a definable set, $M$, equipped with 
		a finite number of definable injections, $\phi_i:U_i\to M$, 
		and each $U_i$ comes equipped with an étale map $r_i:U_i\to K^n$.
		We require further that the sets
		$U_{ij}:=\phi_i^{-1}(\phi_j(U_j))$
		are open in $U_i$, and that the transition maps $U_{ij}\to U_{ji}$,  
		$ \phi_j^{-1}\phi_i$ are $\mathcal{P}$-maps. We further define:
		\begin{enumerate}
			\item A definable weak $\mathcal{P}$-manifolds is a weak $\mathcal{P}$-$n$-manifolds for some $n$. 
			\item A weak $\mathcal{P}$-manifold is equipped with a topology  making the structure maps, $\phi_i$, open immersions.
			\item A morphism of definable weak $\mathcal{P}$-manifolds is a  definable function $f:M\to N$, such that for any charts $\phi_i:U_i\to M$ and $\tau_j:V_j\to N$ the set $W_{ij}=\phi_if^{-1}\tau_j(V_j)$ is open in $U_i$ and the map $W_{ij}\to V_j$ given by $x\mapsto \tau_j^{-1}f\phi_i(x)$ is a $\mathcal{P}$-map.
			\item A definable $\mathcal{P}$-$n$-manifold is a definable weak $\mathcal{P}$-$n$-manifold, where the $U_i$ are open subsets of $K^n$ (and the maps $U_i\to K^n$ are  inclusions).
			\item A morphism of definable $\mathcal{P}$-manifolds is a morphism of 
			weak definable $\mathcal{P}$-manifolds.
		\end{enumerate}
	\end{definition}
	Definable weak $K$-manifolds are, immediately from the definition, (abstract) manifolds over $K$. As such, definable differentiable weak manifold inherit the classical differential structure. For the sake of completeness we remind the relevant definitions: 
	
	\begin{definition}
		If $M$ is a definable strictly 
		differentiable weak manifold and $x\in M$,
		then the tangent space of $M$ at $x$,  $T_x(M)$, is  the disjoint union
		of
		$T_i=T_{\phi_i^{-1}(x)}(U_i)=K^n$ for 
		$(U_i,\phi_i)$ a chart around $x$, under the identification of the spaces $T_i$ and $T_j$ associated with the charts
		$U_i,U_j$ via the map $(\phi_j^{-1}\phi_i)'(\phi_i^{-1}(x))$.
		
		For a strictly differentiable definable morphism $f:M\to N$  of definable strictly differentiable weak
		manifolds, we have a 
		map of $K$-vector spaces $f'(x):T_x(M)\to T_{f(x)}(N)$ given by the differential of the map appearing in Definition \ref{weak-manifolds-def} above. 
	\end{definition}
	
	As usual, once we have a chart around a point in a weak strictly differentiable
	manifold, we get an identification of $T_x(M)$ with $K^n$, but distinct charts
	may give distinct isomorphisms.
	
	\begin{definition}
		A definable (weak) $\mathcal{P}$-Lie group is a 
		group object in the category 
		of definable (weak) $\mathcal{P}$-manifolds.
	\end{definition}
	\begin{lemma}\label{generic-regularity-etale}
		Suppose $i:U\to K^n$ and $j:V\to K^m$ are 
		étale and $f:U\to V$ is a definable map.
		Then $f$ is continuous in an open dense subset of $U$.
		
		Also $f$ is strictly differentiable and $T_k$ in an open dense subset of $U$.
	\end{lemma}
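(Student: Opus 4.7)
The plan is to reduce all three claims (continuity, strict differentiability, and $T_k$) to their analogues for functions between open subsets of Cartesian powers of $K$, for which Propositions \ref{generic-continuity} and \ref{generic-regularity-0} apply directly. The étale maps $i$ and $j$ furnish the required local charts, and the arguments are purely local, so that no uniform definable choice of charts is needed and no significant obstacle arises.

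For continuity, fix $x_0 \in U$. Since $i$ is étale, there is a ball $B \ni x_0$ in $K^m$ such that $i|_{B \cap U}$ is a homeomorphism onto an open set $W \subset K^n$. Writing $V \subset K^{m'}$, the composition $g = f \circ (i|_{B \cap U})^{-1} \colon W \to K^{m'}$ is a definable function on the open set $W$, so Proposition \ref{generic-continuity} produces a definable open dense $W' \subset W$ on which $g$ is continuous. Pulling back, $f$ is continuous on $i^{-1}(W') \cap (B \cap U)$, which is open in $U$ and dense in $B \cap U$. Since this construction can be carried out near every $x_0 \in U$, the definable set $D = \{x \in U : f \text{ is continuous on some ball around } x\}$ is open, contained in the locus of continuity of $f$, and dense in $U$; this gives the first statement.

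For strict differentiability (and identically for $T_k$), replace $U$ by the dense open $D$ just produced, so that $f$ is continuous on $U$. For each $x_0 \in U$, pick a ball $B' \ni f(x_0)$ with $j|_{B' \cap V}$ a homeomorphism onto an open subset of $K^{m'}$, and then, using continuity of $f$ at $x_0$, a small enough ball $B \ni x_0$ with $i|_{B \cap U}$ a homeomorphism onto an open subset of $K^n$ and $f(B \cap U) \subset B' \cap V$. Then
\[
g \;=\; (j|_{B' \cap V}) \circ f \circ (i|_{B \cap U})^{-1} \colon i(B \cap U) \to j(B' \cap V)
\]
is a definable map between open subsets of $K^n$ and $K^{m'}$, so Proposition \ref{generic-regularity-0} provides a definable open dense subset of $i(B \cap U)$ on which $g$ is strictly differentiable (resp.\ $T_k$). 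By the very definition of these properties for maps between étale domains, $f$ inherits the corresponding regularity at every point of the preimage under $i|_{B \cap U}$, an open dense subset of $B \cap U$. Taking the interior of the (definable) set of points where $f$ has the given regularity then yields the required definable open dense subset of $U$, exactly as in the continuity case.
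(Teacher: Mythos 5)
Your proof is correct and takes essentially the same route as the paper's: localize via the \'etale charts, apply Proposition \ref{generic-continuity} (resp.\ Proposition \ref{generic-regularity-0}) to the conjugated map between open subsets of $K^n$ and $K^{m}$, and conclude that the interior of the continuity (resp.\ regularity) locus is dense. The paper's version is just terser, working directly with the interior $U'$ of the set of good points rather than assembling it ball by ball.
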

	\begin{proof}
		For the statement about continuity, note that $V$ has the subspace topology (of $V\subset K^r$) so we may assume $V=K^r$. 
		If we denote $U'$ the interior (relative to $U$) 
		of the set of points of $U$ where $f$ is 
		continuous, then in every ball $B$ where $i$ is a homeomorphism
		$i:B\cap U\to i(B\cap U)$ we get that $B\cap U\cap U'$ is 
		dense in $B\cap U$,
		by  generic continuity of definable functions. We conclude that $U'$ is dense as required.
		
		For strict differentiability and $T_k$, by the above we may assume that $f$ is continuous. Let $U'$ be the interior of the set of all points where $f:U\to V$ is strictly differentiable and $T_k$. This is a definable open set.
		By generic differentiability and generic $T_k$ property for functions
		defined on open sets, for every point  $x\in U$ there is an open ball  $B\ni x$, such that
		$U\cap B\cap U'$ is dense in $U\cap B$. Thus,  we conclude that $U'$ is dense
		in $U$.
	\end{proof}
	Note that the previous lemma implies that
	a (weak) definable topological manifold $M$
	contains an open dense subset $U\subset M$,
	which admits a structure of a (weak) definable $T_n$ manifold extending
	the given (weak) definable topological manifold
	structure. As a consequence of Proposition
	\ref{generic-regularity} below we have that this
	structure on $U$ is unique up to isomorphism
	and restriction to a definable open dense
	subset. For that reason, several of the statements below hold (essentially unaltered) for definable weak manifolds (without further assumptions on differentiability or $T_n$). For the sake of clarity of the exposition, we keep these assumptions.  
	
	\begin{proposition}\label{generic-regularity}
		If $f:M\to N$ is a definable function and $M$, $N$ are definable 
		weak $\mathcal{P}$-manifolds,
		then $f$ is a $\mathcal{P}$-map in an open dense set of $M$.
	\end{proposition}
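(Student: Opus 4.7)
The plan is to produce an open dense $U^\ast\subset M$ such that $f|_{U^\ast}$ is a morphism of definable weak $\mathcal{P}$-manifolds to $N$, with $U^\ast$ given the weak manifold structure inherited from $M$. The conceptual key is that it suffices to arrange, at each $x\in U^\ast$, the following \emph{local} $\mathcal{P}$-map property: there exist indices $i,j$ with $x\in\phi_i(U_i)$ and $f(x)\in\tau_j(V_j)$ such that $\tau_j^{-1}f\phi_i$ is a $\mathcal{P}$-map on an open neighborhood of $\phi_i^{-1}(x)$ in $U_i$. Indeed, given this, for any other pair of charts $\phi_k:U_k\to M$ around $x$ and $\tau_l:V_l\to N$ around $f(x)$ the transition maps $\phi_i^{-1}\phi_k$ and $\tau_l^{-1}\tau_j$ are $\mathcal{P}$ near the relevant preimages, so the equality $\tau_l^{-1}f\phi_k=(\tau_l^{-1}\tau_j)\circ(\tau_j^{-1}f\phi_i)\circ(\phi_i^{-1}\phi_k)$ exhibits $\tau_l^{-1}f\phi_k$ as $\mathcal{P}$ on an open neighborhood of $\phi_k^{-1}(x)$. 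This neighborhood is contained in $\phi_k^{-1}(f^{-1}(\tau_l(V_l)))$, so both the openness and the $\mathcal{P}$-property required by the morphism definition hold automatically.

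Fix a chart $\phi_i:U_i\to M$ and, for each chart $\tau_j:V_j\to N$, set $A_{ij}:=\phi_i^{-1}(f^{-1}(\tau_j(V_j)))\subset U_i$ and $T_{ij}:=\mathrm{Int}_{U_i}(A_{ij})$. Since $\bigcup_j\tau_j(V_j)=N$, we have $\bigcup_j A_{ij}=U_i$. I claim $\bigcup_j T_{ij}$ is dense in $U_i$: otherwise some nonempty open $W\subset U_i$ is disjoint from every $T_{ij}$, so each $A_{ij}\cap W$ has empty interior in $U_i$ and hence $\dim(A_{ij}\cap W)<n=\dim U_i$ by Proposition \ref{dimension}(2); but $\bigcup_j(A_{ij}\cap W)=W$ has dimension $n$, contradicting subadditivity of dimension under finite unions.

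Applying Lemma \ref{generic-regularity-etale} to the definable map $\tau_j^{-1}f\phi_i:T_{ij}\to V_j$ between étale domains furnishes an open dense $T_{ij}'\subset T_{ij}$ on which it is a $\mathcal{P}$-map. Then $T_i:=\bigcup_j T_{ij}'$ is open and dense in $U_i$, and $U^\ast:=\bigcup_i\phi_i(T_i)$ is open and dense in $M$ (the $\phi_i$ being open immersions). Every point of $U^\ast$ satisfies the local $\mathcal{P}$-map property by construction, so by the first paragraph $f|_{U^\ast}$ is a morphism, completing the proof. The only mildly delicate step is the dimension argument showing that the interiors $T_{ij}$ already cover $U_i$ densely; the rest is formal chart bookkeeping combined with Lemma \ref{generic-regularity-etale}.
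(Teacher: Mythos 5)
Your proposal is correct and follows essentially the same route as the paper's proof: reduce via the charts of $M$ to the case of an \'etale domain, observe that the interiors of the preimages $f^{-1}\tau_j(V_j)$ of the target charts are jointly dense (which reduces to $N$ \'etale), and then invoke Lemma \ref{generic-regularity-etale}. You simply make explicit two points the paper leaves implicit — the dimension argument for density of the interiors and the chart-independence of the local $\mathcal{P}$-property — both of which are handled correctly.
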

	\begin{proof}
		Considering the charts in $M$ we may assume $M=U\to K^n$ is étale.
		Now if $(V_i,\tau_i)$ are charts for $N$, then $f^{-1}\tau_i(V_i)$ cover
		$U$, and so the union of their interiors is open dense in $U$.
		So we may assume $N=V\to K^m$ is étale. This case is Lemma
		\ref{generic-regularity-etale}.
	\end{proof}
	
	Recall that the local dimension of a definable set $X$ is defined as  
	\[
	\dim_xX=\min\{\dim(B\cap X): 
	x\in B\text{ is a definable open neighborhood of } x\text{ in } M\}.
	\] 
	The next lemma is standard: 

	\begin{lemma}\label{local-dimension}
		Suppose $M$ is a definable topological weak manifold.
		Let $X\subset M$ be a definable subset. 
		Then $\dim(X)=\max_{x\in X}\dim_{x}(X)$.
		
		If $G$ is a definable weak topological group and $H$ is a subgroup,
		then the dimension of $H$ is the local dimension of $H$ at any point.
	\end{lemma}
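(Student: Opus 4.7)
The plan is to deduce both statements from Proposition~\ref{dimension}(3), which furnishes the local-to-global dimension identity for subsets of $K^N$, together with the invariance of dimension under definable bijection. For (1), the inequality $\dim_x(X) \le \dim(X)$ is immediate from the definition of local dimension, since any open $V \ni x$ gives $\dim(V \cap X) \le \dim(X)$. For the reverse inequality, I would use the finite atlas $\{\phi_i : U_i \to M\}$ to decompose $X = \bigcup_i (X \cap \phi_i(U_i))$, obtaining $\dim(X) = \max_i \dim(X \cap \phi_i(U_i)) = \max_i \dim \phi_i^{-1}(X)$, where the second equality uses that each $\phi_i$ restricts to a definable bijection from $\phi_i^{-1}(X)$ onto $X \cap \phi_i(U_i)$. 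Picking an $i$ realising the maximum and writing $Y = \phi_i^{-1}(X)$, viewed as a definable subset of $K^{m_i}$ (where $U_i \subset K^{m_i}$), Proposition~\ref{dimension}(3) supplies a point $u \in Y$ with $\dim_u(Y) = \dim(Y) = \dim(X)$.

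It then remains to transfer this local dimension back to $M$, namely to verify $\dim_{\phi_i(u)}(X) = \dim_u(Y)$. Since $\phi_i$ is an open immersion, the sets $\phi_i(V)$ for $V \ni u$ open in $U_i$ form a neighborhood basis of $x := \phi_i(u)$ in $M$, while the traces $B \cap U_i$ of open balls $B$ of $K^{m_i}$ form a neighborhood basis of $u$ in $U_i$. Injectivity of $\phi_i$ gives $\dim(\phi_i(B \cap U_i) \cap X) = \dim(B \cap Y)$, and taking infima over such $B$ yields the desired equality. This compatibility between the two notions of local dimension is the only step that makes essential use of the chart structure, and is the main (mild) technical point.

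For (2), I would observe that for any $h_1, h_2 \in H$, left translation $L_{h_1 h_2^{-1}} : G \to G$ is a definable self-homeomorphism of $G$ (its inverse being $L_{h_2 h_1^{-1}}$, both being continuous because $G$ is a topological group), and it restricts to a self-bijection of $H$ carrying $h_2$ to $h_1$. In particular it sends any neighborhood basis of $h_1$ in $G$ to one of $h_2$, so $\dim_{h_1}(H) = \dim_{h_2}(H)$. Hence the local dimension of $H$ is constant on $H$, and combining with (1) gives $\dim(H) = \max_{h \in H} \dim_h(H) = \dim_h(H)$ for every $h \in H$.
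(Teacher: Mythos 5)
Your proof is correct and follows essentially the same route as the paper's: both reduce via the finite atlas and definable bijections to a definable subset of some $K^{m}$ with the subspace topology, apply Proposition~\ref{dimension}(3) there, and transfer local dimensions through the chart maps; for the group statement both use that left translation by an element of $H$ is a definable self-homeomorphism of $G$ preserving $H$, so the local dimension of $H$ is constant. Your write-up is just a more explicit version of the paper's argument.
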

	
	\begin{proof}
		If $M=U_1\cup\cdots\cup U_n$ is a covering by open sets and 
		$\phi_i:U_i\to V_i$ is a homeomorphism onto a set $V_i$, with an 
		étale map
		$V_i\to K^n$,
		then ${\rm dim}(X)=\max_i({\rm dim }(\phi_i(X\cap U_i)))$, and the local
		dimension of $X$ at $x\in X\cap U_i$ is the local dimension of 
		$\phi_i(X\cap U_i)$ at $\phi_i(x)$, so we reduce to the case $M=V$ is étale over $K^n$.
		In fact, the result is true whenever $M\subset K^m$ with the subspace topology,
		as then the local dimension of $X\subset M$ at a point $x$
		equals the local dimension
		of $X$ at $x$ in $K^m$, and so the result follows from Proposition \ref{dimension}(3).
		
		If $G$ is a definable weak topological group and $H$ is a subgroup, then
		the local dimension of $H$ at any point $h\in H$ is constant independent 
		of $h$. Indeed the left translation $L_h:G\to G$ is a definable
		homeomorphism, that sends $e$ to $h$ and satisfies $L_h(H)=H$, so
		${\rm dim}_e(H)={\rm dim}_h(H)$.
	\end{proof}
	
	\begin{proposition}\label{etale-example}
		Suppose $T\subset K^m$ is such that there is a coordinate projection
		$\pi:T\to U$ onto an open subset $U\subset K^n$ and such that the fibres of $\pi$ are finite of constant cardinality, $s$. Assume the associated map
		$f:U\to (K^{m-n})^{[s]}$ is continuous.
		Then $T\to K^n$ is étale.
	\end{proposition}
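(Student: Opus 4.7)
The plan is to show directly that the projection $\pi = \pi_1 : T \to K^n$ is a local homeomorphism at any point $(u_0, y_0) \in T$ by exhibiting an ambient ball $B \subset K^m$ in which $B \cap T$ is the graph of a continuous section of $\pi$.

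First I would unpack the hypothesis on $f$. Fix $(u_0,y_0)\in T$ and enumerate the fiber $f(u_0)=\{y_0,y_1,\dots,y_{s-1}\}\subset K^{m-n}$. Since $K^{m-n}$ is Hausdorff and the $y_i$ are distinct, I can choose pairwise disjoint open balls $B_0,\dots,B_{s-1}\subset K^{m-n}$ with $y_i\in B_i$. By Fact \ref{topology-group-action} applied to the $S_s$-action on $(K^{m-n})^s\setminus\Delta$, the topology on $(K^{m-n})^{[s]}$ is precisely the quotient topology, and the image in $(K^{m-n})^{[s]}$ of $B_0\times\cdots\times B_{s-1}$ is an open neighborhood of $f(u_0)$. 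Continuity of $f$ then yields an open ball $V\subset U$ around $u_0$ and continuous functions $g_i:V\to B_i$ (the local sections provided by Fact \ref{topology-group-action}) such that $f(u)=\{g_0(u),\dots,g_{s-1}(u)\}$ for all $u\in V$, with $g_i(u)$ being the unique element of $f(u)$ lying in $B_i$.

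Next I would shrink. Pick any open ball $B_2\subset B_0$ around $y_0$. By continuity of $g_0$ at $u_0$, there is an open ball $B_1\subset V$ around $u_0$ with $g_0(B_1)\subset B_2$. Set $B:=B_1\times B_2$, an open ball in $K^m$ containing $(u_0,y_0)$. I claim
\[
B\cap T = \{(u,g_0(u)) : u\in B_1\}.
\]
Indeed, if $(u,y)\in B\cap T$, then $u\in B_1\subset V$ so $y\in f(u)=\{g_0(u),\dots,g_{s-1}(u)\}$; since $y\in B_2\subset B_0$ and $g_i(u)\in B_i$ with $B_i\cap B_0=\emptyset$ for $i\neq 0$, the only possibility is $y=g_0(u)$. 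Conversely, $g_0(B_1)\subset B_2$ was arranged, so $(u,g_0(u))\in B_1\times B_2=B$.

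Finally, $\pi(B\cap T)=B_1$ is open in $K^n$, and the inverse map $B_1\to B\cap T$ sending $u\mapsto(u,g_0(u))$ is continuous because $g_0$ is. Hence $\pi$ is a local homeomorphism at $(u_0,y_0)$, and since $(u_0,y_0)$ was arbitrary, $T\to K^n$ is étale. The only subtle point is the extraction of the continuous local sections $g_i$ from continuity of $f$ into $(K^{m-n})^{[s]}$; this is exactly what Fact \ref{topology-group-action} supplies, and once the sections are in hand the rest is bookkeeping to make sure the shrunken ball $B_2$ isolates the correct sheet.
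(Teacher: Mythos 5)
Your proof is correct and follows essentially the same route as the paper's: both apply Fact \ref{topology-group-action} to lift $f$ locally to continuous sections $g_1,\dots,g_s$ and then identify $T$ locally with graphs of these sections. You simply spell out the sheet-isolation step (shrinking to $B_1\times B_2$ so that $B\cap T$ is the graph of a single $g_i$) that the paper leaves implicit in the phrase ``$T$ is homeomorphic to $\bigsqcup_{i=1}^s U$ over $U$.''
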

	\begin{proof}
		Let $x\in T$. Replacing $U$ by a smaller neighborhood
		around $\pi(x)$ we may assume, using  Fact \ref{topology-group-action}, that
		$f$ lifts to a continuous function $g:U\to (K^{m-n})^s$, 
		$g=(g_1,\cdots,g_s)$.
		In this case one gets that $T$ is homeomorphic to $\bigsqcup_{i=1}^sU$
		over $U$,
		via the map $(a,i)\mapsto (a,g_i(a))$.
		
	\end{proof}
	\begin{lemma}\label{glue}
		Suppose  $M=\bigcup_{i=1}^r\phi_i(U_i)$ where  $\phi_i:U_i\to M$ are definable functions, such that the $U_i$ are  definable (weak) $\mathcal{P}$-$n$-manifolds. Suppose further that for all $i,j$ the sets 
		$U_{ij}:=\phi_i^{-1}(\phi_j(U_j))$
		are open in $U_i$, and the transition 
		maps $U_{ij}\to U_{ji}$ given by 
		$x\mapsto \phi_j^{-1}\phi_i(x)$ are $\mathcal{P}$-maps 
		Then $M$ has a unique structure
		of a definable (weak) $\mathcal{P}$-$n$-manifold 
		such that $\phi_i:U_i\to M$ is an 
		open immersion.
		
	\end{lemma}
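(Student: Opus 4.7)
The plan is to define the atlas of $M$ as the collection of composed charts. Concretely, for each $i$ let $\{\psi_{i\alpha}:V_{i\alpha}\to U_i\}_\alpha$ be the atlas witnessing that $U_i$ is a definable weak $\mathcal{P}$-$n$-manifold, with each $V_{i\alpha}$ already equipped with its étale map $r_{i\alpha}:V_{i\alpha}\to K^n$. Take as atlas for $M$ the finite collection $\{\phi_i\circ\psi_{i\alpha}:V_{i\alpha}\to M\}_{i,\alpha}$. These are definable injections (compositions of such), and the $V_{i\alpha}$ retain their étale maps to $K^n$. The topology on $M$ is the unique one declaring $V\subset M$ open iff $\phi_i^{-1}(V)$ is open in $U_i$ for every $i$; well-definedness uses the hypothesis that $U_{ij}$ is open in $U_i$ together with the fact that $\phi_j^{-1}\phi_i:U_{ij}\to U_{ji}$ is a homeomorphism (being a $\mathcal{P}$-morphism with $\mathcal{P}$-morphism inverse), so the local pieces glue to a topology in which each $\phi_i$ is an open immersion.

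The bulk of the verification is the compatibility of the atlas. For indices $(i,\alpha)$ and $(j,\beta)$, set $W_{(i\alpha)(j\beta)}:=(\phi_i\psi_{i\alpha})^{-1}(\phi_j\psi_{j\beta}(V_{j\beta}))$. I would factor the relevant transition as
\[
V_{i\alpha}\supset W_{(i\alpha)(j\beta)}\xrightarrow{\psi_{i\alpha}} U_{ij}\xrightarrow{\phi_i^{-1}\phi_j} U_{ji}\xrightarrow{\psi_{j\beta}^{-1}} V_{j\beta},
\]
suitably restricted. Openness of $W_{(i\alpha)(j\beta)}$ in $V_{i\alpha}$ follows from three facts: $\psi_{i\alpha}$ is an open immersion into $U_i$, $\phi_i^{-1}\phi_j:U_{ji}\to U_{ij}$ is a $\mathcal{P}$-morphism of weak $\mathcal{P}$-manifolds (hence continuous, and in fact a homeomorphism with inverse $\phi_j^{-1}\phi_i$), and $\psi_{j\beta}(V_{j\beta})$ is open in $U_j$. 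The transition itself is a composition of three $\mathcal{P}$-maps: the two chart inverses (which are $\mathcal{P}$-maps by the definition of morphism applied to charts inside a weak manifold) and the $\mathcal{P}$-morphism $\phi_i^{-1}\phi_j$. Hence the composition is a $\mathcal{P}$-map, for each of the three relevant choices of $\mathcal{P}$ (topological, strictly differentiable, $T_n$), all of which are closed under composition and are local conditions. In the non-weak (``$\mathcal{P}$-$n$-manifold'') case one further notes that if all $U_i$ have open subsets of $K^n$ as their charts, the resulting $M$-charts $V_{i\alpha}$ are themselves open subsets of $K^n$, so the construction stays within the stricter class.

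For uniqueness, suppose $M$ carried two weak $\mathcal{P}$-manifold structures, $M_1$ and $M_2$, both making each $\phi_i:U_i\to M$ an open immersion. Then, using the charts constructed above, the identity $\mathrm{id}:M_1\to M_2$ satisfies that for every chart $\phi_i\circ\psi_{i\alpha}$ of $M_1$, the composite $\mathrm{id}\circ\phi_i\circ\psi_{i\alpha}=\phi_i\circ\psi_{i\alpha}:V_{i\alpha}\to M_2$ is an open immersion by hypothesis; reading this against any $M_2$-chart through the same factorization as above shows $\mathrm{id}$ is a $\mathcal{P}$-morphism. The symmetric argument gives a $\mathcal{P}$-morphism inverse, so the two structures coincide up to refinement of the atlas (which is the notion of equality in play).

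The argument has no serious obstacle: the content is just the standard glueing procedure, and every ingredient we need (closure of $\mathcal{P}$-maps under composition and restriction, locality of the $\mathcal{P}$-property, the fact that transition maps between charts \emph{within} a single weak manifold are already $\mathcal{P}$-morphisms by definition) is either built into Definition \ref{weak-manifolds-def} or is immediate from the definitions of strict differentiability and $T_n$. The only thing to be mildly careful about is tracking that the openness of the overlaps $W_{(i\alpha)(j\beta)}$ relies on \emph{both} the openness assumption on $U_{ij}\subset U_i$ supplied by the hypothesis and the openness of each $\psi_{j\beta}(V_{j\beta})$ inside $U_j$ coming from the weak manifold structure of $U_j$.
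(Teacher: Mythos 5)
Your proof is correct: it is the standard glueing argument (compose the charts of each $U_i$ with $\phi_i$, give $M$ the final topology, check compatibility via the factorization through the transition maps $\phi_j^{-1}\phi_i$, and get uniqueness from locality of the $\mathcal{P}$-morphism condition). The paper itself omits the proof as straightforward, and your write-up supplies exactly the argument the authors intend, with the right points of care (openness of the overlaps and closure of $\mathcal{P}$-maps under composition and restriction).
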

	The proof is straightforward and omitted.
	
	\begin{proposition}\label{large-equals-open-dense}
		Suppose $M$ is a definable weak topological manifold, then $X\subset M$ is large
		if and only if the interior of $X$ in $M$ is dense in $M$.
	\end{proposition}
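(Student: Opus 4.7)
The plan is to reduce to Proposition \ref{dimension}(2), which gives the topological characterization of dimension for subsets of $K^n$. Set $n = \dim M$ and $Y = M \setminus X$. First I would rewrite the claim: $X$ is large means $\dim Y < n$, and the interior of $X$ is dense in $M$ means $M \setminus \overline{Y}$ is dense, i.e.\ $\overline{Y}$ has empty interior, i.e.\ $Y$ is nowhere dense in $M$. So the goal becomes showing $\dim Y < n \iff Y$ is nowhere dense in $M$.

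Next I would exploit the local structure of weak manifolds to locally model $M$ on an open subset of $K^n$. Given any $y \in M$, choose a chart $\phi_i \colon U_i \to M$ with $y \in \phi_i(U_i)$ and then shrink to a ball $B$ around $\phi_i^{-1}(y)$ on which the \'etale map $r_i$ restricts to a homeomorphism $r_i \colon B \cap U_i \to V$ onto an open $V \subset K^n$, while $\phi_i$ restricts to a homeomorphism onto an open $W = \phi_i(B \cap U_i) \subset M$. This yields a definable homeomorphism $W \cong V$, and (because $M \subset K^{\ell}$ carries the subspace topology) by Proposition \ref{dimension}(3) and Lemma \ref{local-dimension} this homeomorphism preserves the dimensions of definable subsets.

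For the forward direction, suppose $\dim Y < n$ and let $W \subset M$ be an arbitrary nonempty open set. Shrink $W$ to a chart neighborhood $W \cong V \subset K^n$ as above and let $Y' \subset V$ correspond to $Y \cap W$. Then $\dim Y' = \dim(Y \cap W) \le \dim Y < n$, so Proposition \ref{dimension}(2) gives that $Y'$ is nowhere dense in $V$; pulling back a nonempty open subset of $V$ disjoint from $Y'$ produces a nonempty open subset of $W$ disjoint from $Y$. Hence $Y$ is nowhere dense in $M$.

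For the reverse direction, assume $Y$ is nowhere dense in $M$. By Lemma \ref{local-dimension} it suffices to show $\dim_y Y < n$ for each $y \in Y$. Choose a chart neighborhood $W \cong V \subset K^n$ of $y$. Since nowhere-density is a local property, $Y \cap W$ is nowhere dense in $W$, so the corresponding $Y' \subset V$ is nowhere dense in $V$ and thus $\dim Y' < n$ by Proposition \ref{dimension}(2); transporting back, $\dim(Y \cap W) < n$, giving $\dim_y Y < n$. The only step that is not entirely formal is the transfer of dimension along the chart: making precise that the composite $W \xleftarrow{\phi_i} B \cap U_i \xrightarrow{r_i} V$ is a definable bijection preserving $\acl$-dimension, which is essentially the content (already invoked in Lemma \ref{local-dimension}) that local dimension inside $M$ agrees with local dimension in the ambient $K^{\ell}$.
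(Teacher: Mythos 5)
Your proof is correct and follows essentially the same route as the paper: both arguments use Lemma \ref{local-dimension} to reduce the two conditions to local statements on chart neighborhoods, identify those neighborhoods with open subsets of $K^n$, and then invoke the topological characterization of dimension in Proposition \ref{dimension}(2). You simply spell out the localization and the transfer of dimension along charts in more detail than the paper does.
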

	\begin{proof}
		Because the dimension of $M\setminus X$ is the maximum of the local dimension at its points
		by Lemma \ref{local-dimension}, 
		we conclude that both conditions are local, and so we may assume $M=U\subset K^n$ is open.
		Here the result follows from dimension theory.
	\end{proof}
	\begin{proposition}\label{definable-is-constructible-manifold}
		Suppose $M$ is a weak definable topological manifold. Suppose $X\subset M$
		is definable. Then $X$ is a finite union of locally closed definable subsets
		of $M$.
	\end{proposition}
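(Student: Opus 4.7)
The plan is to reduce this to Proposition \ref{definable-is-constructible} by working chart-by-chart. Since $M$ is a weak topological manifold, it is covered by the finite family of images $\phi_i(U_i)$ of the structure maps, and by the convention in Definition \ref{weak-manifolds-def} the topology on $M$ makes each $\phi_i$ an open immersion. Writing $X=\bigcup_i (X\cap \phi_i(U_i))$, it suffices to show that each $X\cap \phi_i(U_i)$ is a finite union of definable locally closed subsets of $M$.

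Next, for each $i$, set $X_i:=\phi_i^{-1}(X)\subset U_i$. Since $U_i\subset K^{m_i}$ for some $m_i$, the set $X_i$ is a definable subset of $K^{m_i}$, and Proposition \ref{definable-is-constructible} gives a decomposition $X_i=\bigcup_j Y_{ij}$ into finitely many definable sets $Y_{ij}$ which are locally closed in $K^{m_i}$. Since $U_i$ carries the subspace topology from $K^{m_i}$, each $Y_{ij}=Y_{ij}\cap U_i$ is locally closed in $U_i$: if $Y_{ij}=U'\cap C'$ with $U'$ open and $C'$ closed in $K^{m_i}$, then $U'\cap U_i$ is open in $U_i$ and $C'\cap U_i$ is closed in $U_i$.

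The remaining point is to verify that $\phi_i(Y_{ij})$ is locally closed in $M$. Because $\phi_i$ is an open immersion, it is a homeomorphism from $U_i$ onto the open subset $\phi_i(U_i)\subset M$, so $\phi_i(Y_{ij})$ is locally closed in $\phi_i(U_i)$. A locally closed subset of an open subset is locally closed in the ambient space: if $Z=V\cap D$ with $V$ open and $D$ closed in $\phi_i(U_i)$, write $D=D'\cap \phi_i(U_i)$ with $D'$ closed in $M$, so $Z=V\cap D'$ is open-in-$M$ intersect closed-in-$M$. Combining everything, $X=\bigcup_{i,j}\phi_i(Y_{ij})$ is the required finite decomposition.

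The argument is essentially a formal transfer, and I do not anticipate a real obstacle; the only thing requiring a little care is the topological observation that being locally closed passes from an open subspace to the ambient manifold, which is immediate once one unpacks the definitions.
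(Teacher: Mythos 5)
Your proof is correct and follows essentially the same route as the paper: both reduce chart-by-chart to a definable subset of some $K^{s}$ with the subspace topology and then invoke Proposition \ref{definable-is-constructible}. You simply spell out the (routine) topological bookkeeping that the paper leaves implicit.
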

	\begin{proof}
		There is an immediate reduction to the case where $M=U\to K^n$ is étale. In this case
		$U$ has the subspace topology $U\subset K^s$ for some $s$.
		So it is enough to prove this for $X\subset K^s$. This is a consequence
		of Proposition \ref{definable-is-constructible}.
	\end{proof}
	In case $\acl=\dcl$  a weak manifold is generically a manifold:
	\begin{proposition}\label{weak-manifold-is-generically-strong}
		Suppose $\acl=\dcl$.

		If $M$ is a definable 
		weak $\mathcal{P}$-manifold, then there is a definable
		open dense subset $U\subset M$
		which is a definable $\mathcal{P}$-manifold.
	\end{proposition}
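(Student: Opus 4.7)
The plan is to upgrade each weak chart $(U_i, r_i, \phi_i)$ of $M$ into finitely many strong sub-charts whose domains are open subsets of $K^n$, by decomposing the étale cover $r_i: U_i\to K^n$ into finitely many definable continuous sections. The hypothesis $\acl = \dcl$ is essential here: it allows one to enumerate the finite fibers of $r_i$ uniformly by definable functions.

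Fix a chart $(U_i, r_i, \phi_i)$. Since $r_i$ is a definable local homeomorphism, its fibers are finite with cardinality uniformly bounded by some $s$ (by saturation), and its image $V_i := r_i(U_i)$ is open in $K^n$ (local homeomorphisms are open). Stratify $V_i$ by fiber cardinality: $A_k := \{y \in V_i : |r_i^{-1}(y)| = k\}$. Around any $y_0\in A_k$, the $k$ preimages admit pairwise disjoint neighborhoods on which $r_i$ is a homeomorphism, so these yield at least $k$ preimages of every point nearby; hence $A_{\geq k}$ is open, each $A_k$ is locally closed, and $\bigcup_k A_k^\circ$ is open and dense in $V_i$.

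On each $A_k^\circ$, the map $y\mapsto r_i^{-1}(y)$ defines a definable function $A_k^\circ \to (K^m)^{[k]}$. Using the definable section $(K^m)^{[k]}\to (K^m)^{k}$ provided by $\acl=\dcl$ (as in the proof of Proposition \ref{generic-regularity-finite}), and restricting to a further dense open subset $W_{i,k} \subset A_k^\circ$ via generic continuity (Proposition \ref{generic-continuity}) and generic $\mathcal{P}$-regularity (Proposition \ref{generic-regularity-0}), one obtains $k$ definable continuous $\mathcal{P}$-sections $s_{i,k,1},\ldots,s_{i,k,k}:W_{i,k}\to U_i$ of $r_i$ whose images partition $r_i^{-1}(W_{i,k})$. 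Set $\psi_{i,k,j} := \phi_i\circ s_{i,k,j} : W_{i,k}\to M$; these are definable injections from open subsets of $K^n$, whose images are open in $M$ (as $\phi_i$ is an open immersion and $s_{i,k,j}$ is a local section of the étale map $r_i$) and together cover an open dense subset of $\phi_i(U_i)$.

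Collecting these charts over all $i, k, j$ yields finitely many charts covering an open dense definable subset $U\subset M$. Finally, compatibility: the transition $\psi_{i,k,j}^{-1}\psi_{i',k',j'}$, viewed as a map between open subsets of $K^n$, equals $r_i\circ (\phi_i^{-1}\phi_{i'})\circ s_{i',k',j'}$. Locally $s_{i',k',j'}$ coincides with a local inverse of $r_{i'}$, so this composition is exactly the local representation of $\phi_i^{-1}\phi_{i'}$ as a $\mathcal{P}$-map between the étale domains $(U_{i'},r_{i'})$ and $(U_i,r_i)$, which is $\mathcal{P}$ by hypothesis on the weak structure. Hence $U$ is a definable $\mathcal{P}$-manifold. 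The main obstacle is the uniform construction of the sections: $\acl=\dcl$ gives pointwise definability of fiber elements, but packaging them into a uniformly definable family requires both the Skolem-type section of $(K^m)^{[k]}\to (K^m)^k$ and a careful use of generic continuity and $\mathcal{P}$-regularity to shrink to an open set on which all sections behave well simultaneously.
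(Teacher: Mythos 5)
Your proposal is correct and follows essentially the same route as the paper's proof: reduce to an \'etale chart, stratify the base by fiber cardinality and pass to the interiors, use $\acl=\dcl$ to lift the fiber map $(K^m)^{[k]}\to(K^m)^k$ to definable sections, and shrink to a dense open set where these sections are continuous $\mathcal{P}$-maps, so that the preimage becomes a disjoint union of copies of the base. The only difference is that you spell out the chart compatibility across different $i$ explicitly, which the paper leaves implicit in its reduction to a single chart.
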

	\begin{proof}
		There is an immediate reduction to the case in which $i:M=U\to K^n$ is étale.
		Let $r$ be an uniform bound for the cardinality of the fibers of $U$.
		In this case if we denote $X_k\subset K^n$ the set of points $x$ such that 
		$i^{-1}(x)$ has cardinality $x$, and $U_k\subset X_k$ is the interior of $X_k$,
		then $\bigcup_{k\leq r} U_k$ is open dense in in $K^n$.
		Replacing $U$ with $i^{-1}(U_k)$ we may assume that the nonempty fibers of $i$ have
		constant cardinality. From the assumption $\acl=\dcl$ we conclude that the map
		$i(U)\to (K^r)^{[s]}$ lifts to a definable map $i(U)\to (K^r)^s$.
		There is an open dense subset $V'\subset i(U)$ such that $V'\to i(U)\to (K^r)^s$ 
		is a $\mathcal{P}$-map, see Proposition \ref{generic-regularity-etale}, 
		and we conclude that $i^{-1}(V')\cong \bigsqcup_{i=1}^sV'$ 
		over $V'$, which
		is clearly a $\mathcal{P}$-manifold.
	\end{proof}
	It seems possible that in this situation a weak manifold is already a manifold, but we
	do not need this so we do not try to prove it. \\
	
	The next couple of results are not used in the main theorems, but may be
	of independent interest.
	\begin{definition}
		Suppose $M$ and $N$ are definable strictly differentiable weak manifolds,
		and $f:M\to N$ a definable strictly differentiable function.
		
		Then $f$ is called an immersion if the derivative $f'(x)$ is injective
		at all points $x\in M$.
		
		$f$ is called an embedding if $f$ is an immersion and a homeomorphism onto 
		its image.
		
		$f$ is called a submersion if the derivative $f'(x)$ is surjective for all
		$x\in M$.
	\end{definition}
	These notions have the expected properties.
	\begin{proposition}
		Suppose $f:M\to N$ is a strictly differentiable definable map of strictly differentiable definable
		weak manifolds. If $f$ is an immersion then $M$ satisfies the following
		universal property:
		For every strictly differentiable weak definable manifold $P$, 
		and $g:P\to M$, the function $g$ is strictly differentiable and definable
		if and only is $fg$ is strictly differentiable and $g$ is definable and 
		continuous.
		
		If $f$ is an embedding and $g:P\to M$ is as above, then $g$ is a strictly differentiable definable map
		if and only if $fg$ is a strictly differentiable definable map. 
	\end{proposition}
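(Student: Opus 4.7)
The forward direction of both equivalences is immediate: strictly differentiable definable maps between definable weak $\mathcal{P}$-manifolds compose, and strict differentiability implies continuity. So in both parts the content lies in the reverse direction, which is a local statement around each point $p\in P$.

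For the immersion case, the strategy is to trivialize $f$ locally around $x:=g(p)$ via the normal form of Proposition \ref{immersion} and then recover $g$ as the first $k$ coordinates of $fg$. Concretely, fix a chart $\psi:W\to P$ around $p$, a chart $i:U\to M$ around $x$ (with $U$ étale over $K^k$), and a chart $j:V\to N$ around $f(x)$ (with $V$ étale over $K^{k+r}$). After passing to local continuous sections of the étale maps $i$ and $j$, the chart representation of $f$ becomes a strictly differentiable definable map $\tilde f:U_0\to V_0$ between open subsets of $K^k$ and $K^{k+r}$, and the immersion hypothesis says that $\tilde f'$ is injective at the point corresponding to $x$. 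Apply Proposition \ref{immersion}: after shrinking there is a strict diffeomorphism $\phi:V_0'\to B\times B_2$ such that $\phi\circ\tilde f$ is the inclusion $b\mapsto(b,0)$. Using continuity of $g$, shrink $W$ so that $g\psi(W)\subseteq i(U)$ and $fg\,\psi(W)\subseteq j(V_0')$. Then the chart representation of $fg$, composed with $\phi$, has the form $w\mapsto(\tilde g(w),0)$, where $\tilde g$ is the chart representation of $g$. Since $fg$ and $\phi$ are strictly differentiable, so is this composite; projecting to the first $k$ coordinates yields strict differentiability of $\tilde g$, hence of $g$ at $p$. As $p$ was arbitrary, $g$ is strictly differentiable.

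For the embedding case, it is enough to upgrade the hypotheses on $fg$ to the hypotheses of the immersion case for $g$, and then invoke the above. Since $f$ is injective, $g=(f|_M)^{-1}\circ fg$. The map $f^{-1}:f(M)\to M$ is definable (being the inverse of a definable injection) and continuous (by the embedding assumption), so $g$ is definable and continuous because $fg$ is definable and continuous (the latter since strict differentiability implies continuity). The immersion case then gives strict differentiability of $g$.

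The only subtle point I anticipate is bookkeeping with the étale charts, since Proposition \ref{immersion} is stated for open subsets of $K^k$ whereas the weak-manifold atlas uses étale maps. This is resolved by the observation that étale maps are local homeomorphisms, so fixing a local section at the point of interest identifies the étale domain with an open subset of $K^k$; all the relevant properties (strict differentiability, definability, and the normal form) transfer faithfully across this identification.
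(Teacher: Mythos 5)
Your proposal is correct and follows exactly the route the paper indicates: the paper gives no detailed proof, stating only that both properties are consequences of the local structure theorems for immersions and submersions (Proposition \ref{immersion} in particular) and leaving the details to the reader, and your argument is precisely the natural filling-in of those details — local trivialization of the immersion to the normal form $b\mapsto(b,0)$, recovery of $g$ by projection, and reduction of the embedding case to the immersion case via $g=f^{-1}\circ fg$.
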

	\begin{proposition}
		If $f:M\to N$ is a surjective submersion, then a map
		$g:N\to K$ is a strictly differentiable definable function if and only if
		the composition $gf$ is strictly differentiable and definable.
	\end{proposition}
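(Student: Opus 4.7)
The forward implication is immediate: if $g$ is strictly differentiable and definable, then $gf$ is the composition of strictly differentiable definable morphisms, hence itself strictly differentiable and definable.

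For the reverse, assume $gf$ is strictly differentiable and definable. Definability of $g$ is formal: since $f$ is surjective and $g$ is a (set-theoretic) function, the graph of $g$ equals $\{(y,k)\in N\times K : \exists x\in M\,(f(x)=y\wedge gf(x)=k)\}$, which is definable from the graphs of $f$ and $gf$. It remains to prove strict differentiability of $g$. Fix $y\in N$; choose $x\in M$ with $f(x)=y$ using surjectivity. Pick charts $\phi_i:U_i\to M$ around $x$ and $\tau_j:V_j\to N$ around $y$, with associated étale maps $U_i\to K^m$ and $V_j\to K^n$. After passing to local homeomorphic pieces of $U_i$ and $V_j$, the map $f$ is represented by a strictly differentiable definable map between open subsets of $K^m$ and $K^n$ whose derivative at (the image of) $x$ is surjective.

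Now invoke Proposition~\ref{submersion}: there is an open neighborhood $U'$ of $x$'s representative, a ball $B\ni f(x)$'s representative, a ball $B_2\subset K^{m-n}$, and a definable strict diffeomorphism $\psi:B\times B_2\to U'$ such that $f\circ\psi:B\times B_2\to B$ is the projection $(b,c)\mapsto b$. In these local coordinates, the hypothesis says $gf\circ\psi:B\times B_2\to K$ is strictly differentiable. But $gf\circ\psi=g\circ\pi_1$, where $\pi_1(b,c)=b$. Choosing any $c_0\in B_2$, the inclusion $\iota:B\to B\times B_2$, $\iota(b)=(b,c_0)$, is a strict diffeomorphism onto its image, and $g|_B=(gf\circ\psi)\circ\iota$. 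Hence $g|_B$ is strictly differentiable, which (via the chart $\tau_j$) gives strict differentiability of $g$ at $y$.

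There is no real obstacle here beyond bookkeeping: the content of the proposition is entirely carried by Proposition~\ref{submersion}, which furnishes the local sections of $f$ one needs in order to recover $g$ from $gf$. The only subtlety to verify is that the local computation, performed through the étale charts $U_i\to K^m$ and $V_j\to K^n$, is compatible with Definition~\ref{weak-manifolds-def}(3) for morphisms of weak manifolds — but this is automatic since $f$ being a morphism implies the transition behaves well, and strict differentiability is a local property invariant under composition with strict diffeomorphisms.
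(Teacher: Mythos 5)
Your proof is correct and follows exactly the route the paper intends: the paper leaves this proposition as an exercise, noting only that it is a consequence of the local structure theorem for submersions (Proposition \ref{submersion}), and your argument--local coordinates in which $f$ is a projection, plus a section $\iota(b)=(b,c_0)$ to recover $g$ from $gf$--fills in precisely those details. The separate observation that definability of $g$ follows from the definability of the graphs of $f$ and $gf$ is also the right bookkeeping.
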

	These two properties are a consequence of the theorems on the local 
	structure of immersions and submersions, Propositions \ref{immersion} and
	\ref{submersion}. We leave the details for the interested reader to fill.\\
	
	Suppose $M$ is a definable strictly differentiable weak 
	manifold. If $M\to N$ is a 
	surjective map of sets, it
	determines at most one structure of a definable strictly differentiable 
	weak  manifold on $N$, in such a way that $M\to N$ is a submersion.
	Also, an injective map $N\to M$ determines at most one structure of 
	a strictly differentiable weak manifold on $N$, 
	in such a way that $N\to M$ 
	is an embedding. The subsets $N\subset M$ admitting such a structure
	are called submanifolds of $M$. We also get that if $N$ is a definable topological space, and $N\to M$ is a definable and continuous
	function, then there 
	is at most one structure of a strictly differentiable definable manifold on
	$N$ extending the given topology, and for which 
	$N\to M$ is an immersion. In other words the strictly differentiable weak
	manifold structure that makes $N\to M$ an embedding is determined by the set
	$N$, and the strictly differentiable weak manifold structure that makes
	$N\to M$ an immersion is determined by the topological space $N$. 
	\begin{proposition}\label{generic-immersion}
		Suppose $M,N$ are definable strictly differentiable weak manifolds,
		and let $f:M\to N$ be an injective definable map.
		Then there is a definable open dense subset $U\subset M$ such that 
		$f|_U$ is an immersion.
	\end{proposition}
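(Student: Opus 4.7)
The plan is to combine Propositions \ref{generic-regularity} and \ref{constant-rank}. Since the assertion ``$f|_U$ is an immersion for some open dense definable $U$'' is first-order expressible once $M$, $N$ and $f$ are fixed, by Remark \ref{acl=dcl-remark} I may assume $\acl = \dcl$ in $K$. This then lets me invoke Proposition \ref{weak-manifold-is-generically-strong} to replace $M$ and $N$ by open dense definable subsets that are genuine definable strictly differentiable manifolds; locally they are open subsets of $K^n$ and $K^m$. A further application of Proposition \ref{generic-regularity} reduces to the case when $f$ is strictly differentiable throughout $M$. Set $n = \dim M$.

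I would then consider the definable subset
\[
U := \{x \in M : \mathrm{rank}(f'(x)) = n\}
\]
of points at which $f'$ is injective. This set is open in $M$, because the condition ``some $n \times n$ minor of the Jacobian of $f$ in local coordinates is non-zero'' is open by continuity of $f'$ (strict differentiability implies continuous differentiability, as noted after the definition of strict differentiability). On $U$ the map $f$ is by construction an immersion, so everything reduces to showing $U$ is dense in $M$.

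For density I argue by contradiction. Suppose $V \subset M$ is a non-empty definable open set disjoint from $U$, so $\mathrm{rank}(f'(x)) < n$ everywhere on $V$. Let $k := \max_{x \in V} \mathrm{rank}(f'(x))$; by the same minors argument (upper semi-continuity of rank), the subset of $V$ on which $f'$ has rank exactly $k$ is non-empty and open in $V$, so shrinking $V$ I may assume $f'$ has constant rank $k < n$ throughout $V$. Applying Proposition \ref{constant-rank} around any point of $V$ yields a non-empty definable open $V' \subset V$, an open $W' \subset N$ with $f(V') \subset W'$, and strict diffeomorphisms $\phi_1 : B_1 \times B_2 \to V'$ and $\phi_2 : W' \to B_1 \times B_3$ such that $\phi_2 \circ f \circ \phi_1$ is the map $(a,b) \mapsto (a,0)$. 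Since $k < n$, the ball $B_2$ has positive dimension, so for any fixed $a \in B_1$ and any two distinct $b, b' \in B_2$, the points $\phi_1(a,b)$ and $\phi_1(a,b')$ are distinct elements of $V'$ with the same image under $f$. This contradicts injectivity and finishes the proof.

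The only non-routine step is arranging that Proposition \ref{constant-rank} applies in the weak-manifold setting, which is precisely what the reduction via $\acl = \dcl$ and Proposition \ref{weak-manifold-is-generically-strong} achieves. An alternative route, avoiding that reduction, would be to apply Proposition \ref{constant-rank} directly to the composition of $f$ with local inverses of the \'etale chart maps $r_i : U_i \to K^n$ and $s_j : V_j \to K^m$, and then use the injectivity of the chart maps $\phi_i : U_i \to M$ from Definition \ref{weak-manifolds-def} to transport the fibre-collision back to $M$ itself.
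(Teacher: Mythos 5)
Your core argument is exactly the paper's: reduce to $f$ strictly differentiable via Proposition \ref{generic-regularity}, observe that if the immersion locus were not dense one could pass to a nonempty open set on which $f'$ has constant rank $k<\dim M$ (the paper does this by writing $M=X_0\cup\dots\cup X_{n-1}$ and taking interiors, you do it via semicontinuity of the rank --- note, by the way, that rank is \emph{lower} semicontinuous, not upper; your conclusion that $\{\mathrm{rank}=k\}=\{\mathrm{rank}\geq k\}$ is open for $k$ the maximum is nevertheless correct), and then contradict injectivity using Proposition \ref{constant-rank}, since $(a,b)\mapsto(a,0)$ is not injective when $\dim B_2=n-k>0$. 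That part is fine.

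The one step that does not work as written is the initial reduction: you invoke Proposition \ref{weak-manifold-is-generically-strong} to replace \emph{both} $M$ and $N$ by open dense subsets that are genuine manifolds. For $M$ this is harmless, but for $N$ it is not: the guaranteed genuine-manifold subset $N'\subset N$ is merely open and dense, and since $\dim f(M)\leq\dim M$ may be much smaller than $\dim N$, the image $f(M)$ could lie entirely inside the nowhere dense set $N\setminus N'$, in which case $f^{-1}(N')$ is empty and the reduction destroys the map. The paper avoids this by working purely locally: the \'etale structure maps $r_i:U_i\to K^n$, $s_j:V_j\to K^m$ are local homeomorphisms around \emph{every} point (no points of $N$ need to be discarded), and both the derivative $f'(x)$ and Proposition \ref{constant-rank} are applied through these local identifications --- which is precisely the ``alternative route'' you sketch in your last sentence. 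So promote that alternative to the main argument and drop the $\acl=\dcl$ / generic-strong-manifold reduction, which is not needed here; with that change the proof is correct and coincides with the paper's.
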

	\begin{proof}
		By Proposition \ref{generic-regularity} we may assume $f$ is strictly
		differentiable. We have to show that 
		the interior of the set $\{x\in M: f'(x) \text{ is injective} \}$ is dense in $M$.
		If this is not the case,  we can find an open nonempty subset of 
		$M$ such that $f$ is not an immersion at any point. 
		
		So suppose $M$
		is an open subset of $K^n$, $N$ is an open subset of $K^m$ and $f$ is not
		an immersion at any point. For dimension reasons $n\leq m$.
		If we define $X_k$ to be the set of points $x$ of $M$ such that 
		$f'(x)$ is of rank $k$ then $X_0\cup\dots\cup X_{n-1}=M$, and so
		if $U_r$ is the interior of $X_r$ we have that $U_1\cup\cdots\cup U_{n-1}$
		is open dense in $M$. So we may assume that $f$ is of constant rank.
		This contradicts the result in Proposition \ref{constant-rank}, since the map
		$(x,y)\mapsto (x,0)$ is not injective.
	\end{proof}
 The following facts are standard, and are probably known:
	\begin{fact}\label{finite-cover-trivial}
		Suppose $X$ is a Hausdorff space and $X\to Y$ is a surjective continuous
		map, which is a local homemorphism with fibers of constant cardinality $s$.
		Then the map $t:Y\to X^{[s]}$ given by the fibers of $p$, is continuous.
	\end{fact}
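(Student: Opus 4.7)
The plan is to work locally around an arbitrary point $y_0\in Y$ and exhibit, on a small enough neighborhood of $y_0$, a continuous lift $V\to X^s$ of $t|_V$; since $X^{[s]}$ carries the quotient topology from $X^s\setminus\Delta$ (see Fact \ref{topology-group-action}), such a lift witnesses continuity of $t$ at $y_0$.

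First I would enumerate the fiber $p^{-1}(y_0)=\{x_1,\dots,x_s\}$. Since $p$ is a local homeomorphism, for each $i$ there is an open neighborhood $W_i$ of $x_i$ on which $p$ restricts to a homeomorphism $p|_{W_i}\colon W_i\to p(W_i)$ onto an open subset of $Y$. Using Hausdorffness of $X$ together with the fact that the $x_i$ are pairwise distinct, I can shrink the $W_i$ to arrange that they are pairwise disjoint open neighborhoods, each mapping homeomorphically onto an open set in $Y$. Set $V_0:=\bigcap_{i=1}^s p(W_i)$; this is an open neighborhood of $y_0$.

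The main (and essentially only) point to verify is that for every $y\in V_0$ the full fiber $p^{-1}(y)$ already lies in $W_1\cup\dots\cup W_s$. This is where the constant-cardinality assumption is used: for each $y\in V_0$ and each $i$, the local inverse $s_i:=(p|_{W_i})^{-1}\colon V_0\to W_i$ produces a point $s_i(y)\in p^{-1}(y)\cap W_i$, and because the $W_i$ are pairwise disjoint these $s$ points are distinct. Since $|p^{-1}(y)|=s$ by hypothesis, they exhaust the fiber, so $p^{-1}(y)=\{s_1(y),\dots,s_s(y)\}$. The map $\tilde t\colon V_0\to X^s\setminus\Delta$ given by $y\mapsto(s_1(y),\dots,s_s(y))$ is then continuous (each coordinate being the continuous local inverse $s_i$), and composing with the quotient map $X^s\setminus\Delta\to X^{[s]}$ yields $t|_{V_0}$. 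Hence $t$ is continuous at $y_0$, and since $y_0$ was arbitrary, continuous on $Y$.

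There is no real obstacle here: the argument is entirely topological, the only subtlety being the exhaustion-of-the-fiber step, which is immediate from the cardinality hypothesis once the $W_i$ have been chosen pairwise disjoint. No appeal to $1$-h-minimality, definability, or any of the heavier machinery developed earlier in the paper is needed.
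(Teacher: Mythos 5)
Your proof is correct and follows essentially the same route as the paper's: both pick pairwise disjoint neighborhoods $W_i$ of the fiber points on which $p$ is a homeomorphism onto an open set, intersect their images to get a neighborhood $V_0$ of $y_0$, and use the constant-cardinality hypothesis to see that each fiber over $V_0$ meets each $W_i$ exactly once. Your version is slightly more explicit about the fiber-exhaustion step and about factoring $t|_{V_0}$ through a continuous lift to $X^s\setminus\Delta$, which the paper leaves implicit, but there is no substantive difference.
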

	\begin{proof}
		Let $\pi:X^{s}\setminus \Delta\to X^{[s]}$ be the canonical projection.
		Take $y\in Y$ and $\{x_1,\dots,x_s\}=p^{-1}(y)=t(y)$.
		A basic open neighborhood of 
		$t(y)$ is of the form $\pi(U_1\times\dots\times U_s)$
		for $x_k\in U_k$ open and $U_k$ pairwise disjoint.
		Shrinking $U_k$ we may assume $p|_{U_k}$ is a homemorphism onto an open set. 
		If 
		$V=\bigcap_kp(U_k)$, then
		$t^{-1}(V)\subset \pi(U_1\times\dots\times U_s)$.
	\end{proof}
 
	\begin{fact}\label{finite-cover-rigid}
        Let $X,Y,Z$ be topological spaces,
		$p:X\to Z$, $q:Y\to Z$ be surjective continuous functions and $f:X\to Y$ a continuous bijection such that $qf=p$.
		Assume that $X$ and $Y$ are Hausdorff spaces, and $p:X\to Z$ has finite fibers of constant cardinality, $s$. If the map $t:Z\to X^{[s]}$, given by $x\mapsto p^{-1}(x)$ 
		is continuous then $f$ is a homeomorphism.
	\end{fact}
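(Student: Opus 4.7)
The plan is to show that $f^{-1}\colon Y\to X$ is continuous by establishing that $f$ is an open map. The key observation is that the continuity of the section $t\colon Z\to X^{[s]}$ turns $p$ into a local homeomorphism in a controlled way, and the relation $qf=p$ transports this étale-like structure to $q$, at which point Hausdorffness of $Y$ forces the relevant pieces to be open.

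First, I would prove that $p$ is a local homeomorphism. Given $x_0\in X$ with $p(x_0)=z_0$ and fiber $p^{-1}(z_0)=\{x_0,\dots,x_{s-1}\}$, use Hausdorffness of $X$ to choose pairwise disjoint open neighborhoods $U_i\ni x_i$ (with $U_0$ arbitrarily small). Let $V:=t^{-1}(\pi(U_0\times\cdots\times U_{s-1}))\subset Z$, which is open and contains $z_0$ by continuity of $t$ and since $\pi\colon X^s\setminus\Delta\to X^{[s]}$ is open (Fact \ref{topology-group-action}). Setting $\tilde U_i:=U_i\cap p^{-1}(V)$, the disjointness of the $U_i$ gives $p^{-1}(V)=\bigsqcup_i\tilde U_i$; composing $t|_V$ with the continuous local inverse of $\pi|_{U_0\times\cdots\times U_{s-1}}$ and projecting onto the $i$-th coordinate produces a continuous section $s_i\colon V\to \tilde U_i$ of $p|_{\tilde U_i}$, showing that $p|_{\tilde U_i}\colon\tilde U_i\to V$ is a homeomorphism.

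Next, I would transport this to $q$. Since $qf=p$ and $f$ is a continuous bijection, $q^{-1}(V)=f(p^{-1}(V))=\bigsqcup_i f(\tilde U_i)$, and on each piece $q|_{f(\tilde U_i)}\colon f(\tilde U_i)\to V$ is a continuous bijection with explicit continuous inverse $\sigma_i:=f\circ s_i$, hence a homeomorphism. The maps $\sigma_0,\dots,\sigma_{s-1}\colon V\to q^{-1}(V)$ are therefore continuous sections of $q$ with pairwise disjoint images $\sigma_i(V)=f(\tilde U_i)$.

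It remains to show each $f(\tilde U_i)$ is open in $Y$, which I expect to be the only step requiring the Hausdorff hypothesis on $Y$ in an essential way. Fix $y_0\in f(\tilde U_0)$, write $v_0:=q(y_0)$, and consider the fiber $\{\sigma_0(v_0),\dots,\sigma_{s-1}(v_0)\}$ of pairwise distinct points. Using Hausdorffness of $Y$, pick pairwise disjoint open $V_i\ni\sigma_i(v_0)$ and set $V':=\bigcap_i\sigma_i^{-1}(V_i)$, which is open in $V$ and contains $v_0$. A direct check — using that for $v\in V'$ and $j\neq 0$ one has $\sigma_j(v)\in V_j$, hence $\sigma_j(v)\notin V_0$ — yields
\[
\sigma_0(V')=V_0\cap q^{-1}(V'),
\]
so $\sigma_0(V')$ is open in $Y$ and is a neighborhood of $y_0$ contained in $f(\tilde U_0)$. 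Thus $f(\tilde U_0)$ is open, and since $U_0$ was an arbitrary small neighborhood of $x_0$, this shows $f(U)$ is open for every open $U\subset X$. Together with the fact that $f$ is already a continuous bijection, this establishes that $f$ is a homeomorphism.
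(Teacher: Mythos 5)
Your proof is correct and follows essentially the same route as the paper's: both use Fact \ref{topology-group-action} to lift $t$ locally to continuous sections, decompose $p^{-1}(V)$ into disjoint sheets homeomorphic to $V$, transport them to $Y$ via $f$, and invoke Hausdorffness of $Y$ to see that the image sheets are open. The only (immaterial) difference is in that last step: the paper notes that the sets $\{y \in Y : y = f l_i q(y)\}$ are closed equalizers, hence open as finitely many pairwise disjoint closed sets covering the open set $q^{-1}(V)$, whereas you separate the points of a $q$-fibre by disjoint open sets and intersect their preimages under the sections.
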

	\begin{proof}
		Since $f$ is continuous and bijective, we only need to show that it is open, which is a local property. Fix some $x\in X$ and $z=p(z)$. By Fact \ref{topology-group-action} the map $Z\to X^{[s]}$ lifts, locally near $z$,  to a continuous map $(l_1,\dots, l_s):Z\to X^s$. Shrinking $Z$ to this neighborhood, and reducing $X$ and $Y$ accordingly, we may assume that $X$ is homeomorphic to $\bigsqcup_{i\leq s}Z$ over $Z$,
		via the homeomorphism $F_l:\bigsqcup_{i\leq s}Z\to X$, given by 
		$(i,z)\mapsto l_i(z)$. To see that this is a homeomorphism, note that
		the image of the $i$-th-cofactor via $F_l$ is the set
		$X_i=\{x\in X\mid x=l_ip(x)\}$, which is closed in $X$ (because
		$X$ is Hausdorff). Since there are only finitely many $X_i$ (and they are pairwise disjoint) they are also
		open. Finally,  the inverse of $F_l$ restricted to $X_i$ coincides with $p$ which
		is continuous. So $f$ restricted to $F_l(Z)$ is a homeomorphism, so $f$ is open at $x$. Since $x$ was arbitrary, the conclusion follows. 
		
		Similarly, we have a homeomorphism $F_{fl}:\bigsqcup_{i\leq s}Z\to Y$,
		which is compatible with $f$ in the sense that
		$fF_l=F_{fl}$.
	\end{proof}
	
	\begin{proposition}\label{generic-embedding}
		Let $M, N$ be  strictly  differentiable weak manifolds, and 
		$f:M\to N$ be an injective definable function.
		Then there is a dense open $U\subset M$ 
		such that $f|_U$ is an embedding.
	\end{proposition}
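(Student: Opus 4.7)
The plan is as follows. By Proposition \ref{generic-regularity} I may first restrict $M$ to a dense open subset on which $f$ is strictly differentiable, and then by Proposition \ref{generic-immersion} to a further dense open subset on which $f$ is moreover an immersion. The immersion theorem (Proposition \ref{immersion}) then gives that $f$ is \emph{locally} a closed embedding at every point of $M$, so the only obstruction to producing a dense open $U\subset M$ on which $f|_U$ is a global embedding is ensuring continuity of the inverse map $f^{-1}\colon f(M)\to M$ on a large enough subset of $f(M)$.

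Next I analyze the image $Y=f(M)\subset N$. Since $f$ is an injective definable map, $\dim Y=\dim M=n$. Applying the cell decomposition of Proposition \ref{cell-decomposition} together with Proposition \ref{etale-example} to $Y$, the top-dimensional cells are étale over open subsets of $K^n$. Let $V$ be their union; then $\dim(Y\setminus V)<n$, and since dimension is preserved by closure, $\dim\overline{Y\setminus V}<n$ as well. Setting $V^\circ$ to be the interior of $V$ in $Y$ (in the subspace topology from $N$), we have $V\setminus V^\circ\subset V\cap \overline{Y\setminus V}$, so $\dim(Y\setminus V^\circ)<n$. Because each intersection of $V^\circ$ with a top cell is open in that cell and hence étale over $K^n$, $V^\circ$ inherits the structure of a definable weak manifold of dimension $n$ whose topology coincides with the subspace topology inherited from $N$.

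Now apply Proposition \ref{generic-regularity} to the definable map $f^{-1}\colon V^\circ\to M$ between weak manifolds: there exists an open dense $V_1\subset V^\circ$ on which $f^{-1}$ is continuous. Since $V_1$ is open in $Y$ in the subspace topology, we can write $V_1=W\cap Y$ for some open $W\subset N$, and set $U:=f^{-1}(W)=f^{-1}(V_1)$, which is open in $M$ by continuity of $f$. The complement satisfies $M\setminus U=f^{-1}(Y\setminus V_1)$, and by injectivity of $f$ together with $\dim(Y\setminus V_1)<n$, this complement has dimension $<n$ and is therefore nowhere dense by Proposition \ref{dimension}(2). Thus $U$ is open and dense in $M$. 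On $U$ the map $f$ is a continuous bijection onto $V_1=f(U)$ with continuous inverse $f^{-1}|_{V_1}$, hence a homeomorphism; combined with the fact that $f$ remains an immersion on $U$, the restriction $f|_U$ is the desired embedding.

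The main technical point is the construction in the second paragraph: one must verify that the weak manifold topology on the top-dimensional stratum of $Y$ (coming from the étale structure on cells) agrees with the subspace topology from $N$, and that taking the interior of the union of top cells removes only a set of dimension $<n$. Without this identification, generic continuity of $f^{-1}$ with respect to the weak manifold topology would not translate into continuity with respect to the topology from $N$, which is the one relevant for the definition of embedding. The dimension bound on the frontier of the lower-dimensional cells, which is what makes this interior dense, is precisely the content of Proposition \ref{dimension-boundary-function}.
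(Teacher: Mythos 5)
Your overall strategy (first reduce to the case of an immersion via Proposition \ref{generic-immersion}, then repair continuity of $f^{-1}$ on a large set) is the same as the paper's, and the dimension bookkeeping in your third paragraph is fine. But there is a genuine gap at the step you yourself flag as ``the main technical point'': passing to the interior $V^\circ$ of the union $V$ of the top-dimensional cells does \emph{not} make the weak-manifold topology on $V^\circ$ agree with the subspace topology from $N$. The cells $C_i$ are pairwise disjoint, so the charts $V^\circ\cap C_i\to V^\circ$ overlap in the empty set and the weak-manifold topology they induce is the disjoint-union topology, in which each $V^\circ\cap C_i$ is open; in the subspace topology one cell may accumulate on another. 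Concretely, take $Y=(\mathcal O\times\{0\})\cup(\{0\}\times(\mathcal O\setminus\{0\}))\subset K^2$, the image of the injective immersion sending $\mathcal O\sqcup(\mathcal O\setminus\{0\})$ to the two ``axes''. Both cells are $1$-dimensional and \'etale over opens, $V=V^\circ=Y$, and $f^{-1}$ is continuous on each cell separately, so your procedure returns $V_1=Y$ and $U=M$ --- yet $f$ is not an embedding, since $f^{-1}$ is discontinuous at $(0,0)$ for the subspace topology. The underlying problem is that Proposition \ref{generic-regularity}, applied to $f^{-1}\colon V^\circ\to M$, yields continuity and density only with respect to the weak-manifold (disjoint-union) topology, so neither ``$V_1$ is open in $Y$'' nor ``$f^{-1}|_{V_1}$ is continuous for the topology of $N$'' follows. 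Proposition \ref{dimension-boundary-function} as you invoke it only guarantees that $V^\circ$ is large; it does not separate the top-dimensional cells from one another.

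The missing step, which is exactly what the paper's proof supplies, is to shrink the pieces so that their images have pairwise disjoint closures: replace each piece $U_i'$ by $U_i''=U_i'\setminus\bigcup_{j\neq i}f^{-1}\bigl(\cl(f(U_j'))\bigr)$, and use Proposition \ref{dimension-boundary-function} to check that $\dim(U_i'\setminus U_i'')<\dim M$, so the union of the $U_i''$ is still large. Once $\cl(f(U_i''))\cap f(U_j'')=\emptyset$ for $i\neq j$, each $f(U_i'')$ is (cl)open in the union of the images, the disjoint-union and subspace topologies coincide, and local continuity of $f^{-1}$ on each piece globalizes. (The paper then still has to do real work on a single \'etale piece, via the minors of $f'$ and Facts \ref{finite-cover-trivial} and \ref{finite-cover-rigid}, which your appeal to Proposition \ref{immersion} replaces acceptably.) With this separation step inserted before you apply generic continuity to $f^{-1}$, your argument can be repaired; without it, the construction of $V^\circ$ does not deliver the topology identification on which the rest of your proof depends.
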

	\begin{proof}
		By Proposition \ref{generic-immersion} we may assume that $f$ is an immersion.
		If $V_1,\dots,V_n$ is a finite open cover of $N$ and the statement is valid
		for $f:f^{-1}V_i\to V_i$, then it is also valid for $f$. So we may assume 
		$N=V\to K^m$ is étale. From the definition we have that 
		$V\subset K^d$ has the subspace topology. We  have already seen that 
		$f:M\to V$ is an immersion, so if it is a topological embedding into $K^d$,
		then it is an embedding into $V$. So we may assume $N=K^m$.
		
		Now consider $U_1\cup\dots\cup U_n=M$ a finite open cover of $M$.
		Assume $f|_{U_i}$ is an embedding. Define 
		$U_i'=\text{Int}(U_i\setminus \bigcup _{j<i}U_i)$, and 
		$U_i''=U_i'\setminus \bigcup_{j\neq i}f^{-1}\cl(f(U_j'))$.
		Note that $\bigcup_i U_i'\subset M$ is an open dense set.
		Also,
		note that $f(U_i'\setminus U_i'')= \bigcup_{j\neq i}f(U_i')
		\cap \cl(f(U_j'))\subset
		\bigcup_{j\neq i}\cl(f(U_j'))\setminus f(U_j')$. So we conclude that 
		$\dim(U_i'\setminus U_i'')<\dim(M)$, by Proposition 
		\ref{dimension-boundary-function}. Thus, replacing
		$U_i$ with $U_i''$ we may assume $\cl (f(U_i))\cap f(U_j)=\emptyset$ for
		distinct $i,j$. In this case one verifies that $f$ is a topological
		embedding. 
		
		We are thus reduced to the case where $M=U\to K^n$ is étale. 
		
		Consider for each $I\subset \{1,\dots,m\}$ of size $n$, the set $A_I$
		of $x\in U$ such that the $I$-th-minor of $f'(x)$ is invertible.
		As $U=\bigcup_IA_I$ we conclude that $U'=\bigcup_I\text{Int}(A_I)$ is open dense
		in $U$, so by the reduction in the previous paragraph we may assume that
		the composition of $f:U\to K^m$ with the projection onto the first $n$
		coordinates $p:K^m\to K^n$ is an étale immersion.
		If $s$ is a uniform bound for the size of the fibers of $U$ over $K^n$,
		then we can take $A_k$ the set of $x\in K^m$ such that the fiber 
		$(pf)^{-1}(x)$ has $k$ elements, and consider 
		$U'=\bigcup_{k\leq s}\text{Int}(A_k)$. So we may assume that if
		$V=pf(U)$, the fibers of $U$ over $V$ have the same size $s$.
		
		In this case the function $V\to U^{[s]}$, given by $x\mapsto (pf)^{-1}(x)$,
		is continuous, and the function $f$ is  topological homeomorphism
		$U\to f(U)$, see Facts \ref{finite-cover-trivial} and \ref{finite-cover-rigid}.
	\end{proof}
	
	We can now prove a $1$-h-minimal version of Sard's Lemma (compare with \cite[Theorem 2.7]{WilComp} for an analogous result in the o-minimal setting). Namely, given a definable strictly differentiable morphism, $f$, of definable weak manifolds, call a value $x$ of $f$ regular when $f$ is a submersion at every point of $f^{-1}(x)$.  The statement is, then, that the set of singular values is small:  
	\begin{proposition}\label{sard}
		Suppose $M$ and $N$ are definable strictly differentiable weak manifolds.
		If $f:M\to N$ is a strictly differentiable map, then there exists an open
		dense subset $U\subset N$ such that $f:f^{-1}(U)\to U$ is a submersion.
	\end{proposition}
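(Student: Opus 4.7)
The statement asserts that the critical locus $C = \{x \in M : \rk(f'(x)) < m\}$, where $m = \dim N$, has image $f(C)$ nowhere dense in $N$: indeed then $U = N \setminus \cl(f(C))$ is the desired open dense subset. By Proposition \ref{dimension}(2), this reduces to showing $\dim f(C) < m$. Both hypothesis and conclusion are local in $M$ and in $N$, so working chart-by-chart while using that étale maps are locally homeomorphisms, we reduce to the case where $M$ is open in $K^n$ and $N$ is open in $K^m$.

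We then argue by induction on $n = \dim M$; the base case $n=0$ is immediate since $M$ is finite. For the inductive step, we stratify by rank: for each $k < m$, set $C_k = \{x : \rk(f'(x)) = k\}$ and let $V_k$ be the interior of $C_k$ in $M$. On $V_k$ the map $f$ has constant rank $k$, so the constant-rank theorem (Proposition \ref{constant-rank}) furnishes, near each point of $V_k$, a local factorisation of $f$ through $(a,b)\mapsto(a,0)$, giving $\dim f(V_k) \le k < m$. It thus remains to bound $\dim f(Y)$ for the residual $Y = C \setminus \bigcup_{k<m} V_k$, which has empty interior in $M$ and hence satisfies $\dim Y < n$ by Proposition \ref{dimension}.

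By cell decomposition (Proposition \ref{cell-decomposition}) combined with Proposition \ref{etale-example}, $Y$ decomposes into finitely many definable pieces $Y_j$, each étale over some $K^{d_j}$ with $d_j \le \dim Y < n$, so that each $Y_j$ carries the structure of a definable weak manifold of dimension strictly less than $n$. For each $Y_j$, applying Proposition \ref{generic-regularity} both to $f|_{Y_j}$ and to the inclusion $\iota_j : Y_j \to M$, together with Proposition \ref{generic-immersion} applied to $\iota_j$, produces a dense open $Y_j^\ast \subset Y_j$ on which both maps are strictly differentiable and $\iota_j$ is an immersion. The chain rule gives $(f|_{Y_j})'(y) = f'(\iota_j(y)) \circ d\iota_j(y)$ for $y \in Y_j^\ast$, whose rank is at most $\rk(f'(\iota_j(y))) < m$ since $\iota_j(y) \in Y \subset C$. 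Every point of $Y_j^\ast$ is thus critical for $f|_{Y_j}: Y_j \to N$, and the inductive hypothesis (as $d_j < n$) yields $\dim f(Y_j^\ast) < m$. The residual $Y_j \setminus Y_j^\ast$ has empty interior in $Y_j$ and thus dimension strictly less than $d_j$, so the same procedure can be iterated; the recursion terminates since the relevant dimension strictly decreases at each step. The principal difficulty lies precisely in this recursion: the non-open rank strata are not themselves manifolds, and one must combine cell decomposition with generic regularity and the generic-immersion lemma in order to transport the ambient rank bound on $M$ to the intrinsic derivative of the restriction of $f$ to each cell.
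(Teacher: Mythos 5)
Your overall strategy (stratify the critical locus by the rank of $f'$, use the constant rank theorem on the open strata, and push the lower-dimensional residue through cell decomposition and an induction on dimension) is the same as the paper's. But there is a genuine gap at the key step, the treatment of $V_k$. You invoke Proposition \ref{constant-rank} as if it furnished a normal form \emph{near each point} of $V_k$; it does not. As the paper stresses in the discussion preceding Proposition \ref{generic-relative-locally-constant}, the constant rank theorem in this setting holds only generically: it produces \emph{some} nonempty definable open $U'\subset V_k$ on which $f$ factors as $(a,b)\mapsto(a,0)$, with no control over which points $U'$ contains (the obstruction being that a definable function with $D_yf=0$ need not be locally of the form $g(x)$, cf.\ the example before Proposition \ref{generic-relative-locally-constant}). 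So the one-line conclusion $\dim f(V_k)\le k$ is unjustified, and the set $V_k\setminus U'$, which may still be open and $n$-dimensional, is left untreated. Moreover, even granting a local normal form near every point, passing from ``each point has a neighborhood whose image has dimension at most $k$'' to ``$\dim f(V_k)\le k$'' is not automatic, because the local charts are not uniformly definable and dimension is not computed by non-definable open covers.

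The paper repairs both problems at once: it considers the definable set $Y=\{x\in U:\dim f^{-1}f(x)\ge \dim(U)-k\}$ (definable since dimension is definable in definable families), shows $Y$ has dense interior by applying the generic constant rank theorem to every nonempty open subset, disposes of the nowhere dense complement by the induction on dimension, and then bounds $\dim f(Y)$ by \emph{additivity of dimension} (fibers of dimension at least $n-k$ force the image to have dimension at most $k$), rather than by gluing local charts. Your argument needs this, or an equivalent globalization, inserted. Two smaller points: your opening reduction to ``$M$ open in $K^n$'' silently trivializes the \'etale charts, which requires passing to an expansion with $\acl=\dcl$ (Fact \ref{acl=dcl}), as the paper does at the outset; and in your treatment of the residual set the appeal to Proposition \ref{generic-immersion} is unnecessary, since the chain rule alone bounds the rank of $(f|_{Y_j})'$ by that of $f'$.
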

	\begin{proof}
		We have to see that the image via $f$ of 
		the set of points $x\in M$ such that $f'(x)$ is not surjective,
		is nowhere dense in $N$. 
		This property is expressible by a first order formula, so we may
		assume that $\acl=\dcl$, see Fact \ref{acl=dcl} and the subsequent remark.
		
		Let $m=\dim(N)$.
		Let $X\subset M$ a definable 
		set such that for all $x\in X$,
		$f'(x)$ is not surjective. We have to see that $\dim f(X)<m$.
		We do this by induction on the dimension of $X$. 
		The base case,  when $X$ is finite, is trivial. 
		
		The dimension of $f(X)$
		is the maximum of the local dimensions at points, 
		see Proposition \ref{local-dimension}. So we may assume $N\subset K^m$ is open.
		Covering $M$ by a finite number of charts we may assume $M\to K^n$ is étale,
		say $M\subset K^r$. 
		
		Then by Proposition \ref{cell-decomposition}
		there exists a finite partition of $X$ into definable sets such that 
		if $X'$ is an element of the partition, there exists a coordinate projection
		$p:K^r\to K^l$, which restricted to $X'$ is a surjection $X'\to U$ onto an
		open subset $U\subset K^l$ with finite fibers of constant cardinality.
		If we prove that $\dim f(X')<m$ for every element $X'$ of the
		partition then also $\dim f(X)<m$.
		So we may assume there is a coordinate projection 
		$p:X\to U$ onto an open subset $U\subset K^l$, such that 
		$p^{-1}(u)$ has $t$ elements for all $u\in U$.
		From the assumption $\acl=\dcl$, we get that there are definable sections
		$s_1,\dots,s_t:U\to X$, such that $\{s_1(u),\cdots,s_t(u)\}=p^{-1}(u)$, for 
		all $u\in U$.
		As $X=\bigcup_{i\le t}s_i(U)$, we may assume $p:X\to U$ is a bijection
		with inverse $s:U\to X$.
		The map $s:U\to M$ becomes strictly differentiable in an open
		dense $V\subset U$, see Proposition \ref{generic-regularity}. 
		As $s(U\setminus V)$ has smaller dimension than $X$,
		we may assume that $s$ is strictly differentiable.
		Now note that $s$ has 
		image in $X$ and so the composition $U\to M\to N$ has derivative which is 
		not surjective at any point of $U$. 
		So we have reduced to the case in which $M=U\subset K^n$ is open
		and $X=U$. 
		
		If we consider $A_k\subset U$, the set defined by
		$A_k=\{x\in U: f'(x)\text{ has rank }k\}$,
		then $\bigcup_{k<m}A_k=U$ and so $\bigcup_{k<m}\text{Int}(A_k)$
		is open and dense in $U$. We conclude by the induction hypothesis that the 
		image of $U\setminus \bigcup_{k<m}\text{Int}(A_k)$ is nowhere dense in $N$,
		and so we may assume that $f'(x)$ has constant rank $k$ in $U$, for a $k<m$.
		
		Consider the set $Y=\{x\in U:  \dim f^{-1}f(x)\geq \dim(U)-k\}$.
		Then $Y$ is definable, because dimension is definable in definable families.
		Also, $Y$ has dense interior
		by the constant
		rank theorem Proposition \ref{constant-rank}. So once more by the induction
		hypothesis we may assume $f^{-1}f(x)$ has dimension at least 
		$\dim(U)-k$ for all $x\in U$. Then the dimension of $f(U)$ is at most
		$k$, by the additivity of dimension.
	\end{proof}
	If $M\to N$ is a map of strictly differentiable weak manifolds, and $y\in N$ is a regular
	value, then one can show $f^{-1}(y)\subset M$ is a strictly differentiable weak submanifold. 
	\section{Definable Lie groups}\label{groups-section}
	In this section we show that every definable group is a definable 
	weak Lie group
	and that the germ of a definable weak Lie group morphism is determined 
	by its derivative at the identity. \\
	
	The proof of the following lemma was communicated to us by Martin Hils.
	\begin{lemma}\label{generic-in-group-0}
		Suppose $G$ is a group 
		$a$-definable in a pregeometric theory,
		and that $X,Y\subset G$ are non-empty $a$-definable sets of dimension smaller
		than $G$. 
		If $g\in G$ is such that
		$\dim(gX\cap Y)=\dim(X)$, then $\dim(g/a)\leq \dim(Y)$.
		
		In particular, there exists $g\in G$ such that 
		$\dim(gX\cap Y)<\dim(X)$
	\end{lemma}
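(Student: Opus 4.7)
The plan is to prove the first (main) assertion by exhibiting an explicit dependence $g \in \dcl(a, x, g^{-1}x)$ for a suitably chosen generic point $x$ and then applying the additivity of dimension to juggle the dimensions of $g$, $x$, and $g^{-1}x$. The ``in particular'' statement then follows by a saturation argument, choosing $g$ generic in $G$.

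In more detail, assume $\dim(gX \cap Y) = \dim(X)$. By the existence of generics in an $\aleph_0$-saturated model we may pick $x \in gX \cap Y$ with $\dim(x/ag) = \dim(gX \cap Y) = \dim(X)$. Since $x \in gX$ we have $g^{-1}x \in X$, and since $x \in Y$ we have $\dim(x/a) \leq \dim(Y)$. The key algebraic observation is the identity $g = x \cdot (g^{-1}x)^{-1}$, which shows $g \in \dcl(a, x, g^{-1}x)$. In particular $\dim(g/a, x) \leq \dim(g^{-1}x/a, x) \leq \dim(g^{-1}x/a) \leq \dim(X)$, where the last inequality uses that $g^{-1}x \in X$ and $X$ is $a$-definable.

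Now apply additivity twice to the pair $(g, x)$ over $a$:
\[
\dim(g/a) + \dim(x/ag) = \dim(g, x/a) = \dim(x/a) + \dim(g/ax).
\]
Substituting $\dim(x/ag) = \dim(X)$, $\dim(x/a) \leq \dim(Y)$, and $\dim(g/ax) \leq \dim(X)$ into this equality yields $\dim(g/a) + \dim(X) \leq \dim(Y) + \dim(X)$, hence $\dim(g/a) \leq \dim(Y)$, as required.

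For the ``in particular'' statement, note that if $G$ is finite then the hypothesis that $X$ has dimension strictly less than $G$ forces $X = \emptyset$, contradicting non-emptiness; so $\dim(G) > \dim(Y)$. By $\aleph_0$-saturation, there is $g \in G$ with $\dim(g/a) = \dim(G) > \dim(Y)$, and the contrapositive of the first part gives $\dim(gX \cap Y) < \dim(X)$. No step is truly difficult; the only subtle point is noticing the algebraic relation $g = x(g^{-1}x)^{-1}$ that turns the set-theoretic intersection $gX \cap Y$ into a concrete equation relating $g$ to a pair of points in $X$ and $Y$.
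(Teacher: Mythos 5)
Your proof is correct and follows essentially the same route as the paper's: both pick a point of $gX\cap Y$ generic over $ag$, pass to its translate $g^{-1}x\in X$, and exploit the interdefinability coming from $g=x\cdot(g^{-1}x)^{-1}$ together with additivity of dimension. The only difference is cosmetic bookkeeping — the paper first isolates the algebraic independence of $g$ and $g^{-1}x$ over $a$, whereas you substitute all three bounds into a single additivity identity.
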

	\begin{proof}
		Denote $d=\dim(X)$ and $d'=\dim(Y)$. 
		Suppose $\dim(gX\cap Y)=d$. Note that $d\leq d'$. 
		Let 
		$h'\in gX\cap Y$ be such that $\dim(h'/ag)=d$.
		Let $h=g^{-1}h'$. 
		
		As $h\in X$ we have 
		$d\geq \dim(h/a)\geq \dim(h/ag)=\dim(h'/ag)=d$.
		The first inequality is because $h\in X$, the third one because $h$ and 
		$h'$ are inter-definable over $ag$, and the fourth one by choice of $h'$.
		We conclude that $h$ and $g$ are algebraically independent over $a$.
		
		Then we obtain that
		$d'\geq \dim(h'/a)\geq \dim(h'/ah)=\dim(g/ah)=\dim(g/a)$.
		The first inequality because $h'\in Y$, the third equality because $h'$ and $g$
		are inter-definable over $ah$, and the fourth equality because $h$ and $g$
		are algebraically independent over $a$.
		
		For the second statement, note that
		if $g\in G$ is such that $\dim(g/a)=\dim(G)$, or more generally 
		$\dim(g/a)>d'$, then $\dim(gX\cap Y)<\dim(Y)$.
	\end{proof}
	
	The next lemma generalizes Lemma 2.4 of \cite{Pi5} for o-minimal theories. Pillay's proof can be seen to generalize, with some effort,  to geometric theories.  We give a different proof: 
	
	\begin{lemma}\label{generic-in-group}
		Suppose, $G$ is a group definable in a pregeometric theory and suppose
		$X\subset G$ is such that $\dim(G\setminus X)<\dim(G)$.
		Then a finite number of translates of $X$ cover $G$.
	\end{lemma}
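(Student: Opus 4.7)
My plan is to proceed by induction on $d := \dim(G \setminus X)$, using the convention $\dim(\emptyset) = -\infty$. The base case $d = -\infty$ is immediate, since then $X = G$ and a single translate (by the identity) suffices.

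For the inductive step, assume the claim for all definable $X' \subseteq G$ with $\dim(G \setminus X') < d$, and let $X$ satisfy $\dim(G \setminus X) = d < \dim(G)$. Set $Y := G \setminus X$, a non-empty definable subset of $G$ of dimension $d$. The ``in particular'' clause of Lemma~\ref{generic-in-group-0}, applied with both input sets equal to $Y$, furnishes $g \in G$ such that $\dim(gY \cap Y) < d$. Because left translation by $g$ is a bijection of $G$, we have $G \setminus gX = g(G\setminus X) = gY$, and therefore
\[ G\setminus (X \cup gX) \;=\; (G\setminus X) \cap (G\setminus gX) \;=\; Y \cap gY, \]
a definable set of dimension strictly smaller than $d$. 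Thus $X' := X \cup gX$ satisfies $\dim(G\setminus X') < d$, so by the induction hypothesis there exist $h_1,\dots,h_n\in G$ with $G = \bigcup_{i=1}^n h_i X'$. Since $h_i X' = h_i X \cup h_i g X$, the $2n$ translates $h_1 X, h_1 g X, \dots, h_n X, h_n g X$ cover $G$, completing the induction.

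The heart of the argument -- and really the only obstacle to overcome -- is recognizing that Lemma~\ref{generic-in-group-0} (a statement about translates of small sets) converts into a dimension-drop statement for the complement of one additional translate of $X$. Once this observation is in hand, the rest is formal: dimension strictly decreases at each inductive step and lives in $\{-\infty\}\cup\Nn$, so the induction terminates after at most $\dim(G)+1$ iterations, yielding a cover of $G$ by at most $2^{\dim(G)+1}$ translates of $X$. No genuine use of saturation is needed beyond what is already built into the application of Lemma~\ref{generic-in-group-0}.
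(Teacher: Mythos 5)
Your proof is correct and follows essentially the same route as the paper: both arguments rest on the ``in particular'' clause of Lemma~\ref{generic-in-group-0} and induct on the dimension of the uncovered set. The only difference is bookkeeping --- the paper adds one generic translate per dimension drop (applying the lemma to $G\setminus X$ and the current uncovered set), whereas you apply it to $(Y,Y)$ and recurse, doubling the number of translates at each level; both terminate for the same reason.
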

	\begin{proof}
		Suppose we have $g_0,\cdots,g_n\in G$ such that 
		$\dim(G\setminus(\bigcup_k g_kX))=m$. By the Lemma \ref{generic-in-group-0} 
		applied to $G\setminus X$ and $G\setminus (\bigcup_kg_kX)$
		we get
		that there is $g_{n+1}\in G$ such that 
		$\dim(G\setminus\bigcup_{k\leq (n+1)}g_kX)<m$, which finishes the proof.
	\end{proof}
	\begin{lemma}\label{large-generates}
		Suppose $G$ is a definable group in a pregeometric theory and $V\subset G$
		is large. Then every $g\in G$ is a product of two elements in $V$.
	\end{lemma}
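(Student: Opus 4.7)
The plan is to reformulate the equation $g = v_1 v_2$ with $v_1, v_2 \in V$ as a statement about a nonempty intersection of two large subsets of $G$. Specifically, writing $v_1 = g v_2^{-1}$, the existence of such a decomposition of $g$ is equivalent to the nonemptiness of $V \cap g V^{-1}$.

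First I would observe that inversion and left-translation by $g$ are definable bijections on $G$, hence preserve dimension. Therefore, since $V$ is large in $G$ (meaning $\dim(G \setminus V) < \dim(G)$), the set $g V^{-1}$ is also large in $G$: we have
\[
\dim(G \setminus g V^{-1}) = \dim(g \cdot (G \setminus V^{-1})) = \dim(G \setminus V^{-1}) = \dim(G \setminus V) < \dim(G).
\]

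Next, I would use the finite additivity of the $\acl$-dimension with respect to finite unions (property (4) of Fact \ref{dimension-is-canonical}): since
\[
G \setminus (V \cap g V^{-1}) = (G \setminus V) \cup (G \setminus g V^{-1}),
\]
and each summand has dimension strictly less than $\dim(G)$, the complement $V \cap g V^{-1}$ has dimension equal to $\dim(G)$, and in particular is nonempty. Picking any $v_1 \in V \cap g V^{-1}$, we may write $v_1 = g v_2^{-1}$ with $v_2 \in V$, so that $g = v_1 v_2$ is the desired decomposition.

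There is no real obstacle: the argument is a direct dimension count, using only that dimension is invariant under definable bijections (translations and inversion) and that a finite union of sets of strictly smaller dimension than $G$ cannot exhaust $G$. No appeal to the stronger Lemma \ref{generic-in-group} is needed for this particular statement.
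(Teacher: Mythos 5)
Your proof is correct, and it is essentially the dual, set-level formulation of the paper's argument. The paper follows \cite[Lemma 2.1]{Pi5}: take $h\in G$ generic over $g$ (and the parameters of $V$), note that $h^{-1}g$ is then also generic since $x\mapsto x^{-1}g$ is a definable bijection, so both $h$ and $h^{-1}g$ lie in the large set $V$, and $g=h\cdot(h^{-1}g)$. You instead show that the set $V\cap gV^{-1}$ of all such witnesses $h$ is large, hence nonempty. Both arguments rest on exactly the same two facts: largeness is preserved under the definable bijections $x\mapsto x^{-1}$ and $x\mapsto gx$, and a finite union of sets of dimension $<\dim G$ cannot cover $G$. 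Your version has the mild advantage of not needing to realize a generic type over $g$, so it makes no appeal to saturation; it does use that $V$ is definable (so that $\dim(V\cap gV^{-1})$ makes sense), but that is already implicit in the paper's definition of ``large'' and holds in every application. You are also right that Lemma \ref{generic-in-group} is not needed; the paper's proof does not use it either.
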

	\begin{proof}
		The proof of \cite[Lemma 2.1]{Pi5} works:
		if we take $h\in G$ generic over $g$, then $h^{-1}g$ is also generic over
		$g$, and so $h,h^{-1}g\in V$ and their product is $g$.
	\end{proof}

	\begin{proposition}\label{definable-is-lie}
		A definable group can be given the structure of a 
		definable strictly differentiable weak $T_k$-Lie group.
		The forgetful functor from definable strictly differentiable weak Lie groups
		to definable groups is an equivalence of categories.
		
		If $\acl=\dcl$ the forgetful functor from definable strictly differentiable
		$T_k$-Lie groups to definable groups is an equivalence of categories.
	\end{proposition}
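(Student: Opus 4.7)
My plan is to adapt Pillay's strategy from \cite{Pi5} and \cite{PilQp} to the $1$-h-minimal setting, with the essential new feature that cell decomposition (Proposition \ref{cell-decomposition}) produces only étale pieces over open subsets of $K^d$, forcing us to work with \emph{weak} manifolds rather than genuine manifolds.

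First, I would apply Proposition \ref{cell-decomposition} to $G$ to extract a definable piece $W \subset G$ of maximal dimension $d = \dim G$ which, via Proposition \ref{etale-example}, is étale over an open subset of $K^d$. By Proposition \ref{dimension}(2), $W$ is large in $G$. Next, applying the generic regularity statements (Proposition \ref{generic-regularity-0} via Proposition \ref{generic-regularity}) to the multiplication $\mu:G\times G\to G$ and inversion $\iota:G\to G$, I obtain large open subsets $W_0\subset W$ and $W_1\subset W_0\times W_0$ on which $\iota$ and $\mu$, respectively, are strictly differentiable $T_k$ as maps between étale domains; shrinking further, we may arrange $\iota(W_0)\subset W_0$ and $\mu(W_1)\subset W_0$. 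Then, using Lemma \ref{generic-in-group}, pick finitely many $g_1,\dots,g_r\in G$ with $G=\bigcup_i g_iW_0$, and define charts $\phi_i:W_0\to G$ by $\phi_i(w)=g_iw$, each $g_iW_0$ inheriting an étale structure from $W_0\to K^d$ via $\phi_i$.

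The heart of the argument, and the main obstacle, is verifying that the transition maps $\phi_j^{-1}\phi_i$, which are left translations $\lambda_h:w\mapsto hw$ by $h=g_j^{-1}g_i$, are strictly differentiable $T_k$ at every point of their domain (as maps between étale domains). The trick is: given a point $w_0$ in the domain of $\lambda_h$, use Lemma \ref{large-generates} together with compactness to write $hw_0=ab$ with $(a,b)$ falling into the generically good region $W_1$; then on a small neighborhood of $w_0$ one decomposes $\lambda_h$ as a composition of partial multiplications and translations whose factors stay in $W_1$, each $T_k$, and invoke closure of $T_k$ maps under composition. The same decomposition argument shows that $\mu$ and $\iota$ are themselves $T_k$-morphisms in the resulting structure.

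Once the transition maps are verified, Lemma \ref{glue} endows $G$ with the required definable strictly differentiable weak $T_k$-Lie group structure. For the equivalence of categories, essential surjectivity is trivial, and fullness reduces to showing that any definable homomorphism $f:G\to H$ is a morphism of weak Lie groups: generic regularity gives strict differentiability on a large open subset of $G$, and the translation-decomposition argument (now exploiting $f\circ\lambda_g=\lambda_{f(g)}\circ f$) propagates this everywhere; faithfulness is immediate. Finally, when $\acl=\dcl$, Proposition \ref{weak-manifold-is-generically-strong} allows us to replace $W_0$ by a genuine open subset of $K^d$, upgrading the entire construction from a weak manifold to a manifold structure, and yielding the last statement.
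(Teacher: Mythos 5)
Your outline follows the same route as the paper (Pillay's strategy: cell decomposition to get a large \'etale piece, generic regularity of $\mu$ and $\iota$, a finite cover by translates via Lemma \ref{generic-in-group}, and gluing via Lemma \ref{glue}), but the step you identify as ``the heart of the argument'' is exactly where your proposal has a genuine gap. You need the transition map $\lambda_h$ to be $T_k$ at \emph{every} point $w_0$ of its domain, including non-generic ones. Your mechanism --- write $hw_0=ab$ via Lemma \ref{large-generates} and factor $\lambda_h$ through the good region $W_1$ --- does not achieve this: factoring $h=ab$ only reduces translation by $h$ to translations by $a$ and $b$, and to realize $\lambda_b$ near $w_0$ as a restriction of $\mu|_{W_1}$ you need $(b,w_0)\in W_1$ for some suitable $b$. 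Since $W_1$ is merely open and dense in $W_0\times W_0$, its complement can contain an entire fiber $W_0\times\{w_0\}$ over a non-generic $w_0$, so no choice of $b$ need work. Genericity of $b$ over $w_0$ only guarantees $\dim(b,w_0)=n+\dim(w_0)$, which is less than $2n$ when $w_0$ is not generic, so $(b,w_0)$ need not land in a large set.

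The paper closes this gap with a device your proposal omits: before building the charts, it replaces the chart domain by the interior of the set
\[
V_1'=\{g\in V_1:\ (h,g),(h^{-1},hg)\in Y_0\ \text{for all }h\text{ generic over }g\},
\]
which is definable because dimension is definable in definable families (the condition on $g$ is $\dim(G\setminus X_g)<n$ for a definable family $X_g$), and is large because it contains every generic of $G$. Restricting to this set guarantees property (4) of the paper's proof --- that for every $g$ in the chart domain and every $h$ generic over $g$ the relevant pairs lie in the good region --- and then the translation $x\mapsto gxh$ is written as $k^{-1}(((kgx)h_1)h_2)$ with $k,h_1$ chosen generic over everything in sight, each partial product staying in the open good set $Y$. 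Without this ``definability of genericity'' step your decomposition only works at generic points of the chart domains, which is not enough to verify the compatibility conditions of Lemma \ref{glue}. The rest of your proposal (fullness via $f=L_{f(g)f(g_0)^{-1}}fL_{g_0g^{-1}}$, and the $\acl=\dcl$ case via Proposition \ref{weak-manifold-is-generically-strong}) matches the paper.
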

	\begin{proof}
		That the forgetful functor 
		is full follows from Proposition \ref{generic-regularity}.
		Indeed, suppose $G$ and $H$ are strictly differentiable or $T_k$-Lie groups,
		and let $f:G\to H$ be a definable group morphism. Then by Proposition 
		\ref{generic-regularity} there is an open dense $U\subset G$ such that
		$f:U\to H$ is strictly differentiable or $T_k$. If $g_0\in U$ is arbitrary,
		and $g\in G$, consider the formula
		$f=L_{f(g)f(g_0)^{-1}}fL_{g_0g^{-1}}$, where we are denoting 
		$L_h$ the left
		translate by $h$. 
		Now, $L_{g_0g^{-1}}$ is a strict diffeomorphism
		or $T_k$-isomorphism which sends $g$ to $g_0$, and $L_{f(g)f(g_0)^{-1}}$
		is a strict diffeomorphism or $T_k$-isomorphism. We conclude that
		$f$ being 
		strictly differentiable or $T_k$ at $g_0$ implies that $f$ is strictly differentiable of $T_k$ at $g$.
		
		To see that the forgetful functor 
		is essentially surjective one follows the proof of 
		\cite[Proposition 2.5]{Pi5}.
		Namely, let $G$ be of dimension $n$. Decompose $G$
		as in Proposition \ref{cell-decomposition},
		and let $V_0\subset G$ be the union of the $n$-dimensional pieces $U_0,\cdots, U_r$.
		Give $V_0$ the structure of a weak strictly differentiable manifold with charts the 
		inclusions $U_i\to V_0$, see Proposition \ref{etale-example}.
		
		Note that $V_0^{-1}\subset G$ is large in $G$, as the inverse function  
		is a definable bijection  sending the large subset $V_0$ onto $V_0^{-1}$.
		As the intersection of two large sets is large, we conclude that 
		$V_0\cap V_0^{-1}$ is large in $G$.
		A fortiori, $V_0\cap V_0^{-1}$ is large in $V_0$ and so it contains an open
		dense subset of $V_0$, see Proposition \ref{large-equals-open-dense}.
		Let $V_1\subset V_0\cap V_0^{-1}$ be open dense in $V_0$ such that
		the inverse function on 
		$V_1$ (and into $V_0$) is strictly differentiable and $T_k$, 
		see Proposition \ref{generic-regularity}.
		
		In a similar way, we have that $V_0\times V_0\cap m^{-1}(V_0)$ is large in 
		$G\times G$ 
		Indeed, $m^{-1}(V_0)$ is the inverse image of the large subset
		$G\times V_0$ of $G\times G$ under the definable bijection $(\id,m):G\times G\to G\times G$.
		In the same way as before, we find
		$Y_0\subset V_0\times V_0\cap m^{-1}(V_0)$ open and dense in $V_0\times V_0$
		such that the multiplication
		map
		$Y_0\to V_0$ is strictly differentiable and $T_k$.
		
		Now we take
		\[
		V_1'=\{g\in V_1: (h,g),(h^{-1},hg)\in Y_0\text{ for all }h\text{ generic over }g\}.
		\]
		Note that $V_1'$ is definable, because $g\in V_1'$ is equivalent to 
		$\dim(G\setminus X_g)<n$ for $X_g=\{h\in G: (h,g),(h^{-1},hg)\in
		Y_0\}$, and dimension is definable in definable families in 
		geometric theories.
		Note also that $V_1'$ is large in $G$,
		because if $g\in G$ is generic and $h\in G$ is generic over $g$,
		then $(h,g)$ is generic in $G\times G$, and $(h^{-1},hg)$, being the image of a definable
		bijection at $(h,g)$ is also generic in $G\times G$, so they belong to $Y_0$,
		because $Y_0$ is large in $G\times G$.
		
		Now take
		$V_2$ the interior of $V_1'$ in $V_0$ and $V=V_2\cap V_2^{-1}$.
		Then $V_2$ is large in $V_0$ by Proposition \ref{large-equals-open-dense},
		and so it is also large in $G$.
		So we conclude that $V$ is an open dense subset of $V_0$.
		
		Define also $Y=\{(g,h): g,h, gh\in V, (g,h)\in Y_0\}$, then $Y$ is open dense in 
		$V_0\times V_0$.
		This is because $Y$ is large in $G\times G$, 
		with arguments as above, and it is open in $Y_0$,
		because multiplication is continuous in $Y_0$.
		
		Then we have shown:
		\begin{enumerate}
			\item $V$ is large in $G$.
			\item $Y$ is dense open subset of $V\times V$,
			and multiplication $Y\to V$ is strictly differentiable and $T_k$.
			\item Inversion is a strictly differentiable $T_k$-map from $V$ onto $V$.
			\item If $g\in V$ and $h\in G$ is generic in $G$
			over $g$ then $(h,g),(h^{-1},hg)\in Y$
		\end{enumerate}
		For the last item, note that $h,hg,h^{-1}\in G$
		are generic, and so they belong to $V$. Also, because $g\in V_1'$, one has that 
		$(h,g), (h^{-1},hg)\in Y_0$.
		
		From this one gets 
		\begin{enumerate}[(a)]
			
			\item
			For every $g,h\in G$ the set $Z=\{x\in V: gxh\in V\}$ is open
			and $Z\to V$ given by $x\mapsto gxh$ is strictly differentiable and $T_k$.
			\item
			For every $g,h\in G$, the set
			$W=\{(x,y)\in V\times V: gxhy\in V\}$ is open in $V\times V$ and the map
			$W\to V$ given by $(x,y)\mapsto gxhy$ is strictly differentiable and $T_k$.
		\end{enumerate}
		Indeed, for (a), assume $x_0\in Z$, take $h_1$ generic over $h$ and
		$k$ generic over $g,x,h,h_1$. Take $h_2=h_1^{-1}h$.
		Note that $h_1,h_2\in V$.
		Now one writes
		$f(x)=gxh$ as a composition of strictly differentiable and $T_k$
		functions defined on 
		an open neighborhood of $x_0$ in the following way.
		Consider the set
		$Z_1=\{x\in V:(kg,x)\in Y, (kgx,h_1)\in Y, (kgxh_1,h_2)\in Y, (k^{-1},kgxh)\in Y\}$,
		then by item 2 we have that $Z_1$ is open and the
		map $x\mapsto gxh=k^{-1}(((kgx)h_1)h_2)$ is a composition of strictly differentiable and
		$T_k$ functions.
		Also $x_0\in Z_1$ by item 4.
		
		Similarly for (b) given $(x_0,y_0)\in W$ the set 
		\[
		W_1=\{(x,y)\in V: (kg,x),(kgx,h_1), (kgxh_1,h_2),
		(kgxh,y), (k^{-1},kgxhy)\in Y\}
		\]
		is open by item (3), contains $(x_0,y_0)$ 
		by item (4) and in 
		$W_1$ the required map is a composition of strictly differentiable 
		and $T_k$ functions.
		
		By (1) above and Lemma \ref{generic-in-group} a finite number of translates, $g_0V,\dots, g_nV$, cover $G$. 
		Consider 
		the maps $\phi_i:V\to G$ given by $\phi_i(x)=g_ix$.
		It is straightforward to verify, using (a), (b) and (3) above 
		that these charts endow  $G$ with a (unique) structure of a strictly differentiable or $T_k$ manifold, as in Lemma \ref{glue}, and with this structure $G$ is a Lie group.
		
		For example, to see that the transition maps are strictly differentiable
		or $T_k$, we have to see that the sets $V\cap g_i^{-1}g_jV$ are open,
		and the maps $\phi_{i,j}: V\cap g_i^{-1}g_jV\to V\cap g_j^{-1}g_iV$ given by 
		$x\mapsto g_j^{-1}g_ix$ are strictly
		differentiable or $T_k$. This is a particular case of (a). Similarly, (b)
		translates into the multiplication being strictly differentiable or $T_k$
		and (a) and (3) translate into the inversion being strictly differentiable and 
		$T_k$.
		
		When $\acl=\dcl$ an appropriate version of cell decomposition in 
		Proposition \ref{cell-decomposition} gives the result by repeating the above
		proof. Alternatively, we can see it directly from the result we have just proved 
		and 
		Proposition \ref{weak-manifold-is-generically-strong} and 
		Lemma \ref{generic-in-group}
		(and the appropriate version of the Lemma \ref{glue}).
		Indeed, if $G$ is a definable group in a 1-h-minimal field with $\acl=\dcl$,
		then $G$ has the structure of a weak strictly differentiable or $T_k$-Lie 
		group. By Proposition \ref{weak-manifold-is-generically-strong} there is an
		open dense $U\subset G$ such that $U$ is a strictly differentiable or 
		$T_k$-manifold. By Proposition \ref{large-equals-open-dense} and Lemma 
		\ref{generic-in-group} a finite number of translates of $U$
		cover $G$, $g_1U\cup\cdots\cup g_nU=G$. Then the functions 
		$\phi_i:U\to G$ given by $x\mapsto g_ix$ form a gluing data for $G$ which
		makes it a strictly differentiable or $T_k$-Lie group.
	\end{proof}
	As the previous result implies that every definable group $G$ admits a structure
	of a definable weak Lie group which is unique up to a unique isomorphism,
	whenever we mention a property of the weak Lie group structure
	we understand it  with respect to this structure.
	\begin{definition}
		A definable strictly differentiable local Lie group is given by a definable open
		set containing a distinguished point 
		$e\in U\subset K^n$, a definable open subset $e\in U_1\subset U$,
		and definable strictly differentiable maps $U_1\times U_1\to U$ denoted as
		$(a,b)\mapsto a\cdot b$ and $U_1\to U$ denoted as $a\mapsto a^{-1}$, such that
		there exists $e\in U_2\subset U_1$ 
		definable open such that
		\begin{itemize}
			\item $a\cdot e=e\cdot a=a$ for $a\in U_2$.
			\item If $a,b,c\in U_2$ then $a\cdot b\in U_1, b\cdot c\in U_1$ and $(a\cdot b)\cdot c=a\cdot (b\cdot c)$.
			\item If $a\in U_2$ then $a^{-1}\in U_1$ and $a\cdot a^{-1}=a^{-1}\cdot a=e$.
		\end{itemize}
		
		Given two definable strictly differentiable local Lie groups, $U$ and $V$,
		a definable strictly differentiable local Lie group morphism
		is given by a definable strictly differentiable map $f:U'\to V_1$ for a 
		$e\in U'\subset U_1$ open, with $U_1$ and $V_1$ as in the above definition,
		and such that $f(e)=e$, $f(a\cdot b)=f(a)\cdot f(b)$ and 
		$f(a^{-1})=f(a)^{-1}$ for
		$a\in U'$.
		Also two such maps $f_1$ and $f_2$ are identified as morphisms if 
		they have the same germ around $0$, in other words, if there is a definable
		open neighborhood of the identity
		$W\subset \dom(f_1)\cap \dom(f_2)$ such that 
		$f_1|_W=f_2|_W$.
	\end{definition}
	It is common to only consider local groups where $e=0$, and translating we 
	see that every local group is isomorphic to one with this condition.
	In this case we denote the distinguished element by $e$ whenever we emphasize
	its role as a local group identity.
	
	We will usually identify a local group with its germ at $e$. In those terms,  the prototypical example of a local Lie group is the germ around the identity
	of a Lie group. \\
	
	The following fact is a well known application of the chain rule. We give the short proof for completeness: 
	
	\begin{fact}\label{m'}
		Suppose $U$ is a local definable strictly differentiable Lie group.
		Then the multiplication map $m:U_1 \times U_1\to U_0$ has derivative the 
		$m'(0)(u,v)=u+v$.
		The inverse $i:U_1\to U_0$ has derivative $i'(0)(x)=-x$.
		The $n$-power $p_n:U_n\to U_0$ has derivative $p_n'(0)(x)=nx$
	\end{fact}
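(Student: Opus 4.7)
The plan is to apply the chain rule for strictly differentiable maps to the defining identities of a local Lie group; all three assertions follow once the partial derivatives of $m$ at the identity are identified.

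First I would compute $m'(0)$. By the local group axioms, after translation so that $e=0$, we have $m(x,0)=x$ and $m(0,y)=y$ for every $x,y$ in a sufficiently small neighborhood of $0$. Differentiating the first identity in $x$ at $x=0$ shows that the partial derivative $D_1 m(0,0)$ is the identity on $K^n$; differentiating the second in $y$ at $y=0$ shows $D_2 m(0,0)$ is also the identity. Since $m$ is strictly differentiable (and hence differentiable) at $(0,0)$, the full derivative is determined by the two partials, giving
\[
m'(0,0)(u,v)\;=\;D_1 m(0,0)(u)+D_2 m(0,0)(v)\;=\;u+v.
\]

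Next I would handle $i'(0)$. The axiom $x\cdot x^{-1}=e$ gives the identity $m(x,i(x))=0$ on a neighborhood of $0$. Composing with the diagonal-type map $x\mapsto (x,i(x))$ (which is strictly differentiable because $i$ is) and applying the chain rule at $x=0$, using $m'(0,0)(u,v)=u+v$ and $i(0)=0$, yields
\[
0\;=\;D_1 m(0,0)+D_2 m(0,0)\circ i'(0)\;=\;\mathrm{id}+i'(0),
\]
so $i'(0)=-\mathrm{id}$, i.e.\ $i'(0)(x)=-x$.

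Finally, for $p_n$ I would argue by induction on $n\geq 1$. The case $n=1$ is trivial since $p_1=\mathrm{id}$. For the induction step, shrinking the domain if necessary, write $p_{n+1}(x)=m(p_n(x),x)$. By the chain rule and the inductive hypothesis,
\[
p_{n+1}'(0)(x)\;=\;D_1 m(0,0)(p_n'(0)(x))+D_2 m(0,0)(x)\;=\;nx+x\;=\;(n+1)x,
\]
completing the induction. There is no real obstacle here: the only point to observe is that the chain rule and the additive decomposition of the derivative into partials are valid in the strictly differentiable category, which is immediate from the definition of strict differentiability and was implicitly used throughout the previous section.
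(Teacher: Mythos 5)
Your proposal is correct and follows essentially the same route as the paper: the paper likewise determines $m'(0)$ from the identities $m(x,0)=x$ and $m(0,y)=y$ (phrased via the expansion $m(x,y)=ax+by+o(x,y)$ and uniqueness of derivatives, which is the same as your partial-derivative argument), then derives $i'(0)=-\mathrm{id}$ from $m(x,i(x))=0$ and the chain rule, and $p_n'(0)=n\,\mathrm{id}$ by induction from $p_n(x)=m(p_{n-1}(x),x)$.
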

	\begin{proof}
		The formula for $m'(0)$
		follows formally from the equations $m(x,0)=x, m(0,y)=y$.
		Indeed if $m(x,y)=ax+by+o(x,y)$, then plugging $y=0$ we obtain $a=1$ and plugging $x=0$ we obtain $b=1$. Here we are using the small $o$ notation, $f=o(x,y)$,
		meaning that for all $\epsilon>0$ there is an $r$ such that if $|(x,y)|<r$ then
		$|f(x,y)|\leq \epsilon|(x,y)|$, and we are using the uniqueness of 
		derivatives for the strictly differentiable functions $m(x,0)$ and $m(0,y)$.
		
		From this the formula for $i'(0)$ follows from $m(x,i(x))=0$ and the chain 
		rule.
		
		The formula for $p_n$ follows inductively from the chain rule and 
		$p_n(x)=m(p_{n-1}(x),x)$.
	\end{proof}
	We  give some results on subgroups and quotient groups 
	These are not
	needed for the main applications.
	\begin{proposition}
		Suppose $f:G\to H$ is a surjective definable group morphism.
		Then $f$ is a submersion.
	\end{proposition}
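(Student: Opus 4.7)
The plan is to combine homogeneity of a Lie group with the generic version of Sard's lemma already established in Proposition \ref{sard}. Since the property ``$f'(g)$ is surjective'' is translation-invariant in a group, and Sard guarantees at least one regular value of a surjective map, every point must be a regular point.

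First I would observe that $f$ is strictly differentiable everywhere: this is immediate from Proposition \ref{definable-is-lie} (the forgetful functor from strictly differentiable weak Lie groups to definable groups is an equivalence, so any definable morphism of definable groups is automatically a morphism of strictly differentiable weak Lie groups).

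Second, I would exploit homogeneity. For $g\in G$, differentiating the identity $f\circ L_g=L_{f(g)}\circ f$ at the identity element $e\in G$ yields
\[
f'(g)\circ (L_g)'(e)=(L_{f(g)})'(e)\circ f'(e).
\]
Since left translations on $G$ and $H$ are definable strict diffeomorphisms (they are self-inverse up to composition with another left translation), the two outer maps $(L_g)'(e)$ and $(L_{f(g)})'(e)$ are linear isomorphisms between the tangent spaces at $e$ and at $g$, respectively $e$ and $f(g)$. Consequently the rank of $f'(g)$ equals the rank of $f'(e)$ for every $g\in G$. In particular, $f$ is a submersion at some point if and only if it is a submersion at every point.

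Third, I would apply Proposition \ref{sard}: there is an open dense subset $U\subset H$ such that $f\colon f^{-1}(U)\to U$ is a submersion. Since $f$ is surjective, $f^{-1}(U)$ is nonempty, so there exists some $g_0\in G$ with $f'(g_0)$ surjective. By the homogeneity argument above, $f'(g)$ is surjective for all $g\in G$, i.e.\ $f$ is a submersion, as required.

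I do not anticipate a main obstacle: both ingredients (homogeneity via left translations, and generic submersivity via Sard) are available in the excerpt, and the argument is essentially formal. The only minor point to verify carefully is that the chain rule applies to the composition $f\circ L_g$ in the weak-manifold setting, which follows from the definition of strict differentiability for morphisms of definable weak Lie groups in Section \ref{manifold-section}.
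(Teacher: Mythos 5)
Your proof is correct and is exactly the argument the paper intends: the paper's own proof consists of the single line ``This is a consequence of Sard's Lemma, Proposition \ref{sard}'', and your write-up supplies precisely the implicit steps (strict differentiability via the equivalence of categories, constancy of the rank of $f'$ by translation homogeneity, and the existence of one regular value from Sard plus surjectivity). No gaps.
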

	\begin{proof}
		This is a consequence of Sard's Lemma, Proposition \ref{sard}.
	\end{proof}
	\begin{fact}\label{constructible-nowhere-dense}
		Suppose $X$ is a topological space and $Y\subset X$ is a finite union of locally closed
		subsets of $X$. Then every open nonempty subset of $X$ contains an open nonempty subset
		which is disjoint from $Y$ or contained in $Y$.
	\end{fact}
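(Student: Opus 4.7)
The plan is to proceed by induction on the number of locally closed sets in the union. So write $Y = Y_1 \cup \dots \cup Y_n$ with each $Y_i = U_i \cap C_i$ for some open $U_i$ and closed $C_i$, and fix a nonempty open $V \subset X$. We want a nonempty open $W \subset V$ with either $W \cap Y = \emptyset$ or $W \subset Y$.

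For the base case $n=1$, write $Y = U \cap C$ and consider the closed set $C$. Either $V$ contains a nonempty open subset disjoint from $C$ (in which case that subset is automatically disjoint from $Y$, and we are done), or $C$ is dense in $V$. In the latter case, since $C$ is closed, $V \subset C$. Now look at $V \cap U$: if it is empty, then $V \cap Y \subset V \cap U = \emptyset$ and $W = V$ works; if it is nonempty, then $V \cap U \subset V \cap C \cap U = Y$, so $W = V \cap U$ works.

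For the inductive step, apply the induction hypothesis to $Y' = Y_1 \cup \dots \cup Y_{n-1}$ inside $V$, producing a nonempty open $W' \subset V$ that is either contained in $Y'$ (in which case $W' \subset Y$ and we are done) or disjoint from $Y'$. In the latter case, apply the base case to the single locally closed set $Y_n$ inside $W'$, giving a nonempty open $W \subset W'$ that is either contained in $Y_n$ (so $W \subset Y$) or disjoint from $Y_n$ (so $W \cap Y \subset (W' \cap Y') \cup (W \cap Y_n) = \emptyset$). Either possibility furnishes the desired $W$, completing the induction.

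There is no real obstacle here; the argument is purely topological and uses only that locally closed means open in a closed set. The only mildly delicate point is the base case, where one must remember to split into the subcase where $V \cap U$ is empty, since otherwise one might wrongly claim $V \cap U$ is always a witness.
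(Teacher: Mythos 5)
Your proof is correct, and it is essentially the explicit unwinding of the paper's one-line argument, which observes that the stated property of $Y$ holds for open sets and is preserved under Boolean combinations (your base case handles a single locally closed set, i.e.\ an intersection of an open and a closed set, and your inductive step is exactly the closure-under-finite-unions argument). The case split on whether $V\cap U$ is empty in the base case is handled correctly.
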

	\begin{proof}
		The property mentioned is closed under Boolean combinations and is true for open subsets.
	\end{proof}
	\begin{proposition}\label{subgroups-are-closed}
		Suppose $G$ is a definable group and $H\subset G$ is a definable subgroup.
		Then $H$ is closed in $G$.
	\end{proposition}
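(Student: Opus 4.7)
The plan is to use Proposition \ref{dimension-boundary-function} together with the fact that left translations in $G$ are definable homeomorphisms.

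First, I would observe that the closure $\bar H$ of $H$ in $G$ is itself a subgroup. By Proposition \ref{definable-is-lie}, $G$ carries the structure of a weak topological Lie group, so multiplication and inversion are continuous, giving $\bar H\cdot \bar H\subseteq \overline{H\cdot H}\subseteq \bar H$ and $\bar H^{-1}\subseteq \overline{H^{-1}}\subseteq \bar H$ by the usual topological-group argument. The case $\dim(H)=0$ is immediate, since then $H$ is finite and hence closed in the Hausdorff space $G$, so I may assume $\dim(H)\geq 1$.

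The key step is to apply Proposition \ref{dimension-boundary-function} to $H\subseteq G$, obtaining $\dim(\bar H\setminus H)<\dim(H)$. Now suppose, for a contradiction, that $H\neq \bar H$, and pick $g\in \bar H\setminus H$. The left translation $L_g:G\to G$ is a definable homeomorphism (indeed a strict diffeomorphism for the weak Lie group structure), so $\dim(gH)=\dim(H)$. Since $H$ is a subgroup and $g\notin H$, we have $gH\cap H=\emptyset$. On the other hand $g\in\bar H$ and $H\subseteq \bar H$, so $gH\subseteq \bar H\cdot \bar H\subseteq \bar H$. Hence $gH\subseteq \bar H\setminus H$, giving
\[
\dim(\bar H\setminus H)\geq \dim(gH)=\dim(H),
\]
contradicting the previous inequality. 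Therefore $H=\bar H$, and $H$ is closed in $G$.

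I do not anticipate a substantial obstacle; the required ingredients (closure of a subgroup is a subgroup, translation invariance of dimension, and the dimension-of-boundary bound) are either standard for topological groups or provided by the earlier parts of the paper. The only subtlety worth flagging is ensuring that the topology on $G$ is the one coming from the weak Lie group structure of Proposition \ref{definable-is-lie}, and that it is Hausdorff; this is built into the local model via étale domains over $K^n$, so single points and finite sets are closed.
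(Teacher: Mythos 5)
Your proof is correct, but it takes a genuinely different route from the paper's. The paper argues via constructibility: $H$ is a finite union of locally closed subsets of $G$ (Proposition \ref{definable-is-constructible-manifold}), hence by Fact \ref{constructible-nowhere-dense} it has nonempty relative interior in $\bar{H}$; by homogeneity $H$ is then open in $\bar{H}$, an open subgroup is closed (being the complement of its other cosets), and since $H$ is dense in $\bar{H}$ this forces $H=\bar{H}$. You instead use the frontier inequality $\dim(\bar{H}\setminus H)<\dim(H)$ of Proposition \ref{dimension-boundary-function} together with translation-invariance of dimension: any $g\in\bar{H}\setminus H$ drags the full-dimensional coset $gH$ into $\bar{H}\setminus H$, a contradiction. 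Both arguments are sound. The paper's version avoids the frontier inequality (which the authors single out as ``a more difficult property of dimension''), while yours is the shorter, classical argument familiar from o-minimal and geometric structures; amusingly, the paper's own remark preceding Proposition \ref{dimension-boundary-function} lists Proposition \ref{subgroups-are-closed} among its applications, even though the printed proof does not use it, so your route may be the one the authors originally intended. Two small points you should make explicit: (i) Proposition \ref{dimension-boundary-function} is stated for subsets of $K^n$, so you need the routine extension to definable subsets of a weak manifold, obtained by intersecting with the (open) images of the charts and using that each chart domain is an \'etale domain carrying the subspace topology of some $K^m$; (ii) the containment $gH\subseteq\bar{H}$ can be had slightly more cheaply by noting that each right translation $R_h$ with $h\in H$ is a homeomorphism preserving $H$ and hence $\bar{H}$, which spares you the (correct but unneeded) verification that $\bar{H}$ is a subgroup. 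Your separate treatment of the case $\dim(H)=0$ is also unnecessary, since the frontier inequality already forces $\bar{H}\setminus H=\emptyset$ there.
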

	\begin{proof}
		Recall that $H$ is a finite union of locally closed subsets of $G$,
		see for instance Proposition \ref{definable-is-constructible-manifold}.
		So by applying Fact \ref{constructible-nowhere-dense} to $H\subset \bar{H}$,
		we conclude that $H$ has nonempty relative interior in $\bar{H}$.
		As $H$ is a subgroup we conclude by translation that $H$ is open in $\bar{H}$.
		An open subgroup is the complement of some of its translates, so it is also
		closed. We conclude that $H=\bar{H}$ is closed.
	\end{proof}
	\begin{proposition}\label{subgroups-are-submanifolds}
		Suppose $H\subset G$ is a subgroup of $G$. Then with the structure of 
		weak definable strictly differentiable manifolds on $G$ and $H$, the inclusion
		$i:H\to G$ is a closed embedding.
	\end{proposition}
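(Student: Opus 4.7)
I would verify separately that $H$ is closed in $G$ and that the inclusion $i : H \to G$ is both a strictly differentiable immersion and a homeomorphism onto its image (equipped with the subspace topology from $G$). Closedness is exactly the content of the preceding Proposition \ref{subgroups-are-closed}, so nothing new is needed there.

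For strict differentiability and the immersion property, observe that $i$ is a definable map between definable weak manifolds. By Proposition \ref{generic-regularity} it is strictly differentiable on a dense open subset of $H$, and by Proposition \ref{generic-immersion} it is an immersion on a dense open subset of $H$. To spread both properties to every point of $H$, I would apply the translation argument already used in the proof of Proposition \ref{definable-is-lie}: for any $h \in H$ the left translation $L_h$ is a self-diffeomorphism of both $H$ and $G$, and $i \circ L_h = L_h \circ i$, so the property of being strictly differentiable or of having injective derivative transfers from one point of $H$ to any other.

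For the homeomorphism-onto-image property I would invoke Proposition \ref{generic-embedding} to obtain a dense open $U \subset H$ on which $i|_U$ is an embedding. By Proposition \ref{large-equals-open-dense}, $U$ is large in $H$, so by Lemma \ref{generic-in-group} finitely many left translates $h_1 U, \dots, h_n U$ cover $H$. Translation invariance shows that each $i|_{h_i U}$ is again an embedding, so each $h_i U$ is open in $H$ with its subspace topology from $G$ and $i$ restricts to a homeomorphism onto it.

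The main subtlety is that locally being an embedding does not in general assemble into a globally being one (the figure-eight is the standard warning), but our $i$ is globally injective, which is enough. Writing any $V \subset H$ open in the manifold topology as $V = \bigcup_i (V \cap h_i U)$, each $i(V \cap h_i U)$ is open in the subspace topology on $h_i U$, and since $h_i U$ is subspace-open in $H$, the union $i(V)$ is subspace-open in $H$. Thus $i$ is a homeomorphism onto its image, completing the argument.
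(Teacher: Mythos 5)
There is a genuine gap in the step where you pass from ``$i|_{h_iU}$ is an embedding'' to ``$h_iU$ is open in $H$ with its subspace topology from $G$.'' That $i|_U$ is an embedding only says that $U$ (with its manifold topology) is homeomorphic to $i(U)$ carrying the subspace topology induced from $G$; it does \emph{not} say that $i(U)$ is relatively open in $i(H)$. A priori, points of $i(H\setminus U)$ can accumulate on $i(U)$, in which case $i(U)$ fails to be open in $i(H)$ even though $i|_U$ is a perfectly good embedding. Your final paragraph needs exactly this relative openness: the union $\bigcup_i i(V\cap h_iU)$ is only seen to be open in $i(H)$ if each $i(h_iU)$ is open in $i(H)$, so the non-sequitur propagates into the conclusion.

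The paper closes this gap with an extra shrinking step: it replaces $U$ by $U'=U\setminus \cl\bigl(i(H)\setminus i(U)\bigr)$, so that $i(U')=i(H)\cap\bigl(G\setminus\cl(i(H)\setminus i(U))\bigr)$ is relatively open in $i(H)$ by construction, and then uses the frontier-dimension inequality $\dim(\cl(X)\setminus X)<\dim(X)$ (Proposition \ref{dimension-boundary-function}) to check that $U\setminus U'$ has dimension strictly less than $\dim(H)$, so $U'$ is still large, hence still dense open, and the translation argument can proceed. Everything else in your proposal (closedness via Proposition \ref{subgroups-are-closed}, the immersion property via Propositions \ref{generic-regularity} and \ref{generic-immersion} plus translation, and the covering of $H$ by translates of $U$) matches the paper's route; you only need to insert this shrinking step before translating.
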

	\begin{proof}
		By Proposition \ref{generic-embedding},
		there is an open dense set $U\subset H$ such that $i|_{U}$ is an embedding.
		Replacing $U$ by $U'=U\setminus \cl(i(H)\setminus i(U))$ if necessary,
		and keeping in mind Proposition \ref{dimension-boundary-function} to show 
		$U'$ is large in $H$,
		we may assume $i(U)$ is open in $i(H)$.
		By translation we conclude that $i$ is an immersion. Also for an open set
		$V\subset H$ we have that $i(V)=i(\bigcup_{h\in H} hU\cap V)=\bigcup_hhi(U\cap h^{-1}V)$ is open in $i(H)$. Since $i$ is injective, the conclusion follows. 
	\end{proof}
	
	As a consequence of the theorem on constant rank functions, Proposition 
	\ref{constant-rank}, we have the following result:
	\begin{corollary}\label{dimension-of-kernel}
		Suppose $U$ and $V$ are definable strictly differentiable local Lie groups
		and let $g,f:U\to V$ be definable strictly differentiable local Lie group
		morphisms. If we denote $Z=\{x\in U: g(x)=f(x)\}$, then 
		$\dim_e Z=\dim(\ker (f'(e)-g'(e)))$. 
		
		In particular if $G$ and $H$ are definable 
		strictly differentiable weak Lie groups
		and $g,f$ are definable strictly differentiable Lie group morphisms then 
		$\dim\{x: f(x)=g(x)\}=\dim(\ker(f'(e)-g'(e)))$.
	\end{corollary}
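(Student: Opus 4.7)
The approach is to recognize the equalizer as the zero fiber of a single strictly differentiable map and then apply the constant-rank theorem. Define $\phi(x) := f(x) \cdot g(x)^{-1}$ on an open neighborhood $U'$ of $e$ in $U$ small enough that the right-hand side is defined in $V$; as a composition of the strictly differentiable maps $f$, $g$, $i_V$, $m_V$ guaranteed by the local Lie group structure, $\phi$ is strictly differentiable on $U'$. Clearly $Z \cap U' = \phi^{-1}(e)$, and Fact~\ref{m'} together with the chain rule yields $\phi'(e) = f'(e) - g'(e)$. It therefore suffices to prove $\dim_e \phi^{-1}(e) = \dim \ker \phi'(e)$.

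The key step is that $\phi$ has constant rank in a neighborhood of $e$, even though $\phi$ is not itself a group morphism when $V$ is non-abelian. For $x,y$ close to $e$, the morphism properties of $f$ and $g$ give
\[
\phi(xy) \;=\; f(x)f(y)g(y)^{-1}g(x)^{-1} \;=\; f(x)\,\phi(y)\,g(x)^{-1} \;=\; C_x(\phi(y)),
\]
where $C_x(v):=f(x)\,v\,g(x)^{-1}$ is the composition of left translation by $f(x)$ and right translation by $g(x)^{-1}$ in $V$, hence a strict diffeomorphism from a neighborhood of $e$ in $V$ onto a neighborhood of $\phi(x)$. Letting $L_x$ denote left translation by $x$ in $U$, the identity reads $\phi\circ L_x = C_x\circ \phi$ near $e$; differentiating at $e$ gives $\phi'(x)\circ L_x'(e) = C_x'(e)\circ \phi'(e)$. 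Since $L_x$ and $C_x$ are strict diffeomorphisms, the maps $L_x'(e)$ and $C_x'(e)$ are invertible, and therefore $\mathrm{rank}(\phi'(x)) = \mathrm{rank}(\phi'(e))$ for every $x$ in a neighborhood of $e$.

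Setting $r:=\mathrm{rank}(\phi'(e))$, Proposition~\ref{constant-rank} produces strict diffeomorphisms conjugating $\phi$ near $e$ to the projection $(a,b)\mapsto(a,0)$ between products of balls $B_1\times B_2 \to B_1\times B_3$. The fibers of this projection are translates of $\{0\}\times B_2$, of dimension $\dim U - r = \dim\ker\phi'(e) = \dim\ker(f'(e)-g'(e))$, yielding $\dim_e Z$ as claimed. For the ``in particular'' statement, $Z$ is a subgroup of $G$ (since $f$ and $g$ are group morphisms), so by Lemma~\ref{local-dimension} its global dimension coincides with its local dimension at $e$, reducing the global case to the local one just proved. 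The main obstacle is isolating the twisted equivariance identity $\phi\circ L_x = C_x\circ \phi$ (the natural substitute for the constant-rank property of honest Lie group morphisms); once that identity is in place, the constant rank of $\phi$, and hence the corollary, is an immediate consequence of Proposition~\ref{constant-rank}.
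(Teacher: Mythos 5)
Your proposal is essentially the paper's own proof: the same auxiliary map $f\cdot g^{-1}$, the same twisted equivariance identity $(f\cdot g^{-1})\circ L_x = L_{f(x)}R_{g(x)^{-1}}\circ(f\cdot g^{-1})$ to get constant rank, the same appeal to Proposition~\ref{constant-rank}, and the same reduction of the global case via Lemma~\ref{local-dimension}. One point deserves more care than you give it: Proposition~\ref{constant-rank} is only a \emph{generic} statement --- it produces the normal form on \emph{some} nonempty open $U'\subset U$, not on a prescribed neighborhood of $e$ (its proof passes through Proposition~\ref{generic-relative-locally-constant}, which shrinks to a dense open set). So writing that it ``produces strict diffeomorphisms conjugating $\phi$ near $e$'' skips a step; you must pick a point $u\in U'$ sent to $(0,0)$ by the chart and use the very equivariance identity you established to transport the normal form to a neighborhood of $e$, exactly as the paper does. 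Since that identity is already in your hands, this is an easy repair rather than a fatal gap.
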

	\begin{proof}
		The second result follows from the first because of Lemma 
		\ref{local-dimension}.
		
		In order to keep the proof readable we only verify the first statement in the case
		of weak Lie groups. The proof for local Lie groups is similar. 
		By translating in $G$ we see that the map $f\cdot g^{-1}:G\to G$
		has, at any point of $G$, derivatives of constant rank 
		equal to ${\rm dim}(G)-k$, for $k=\dim(\text{ker}(f'(e)-g'(e)))$.
		Indeed, if $u\in G$, then
		$(f\cdot g^{-1})L_u=L_{f(u)}R_{g(u)^{-1}}(f\cdot g^{-1})$, 
		where 
		$L_u$, $R_u$ denote the  left and right translates by $u$, respectively.
		By the chain rule we get $(f\cdot g^{-1})'(u)L_u'(e)=(L_{f(u)}R_{g(u)^{-1}})'(f(u)\cdot g(u)^{-1})(f\cdot g^{-1})'(e)$. As $L_u$ and $L_{f(u)}R_{g(u)}$ are definable strict diffeomorphisms, 
		we have that their derivatives at any point 
		are vector space isomorphisms, so we conclude that the rank of 
		$(f\cdot g^{-1})'(u)$
		equals the rank of $(f\cdot g^{-1})'(e)=f'(e)-g'(e)$ (see Fact \ref{m'}), as desired.
		
		By the theorem on constant rank functions, 
		Proposition \ref{constant-rank},
		we conclude that there are nonempty open sets $U\subset G$ and 
		$V\subset H$, balls $B_1, B_2$ and $B_3$ around the origin
		and 
		definable strictly differentiable isomorphisms
		$\phi_1:U\to B_1\times B_2$ and $\phi_2:V\to B_1\times B_3$, such that
		$f(U)\subset V$ and $\phi_2f=\phi_1\alpha$ for 
		$\alpha:B_1\times B_3\to B_1\times B_3$ the function $(x,y)\mapsto (x,0)$. 
		Translating in $G$
		we may assume $e\in U$. More precisely, from the formula
		$(f\cdot g^{-1})L_u=(L_{f(u)}R_{g(u)^{-1}})(f\cdot g^{-1})$ discussed before,
		if $u\in U$ maps to $(0,0)$ under $\phi_1$, 
		then $e\in u^{-1}U$, so we may replace $(U,V,\phi_1,\phi_2)$
		by $(u^{-1}U, f(u)^{-1}Vg(u), \phi_1L_u, \phi_2L_{f(u)}R_{g(u)}^{-1})$.
		Note also that $\phi_1(e)=(0,0)$.
		
		In this case we obtain
		$\{0\}\times B_2=\phi_1(Z\cap U)$ so the local dimension of $e$ at $Z$ is
		the local dimension of $\{0\}\times B_2\subset B_1\times B_2$ at $(0,0)$,
		which is the dimension of $B_2$ and is as in the statement.
	\end{proof}

	In the particular case the dimension 
	$\dim(\text{ker}(f'(e)-g'(e)))$ of the previous statement 
	equals the dimension of $G$ we get: 
	\begin{corollary}\label{map-germ-at-1}
		Suppose $U$ and $V$ are definable strictly differentiable local Lie groups
		and let $g,f:U\to V$ be definable strictly differentiable local Lie group
		morphisms. 
		Then $f$ and $g$ are equal (as local Lie group morphisms) if and
		only if $f'(0)=g'(0)$.
		
		In particular if $G$ and $H$ are definable strictly differentiable weak Lie groups
		and $g,f$ are definable strictly differentiable Lie group morphisms then 
		$f$ and $g$ coincide in an open neighborhood of the identity $e$ if and
		only if $f'(e)=g'(e)$.
	\end{corollary}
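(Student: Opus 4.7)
The plan is to read off this corollary from Corollary \ref{dimension-of-kernel} together with the local group structure. The ``only if'' direction is immediate: two functions that agree on a neighborhood of $e$ have the same derivative at $e$.

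For the ``if'' direction, assuming $f'(e)=g'(e)$ makes $\ker(f'(e)-g'(e))$ equal to the entire tangent space, of dimension $n=\dim(U)$. Applying Corollary \ref{dimension-of-kernel} then yields $\dim_e Z=n$, where $Z=\{x\in U:f(x)=g(x)\}$. By the definition of local dimension, for every open ball $B$ around $e$ inside the common domain of $f$ and $g$, we have $\dim(B\cap Z)=n$, and Proposition \ref{dimension}(2) forces $B\cap Z$ to have nonempty interior. Thus open pieces of $Z$ accumulate arbitrarily close to $e$.

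At this point it remains to bootstrap the existence of open pieces of $Z$ arbitrarily near $e$ into the statement that $f$ and $g$ agree on a full neighborhood of $e$. Here I would use the local homomorphism property: fix $w$ in the interior of $Z$ close enough to $e$ that $w$ lies in the local-group domain $U_2$, and then choose an open neighborhood $W'\ni e$ small enough that $w\cdot W'$ is still contained in this open subset of $Z$ (possible by continuity of multiplication). For $x\in W'$ the computation
\[
f(w)\cdot f(x)=f(w\cdot x)=g(w\cdot x)=g(w)\cdot g(x),
\]
combined with $f(w)=g(w)$ (which is invertible in $V$ since $f(w)$ lies near $e$), yields $f(x)=g(x)$. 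Hence $f$ and $g$ have the same germ at $e$. The ``in particular'' statement for globally defined morphisms of weak Lie groups follows by restricting to charts around $e$ and applying the local version, using Lemma \ref{local-dimension} to pass between global and local dimension of the coincidence set.

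The main conceptual step, and where the local Lie group structure is essential, is precisely this bootstrap from ``open subsets of $Z$ accumulate at $e$'' to ``$Z$ contains a whole neighborhood of $e$''; the rest is dimension-theoretic bookkeeping. In particular, note that it is \emph{not} enough to have $\dim_e Z=n$ on its own, since without the group structure a definable set of full local dimension at a point need not contain that point in its interior.
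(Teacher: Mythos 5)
Your proof is correct, and it is worth noting that it is more careful than what the paper actually records: the paper states this corollary with no proof at all, presenting it as the special case of Corollary \ref{dimension-of-kernel} in which the kernel has full dimension. As you rightly observe, the bare conclusion $\dim_e Z=\dim U$ of that corollary does not by itself put $e$ in the interior of $Z$, so some additional input is needed. The paper's implicit route is to reuse the \emph{proof} of Corollary \ref{dimension-of-kernel}: there the constant rank theorem (Proposition \ref{constant-rank}) puts $f\cdot g^{-1}$ in the normal form $(x,y)\mapsto(x,0)$ on a neighborhood $U'\ni e$ with $\phi_1(Z\cap U')=\{0\}\times B_2$, and when the rank is $0$ the factor $B_1$ is a point, so $Z\cap U'=U'$ is outright an open neighborhood of $e$. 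Your route instead uses only the \emph{statement} of Corollary \ref{dimension-of-kernel} together with the group structure: full local dimension gives open pieces of $Z$ accumulating at $e$, and translating by a nearby interior point $w\in Z$ via $f(x)=f(w)^{-1}\cdot f(w\cdot x)=g(w)^{-1}\cdot g(w\cdot x)=g(x)$ upgrades this to a full neighborhood. This bootstrap is sound (for $w,x$ close enough to $e$ all the required products, inverses and instances of associativity are available in the local group, and the homomorphism identities apply), and it has the virtue of being a black-box deduction from the stated corollary rather than from its proof. The cost is that it genuinely uses the group law a second time, whereas the paper's intended argument gets the open neighborhood for free from the constant-rank normal form. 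Both the ``only if'' direction and the reduction of the weak Lie group case to the local case via charts are handled correctly.
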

	The following two corollaries are not needed for the sequel, but may be interesting
	on their own right.
	\begin{corollary}\label{intersection-tangent}
		Suppose $H_1$ and $H_2$ are subgroups of the strictly differentiable definable 
		weak Lie
		group $G$. Then $T_e(H_1\cap H_2)=T_e(H_1)\cap T_e(H_2)$ as subspaces of 
		$T_e(G)$.
	\end{corollary}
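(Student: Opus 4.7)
The plan is to prove the two inclusions separately: the easy inclusion from functoriality of the tangent space and the harder one as a dimension count via Corollary \ref{dimension-of-kernel}, applied to a cleverly chosen pair of morphisms.

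For the easy inclusion $T_e(H_1\cap H_2)\subseteq T_e(H_1)\cap T_e(H_2)$, I observe that $H_1\cap H_2$ is itself a definable subgroup of $G$, and by Proposition \ref{subgroups-are-submanifolds} the inclusions $H_1\cap H_2\hookrightarrow H_i\hookrightarrow G$ are closed embeddings of weak Lie groups. Their derivatives at the identity are therefore injective linear maps, identifying $T_e(H_1\cap H_2)$ with a subspace of each $T_e(H_i)$ (viewed inside $T_e(G)$); hence with a subspace of their intersection.

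For the reverse inclusion, the key idea is to apply Corollary \ref{dimension-of-kernel} to the two projection-inclusion maps $f,g:H_1\times H_2\to G$ defined by $f(x,y)=x$ and $g(x,y)=y$. These are definable strictly differentiable morphisms of weak Lie groups. Their equalizer $Z=\{(x,y)\in H_1\times H_2:f(x,y)=g(x,y)\}$ is the diagonal copy $\{(x,x):x\in H_1\cap H_2\}$, which is definably isomorphic to $H_1\cap H_2$ via $(x,x)\mapsto x$; hence $\dim Z=\dim(H_1\cap H_2)$. Using the canonical identification $T_{(e,e)}(H_1\times H_2)=T_e(H_1)\oplus T_e(H_2)$ and Fact \ref{m'} together with functoriality of the differential under the embeddings $H_i\hookrightarrow G$, the map $(f'-g')(e,e):T_e(H_1)\oplus T_e(H_2)\to T_e(G)$ is given by $(u,v)\mapsto u-v$ (inside $T_e(G)$). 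Its kernel is exactly $\{(u,u):u\in T_e(H_1)\cap T_e(H_2)\}$, of dimension $\dim(T_e(H_1)\cap T_e(H_2))$.

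Corollary \ref{dimension-of-kernel} therefore yields
\[
\dim(H_1\cap H_2)=\dim Z=\dim\ker(f'(e,e)-g'(e,e))=\dim\bigl(T_e(H_1)\cap T_e(H_2)\bigr).
\]
Since $H_1\cap H_2$ is a definable subgroup of $G$, Proposition \ref{definable-is-lie} equips it with a weak Lie group structure whose manifold dimension equals $\dim T_e(H_1\cap H_2)$. Combined with the inclusion established in the first step, the two subspaces $T_e(H_1\cap H_2)$ and $T_e(H_1)\cap T_e(H_2)$ of $T_e(G)$ have the same finite dimension, forcing equality. The main point requiring care will be the derivative calculation, i.e.\ verifying that the inclusions $T_e(H_i)\hookrightarrow T_e(G)$ induced by the embeddings of Proposition \ref{subgroups-are-submanifolds} are compatible with the tangent-space identification of $H_1\times H_2$, so that the formula $(u,v)\mapsto u-v$ really describes $f'(e,e)-g'(e,e)$.
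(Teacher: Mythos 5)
Your proof is correct and follows essentially the same route as the paper: the paper likewise applies Corollary \ref{dimension-of-kernel} to the two projections $p_1,p_2:H_1\times H_2\to G$, identifies the equalizer with the diagonal image of $H_1\cap H_2$ and the kernel of $p_1'(e)-p_2'(e)$ with the diagonal image of $T_e(H_1)\cap T_e(H_2)$, and concludes by comparing dimensions. Your additional remarks on the easy inclusion and on the compatibility of the tangent-space identifications are details the paper leaves implicit, but the argument is the same.
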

	\begin{proof}
		This is a consequence of Corollary \ref{dimension-of-kernel}.
		Indeed, we know $H_1$, $H_2$ and $H_1\cap H_2$ are strictly differentiable
		definable weak Lie groups and the inclusion maps $H_1\cap H_2\to H_i$ and 
		$H_i\to G$ are strictly differentiable immersions, for example by
		Proposition \ref{subgroups-are-submanifolds},
		so the statement makes sense.
		We also have the diagonal map $\Delta:H_1\cap H_2\to H_1\times H_2$
		is the equalizer of the two projections $p_1:H_1\times H_2\to G$ and 
		$p_2:H_1\times H_2\to G$. The kernel of the $p_1'(e)-p_2'(e)$ is 
		the image under the diagonal map of $T_e(H_1)\cap T_e(H_2)$.
		So by the equality of the dimensions in Corollary \ref{dimension-of-kernel}
		we conclude $T_e(H_1\cap H_2)=T_e(H_1)\cap T_e(H_2)$.
	\end{proof}
	\begin{corollary}\label{subgroup-tangent}
		If $G$ is a definable strictly differentiable weak Lie group and 
		$H_1,H_2$ are subgroups, then there is $U\subset G$ an open neighborhood
		of $e$ such that $U\cap H_1=U\cap H_2$ if and only if $T_e(H_1)=T_e(H_2)$.
	\end{corollary}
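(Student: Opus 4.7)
The plan is to prove the two directions separately. The forward direction is essentially tautological: if $U\cap H_1=U\cap H_2$ for some open $U\ni e$, then $H_1$ and $H_2$ have the same germ at $e$ and therefore the same tangent space at $e$, since $T_e(H_j)$ is computed from the germ (via any chart around $e$).

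For the reverse direction, assume $T_e(H_1)=T_e(H_2)$. The first step is to show that $H_1\cap H_2$ is full-dimensional in each $H_j$. By Corollary \ref{intersection-tangent} applied to the definable subgroup $H_1\cap H_2$, we get
\[
T_e(H_1\cap H_2)=T_e(H_1)\cap T_e(H_2)=T_e(H_1)=T_e(H_2).
\]
Since each of $H_1$, $H_2$, $H_1\cap H_2$ is a definable strictly differentiable weak Lie subgroup of $G$ (embedded submanifolds, by Proposition \ref{subgroups-are-submanifolds}), and the dimension of such a subgroup equals the dimension of its tangent space at $e$, we deduce $\dim(H_1\cap H_2)=\dim(H_1)=\dim(H_2)$.

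The next step is the key topological input: a definable subgroup $H\subset G'$ of a definable strictly differentiable weak Lie group $G'$ with $\dim H=\dim G'$ contains an open neighborhood of $e$ in $G'$. Indeed, by Lemma \ref{local-dimension} the local dimension of $H$ at $e$ equals $\dim G'$; working in a chart around $e$ and using the topological characterization of dimension from Proposition \ref{dimension}(2), $H$ has nonempty interior in $G'$ (alternatively, one can combine Proposition \ref{definable-is-constructible-manifold} with Fact \ref{constructible-nowhere-dense}, as in the proof of Proposition \ref{subgroups-are-closed}). Translating by an element of $H$ then produces an open neighborhood of $e$ inside $H$. Applied to the full-dimensional inclusions $H_1\cap H_2\hookrightarrow H_j$ for $j=1,2$, this yields open neighborhoods $V_j$ of $e$ in $H_j$ with $V_j\subset H_1\cap H_2$.

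Finally, since each $H_j\hookrightarrow G$ is an embedding (Proposition \ref{subgroups-are-submanifolds}), we may write $V_j=W_j\cap H_j$ for some open $W_j\subset G$. Setting $U=W_1\cap W_2$, we get
\[
U\cap H_1\;\subset\; W_1\cap H_1\;=\;V_1\;\subset\; H_1\cap H_2\;\subset\; H_2,
\]
so $U\cap H_1\subset U\cap H_2$, and symmetrically the reverse inclusion; hence $U\cap H_1=U\cap H_2$. The only nontrivial point is the passage from an open neighborhood of $e$ inside $H_j$ to a corresponding open in $G$, which relies crucially on the embedded nature of subgroups established in Proposition \ref{subgroups-are-submanifolds}.
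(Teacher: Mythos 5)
Your proof is correct, and it shares the paper's starting point: both arguments begin by applying Corollary \ref{intersection-tangent} to get $T_e(H_1\cap H_2)=T_e(H_1)=T_e(H_2)$, and both invoke Proposition \ref{subgroups-are-submanifolds} to pass between the intrinsic topology on the subgroups and the subspace topology from $G$. Where you diverge is in the central step, showing that $H_1\cap H_2$ contains a neighborhood of $e$ in each $H_j$: the paper deduces this from the inverse function theorem (Proposition \ref{inverse-function}), applied to the inclusion $H_1\cap H_2\to H_j$, which is a strictly differentiable map inducing an isomorphism of tangent spaces at $e$; you instead convert the tangent-space equality into an equality of dimensions and then use the purely dimension-theoretic fact that a full-dimensional definable subgroup of a definable weak Lie group has nonempty interior (via Lemma \ref{local-dimension}, the topological characterization of dimension in Proposition \ref{dimension}(2) transported through a chart, and a translation). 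Your route is more elementary in that it bypasses the differential machinery entirely after the tangent spaces have been identified, at the cost of having to transfer the interior criterion from open subsets of $K^n$ to \'etale domains (a small point you correctly gesture at, and which also underlies Proposition \ref{large-equals-open-dense}); the paper's route is shorter once the inverse function theorem is available and generalizes to situations where one only knows the derivative of the inclusion is an isomorphism rather than an equality of dimensions of definable sets. Your explicit treatment of the (trivial) forward direction and of the bookkeeping $V_j=W_j\cap H_j$ fills in details the paper leaves implicit.
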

	\begin{proof}
		By Corollary \ref{intersection-tangent} we get $T_e(H_3)=T_e(H_1)=T_e(H_2)$
		for $H_3=H_1\cap H_2$. Then as the inclusion $H_3\to H_1$ produces an 
		isomorphism of tangent spaces at the identity we conclude by the inverse
		function theorem \ref{inverse-function} that there is $U\subset G$ an open
		neighborhood of the identity, such that
		$U\cap H_1=U\cap H_3$. Note that this also uses that the topology of $H_1$
		and $H_3$ which makes them strictly differentiable definable 
		Lie groups coincides
		with the subgroup topology coming from $G$, see 
		Proposition \ref{subgroups-are-submanifolds}.
		
		Symmetrically we have $U'\cap H_2=U'\cap H_3$ for some open $U'$.
	\end{proof}
	
	Next we give the familiar 
	definition of the Lie bracket in $T_e(G)$ for the definable
	Lie group $G$, and show it forms a Lie algebra.
	\begin{definition}
		Suppose $G$ is a definable strictly differentiable weak Lie group.
		For $g\in G$ we consider the map $c_g:G\to G$ defined by $c_g(h)=ghg^{-1}$.
		Then $c_g$ is a definable group morphism and so it is strictly differentiable,
		see Proposition \ref{generic-regularity}.
		Its derivative produces a map $\Ad:G\to \mathrm{Aut}_K(T_e(G))$, $g\mapsto c_g'(e)$ which
		is a definable map and a group morphism by the chain rule and the equation
		$c_gc_h=c_{gh}$.
		Then $\Ad$ is strictly differentiable and so its derivative at $e$
		gives a linear map
		$\ad:T_e(G)\to \mathrm{End}_K(T_e(G))$.
		In other words this gives a bilinear map $(x,y)\mapsto \ad(x)(y)$, $T_e(G)\times T_e(G)\to T_e(G)$ denoted $(x,y)\mapsto [x,y]$.
		
		This map is called the Lie bracket.
	\end{definition}
	\begin{proposition}\label{local-lie-bracket}
		Let $G$ be a definable weak $T_2$-Lie group.
		Let $0\in U\subset K^n$ be an open set and $i:U\to G$ a $T_2$-diffeomorphism 
		of $U$ onto
		an open subset of $G$, that sends $0$ to $e$. Make $U$ into a local definable
		group via $i$.
		Then under the identification $i'(0):K^n\to T_e(G)$ we have that the Lie 
		bracket is characterized by the property
		$x\cdot y\cdot x^{-1}\cdot y^{-1}=[x,y]+O(x,y)^3$, for $x,y\in U$.
	\end{proposition}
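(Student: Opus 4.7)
The approach is a direct Taylor-expansion computation in the chart provided by $i$. After identifying $T_e(G)$ with $K^n$ via $i'(0)$, multiplication and inversion of $G$ pull back to definable $T_2$-maps $m\colon U\times U\to K^n$ and $\iota\colon U\to K^n$ fixing the origin, with $m'(0)(u,v)=u+v$ and $\iota'(0)=-\id$ by Fact \ref{m'}. Writing $m(x,y)=x+y+Q(x,y)+O((x,y)^3)$ for a $K^n$-valued quadratic form $Q$, the identities $m(x,0)=x$ and $m(0,y)=y$ together with uniqueness of Taylor coefficients (Lemma \ref{uniqueness-taylor}) force every monomial in $Q$ that depends on only one of $x,y$ to vanish. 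Hence $Q=B$ is a bilinear $K^n\times K^n\to K^n$; expanding the local identity $m(x,\iota(x))=0$ through order two then gives $\iota(x)=-x+B(x,x)+O(x^3)$.

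Next, the commutator $c(x,y):=m(m(m(x,y),\iota(x)),\iota(y))$ is computed by iterated substitution, using closure of $T_2$-maps under composition to keep all remainders in $O((x,y)^3)$. A careful but routine bookkeeping shows that the $B(x,x)$ contribution from $\iota(x)$ is cancelled by the corresponding contribution from $B(m(x,y),\iota(x))$, leaving $m(m(x,y),\iota(x))=y+B(x,y)-B(y,x)+O((x,y)^3)$; a further right-multiplication by $\iota(y)$ produces an analogous cancellation of the $B(y,y)$ terms, yielding
\[ c(x,y)=B(x,y)-B(y,x)+O((x,y)^3). \]

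It remains to identify $[x,y]$ with $B(x,y)-B(y,x)$ under $T_e(G)\cong K^n$. Using $c_g(y)=g\cdot y\cdot g^{-1}=m(c(g,y),y)$ and substituting the displayed expression for $c(g,y)$ and the $P_2$ expansion of $m$ yields
\[ c_g(y)=y+B(g,y)-B(y,g)+O(g^2)\cdot y+O(y^2), \]
the $O(g^2)$ being the operator norm of a $K$-linear endomorphism of $K^n$; this uses the $T_2$-regularity of the remainder in the expansion of $c(g,y)$ to control the dependence of its $y$-Taylor coefficients on $g$. Differentiating with respect to $y$ at $y=0$ reads off $\Ad(g)=\id+B(g,\,\cdot\,)-B(\,\cdot\,,g)+O(g^2)$ as an $\mathrm{End}(K^n)$-valued function of $g$, and strict differentiability of $\Ad$ (which holds because $\Ad$ is a definable group homomorphism between definable Lie groups, cf.\ Proposition \ref{definable-is-lie}) then gives $\ad(x)=B(x,\,\cdot\,)-B(\,\cdot\,,x)$ upon differentiating at $g=0$. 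Hence $[x,y]=\ad(x)(y)=B(x,y)-B(y,x)$, matching the quadratic part of $c(x,y)$ above and finishing the proof. The principal difficulty is the meticulous bookkeeping of error terms across repeated compositions and differentiations; no new conceptual ingredient beyond uniqueness of Taylor coefficients and the compositional closure of $T_2$-maps is needed.
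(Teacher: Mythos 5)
Your proposal is correct and follows essentially the same route as the paper: a second-order Taylor expansion of the group operations in the chart, followed by identification of the quadratic part of the commutator with the Lie bracket via the expansions $\Ad(g)=\id+\ad(g)+O(g^2)$ and $c_g(y)=\Ad(g)y+O_g(y^2)$ (the paper names the mixed quadratic coefficient of $x\cdot y\cdot x^{-1}$ rather than writing it as $B(x,y)-B(y,x)$, but the computation is the same). The one step you assert rather than prove --- that the cubic remainder in $c_g(y)$ contributes only $O(g^2)$ to the coefficient of $y$, which is where the $T_2$ hypothesis is genuinely needed --- is precisely the content of the paper's Lemma \ref{xy}, so your appeal to ``$T_2$-regularity of the remainder'' is the right justification.
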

	\begin{proof}
		We have that the function $f(x,y)=x\cdot y\cdot x^{-1}$
		satisfies $f(0,y)=y$ and $f(x,0)=0$, so its Taylor approximation of order
		2 is of the form $f(x,y)=y+axy+O(x,y)^3$. Indeed it is of the form
		$a_0+a_1x+a_2y+a_3x^2+a_4xy+a_5y^2+O(x,y)^3$ and plugging $x=0$ and using
		the uniqueness of the Taylor approximation we get $a_0=a_5=0$ and $a_2=1$,
		and a similar argument with $y=0$ gives $a_1=a_3=0$, so 
		$f(x,y)=y+axy+O(x,y)^3$ as claimed.
		
		From the definition of $\Ad(x)$ we get 
		$f(x,y)=\Ad(x)y+O_x(y^2)$ where $O_x$ means the coefficient may depend on $x$.
		
		Note that the definition of $\ad(x)$ gives $\Ad(x)(y)=y+[x,y]+O(x^2y)$.
		Indeed, we have $\Ad(x)=\Ad(0)+\Ad'(0)(x)+O(x^2)=I+\ad(x)+O(x^2)$, where
		$I$ is the identity matrix, and evaluating at $y$ we conclude
		$\Ad(x)y=y+[x,y]+O(x^2y)$.
		
		We conclude that $y+axy+O(x,y)^3=y+[x,y]+O_x(y^2)$.
		This implies $[x,y]=axy$. See Lemma \ref{xy}.
		Now from $x\cdot y\cdot x^{-1}=y+axy+O(y,x)^3$, and the formula
		$x\cdot y^{-1}=x-y+b_0x^2+b_1xy+b_2y^2+O(x,y)^3$ (see Fact \ref{m'}), 
		we get 
		$x\cdot y\cdot x^{-1}\cdot y^{-1}=(x\cdot y\cdot x^{-1})\cdot y^{-1}=axy+b_3y^2+O(x,y)^3$.
		On the other hand if $c(x,y)=x\cdot y\cdot x^{-1}\cdot y^{-1}$ then 
		$c(0,y)=0$ implies 
		that $b_3=0$, as required.
	\end{proof}
	\begin{proposition}
		Let $G$ be a definable strictly differentiable weak Lie group. Then 
		$(T_e(G),[,])$ is a Lie algebra.
	\end{proposition}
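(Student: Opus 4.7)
The plan is to deduce the three Lie algebra axioms (bilinearity, antisymmetry, Jacobi) from the commutator characterization in Proposition \ref{local-lie-bracket}. Bilinearity is immediate from the construction of $\ad$ as the derivative at $e$ of the group homomorphism $\Ad$, which is a linear map into $\mathrm{End}_K(T_e(G))$. For the remaining two axioms, I first invoke Proposition \ref{definable-is-lie} to endow $G$ with a definable weak $T_2$-Lie group structure (in fact $T_k$ for any $k$), so that Proposition \ref{local-lie-bracket} applies: in a $T_2$-chart around $e$ identified with a neighborhood of $0$ in $K^n$, we have $x\cdot y\cdot x^{-1}\cdot y^{-1}=[x,y]+O(x,y)^3$.

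Antisymmetry is the easy step: substituting $y=x$ gives $0=[x,x]+O(x)^3$, and since $[x,x]$ is homogeneous of degree $2$ in $x$, the uniqueness of Taylor coefficients (Lemma \ref{uniqueness-taylor}) forces $[x,x]=0$. Polarizing, $[x+y,x+y]=0$ expands to $[x,y]+[y,x]=0$.

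For Jacobi, I follow the classical strategy of showing naturality of the bracket and then applying it to the adjoint representation. First I establish the naturality lemma: for any definable $T_2$-Lie group morphism $f:G\to H$ (fixing identities), the differential $df_e$ preserves brackets. The computation is: applying $f$ to $x\cdot_G y\cdot_G x^{-1}\cdot_G y^{-1}=[x,y]_G+O(x,y)^3$ (which has size $O(x,y)^2$) and using the $T_2$-expansion of $f$ gives $df_e([x,y]_G)+O(x,y)^3$; on the other hand the morphism identity rewrites this as $f(x)\cdot_H f(y)\cdot_H f(x)^{-1}\cdot_H f(y)^{-1}$, which by Proposition \ref{local-lie-bracket} in $H$ equals $[df_e(x),df_e(y)]_H+O(x,y)^3$. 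Matching the degree-$2$ components via Lemma \ref{uniqueness-taylor} yields $df_e([x,y]_G)=[df_e(x),df_e(y)]_H$.

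Next I apply this with $G$ arbitrary and $H=\mathrm{GL}(T_e(G))$, taking $f=\Ad$ (so $df_e=\ad$). I compute the bracket of $\mathrm{GL}(T_e(G))$ in the canonical chart $X\mapsto I+X$: a direct expansion using $(I+X)^{-1}=I-X+X^2+O(X^3)$ yields $(I+X)(I+Y)(I+X)^{-1}(I+Y)^{-1}=I+(XY-YX)+O(X,Y)^3$, so the bracket on $\mathrm{End}_K(T_e(G))$ coming from Proposition \ref{local-lie-bracket} is the matrix commutator. Naturality then gives $\ad([x,y])=\ad(x)\ad(y)-\ad(y)\ad(x)$, and evaluating at $z\in T_e(G)$ produces $[[x,y],z]=[x,[y,z]]-[y,[x,z]]$, which is the Jacobi identity. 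The main technical obstacle is the naturality computation, where one must carefully control the error term after composing the $T_2$-expansion of $f$ with a quantity that is itself $O(x,y)^2$; this is where the $T_2$ (rather than merely strictly differentiable) hypothesis is used in an essential way, and where Lemma \ref{uniqueness-taylor} is invoked to extract the quadratic part.
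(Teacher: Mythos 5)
Your proof is correct, but it reaches the Jacobi identity by a genuinely different route from the paper. Both arguments rest on the same two pillars — the commutator characterization $x\cdot y\cdot x^{-1}\cdot y^{-1}=[x,y]+O(x,y)^3$ of Proposition \ref{local-lie-bracket} and the uniqueness of Taylor coefficients (Lemma \ref{uniqueness-taylor}) — and the antisymmetry step is identical. For Jacobi, however, the paper stays entirely inside a single $T_2$-chart of $G$ and performs a direct associativity computation, rewriting $xyz$ as $f(x,y,z)\cdot zyx$ in two ways and comparing the degree-$3$ terms; you instead prove a naturality statement (the differential at $e$ of a definable $T_2$-morphism preserves brackets), compute that the bracket of $\mathrm{GL}(T_e(G))$ in the chart $X\mapsto I+X$ is the matrix commutator, and apply naturality to $\Ad$ to get $\ad([x,y])=\ad(x)\ad(y)-\ad(y)\ad(x)$, which is Jacobi after evaluation at $z$ and antisymmetry. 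Your version trades the paper's error-term bookkeeping in the triple product for two auxiliary facts, both of which are available in the paper's framework: the fullness of the forgetful functor in Proposition \ref{definable-is-lie} guarantees that $\Ad:G\to\mathrm{GL}_n(K)$ is a $T_2$-morphism, and $\mathrm{GL}_n(K)$, being an open subset of $K^{n^2}$ with polynomial operations, is a legitimate definable $T_2$-Lie group to which Proposition \ref{local-lie-bracket} applies. The payoff of your route is conceptual: you get, as a byproduct, the functoriality of the Lie algebra construction on morphisms (which the paper does not state), at the cost of the extra naturality lemma; the paper's computation is more self-contained but yields only the identity itself. One point worth making explicit in your write-up is the error analysis in the naturality lemma when you feed $w=c_G(x,y)=O(x,y)^2$ into the $P_2$-expansion of $f$ at $e$ (the quadratic term contributes only $O(x,y)^4$), and the use of bilinearity to absorb $[O(x^2),df_e(y)]$-type terms into $O(x,y)^3$ on the target side — you flag this, and it does go through.
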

	\begin{proof}
		We have to prove $[x,x]=0$ and the Jacobi identity.
		We will use the characterization of Proposition \ref{local-lie-bracket}
		(we may assume $G$ is $T_2$ by Proposition \ref{definable-is-lie}).
		$[x,x]=0$ now 
		follows immediately.
		
		The idea of proof of the  Jacobi identity
		is to express $xyz$ as $f(x,y,z)zyx$ in two different ways using associativity,
		the first one permutes from left to right, the second permutes $yz$ and then
		permutes from left to right. The details follow.
		
		Writing $c(x,y)=xyx^{-1}y^{-1}$ one has
		\begin{center}
			$xyz=c(x,y)yxz=c(x,y)yc(x,z)zx=c(x,y)([y,c(x,z)]+O(y,c(x,z))^3)c(x,z)yzx=
			c(x,y)([y,[x,z]]+O(x,y,z)^4)c(x,z)c(y,z)zyx=
			(c(x,y)+[y,[x,z]]+c(x,z)+c(y,z)+O(x,y,z)^4)zyx$.
		\end{center}
		At the last step we use the formula $xy=x+y+O(x,y)^2$
		And on the other hand
		\begin{center}
			$xyz=xc(y,z)zy=([x,c(y,z)]+O(x,c(y,z))^3)c(y,z)xzy=
			([x,[y,z]]+O(x,y,z)^4)c(y,z)c(x,z)zxy=
			([x,[y,z]]+O(x,y,z)^4)c(y,z)c(x,z)zc(x,y)yx=
			([x,[y,z]]+O(x,y,z)^4)c(y,z)c(x,z)([z,[x,y]]+O(x,y,z)^4)c(x,y)zyx=
			([x,[y,z]]+c(y,z)+c(x,z)+c(x,y)+[z,[x,y]]+O(x,y,z)^4)zyx$.
		\end{center}
		From this we get
		$[y,[x,z]]=[x,[y,z]]+[z,[x,y]]+O(x,y,z)^4$ and from the uniqueness of Taylor 
		expansions we obtain 
		$[y,[x,z]]=[x,[y,z]]+[z,[x,y]]$ which is the Jacobi identity.
	\end{proof}
	Given a strictly differentiable definable weak Lie group $G$, we denote
	$Lie(G)$ the tangent space $T_e(G)$ considered as a Lie algebra with the Lie
	bracket $[x,y]$.
	
	\section{Definable fields}\label{fields-section}
	In this section we prove that if $L$ is a definable field in a 1-h-minimal valued field
	then, as a definable field, $L$ isomorphic to a finite field 
	extension of $K$. This is result generalizes
	\cite[Theorem 4.2]{BaysPet}, where this is proved for real closed valued fields,
	and  \cite[Theorem 4.1]{PilQp} where this is proven for $p$-adically closed
	fields. 
	
	With the terminology and results we have developed in the previous section the main ingredients of the proof are similar to those appearing in the classification of infinite fields definable in o-minimal fields, \cite[Theorem 1.1]{OtPePi}. 
	\begin{lemma}\label{definable-subfield}
		Suppose $K$ is 1-h-minimal, $L\sub  K$  a definable subfield. Then $L=K$.
	\end{lemma}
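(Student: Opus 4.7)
The plan is to show that $L$ must contain an open neighborhood of $0$, and then use the multiplicative structure of $L$ to bootstrap from there to all of $K$.

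First I would observe that since $K$ has characteristic $0$ and $L$ is a subfield, $L$ contains $\mathbb{Q}$, hence is infinite. Therefore $\dim(L) = 1$, since as a definable subset of $K$ its dimension is at most $1$ and nonzero by infiniteness. By item (2) of Proposition \ref{dimension}, any definable subset of $K$ of dimension $1$ has nonempty interior, so $L$ contains some open ball, say $B = \{x : |x - a| < r\}$ with $a \in L$. Translating by $-a \in L$ (which is an $L$-definable bijection sending $B$ to the ball of radius $r$ around $0$), we get that $L$ contains $\{x \in K : |x| < r\}$.

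Next I would exploit the nontriviality of the valuation. Since the valuation on $K$ is nontrivial, there exists some $\pi \in K$ with $0 < |\pi| < 1$, and by taking a sufficiently high power we may assume $|\pi| < r$, so $\pi \in L$. Then $\pi^{-1} \in L$ as well (as $L$ is a field), and more generally $\pi^{-n} \in L$ for all $n \in \Nn$, with $|\pi^{-n}| = |\pi|^{-n}$ tending to infinity in $\Gamma$.

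Finally, given any $y \in K$, choose $n$ large enough so that $|\pi|^n < r/|y|$ (possible since $|\pi| < 1$), which gives $|y\pi^n| < r$ and hence $y\pi^n \in L$. Since $\pi^{-n} \in L$ as well, we conclude $y = (y\pi^n)\pi^{-n} \in L$, so $L = K$.

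The main point where $1$-h-minimality is invoked is in the step that $\dim(L)=1$ forces $L$ to have nonempty interior (Proposition \ref{dimension}); everything else is elementary manipulation using that $L$ is a subfield and that the valuation is nontrivial. There is no real obstacle in this argument.
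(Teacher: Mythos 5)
Your overall strategy (use dimension theory to put an open ball around $0$ inside $L$, then use the field operations of $L$ to expand to all of $K$) is the same as the paper's, and the first half of your argument is correct. But the second half has a genuine gap: you twice invoke an archimedean property of the value group that $1$-h-minimal fields need not satisfy. Concretely, in ``by taking a sufficiently high power we may assume $|\pi|<r$'' and again in ``choose $n$ large enough so that $|\pi|^n<r/|y|$'', you assume that for a \emph{fixed} $\pi$ with $|\pi|<1$ the powers $|\pi|^n$ become smaller than any prescribed element of $\Gamma$. This fails when $\Gamma$ is non-archimedean as an ordered abelian group, which is allowed here: e.g.\ $\Cc((t))((u))$ is a Henselian valued field of equicharacteristic $0$, hence $1$-h-minimal, with $\Gamma\cong\Zz\times\Zz$ ordered lexicographically, and if $|\pi|$ lies in the convex subgroup generated by $|u|$ then no power $|\pi|^n$ ever drops below $|t|$. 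For such a $\pi$ and a suitable $y$ there is no $n$ with $|y\pi^n|<r$, and your argument stalls.

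The repair is a one-line change, and the paper's proof shows one way to do it: having $B'=\{x:|x|<r\}\sub L$, invert the punctured ball to get $C=(B'\setminus\{0\})^{-1}\sub L$, which is the complement of a closed ball and hence contains \emph{every} element of sufficiently large valuation; then $C-C=K$ (given $y$, pick $c$ with $|c|>\max(|y|,r^{-1})$ and write $y=(y+c)-c$). Equivalently, in your notation: instead of powers of a fixed $\pi$, choose for each $y\neq 0$ a fresh element $z$ with $|z|<\min(r,\,r/|y|)$ --- such $z$ exists because $\Gamma$ is a nontrivial ordered group, so below any $\gamma\in\Gamma$ there is $\gamma\delta$ for any $\delta<1$ --- whence $z,\,yz\in B'\sub L$ and $y=(yz)z^{-1}\in L$. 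With that change your proof is correct and essentially coincides with the paper's.
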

	\begin{proof}
		$L$ is a definable set which is infinite because the characteristic of $K$ is $0$.
		We conclude that there is a nonempty open ball $B\subset L$, for example
		by dimension theory, item 2 of Proposition \ref{dimension}.
		The field generated by a nonempty open ball is $K$.
		Indeed $B-B$ contains a ball around the origin $B'$, 
		$C=B'\setminus \{0\}^{-1}$ is the complement of 
		a closed ball, and $C-C=K$.
	\end{proof}
	\begin{lemma}\label{k-is-rigid}
		Suppose $K$ is 1-h-minimal.
		Let $F_1$ and $F_2$ be finite extensions of $K$, and consider them as definable fields
		in $K$. If $\phi:F_1\to F_2$ is a definable field morphism, then $\phi$ is a morphism
		of $K$ extensions, in other words it is the identity when restricted to $K$.
	\end{lemma}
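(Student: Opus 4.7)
The plan is to first show that $\phi$ restricts to a definable field automorphism $\sigma := \phi|_K : K \to K$, and then use strict differentiability together with additivity and multiplicativity to conclude that $\sigma$ is the identity.

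For the first step, apply Lemma \ref{definable-subfield} twice. The set $\phi(K) \cap K$ is the intersection of two subfields of $F_2$, hence a subfield of $F_2$ contained in $K$, hence a definable subfield of $K$. Since $\phi$ fixes $\Qq$, this subfield is infinite, so by Lemma \ref{definable-subfield} it equals $K$, giving $K \subseteq \phi(K)$. Symmetrically, $\phi^{-1}(K) = \{a \in F_1 : \phi(a) \in K\}$ is a subfield of $F_1$ (using that $\phi$ is a field embedding and $K$ is a field), so $\phi^{-1}(K) \cap K$ is an infinite definable subfield of $K$, again equal to $K$; this gives $\phi(K) \subseteq K$. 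Thus $\sigma := \phi|_K : K \to K$ is a definable field automorphism.

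Next, I will show $\sigma$ is strictly differentiable everywhere with a constant derivative $c$. By Proposition \ref{generic-regularity-0}, $\sigma$ is strictly differentiable at some point $a \in K$. Additivity of $\sigma$ gives $\sigma(x+h)-\sigma(x)=\sigma(h)$, so the difference quotient at any $x$ equals the difference quotient at $0$. Hence $\sigma$ is strictly differentiable at every point and $\sigma'(x)$ equals a single constant $c \in K$. Moreover $c \neq 0$: otherwise Proposition \ref{locally-constant} would force $\sigma$ to be locally constant with finite image, contradicting the bijectivity of $\sigma$ together with $|K| = \infty$.

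Finally, differentiating the identity $\sigma(xy) = \sigma(x)\sigma(y)$ with respect to $y$ (using the chain rule on the left-hand side and the fact that $\sigma' \equiv c$) yields
\[
c \cdot x \;=\; \sigma(x)\cdot c
\]
for all $x \in K$. Dividing by $c$ gives $\sigma(x) = x$, so $\phi|_K = \mathrm{id}_K$, as required.

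The main step is the first reduction, where the power of $1$-h-minimality enters via Lemma \ref{definable-subfield} to rule out proper infinite definable subfields of $K$; the rest is a clean differential-geometric computation made possible by generic strict differentiability and the local constancy result of Proposition \ref{locally-constant}.
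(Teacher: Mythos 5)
Your proof is correct, but it takes a genuinely different and substantially longer route than the paper. The paper's entire proof is one line: the set $\{x\in K:\phi(x)=x\}$ is a definable subfield of $K$ (it is the equalizer of the two ring morphisms $\phi|_K$ and the inclusion $K\hookrightarrow F_2$, and is closed under inverses), so Lemma \ref{definable-subfield} immediately forces it to be all of $K$. You instead first prove $\phi(K)=K$ by two applications of Lemma \ref{definable-subfield} (to $\phi(K)\cap K$ and to $\phi^{-1}(K)\cap K$), and then kill the resulting definable automorphism $\sigma=\phi|_K$ by a differential argument: generic strict differentiability plus additivity gives a constant derivative $c$, Proposition \ref{locally-constant} rules out $c=0$, and differentiating $\sigma(xy)=\sigma(x)\sigma(y)$ in $y$ yields $cx=\sigma(x)c$. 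Each of these steps checks out (the translation argument correctly upgrades strict differentiability at one point to everywhere with the same derivative, and the chain-rule computation is valid since the paper notes that compositions and products of strictly differentiable maps are strictly differentiable). What your approach buys is a slightly stronger intermediate fact of independent interest --- every definable field automorphism of $K$ itself is the identity, proved by differentiation rather than by the fixed-field trick --- and it parallels the derivative-based argument used later in Proposition \ref{field}. What it costs is length: the observation that the fixed field is itself a definable subfield of $K$ makes the whole second half of your argument unnecessary.
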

	\begin{proof}
		The set $\{x\in K: \phi(x)=x\}$ is a definable subfield of $K$, so 
		Lemma \ref{definable-subfield} give the desired conclusion.
	\end{proof}
	\begin{proposition}\label{field}
		Suppose $K$ is 1-h-minimal and
		$F$ is a definable field. Then $F$ is isomorphic as a definable field to a finite 
		extension of $K$. The forgetful functor from finite $K$-extensions to definable fields
		is an equivalence of categories.
	\end{proposition}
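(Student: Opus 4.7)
The plan is to adapt the matrix-embedding strategy familiar from the o-minimal setting (compare \cite{OtPePi}): embed $F$ into a matrix algebra $M_n(K)$ by sending $a\in F$ to the derivative at $0$ of left multiplication by $a$, use Lemma \ref{definable-subfield} to show the image contains the scalar copy of $K$, and conclude that the image is a finite extension of $K$. Since $K$ has characteristic zero, no finite field of positive characteristic can be isomorphic to a finite extension of $K$, so I may (and will) assume $F$ is infinite. A short preliminary observation using the formula $p_n'(0)=n\cdot\mathrm{id}$ of Fact \ref{m'}, together with Corollary \ref{map-germ-at-1}, then forces $F$ itself to have characteristic zero.

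First I equip $(F,+)$ with its (essentially unique) structure of a definable strictly differentiable weak Lie group given by Proposition \ref{definable-is-lie}, set $n=\dim F$, and fix a chart identifying $T_0(F)$ with $K^n$. For each $a\in F$ the left multiplication $L_a\colon F\to F$, $x\mapsto ax$, is a definable endomorphism of $(F,+)$, hence automatically strictly differentiable by Proposition \ref{definable-is-lie}. I then define
\[
\phi\colon F\to \mathrm{End}_K(T_0 F)\cong M_n(K),\qquad \phi(a):=L_a'(0).
\]
Since strict differentiability with a prescribed derivative is a first-order condition, $\phi$ is definable. The identities $L_{a+b}(x)=L_a(x)+L_b(x)$ and $L_{ab}=L_a\circ L_b$, combined with the chain rule and the formula $m'(0,0)(u,v)=u+v$ of Fact \ref{m'}, show that $\phi$ is a definable unital ring homomorphism. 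Injectivity is the one place where the Lie correspondence is essential: if $\phi(a)=0=\phi(0)$, then applying Corollary \ref{map-germ-at-1} to $L_a$ and the zero endomorphism (both viewed as local Lie group morphisms of $(F,+)$) forces $L_a$ to vanish on a neighborhood of $0\in F$, whence $a=0$.

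Setting $L:=\phi(F)\subset M_n(K)$ produces a definable subfield. Let $K\cdot I$ denote the scalar subfield of $M_n(K)$. Then $L\cap (K\cdot I)$ is a definable subfield of $K\cdot I\cong K$ containing the identity, so by Lemma \ref{definable-subfield} it equals $K\cdot I$; that is, $K\cdot I\subset L$. Hence $L$ is a definable $K$-subalgebra of the $n^2$-dimensional $K$-algebra $M_n(K)$ which is also a field, and so it is a finite field extension of $K$ of degree at most $n^2$. The definable ring isomorphism $\phi\colon F\to L$ then exhibits $F$ as a finite $K$-extension.

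For the categorical statement, essential surjectivity is exactly what we have just shown. For full faithfulness, $K$-algebra morphisms of finite $K$-extensions are polynomial and hence definable, while Lemma \ref{k-is-rigid} ensures conversely that any definable field morphism between finite $K$-extensions fixes $K$ pointwise, so is a $K$-algebra morphism. The two substantive steps are the injectivity of $\phi$ (which requires the Lie correspondence) and the inclusion $K\cdot I\subset L$ (which requires Lemma \ref{definable-subfield}); both reduce cleanly to the preceding results, so I do not anticipate any genuine obstacles beyond these.
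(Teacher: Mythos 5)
Your proof is correct and takes essentially the same route as the paper: both embed $F$ into $M_n(K)$ via $a\mapsto L_a'(0)$, use Fact \ref{m'} and the chain rule to see this is a unital ring homomorphism, and invoke Lemma \ref{definable-subfield} to show the image contains the scalars $K\cdot I$, with fullness of the functor given by Lemma \ref{k-is-rigid}. The only cosmetic difference is that the paper obtains injectivity for free (a unital ring homomorphism out of a field has trivial kernel), whereas you route it through Corollary \ref{map-germ-at-1}; both work.
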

	\begin{proof}
		That the functor is full is Lemma \ref{k-is-rigid}.
		
		Let $F$ be a definable field. By Proposition \ref{definable-is-lie} we have that
		$(F,+)$ is a definable strictly differentiable weak Lie group.
		If $a\in F$ the map $L_a:x\mapsto ax$ is a definable group morphism and so it is strictly
		differentiable, by the fullness in Proposition \ref{definable-is-lie}.
		We get a definable map $f:F\to M_n(K)$ defined as $a\mapsto L_a'(0)$.
		By the chain rule we have $f(ab)=f(a)f(b)$ for all $a,b\in F$. Clearly $f(1)=1$.
		Finally one has $f(a+b)=f(a)+f(b)$ (the derivative of multiplication 
		$G\times G\to G$
		in a Lie group is the sum map, see for instance Fact \ref{m'}).
		We conclude that $f$ is a ring map, and because $F$ is 
		a field it is injective. If we set $i:K\to M_n(K)$ given by $i(k)=kI$ where $I$
		is the identity matrix, then $i^{-1}f(F)\subset K$ is a definable subfield of $K$,
		and so by Lemma \ref{definable-subfield} one has $i(K)\subset f(F)$.
		So $F/K$ is a finite field extension as required.
	\end{proof}

	\section{One dimensional groups are finite by abelian by finite}
	\label{one-dimensional-section}
	In this section we prove that if $K$ is a 1-h-minimal valued field and $G$
	is a one dimensional group definable in $K$ then $G$ is finite-by-abelian-by-finite. This  generalizes \cite[Theorem 2.5]{PiYao} where it is proved
	that one dimensional groups definable in
	p-adically closed fields are abelian-by-finite. 
	This result is analogous to 
	\cite[Corollary 2.16]{Pi5} where it is shown that a one dimensional 
	group definable in an o-minimal structure is abelian-by-finite.
	
	The proof here is not a straightforward adaptation of either, 
	since we do not assume NIP, making the argument more involved.
	\begin{definition}\label{cw}
		Let $G$ be a group. We let $C^w$  denote the set of elements $x\in G$
		whose centralizer, $c_G(x)$, has finite index in $G$.
	\end{definition}
	
	Note that $C^w$ is a characteristic subgroup of $G$.
	
	\begin{lemma}\label{abelian-core}
		Suppose $G$ is an (abstract) group. 
		Take $C^w$ as in Definition \ref{cw}, 
		$Z$ its center.
		Then $C^w$ and $Z$ are characteristic groups of $G$, and $Z$ is commutative.
		Moreover $Z$ has finite index in $G$ if and only if
		$G$ is abelian-by-finite.
		
		When $G$ is definable in a geometric theory, $C^w$ and $Z$ are definable.
		Also $x\in C^w$ if and only if $\dim(c_G(x))=\dim(G)$.
	\end{lemma}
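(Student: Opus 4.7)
The plan is to separate the abstract group-theoretic content from the model-theoretic portion. On the algebraic side, I first verify that $C^w$ is a subgroup: the identities $c_G(x^{-1}) = c_G(x)$ and $c_G(xy) \supseteq c_G(x) \cap c_G(y)$, combined with the fact that the intersection of two finite-index subgroups is of finite index, handle closure under inverses and products. It is characteristic because any $\phi \in \aut(G)$ sends $c_G(x)$ bijectively onto $c_G(\phi(x))$, preserving its index. Then $Z = Z(C^w)$ is abelian as a center, and is characteristic in $G$ because any $\phi \in \aut(G)$ restricts to an automorphism of the characteristic subgroup $C^w$, and hence preserves its center.

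For the main equivalence, the forward direction is immediate: $Z$ is an abelian normal (being characteristic) subgroup of $G$, so finite index of $Z$ makes $G$ abelian-by-finite. The converse is the main step. Given an abelian normal $A \triangleleft G$ of finite index, I note $A \subseteq C^w$ (since $c_G(a) \supseteq A$ for $a \in A$), and then consider the conjugation action $G/A \to \aut(A)$, which is well-defined because $A$ is abelian; its kernel $H/A$ has finite image, so $H$ has finite index in $G$. Since elements of $H$ commute with all of $A$, one has $H \subseteq C^w$, and therefore $C^w$ has finite index in $G$ and $F := C^w/A$ is finite. For each $c \in C^w$, the fixed set $\fix_A(c) = A \cap c_G(c)$ has finite index in $A$ because $c_G(c)$ has finite index in $G$. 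Taking the intersection over a set of coset representatives of $F$ yields a finite-index subgroup of $A$ (finite intersection of finite-index subgroups); this intersection is exactly $Z \cap A$, since conjugation by $c$ on $a \in A$ depends only on $cA$, and $a\in Z$ means $cac^{-1}=a$ for all $c\in C^w$. Hence $[G : Z] \leq [G : Z \cap A] < \infty$.

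For the model-theoretic portion, assume $G$ is definable in a geometric theory. The equivalence $x \in C^w$ iff $\dim(c_G(x)) = \dim(G)$ rests on Lemma \ref{generic-in-group}: finite index of $c_G(x)$ forces equal dimension by additivity, and conversely, a definable subgroup of full dimension is large, so by that lemma finitely many of its cosets cover $G$, yielding finite index. Since dimension is definable in definable families in a geometric theory, the set $\{x \in G : \dim(c_G(x)) = \dim(G)\}$ is definable, so $C^w$ is definable; then $Z$ is definable as the center of the definable subgroup $C^w$. The main obstacle is the reverse direction of the main equivalence, which requires the fixed-point intersection argument in the second paragraph; everything else reduces to routine verifications.
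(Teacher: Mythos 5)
Your abstract group-theoretic portion is correct and follows essentially the same route as the paper: $A\subset C^w$ for any finite-index abelian $A$, and intersecting the finitely many sets $A\cap c_G(c_k)$ over coset representatives $c_k$ of $A$ in $C^w$ lands you in $Z\cap A$ with finite index. (Your intermediate step with the kernel $H$ of $G\to\aut(A)$ is redundant, since $A\subset C^w$ already gives $[G:C^w]<\infty$, but it is harmless.)

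There is, however, a genuine gap in the model-theoretic part. You justify the implication ``$\dim(c_G(x))=\dim(G)\Rightarrow [G:c_G(x)]<\infty$'' by asserting that a definable subgroup of full dimension is large and then invoking Lemma \ref{generic-in-group}. That assertion is false: in ACVF$_0$ (which is $1$-h-minimal, hence geometric) the subgroup $\mathcal{O}^\times\leq K^\times$ is definable of full dimension $1$, yet its complement also has dimension $1$ (so it is not large) and its index is $|\Gamma|$, which is infinite. The point is that additivity of dimension in the form $\dim(G)=\dim(G/H)+\dim(H)$ is unavailable when the coset space $G/H$ is not a definable subset of the field sorts. The correct argument --- the one the paper uses --- exploits that for a centralizer the coset space \emph{is} realized as a definable set, namely the conjugacy class: the map $g\mapsto x^g$ from $G$ onto $x^G\subset G$ has fibers the cosets of $c_G(x)$, all of dimension $\dim(c_G(x))$, so additivity gives $\dim(x^G)=\dim(G)-\dim(c_G(x))$; if $\dim(c_G(x))=\dim(G)$ then $\dim(x^G)=0$, hence $x^G$ is finite and $[G:c_G(x)]=|x^G|<\infty$. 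This same observation, combined with elimination of $\exists^\infty$ applied to the family $\{x^G\}_{x\in G}$ (or, as you suggest, with definability of dimension in families once the equivalence is established), also yields the definability of $C^w$. You should replace the ``full dimension implies large'' step by this orbit argument; the rest of your proof then goes through.
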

	\begin{proof}
		It is clear that $C^w$ and $Z$ are characteristic, and that $Z$ is abelian. So, in particular, if $[G:Z]<\infty$ then $G$ is abelian-by-finite. On the other hand, if $A$ is an abelian subgroup of finite then $A\subset C^w$,
		as $A\subset c_G(a)$ for every $a\in A$. 
		If $a_1,\dots, a_n$ are a set of representatives for left cosets of $A$
		in $C^w$ then $\bigcap_{k=1}^n c_G(a_k)\cap A\subset Z$, and as $a_k\in C^w$,
		the $c_G(a_k)$ have finite index in $G$, and so $Z$ has finite index in $G$.
		
		If $G$ is definable in a geometric theory note that $x\in C^w$ if and only if
		$x^G$, the orbit of $G$ under conjugation, is finite. This is because the fibers of the map
		$x\mapsto x^g$ are cosets of $c_G(x)$.
		So $C^w$ is definable because a geometric theory eliminates the exist 
		infinity quantifier. We also get that if $\dim(c_G(x))=\dim(G)$ 
		then $c_G(x)$ is of finite index.
	\end{proof}
	\begin{lemma}\label{locally-constant-pregeometry}
		Suppose $f:X\times Y\to Z$ is a function definable in a pregeometric theory.
		Denote $n=\dim(X)$ and $m=\dim(Y)$.
		Suppose for all $x\in X$ the nonempty fibers of the function 
		$f_x(y)=f(x,y)$ have
		dimension
		$m$.
		Suppose that for all $y\in Y$ the nonempty fibers of the function 
		$f_y(x)=f(x,y)$ have
		dimension $n$.
		Then $f$ has finite image.
	\end{lemma}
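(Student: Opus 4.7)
My strategy is to show that $\mathrm{Im}(f)$ is zero-dimensional and hence finite (using $\aleph_0$-saturation, which is built into the definition of a pregeometric structure in this paper). The key observation is that the two-sided fibre condition forces every non-empty fibre of $f$ itself to have the full dimension $n+m$ of $X\times Y$.

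For this, I would fix $z\in\mathrm{Im}(f)$, set $F = f^{-1}(z)\subseteq X\times Y$, and analyse the projection $\pi_X \colon F \to X$. Its non-empty fibres are identified with the sets $f_x^{-1}(z)$, each of dimension $m$ by hypothesis, so by additivity of dimension along surjections with constant fibre dimension (recalled in the preliminaries) $\dim F = m + \dim \pi_X(F)$. Choosing any $(x_0,y_0)\in F$, we have $f_{y_0}^{-1}(z)\subseteq \pi_X(F)$, and the symmetric hypothesis applied to $f_{y_0}$ forces $\dim f_{y_0}^{-1}(z) = n$; combined with $\pi_X(F)\subseteq X$ and $\dim X = n$, this gives $\dim \pi_X(F) = n$ and hence $\dim F = n+m$.

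Every non-empty fibre of $f\colon X\times Y\to \mathrm{Im}(f)$ therefore has the same dimension $n+m$, so additivity applied to this surjection yields
\[
n+m \;=\; \dim(X\times Y) \;=\; \dim \mathrm{Im}(f) + (n+m),
\]
whence $\dim\mathrm{Im}(f) = 0$. A standard compactness argument—realising over the parameters defining $f$ the partial type that asserts membership in $\mathrm{Im}(f)$ together with the negation of every algebraic formula—turns this into finiteness: an infinite definable set in an $\aleph_0$-saturated structure cannot be contained in an algebraic closure. I do not anticipate any serious obstacle; the only point requiring mild care is this last passage from zero-dimensionality to finiteness, and it is otherwise pure dimension-chasing.
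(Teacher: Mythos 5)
Your proof is correct and follows essentially the same route as the paper's: both arguments show that every nonempty fibre of $f$ has dimension $n+m$ (the paper by exhibiting the subset $\bigcup_{x\in f_{y_0}^{-1}(z)}\{x\}\times f_x^{-1}(z)$ inside the fibre, you by additivity along the projection of the fibre to $X$), and then conclude $\dim\mathrm{Im}(f)=0$ by additivity of dimension for the surjection onto the image. The final passage from zero-dimensionality to finiteness via compactness, which you spell out, is the standard one and is left implicit in the paper.
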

	\begin{proof}
		We claim that the nonempty fibers of $f$ have dimension $n+m$.
		Indeed, if $(x_0,y_0)\in X\times Y$, then $f^{-1}f(x_0,y_0)$
		contains $\cup_{x\in f_{y_0}^{-1}f_{y_0}(x_0)}\{x\}\times f_x^{-1}f_x(y_0)$,
		so we conclude by the additivity of dimension.
	\end{proof}
	\begin{lemma}\label{c-is-finite}
		Suppose $G$ is definable in a pregeometric theory and $G=C^w$.
		Then the image of the commutator map $c:G\times G\to G$ is finite.
	\end{lemma}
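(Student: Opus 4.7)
The plan is to apply Lemma \ref{locally-constant-pregeometry} to the commutator map $c \colon G \times G \to G$, $c(x,y) = xyx^{-1}y^{-1}$, taking $X = Y = G$, so that $\dim(X) = \dim(Y) = \dim(G)$. To do this, I need to show that for every $x_0 \in G$ the nonempty fibers of $c_{x_0}(y) = [x_0,y]$ have dimension $\dim(G)$, and symmetrically for every $y_0 \in G$ the nonempty fibers of $c^{y_0}(x) = [x,y_0]$ have dimension $\dim(G)$.

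For the first claim, fix $x_0 \in G$ and compute: $c_{x_0}(y) = c_{x_0}(y')$ is equivalent to $x_0yx_0^{-1}y^{-1} = x_0y'x_0^{-1}y'^{-1}$, which after cancellation rearranges to $y'^{-1}y \in c_G(x_0)$. Hence each nonempty fiber of $c_{x_0}$ is a left coset of the centralizer $c_G(x_0)$. By hypothesis $G = C^w$, so $c_G(x_0)$ has finite index in $G$, and in particular $\dim(c_G(x_0)) = \dim(G)$ by additivity of dimension. Therefore every nonempty fiber of $c_{x_0}$ has dimension $\dim(G)$. By an entirely analogous computation, the nonempty fibers of $c^{y_0}$ are right cosets of $c_G(y_0)$, again of dimension $\dim(G)$.

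Now Lemma \ref{locally-constant-pregeometry}, applied with $n = m = \dim(G)$, yields that the image of $c$ is finite. The only subtle point is that in a pregeometric theory the centralizer $c_G(x)$ is definable and its dimension is well-defined, and that $x \in C^w$ if and only if $\dim(c_G(x)) = \dim(G)$; this was recorded in Lemma \ref{abelian-core}. So no significant obstacle is expected — the argument is a direct fiber-dimension calculation feeding into the additivity-of-dimension lemma already proved.
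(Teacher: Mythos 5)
Your proof is correct and follows essentially the same route as the paper: both identify the nonempty fibers of the commutator map in each variable as cosets of centralizers, use $G=C^w$ to see these have full dimension, and then invoke Lemma \ref{locally-constant-pregeometry}. Your write-up just makes explicit the coset computation and the dimension bookkeeping that the paper leaves implicit.
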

	\begin{proof}
		The commutator map $c(x,y)$ is constant when $x$ is fixed and $y$ varies
		over a right coset of $c_G(y)$, and it is constant when $y$ is fixed and 
		$x$ varies over a right coset of $c_G(x)$.
		This implies that the image of $c$ is finite,
		see Lemma \ref{locally-constant-pregeometry}.
	\end{proof}
	\begin{lemma}\label{almost-commutative}
		Suppose $G$ is an $n$-dimensional group definable 
		in a pregeometric theory such that  $G=C^w$. Then there is a definable 
		characteristic subgroup, $G_1$, of finite index
		with a characteristic finite subgroup $L$, central in $G_1$, such that
		$G_1/L$ is abelian.
		If $Z$ is the center of $G_1$ then $Z/L$ contains $(G_1/L)^m$, the $m$-th powers of $G_1/L$, for some $m$. 
		
		If the theory is NIP then the center of $G$ has finite index.
	\end{lemma}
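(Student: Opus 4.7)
The plan is to derive the result in three main steps: first, reduce to a bounded finite conjugacy (BFC) situation using saturation; second, invoke Neumann's classical theorem to conclude $[G,G]$ is finite; third, construct $L$ and $G_1$ by standard centralizer and commutator manipulations, and then verify the $m$-th power claim from the resulting central extension structure.

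For the first step, the set $X_k := \{x \in G : |x^G| \leq k\}$ is first-order definable in any theory, since the condition ``the orbit under conjugation has at most $k$ elements'' is directly expressible by a formula with parameters $y_1,\dots,y_k$ witnessing all conjugates. Since $G = C^w$ gives $G = \bigcup_k X_k$ and $G$ is definable in an $\aleph_0$-saturated structure, compactness forces $G = X_K$ for some $K$, so conjugacy classes in $G$ are uniformly bounded. By Neumann's classical theorem on BFC-groups, $L_0 := [G,G]$ is then finite (this complements Lemma \ref{c-is-finite}, which already produced a finite set of commutators; the new content is finiteness of the subgroup they generate). Clearly $L_0$ is characteristic in $G$.

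Next, I will set $G_1 := C_G(L_0) = \bigcap_{l \in L_0} c_G(l)$, which is characteristic (centralizer of a characteristic subgroup) and of finite index (a finite intersection of finite-index subgroups, using $L_0 \subseteq G = C^w$). Let $L := [G_1, G_1] \subseteq L_0$. Then $L$ is finite and characteristic in $G$ (commutator subgroup of a characteristic subgroup of $G$), and is central in $G_1$ because $L \subseteq L_0$ while $G_1 \subseteq C_G(L_0)$ by construction. By definition $G_1/L$ is abelian. For the $m$-th-power claim, centrality of $L$ together with the standard identity $[ab,c] = a[b,c]a^{-1}[a,c]$, which reduces to $[a,c][b,c]$ when $[b,c]$ is central, yields a well-defined bi-multiplicative pairing
\[
\beta \colon G_1/L \times G_1/L \longrightarrow L, \qquad (\bar g,\bar h) \mapsto [g,h].
\]
An element $\bar g$ lies in $Z(G_1)/L$ precisely when $\beta(\bar g, \cdot) \equiv 1$. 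Letting $m$ be the exponent of the finite group $L$, one has $\beta(\bar g^m,\bar h) = \beta(\bar g,\bar h)^m = 1$ for every $\bar h$, hence $(G_1/L)^m \subseteq Z/L$.

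Finally, for the NIP statement, the BFC bound $K$ makes $\{c_G(x)\}_{x \in G}$ a uniformly definable family of subgroups of index $\leq K$. By the Baldwin-Saxl theorem in NIP theories, $Z(G) = \bigcap_{x \in G} c_G(x)$ already equals $\bigcap_{i=1}^n c_G(x_i)$ for finitely many $x_i$, and therefore has index at most $K^n$ in $G$. The main obstacle in the whole argument is the appeal to Neumann's BFC theorem in the second step: it is not developed in the paper, but it is a classical result and the only external input required.
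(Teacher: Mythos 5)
Your proof is correct, and the back end of it coincides with the paper's: your $G_1=C_G([G,G])$ is the same subgroup as the paper's centralizer of the set of commutators, your $L=[G_1,G_1]$ is the paper's $D(G_1)$, the $m$-th power claim is verified by the same identity $c(g^m,h)=c(g,h)^m$ (packaged by you as a bi-multiplicative pairing into $L$), and the NIP clause is Baldwin--Saxl in both treatments. Where you genuinely diverge is the key finiteness input. The paper stays inside its own machinery: Lemma \ref{c-is-finite} (a dimension-theoretic argument valid in any pregeometric theory) shows only that the \emph{set} of commutators is finite; since a finite set need not generate a finite group, the paper first passes to $G_1$, where commutators are central, and then uses $c(g,h)^m=c(g^m,h)\in c(G,G)$ to see each commutator is a central torsion element, so the finitely many of them generate a finite group. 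You instead extract a uniform bound on conjugacy class sizes from $G=C^w$ by compactness (legitimate, since the ambient structure is $\aleph_0$-saturated and the sets $X_k$ are uniformly definable) and invoke Neumann's BFC theorem to conclude that $[G,G]$ itself is finite. Your route is shorter and yields a strictly stronger intermediate fact (finiteness of $[G,G]$ rather than of $[G_1,G_1]$), and it does not use the dimension theory at all --- it works for any definable group with $G=C^w$ in any $\aleph_0$-saturated structure. The cost is the appeal to Neumann's theorem, a nontrivial external classical result that the paper deliberately avoids by routing everything through its just-proved Lemma \ref{c-is-finite}. Both arguments are complete; yours is an acceptable alternative provided the reference to Neumann is made explicit.
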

	\begin{proof}
		If the theory has NIP then the center is finite index by Baldwin-Saxl
		(e.g., \cite[Lemma 1.3]{PoiGroups}).
		Indeed, $Z=\bigcap_{g\in G}c_G(g)$ is an intersection of a definable family subgroups, each of which is finite index by the assumption $G=C^w$.
		So, as $G$ is NIP one gets that $Z$ is the intersection of finitely many of the
		centralizers and $[G:Z]<\omega$.
		
		In general, 
		by  Lemma \ref{c-is-finite} we know that $c(G,G)$ is finite. The centralizer of
		$c(G,G)$ is $G_1$ and has finite index in $G$ by the hypothesis that $G=C^w$.
		Clearly $G_1$ is characteristic in $G$, so we may replace $G$ by $G_1$
		and assume that
		$c(G,G)$ is contained in the center of $G$.
		
		In this case we prove that $c(G,G)$ generates a 
		finite central characteristic group $L=D(G)$.
		Indeed, since $c(G,G)$ is central, a simple computation shows
		$c(gh,x)=c(g,x)c(h,x)$ for all $g,h,x\in G$. 
		It follows that 
		$c(g,h)^m=c(g^m,h)$ is in $c(G,G)$. Thus $c(g,h)$ has finite order.
		As $c(G,G)$ is central with elements of finite order, 
		the group it generates is central and finite. It is obviously characteristic.
		
		We also see that if $m$ is the order of $D(G)$ then $g^m\in Z$ for all
		$g\in G$. This is because 
		$c(g^m,h)=c(g,h)^m=1$, so $(G/D(G))^m$ is contained
		in $Z/D(G)$ as required.
	\end{proof}
	\begin{lemma}\label{abelian-n-torsion}
		Suppose $G$ is an $n$-dimensional 
		abelian group definable in a 1-h-minimal theory.
		Then the $m$-torsion of $G$ is finite and $G^m\subset G$ is a subgroup
		of dimension $n$.
	\end{lemma}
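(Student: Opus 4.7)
My plan is to exploit the fact that since $G$ is abelian, the $m$-th power map $p_m:G\to G$ is a definable group homomorphism, so $G[m]=\ker p_m$ and $G^m=\mathrm{Im}\,p_m$, and both are definable subgroups of $G$. The whole argument will reduce to computing the derivative of $p_m$ at $e$.

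First, by Proposition \ref{definable-is-lie}, $G$ carries the structure of a definable strictly differentiable weak Lie group and $p_m$, being a definable group morphism, is strictly differentiable. By Fact \ref{m'}, its derivative at the identity is $p_m'(e)=m\cdot\mathrm{id}_{T_e(G)}$. Since $K$ has characteristic zero, $m\cdot\mathrm{id}$ is a linear automorphism of $T_e(G)$, so $p_m'(e)$ is invertible.

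Next, I would apply the inverse function theorem (Proposition \ref{inverse-function}, via charts) to conclude that $p_m$ is a local strict diffeomorphism at $e$: there exist open neighborhoods $U\ni e$ and $V\ni e$ in $G$ such that $p_m$ restricts to a strict diffeomorphism $U\to V$. From this both halves of the statement drop out. For the $m$-torsion, $G[m]\cap U=\{e\}$, so $\dim_e G[m]=0$; since $G[m]$ is a definable subgroup of $G$, Lemma \ref{local-dimension} gives $\dim G[m]=\dim_e G[m]=0$, so $G[m]$ is finite. For $G^m$, we have $V\subseteq G^m$, so $G^m$ has non-empty interior, whence $\dim G^m=n$ by Proposition \ref{dimension}(2), while being a subgroup (image of a homomorphism from an abelian group).

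Alternatively, one can read off the dimension of $G[m]$ directly from Corollary \ref{dimension-of-kernel}, applied to $f=p_m$ and $g$ the constant morphism to $e$: then $\dim_e G[m]=\dim\ker(p_m'(e)-0)=\dim\ker(m\cdot\mathrm{id})=0$. There is no real obstacle here: the argument is essentially the standard Lie-theoretic computation, and the only point where $1$-h-minimality enters is through Proposition \ref{definable-is-lie} (to put a differentiable structure on $G$) and Proposition \ref{inverse-function} (to invert $p_m$ locally).
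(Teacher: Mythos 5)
Your proposal is correct, and the ``alternative'' you mention at the end --- Fact \ref{m'} plus Corollary \ref{dimension-of-kernel} applied to $p_m$ and the trivial morphism --- is precisely the paper's proof; the paper then gets $\dim(G^m)=n$ by additivity of dimension, since the fibers of $p_m:G\to G^m$ are cosets of the finite $m$-torsion. Your primary route, unpacking this via the inverse function theorem in charts, is an equally valid and slightly more hands-on variant: local injectivity of $p_m$ at $e$ together with homogeneity (Lemma \ref{local-dimension}) gives finiteness of $G[m]$, and openness of the local image gives $\dim(G^m)=n$ via Proposition \ref{dimension}(2). Both arguments hinge on the same computation, $p_m'(e)=m\cdot\mathrm{id}$, invertible because $K$ has characteristic zero, and on Proposition \ref{definable-is-lie} to make $p_m$ strictly differentiable.
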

	\begin{proof}
		The map  $x\mapsto x^m$ is a definable group morphism with invertible
		derivative at the identity, see for instance Fact \ref{m'},
		so by Corollary \ref{dimension-of-kernel} we
		get that the $m$-torsion of $G$ is finite, and so by additivity of dimension
		$\dim(G^m)=\dim(G)=n$.
	\end{proof}
	\begin{lemma}\label{almost-center}
		Suppose $G$ is an $n$-dimensional
		group definable in a 1-h-minimal field.
		Then $C^w$ is the kernel of the map $\Ad:G\to GL_n(K)$.
		
		If the Lie algebra $\mathrm{Lie}(G)$ is abelian, 
		then $C^w$ has finite index.
	\end{lemma}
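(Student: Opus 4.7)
The plan is to treat the two parts of the lemma in sequence, each essentially reducing to a derivative computation at the identity via the Lie-correspondence tools of Section \ref{groups-section}.

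For $C^w = \ker(\Ad)$, I will combine the criterion from Lemma \ref{abelian-core} that $g \in C^w \iff \dim c_G(g) = n$ with the germ-determined-by-derivative principle of Corollary \ref{map-germ-at-1}. For the inclusion $\ker(\Ad) \subseteq C^w$: if $\Ad(g) = I$ then $c_g'(e) = \mathrm{id}'(e)$, so by Corollary \ref{map-germ-at-1} the definable strictly differentiable Lie group morphisms $c_g, \mathrm{id}_G : G \to G$ agree on an open neighborhood of $e$. Hence $c_G(g)$ contains a neighborhood of $e$, has dimension $n$, and $g \in C^w$. For the reverse inclusion: if $g \in C^w$ then $\dim c_G(g) = n$, so $c_G(g)$ has nonempty interior in $G$ (Proposition \ref{dimension}); being a subgroup, left-translating any interior point to $e$ shows that $c_G(g)$ is open and contains some neighborhood $W$ of $e$. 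Then $c_g$ agrees with $\mathrm{id}$ on $W$, so $c_g'(e) = \mathrm{id}'(e)$, giving $\Ad(g) = I$.

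For the finite-index statement, the assumption that $\mathrm{Lie}(G)$ is abelian is, by the very definition of the Lie bracket, exactly $\ad = 0$, i.e., $\Ad'(e) = 0$. I then apply Corollary \ref{dimension-of-kernel} to the pair of definable strictly differentiable Lie group morphisms $\Ad : G \to GL_n(K)$ and the constant morphism $\tau : G \to GL_n(K)$, $\tau(g) = I$. This yields
\[
\dim \ker(\Ad) \;=\; \dim \{g : \Ad(g) = \tau(g)\} \;=\; \dim \ker(\Ad'(e) - 0) \;=\; \dim T_e G \;=\; n.
\]
Since the fibers of the surjection $\Ad : G \to \Ad(G)$ are cosets of $\ker(\Ad)$, all of dimension $n$, additivity of dimension gives $\dim \Ad(G) = \dim G - n = 0$, so $\Ad(G)$ is finite. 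Therefore $[G : C^w] = [G : \ker(\Ad)] = |\Ad(G)| < \infty$.

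I do not anticipate a serious obstacle: both parts are straightforward applications of the tools already established for definable (weak) Lie groups, namely the derivative-determines-germ result and the dimension-of-equalizer formula, together with basic dimension theory. The only mild wrinkle to keep in mind is that $G$ is in general only a \emph{weak} Lie group; however, both Corollary \ref{map-germ-at-1} and Corollary \ref{dimension-of-kernel} are formulated at that level of generality, so no further work is required.
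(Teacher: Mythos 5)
Your proof is correct and takes essentially the same route as the paper's: the first part is exactly the intended combination of Corollary \ref{map-germ-at-1} with the dimension criterion of Lemma \ref{abelian-core}, and the second part's use of Corollary \ref{dimension-of-kernel} to get $\dim\ker(\Ad)=n$ followed by additivity of dimension is the same argument the paper runs via Corollary \ref{map-germ-at-1} (its full-dimension special case). You have merely written out details the paper leaves implicit.
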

	\begin{proof}
		The first statement follows from Corollary \ref{map-germ-at-1} and 
		Lemma \ref{abelian-core}.
		
		If the Lie bracket is abelian, then, by the definition of the 
		Lie bracket, the derivative of $\Ad$ at $e$ is $0$.
		This means that $C^w=\ker(\Ad)$ 
		contains an open neighborhood of $e$ by Corollary 
		\ref{map-germ-at-1}, so $C^w$ is $n$-dimensional.
		As the kernel of $\Ad$ is $C^w$ we conclude that $C^w$ has finite index in 
		$G$ by the additivity of dimension.
	\end{proof}
	\begin{lemma}\label{center-finite-by-abelian}
		Suppose $G$ is an $n$-dimensional 
		finite-by-abelian group definable in a 1-h-minimal 
		theory. Then $G=C^w$ and 
		the center of $G$ has dimension $n$.
	\end{lemma}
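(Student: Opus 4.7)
The plan is to establish the two conclusions in sequence: first show $G=C^w$ by a direct commutator count, then use the structure supplied by Lemma \ref{almost-commutative} to produce a finite-index subgroup whose center is $n$-dimensional, and finally descend from that center to the center of $G$ itself by intersecting with finitely many centralizers.

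For the first step, I would fix a finite normal $N \trianglelefteq G$ with $G/N$ abelian. For any $g \in G$ and any $h \in G$, the commutator $[g,h]$ lies in $N$, because $G/N$ is abelian. Hence the conjugacy class $g^G = \{hgh^{-1} : h \in G\}$ is contained in the finite set $gN$, so $[G:c_G(g)] = |g^G| < \infty$, giving $g \in C^w$. As $g$ was arbitrary, $G = C^w$.

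Now I can apply Lemma \ref{almost-commutative} to obtain a definable characteristic finite-index subgroup $G_1 \leq G$, a finite central characteristic $L \trianglelefteq G_1$ with $G_1/L$ abelian, and an integer $m$ such that $(G_1/L)^m \subseteq Z(G_1)/L$. Since $G_1$ has finite index in $G$ and $L$ is finite, $G_1/L$ is an $n$-dimensional abelian group definable in the $1$-h-minimal field $K$; by Lemma \ref{abelian-n-torsion} its $m$-th power subgroup has the same dimension $n$, so $\dim Z(G_1) = \dim (Z(G_1)/L) \geq n$, and therefore $\dim Z(G_1) = n$.

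It remains to descend from $Z(G_1)$ to $Z(G)$. Choose coset representatives $g_1,\dots,g_k$ for $G_1$ in $G$. An element of $Z(G_1)$ commutes with every element of $G$ if and only if it commutes with each $g_i$, so
\[
Z(G) \;=\; Z(G_1) \cap \bigcap_{i=1}^k c_G(g_i).
\]
Since $G = C^w$ by step one, each $c_G(g_i)$ has finite index in $G$, hence $c_G(g_i) \cap Z(G_1)$ has finite index in $Z(G_1)$; a finite intersection of finite-index subgroups is finite index, so $[Z(G_1):Z(G)] < \infty$ and $\dim Z(G) = \dim Z(G_1) = n$. The only subtlety I foresee is ensuring that the passage $Z(G_1) \rightsquigarrow Z(G)$ preserves dimension, which is exactly where the $C^w$ conclusion from step one feeds back in.
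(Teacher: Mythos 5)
Your proof is correct, and its second half matches the paper's, but your first step takes a genuinely different and more elementary route. To get $G=C^w$ you note that each conjugacy class $g^G$ lies in the finite coset $Ng$, so every centralizer has finite index; this is pure group theory and needs nothing about definability or differentiability. The paper instead goes through its Lie machinery: by Corollary \ref{dimension-of-kernel} the quotient $p:G\to G/H$ by the finite normal $H$ induces an isomorphism of tangent spaces at $e$, abelianness of $G/H$ forces $\Ad(g)=\Ad(p(g))=1$ for all $g$, and Lemma \ref{almost-center} identifies $\ker\Ad$ with $C^w$. Your argument is shorter and more general; the paper's fits its theme of extracting group-theoretic information from the adjoint representation. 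The remainder of your proof (Lemma \ref{almost-commutative} to get $L\subset G_1$, Lemma \ref{abelian-n-torsion} to see $(G_1/L)^m$ is $n$-dimensional, then cutting $Z(G_1)$ down by the finitely many centralizers $c_G(g_i)$ --- each of finite index because $G=C^w$ --- to land inside $Z(G)$) is exactly the paper's, and you actually spell out the final descent more carefully than the paper, whose remark that ``$Z(G_1)$ contains a finite index subgroup of $Z(G)$'' gives the inequality in the unneeded direction. Two small points: your displayed identity should read $Z(G)\cap G_1=Z(G_1)\cap\bigcap_{i} c_G(g_i)$, since $Z(G)$ need not be contained in $G_1$; but the inclusion of the right-hand side into $Z(G)$ is all you use, and it yields $\dim Z(G)\ge n$. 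Also, when applying Lemma \ref{abelian-n-torsion} to $G_1/L$ you are implicitly using elimination of finite imaginaries to know this quotient is again a definable group, just as the paper does for $G/H$.
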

	\begin{proof}
		Let $H$ be finite normal such that $G/H$ is abelian.
		By finite elimination of imaginaries in fields we have that 
		$G/H$ is definable.
		Also by Corollary \ref{dimension-of-kernel} we see that that the quotient map $p:G\to G/H$ induces
		an isomorphism of tangent spaces at the identity, and under this isomorphism
		$1=\Ad(p(g))=\Ad(g)$ for all $g\in G$. We conclude that $G=C^w$, by Lemma 
		\ref{almost-center}. Now if we apply Lemma \ref{almost-commutative}
		we get characteristic groups $L\subset G_1\subset G$ such that 
		$G/G_1$ 
		is finite, $L$ is finite and $G_1/L$ is abelian, and 
		$Z(G_1)/L\supset (G_1/L)^m$ for some $m$.
		Note  that $Z(G_1)$ contains a finite index subgroup of
		$Z(G)$, so we just have to see that $(G_1/L)^m$ has dimension $n$.
		
		This follows from Lemma \ref{abelian-n-torsion}.
	\end{proof}
	\begin{proposition}\label{abelian}
		Suppose $K$ is 1-h-minimal.
		Suppose $G$ is a strictly differentiable definable weak Lie group.
		Then $\mathrm{Lie}(G)$ is abelian 
		if and only if $G$ is finite-by-abelian-by-finite.
		In this case $G$ has characteristic definable subgroups
		$L\subset G_1\subset G$ such that $G/G_1$ is finite, $G_1/L$ is abelian,
		and $L$ is finite and central in $G_1$. Also if $Z$ is the center of $G'$,
		then $Z$ is $n$-dimensional, and $Z/L$ contains $(G_1/L)^m$ for some $m$.
		
		If $K$ is NIP then we may take $L=1$.
	\end{proposition}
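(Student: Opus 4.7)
\medskip
\noindent\textbf{Proof plan.} The argument is a direct assembly of the preparatory lemmas; the new work is mostly verifying that dimensions and characteristic-ness propagate correctly through the two-step tower $L\subset G_1\subset G$.

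\medskip
I will begin with the easier reverse implication. Assume $G$ has definable subgroups $L\subset H\subset G$ with $[G:H]$ finite, $L$ finite and normal in $H$, and $H/L$ abelian (by replacing $H$ with the intersection of its finitely many conjugates there is no loss in taking $H$ definable and normal in $G$). Since $H$ is a definable subgroup, it is closed by Proposition \ref{subgroups-are-closed}, and being of finite index it is also open in $G$, so the inclusion induces an isomorphism $\mathrm{Lie}(H)\cong \mathrm{Lie}(G)$. By finite elimination of imaginaries in fields the quotient map $p\colon H\to H/L$ is definable; its kernel $L$ is finite, so Corollary \ref{dimension-of-kernel} forces $p'(e)$ to be injective, and by equality of dimensions it is an isomorphism. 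Naturality of $\mathrm{Ad}$, exactly as in the proof in the excerpt that $\mathrm{Ad}$ is a morphism, shows that $p'(e)$ intertwines the adjoint representations of $H$ and $H/L$; differentiating once more shows $p'(e)$ is an isomorphism of Lie algebras. Since $H/L$ is abelian, Proposition \ref{local-lie-bracket} applied to $H/L$ shows $\mathrm{Lie}(H/L)$ has zero bracket, hence so does $\mathrm{Lie}(G)$.

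\medskip
For the forward direction, suppose $\mathrm{Lie}(G)$ is abelian. By Lemma \ref{almost-center}, $C^w$ is the kernel of $\mathrm{Ad}$ and has finite index in $G$. Since every $g\in C^w$ has $c_{C^w}(g)=c_G(g)\cap C^w$ of finite index in $C^w$, we have $(C^w)^w=C^w$, so Lemma \ref{almost-commutative} applies to $C^w$ and provides definable characteristic subgroups $L\subset G_1\subset C^w$ with $[C^w:G_1]<\infty$, $L$ finite and central in $G_1$, $G_1/L$ abelian, and $Z(G_1)/L\supset (G_1/L)^m$ for some $m$. Because characteristic subgroups of characteristic subgroups are characteristic, both $L$ and $G_1$ are characteristic in $G$; the index $[G:G_1]$ is finite, so $\dim G_1=n$. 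This gives the claimed tower $L\subset G_1\subset G$ and in particular shows that $G$ is finite-by-abelian-by-finite.

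\medskip
It remains to verify that $Z:=Z(G_1)$ is $n$-dimensional and to handle the NIP case. Since $L$ is finite, $\dim(G_1/L)=n$, so Lemma \ref{abelian-n-torsion} applied to the abelian group $G_1/L$ shows $(G_1/L)^m$ is a subgroup of dimension $n$. As $Z/L\supset (G_1/L)^m$, we obtain $\dim Z\ge n$, and the reverse inequality $\dim Z\le \dim G_1=n$ is automatic, so $\dim Z=n$ as required. Finally, if the ambient theory is NIP then the final clause of Lemma \ref{almost-commutative} gives that $Z(C^w)$ already has finite index in $C^w$, hence in $G$; taking $G_1=Z(C^w)$ (which is characteristic in $C^w$, hence in $G$) and $L=\{1\}$ realises the tower with trivial finite part. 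The only step that might require more than book-keeping is the Lie-algebra computation in the reverse direction; otherwise, the proposition reduces to a clean application of Lemmas \ref{almost-center}, \ref{almost-commutative}, and \ref{abelian-n-torsion}.
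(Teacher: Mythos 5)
Your proof is correct and follows essentially the same route as the paper, whose own proof is just the one-line instruction to combine Lemmas \ref{almost-center}, \ref{almost-commutative} and \ref{center-finite-by-abelian}; your forward direction is exactly that assembly (with the useful explicit checks that $(C^w)^w=C^w$ and that characteristicness and dimension propagate through the tower), and your reverse direction via $p'(e)$ intertwining the adjoint representations is the same computation the paper packages inside Lemma \ref{center-finite-by-abelian}. No gaps worth flagging.
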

	\begin{proof}
		This follows by putting the previous results together, 
		Lemmas \ref{almost-center}, \ref{almost-commutative}, 
		\ref{center-finite-by-abelian}.
	\end{proof}
	\begin{corollary}\label{one-dimensional}
		Suppose $G$ is a one dimensional group definable in a 1-h-minimal valued
		field. Then $G$ is finite-by-abelian-by-finite.
		If the theory is NIP then $G$ is abelian-by-finite.
	\end{corollary}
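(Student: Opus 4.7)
The plan is to reduce this to \textbf{Proposition \ref{abelian}}, which asserts that a definable (weak) Lie group $G$ is finite-by-abelian-by-finite if and only if $\mathrm{Lie}(G)$ is abelian (and abelian-by-finite in the NIP case, taking $L=1$). So the only thing that needs to be checked is that the Lie algebra of a one-dimensional definable group is abelian.

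First I would invoke Proposition \ref{definable-is-lie} to equip $G$ with the structure of a definable strictly differentiable weak Lie group. Its Lie algebra $\mathrm{Lie}(G) = T_e(G)$ is a $K$-vector space of dimension $\dim(G) = 1$, equipped with the bracket $[\cdot,\cdot]$ constructed in Section \ref{groups-section}.

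Next I would observe that any one-dimensional Lie algebra over a field is automatically abelian. Indeed, fix a basis vector $v \in \mathrm{Lie}(G)$; every element has the form $av$ for some $a \in K$, and by bilinearity together with the alternating property $[v,v]=0$, we get $[av, bv] = ab[v,v] = 0$. Hence $[\cdot,\cdot]$ is identically zero on $\mathrm{Lie}(G)$.

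Finally, applying Proposition \ref{abelian} to $G$ yields that $G$ is finite-by-abelian-by-finite, with characteristic definable subgroups $L \subset G_1 \subset G$ as described there. In the NIP case, the same proposition gives $L = 1$, so $G$ is abelian-by-finite. No obstacle is expected here; the argument is essentially a one-line verification reducing the statement to the already-established Proposition \ref{abelian}.
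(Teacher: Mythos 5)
Your proposal is correct and follows exactly the paper's argument: invoke Proposition \ref{definable-is-lie} to make $G$ a strictly differentiable weak Lie group, observe that a one-dimensional Lie algebra is abelian, and conclude via Proposition \ref{abelian}. The only difference is that you spell out the (trivial) verification that $[av,bv]=ab[v,v]=0$, which the paper leaves implicit.
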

	\begin{proof}
		By Proposition \ref{definable-is-lie} we get that 
		$G$ is a strictly differentiable definable weak Lie group.
		The result now follows from Proposition \ref{abelian} because the only
		one dimensional Lie algebra is abelian.
	\end{proof}
	In the NIP case this corollary follows more directly from the fact 
	that a definable group is definably weakly Lie. Indeed, this implies that there
	is an element $x\in G$ with $x^n\neq e$ (because the derivative of the map
	$x\mapsto x^n$ at $e$ is $v\mapsto nv$, which is not equal to $0$, see
	Fact \ref{m'}). 
	By $\aleph_0$-saturation there is an $x\in G$ such that the group generated
	by $x$ is infinite. Then by
	\cite{PiYao}, Remark 2.4, one has that the centralizer of the centralizer of 
	$x$ has finite index and is abelian. Indeed, note that if
	$a\in c_G(x)$, $c_G(a)$ contains
	the group generated by $x$, and so it is of dimension $1$. 
	
	The last corollary shows that the classification of one dimensional abelian groups definable in 
	ACVF carried out in \cite{AcostaACVF} extends to all definable $1$-dimensional groups,  for $K$ of characteristic $0$ (see also the main result of \cite{HaHaPeGps}). I.e., since ACVF$_0$ is $1$-h-minimal and NIP, $1$-dimensional definable groups are abelian-by-finite, and the classification of definable $1$-dimensional abelian groups of \cite{AcostaACVF} applies.  
	We do not know if this corollary is true in ACVF$_{p,p}$.
	Similarly, the commutativity assumption is  unnecessary in the classification of $1$-dimensional groups definable in pseudo-local fields of
	residue characteristic $0$. As those are pure henselian they, too,  are $1$-h-minimal, so we may apply Proposition \ref{abelian}. To get the full result we observe that though pseudo-local fields are not NIP,  an inspection of the 
	list of the definable $1$-dimensional abelian groups, $A$, obtained
	in \cite{AcostaACVF} shows they are almost divisible (i.e.,  
	$nA$ has finite index in $A$ for all $n$). Therefore, in the notation of   
	Proposition \ref{abelian} the 
	center of $G_1$ has finite index in $G_1$, and so every one dimensional group
	is abelian-by-finite. 
	
	\begin{question}
		If $G$ is finite-by-abelian, does the center of $G$ have finite index in $G$?
		This is true if the theory is NIP or if $nA$ has finite index in $A$
		for every abelian definable group, 
		by Lemmas \ref{center-finite-by-abelian} and \ref{almost-commutative}.
	\end{question}
	\begin{remark}
		ACVF$_{p,p}$ does not fit into the framework of 1-h-minimality.
		However, many of the ingredients in previous sections translate to this setting. For example: $K$ is geometric, a subset of $K^n$ has dimension $n$ if
		and only if it contains a nonempty open set, one-to-finite functions defined in an open set
		are generically continuous, functions definable in an open set are generically
		continuous, and $K$ is definably spherically complete.
		
		That one-to-finite functions are generically continuous follows from 
		the fact that $\acl(a)$ coincides with the field-theoretic algebraic closure of $a$
		and by a suitable result about continuity of roots. 
		That functions are generically continuous follows
		from the fact that $\dcl(a)$ is the Henselization of the perfect closure of $a$,
		so a definable function is definably 
		piecewise a composition of rational functions, inverse of the 
		Frobenius automorphism and roots of Hensel polynomials,
		all of these functions being continuous.
		
		However,
		the inverse of the Frobenius is not differentiable anywhere, so Proposition
		\ref{generic-regularity-0} does not hold. Also the Frobenius
		is an homeomorphism with $0$ derivative, so for example 
		Proposition \ref{locally-constant} does not hold.
	\end{remark}
	\bibliographystyle{plain}
	\bibliography{harvard}
\end{document}